\documentclass[11pt,reqno,a4paper]{amsart}
\usepackage{a4wide}
\usepackage{amsmath}
\usepackage{amssymb}
\usepackage{amsthm}
\usepackage{amstext}
\usepackage{geometry}
\usepackage{fancyhdr}
\usepackage{array}
\usepackage{graphicx}
\usepackage{hyperref}
\usepackage{fullpage}
\usepackage{cite}
\usepackage{bm}
\allowdisplaybreaks
\hypersetup{
colorlinks=true,
linkcolor=blue,
urlcolor=red,
citecolor=green,
}

\allowdisplaybreaks[4]

\newtheorem{theorem}{Theorem}[section]

\newtheorem{corollary}[theorem]{Corollary}

\newtheorem{lemma}[theorem]{Lemma}
\newtheorem{proposition}[theorem]{Proposition}
\newtheorem{remark}[theorem]{Remark}

\numberwithin{equation}{section}

\newtheorem{notations}[theorem]{Notations}

\begin{document}

\title{Quantitative analysis of ground states for the fractional logarithmic Schr\"odinger equation}

\author{Xiaoming An}
\address{[Xiaoming An] School of Mathematics and Statistics \& Guizhou University of Finance and Economics, Guiyang, 550025, P. R. China}
\email{xman@mail.gufe.edu.cn}

\author{Shuangjie Peng}
\address{[Shuangjie Peng] School of Mathematics and Statistics, Key Laboratory of Nonlinear Analysis and Applications (Ministry of Education), Central China Normal University, Wuhan, 430079, P. R. China}
\email{sjpeng@mail.ccnu.edu.cn} 

\author{Fulin Zhong$^{\dagger}$}
\address{[Fulin Zhong] School of Mathematics and Statistics, Central China Normal University, Wuhan 430079, P. R. China}
\email{flzhong@mails.ccnu.edu.cn}

\thanks{$^{\dagger}$ Corresponding author: Fulin Zhong}


\begin{abstract} 
Let $N\geq1$ and $0<s<1$. We study positive ground states of the fractional logarithmic Schr\"odinger equation
\begin{equation*}
(-\Delta)^sQ=Q\log Q \quad\text{in }\mathbb{R}^N.
\end{equation*}
We prove that for every $N\geq1$ and $0<s<1$, the positive ground state is unique up to translations and nondegenerate. More precisely, for the linearized operator $L_Q=(-\Delta)^s-1-\log Q$, it holds that
\begin{equation*}
\ker L_Q=\operatorname{span}\{\partial_{x_1}Q,\cdots,\partial_{x_N}Q\}.
\end{equation*}
A main difficulty is that the potential $-1-\log Q$ is unbounded in $\mathbb{R}^N$, which prevents a direct application of the available radial oscillation theory for fractional Schr\"odinger operators with bounded potentials. We overcome this difficulty by a bounded-potential approximation. Using also the fact that the associated quadratic form has Morse index one and an angular decomposition, we obtain the nondegeneracy. Based on the isolation of logarithmic ground states and the uniqueness theory for the fractional power equation, we prove uniqueness by a variational approximation with subcritical power nonlinearities. As an application, we establish sharp fractional logarithmic Sobolev inequalities and characterize all cases of equality.

\vspace{0.25cm}

{\bf Keywords:} {\em 
fractional Schr\"odinger equation, 
logarithmic nonlinearity, 
uniqueness, 
nondegeneracy, 
logarithmic Sobolev inequality.}

\vspace{0.25cm}

{\bf AMS subject classification:}
35R11, 
35A02, 
35J10, 
35P05. 
\end{abstract}

\maketitle
\tableofcontents

\section{Introduction}

Let $N\geq1$ and $0<s<1$. We study the fractional logarithmic Schr\"odinger equation
\begin{equation}\label{eq:log}
(-\Delta)^s u=u\log|u| \quad\text{in }\mathbb{R}^N,
\end{equation}
where $0\log0=0$ and $(-\Delta)^s$ denotes the fractional Laplacian with Fourier symbol $|\xi|^{2s}$. For sufficiently regular functions $u$, the fractional Laplacian has the principal value representation
\begin{equation*}
(-\Delta)^s u(x)=c_{N,s}\mathrm{P.V.}\int_{\mathbb{R}^N}\frac{u(x)-u(y)}{|x-y|^{N+2s}}\mathrm{d}y,
\end{equation*}
where
\begin{equation*}
c_{N,s}=\left(\int_{\mathbb{R}^N}\frac{1-\cos\zeta_1}{|\zeta|^{N+2s}}\mathrm{d}\zeta\right)^{-1}.
\end{equation*}
We refer to \cite{Nezza-BSM-2012} for the basic properties of the fractional Laplacian and fractional Sobolev spaces. 

The fractional Schr\"odinger equation was introduced by Laskin in the framework of fractional quantum mechanics based on L\'evy processes \cite{Laskin-2000,Laskin-2002}. Its nonlinear version can be written as
\begin{equation}\label{aeq1.2}
\mathrm{i}\frac{\partial\psi}{\partial t}=(-\Delta)^s\psi-f(\psi) \quad\text{in }\mathbb{R}^N\times\mathbb{R}.
\end{equation}
For the logarithmic nonlinearity $f(z)=z\log|z|$, the standing-wave ansatz
$
\psi(x,t)=\mathrm{e}^{-\mathrm{i}E t}v(x)
$
gives
\begin{equation*}
(-\Delta)^sv=v\log|v|+E v\quad\text{in }\mathbb{R}^N.
\end{equation*}
Hence, after the rescaling $u=\mathrm{e}^{E}v$, we obtain \eqref{eq:log}. 
For the power nonlinearity $f(z)=|z|^\alpha z$ in \eqref{aeq1.2}, the same standing-wave reduction leads, after normalization, to the power-law fractional Schr\"odinger equation
\begin{equation}\label{aeq1.3}
(-\Delta)^sW+W=|W|^\alpha W
\quad\text{in }\mathbb{R}^N,
\quad
0<\alpha<\alpha_*(s,N),
\end{equation}
where
\begin{equation*}
\alpha_*(s,N)=
\begin{cases}
\frac{4s}{N-2s}, & N>2s,\\
+\infty, & N\leq2s.
\end{cases}
\end{equation*} 

In this paper, we are concerned with the uniqueness and nondegeneracy of ground states of \eqref{eq:log}, for their important application in mathematical physics, such as the stability, blow-up, and long-time analysis of solitary waves. See, for example, \cite{Weinstein-1985,Weinstein-1987,Chang-Gustafson-Nakanishi-Tsai-2007,Kenig-Martel-Robbiano-2011,Frank-Lenzmann-2013,Frank-Lenzmann-Silvestre-2016} and the references therein.

In the classical case $s=1$, uniqueness and nondegeneracy for \eqref{eq:log} and \eqref{aeq1.3} have been extensively studied. For the power-law equation \eqref{aeq1.3}, see \cite{Coffman-ARMA-1972,Kwong-1989,Kwong-Zhang-DIE-1991,Coffman-JDE-1996,Mcleod-Serrin-ARMA-1987} for the uniqueness of positive solutions. It is noteworthy that Tang \cite{Tang-Invention-2026} recently gave a positive answer to a conjecture of Berestycki and Lions \cite{Berestycki-Lions-1983a,Berestycki-Lions-1983b} on the uniqueness of bound states (sign-changing radial solutions) for \eqref{aeq1.3}. For the logarithmic equation \eqref{eq:log}, Serrin and Tang \cite{Serrin-Tang-Indiana-2000} proved uniqueness of positive ground states for $N\geq3$, while Troy \cite{Troy-ARMA-2016} proved uniqueness for $1\leq N\leq9$ by a different comparison argument. Zhang and Zhang \cite{Zhang-Zhang-JFPT-2022} obtained uniqueness of positive radial solutions for $N\geq2$ under general assumptions on the potential. Uniqueness results for \eqref{eq:log} with general logarithmic nonlinearities, as well as uniqueness and nondegeneracy results in the presence of a small constant magnetic field, can be found in \cite{An-Fang-2025,An-Peng-Yang-Zhong-2026}. For further recent results on bound states of \eqref{eq:log}, see \cite{Liu-Sun-Zou-2026}. 

For $0<s<1$, since $(-\Delta)^s$ is nonlocal, ODE tools such as Sturm comparison, Wronskian identities and shooting arguments, which are commonly used in the local case $s=1$, are no longer applicable. Before the work of Frank and Lenzmann \cite{Frank-Lenzmann-2013} and Frank, Lenzmann, and Silvestre \cite{Frank-Lenzmann-Silvestre-2016}, uniqueness of positive ground states for the fractional power-law equation \eqref{aeq1.3} remained open in general, and only a few results were known in several special cases \cite{Amick-Toland-ActaMath-1991,Li-JEMS-2004,Chen-Li-CPAM-2006}. Using the extension technique of Caffarelli and Silvestre \cite{Caffarelli-Silverstre-CPDE-2007}, they established a radial monotonicity formula and proved a uniqueness result for radial solutions of the corresponding linear equation. This implies that the radial eigenvalues are simple. They then used the extension problem and a continuation argument to prove that the second radial eigenfunction changes sign exactly once. This oscillation result is used to prove nondegeneracy. Then, with the nondegeneracy result and the uniqueness in the classical case $s=1$, they used implicit function theorem and a continuation argument in $s$ to establish the uniqueness result for the full range $0<s<1$. 

The fractional logarithmic Schr\"odinger equation has attracted considerable research interest in recent years. D'Avenia, Squassina, and Zenari \cite{DAvenia-Squassina-Zenari-2015} proved the existence of infinitely many weak solutions. Ardila \cite{Ardila-2017} established the existence of ground states and their orbital stability. Li, Peng, and Shuai \cite{Li-Peng-Shuai-2022} studied positive and sign-changing solutions in the presence of variable potentials. Motivated by the power-to-logarithm limit in \cite{Wang-Zhang-2019}, An and Yang \cite{An-Yang-2023} proved that, after a suitable rescaling, ground states of \eqref{aeq1.3} converge as $\alpha\to0^+$ to a ground state of the fractional logarithmic equation. 

To the best of our knowledge, uniqueness and nondegeneracy of positive ground states of \eqref{eq:log} have remained open. In this paper, we establish these two properties for the full range $N\geq1$ and $0<s<1$. 
To state our main results, we first introduce some necessary notations. Following \cite{Nezza-BSM-2012}, the fractional Sobolev space is defined by
\begin{equation*}
H^s(\mathbb{R}^N)=\left\{u\in L^2(\mathbb{R}^N):\iint_{\mathbb{R}^{2N}}\frac{|u(x)-u(y)|^2}{|x-y|^{N+2s}}\mathrm{d}x\mathrm{d}y<\infty\right\}.
\end{equation*}
Equipped with the inner product
\begin{equation*}
\langle u,v\rangle_{H^s(\mathbb{R}^N)}=\frac{c_{N,s}}{2}\iint_{\mathbb{R}^{2N}}\frac{(u(x)-u(y))(v(x)-v(y))}{|x-y|^{N+2s}}\mathrm{d}x\mathrm{d}y+\int_{\mathbb{R}^N}uv\,\mathrm{d}x
\end{equation*}
and the corresponding norm
\begin{equation*}
\|u\|_{H^s(\mathbb{R}^N)}^2=\frac{c_{N,s}}{2}\iint_{\mathbb{R}^{2N}}\frac{|u(x)-u(y)|^2}{|x-y|^{N+2s}}\mathrm{d}x\mathrm{d}y+\|u\|_{L^2(\mathbb{R}^N)}^2,
\end{equation*}
the space $H^s(\mathbb{R}^N)$ is a Hilbert space. Moreover, by \cite[Proposition 3.6]{Nezza-BSM-2012},
\begin{equation*}
\frac{c_{N,s}}{2}\iint_{\mathbb{R}^{2N}}\frac{(u(x)-u(y))(v(x)-v(y))}{|x-y|^{N+2s}}\mathrm{d}x\mathrm{d}y=\left\langle(-\Delta)^{\frac{s}{2}}u,(-\Delta)^{\frac{s}{2}}v\right\rangle_{L^2(\mathbb{R}^N)}
\end{equation*}
for every $u,v\in H^s(\mathbb{R}^N)$. 

For $u\in H^s(\mathbb{R}^N)$, set
\begin{equation*}
A(u)=\int_{\mathbb{R}^N}\left|(-\Delta)^{\frac{s}{2}}u\right|^2\mathrm{d}x,\quad B(u)=\int_{\mathbb{R}^N}|u|^2\mathrm{d}x.
\end{equation*}
In particular,
\begin{equation} \label{eq:gagliardo}
A(u)=\frac{c_{N,s}}{2}\iint_{\mathbb{R}^{2N}}\frac{|u(x)-u(y)|^2}{|x-y|^{N+2s}}\mathrm{d}x\mathrm{d}y.
\end{equation} 
Define
\begin{equation*}
\mathcal{D}_s=\left\{u\in H^s(\mathbb{R}^N):\int_{\mathbb{R}^N}u^2|\log|u||\mathrm{d}x<\infty\right\}.
\end{equation*}
For $u\in\mathcal{D}_s$, set
\begin{equation*}
C_0(u)=\int_{\mathbb{R}^N}|u|^2\log|u|\mathrm{d}x,\quad \mathcal{I}_0(u)=\frac{1}{2}A(u)+\frac{1}{4}B(u)-\frac{1}{2}C_0(u).
\end{equation*}
A function $u\in\mathcal{D}_s$ is a weak solution of \eqref{eq:log} if
\begin{equation*}
\left\langle(-\Delta)^{\frac{s}{2}}u,(-\Delta)^{\frac{s}{2}}\varphi\right\rangle_{L^2(\mathbb{R}^N)}
=\int_{\mathbb{R}^N}u\log|u|\varphi\mathrm{d}x
\end{equation*}
for every $\varphi\in C_c^\infty(\mathbb{R}^N)$. The associated Nehari set is
\begin{equation*}
\mathcal N_0=\left\{u\in\mathcal{D}_s\setminus\{0\}:A(u)=C_0(u)\right\}.
\end{equation*}
A ground state is a weak solution minimizing $\mathcal{I}_0$ on $\mathcal N_0$, see \cite{Ardila-2017,Li-Peng-Shuai-2022}. 

For a positive ground state $Q$, set
\begin{equation*}
V_Q=-1-\log Q,\quad L_Q=(-\Delta)^s+V_Q.
\end{equation*}
The precise definition of $L_Q$ is given in Lemma \ref{lem:LQ-domain}.

Our first main result establishes the uniqueness and nondegeneracy of positive ground states of \eqref{eq:log}.

\begin{theorem}\label{thm:main} 
Let $N\geq1$ and $0<s<1$.
\begin{enumerate}
\item If $Q$ is a positive radial ground state of \eqref{eq:log}, then
\begin{equation*}
	\ker(L_Q)=\operatorname{span}\left\{\partial_{x_1}Q,\cdots,\partial_{x_N}Q\right\}.
\end{equation*}
\item Equation \eqref{eq:log} has exactly one positive ground state up to translations.
\end{enumerate}
\end{theorem}

Sharp logarithmic Sobolev inequalities in the classical case $s=1$ were obtained in \cite{Gross-1975,Carlen-1991,DelPino-Dolbeault-2003}. For fractional derivatives, Cotsiolis and Tavoularis \cite{Cotsiolis-Tavoularis-2005} claimed a sharp logarithmic Sobolev inequality. Chatzakou and Ruzhansky \cite{Chatzakou-Ruzhansky-2024} later pointed out an incompatibility in the exponent choices used in that proof and established a revised inequality with an explicit constant for $0<s<\frac{N}{2}$. As an application of Theorem \ref{thm:main}, our second main result establishes sharp fractional logarithmic Sobolev inequalities for every $N\geq1$ and $0<s<1$, with optimal constants determined by the mass of the unique positive ground state, and characterizes all equality cases.

\begin{theorem}\label{th1.2}
Let $N\geq1$, $0<s<1$, and let $Q$ be the radial positive ground state of \eqref{eq:log}. Then, for every $u\in\mathcal{D}_s\setminus\{0\}$,
\begin{equation*}
\int_{\mathbb{R}^N}|u|^2\log|u|^2\mathrm{d}x\leq2A(u)+B(u)\log\left(\frac{B(u)}{B(Q)}\right).
\end{equation*}
Equality holds if and only if
\begin{equation*}
u(x)=cQ(x-y)
\end{equation*}
for some $c\in\mathbb{R}\setminus\{0\}$ and $y\in\mathbb{R}^N$. Moreover, for every $a>0$,
\begin{equation*}
\int_{\mathbb{R}^N}|u|^2\log|u|^2\mathrm{d}x \leq 2\mathrm{e}^{2s}B(Q)^{-\frac{2s}{N}}a^{2s}A(u)+\left[\log B(u)-N\left(1+\log a\right)\right]B(u).
\end{equation*}
For fixed $a>0$, equality holds if and only if
\begin{equation*}
u(x)=cQ\left(\frac{B(Q)^{\frac{1}{N}}}{\mathrm{e}a}(x-y)\right)
\end{equation*}
for some $c\in\mathbb{R}\setminus\{0\}$ and $y\in\mathbb{R}^N$.
\end{theorem}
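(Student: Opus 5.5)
We prove Theorem \ref{th1.2} by a variational argument built on Theorem \ref{thm:main}(2), then obtain the $a$-dependent version by scaling.

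\emph{Minimization problem.} By $L^2$-scaling we may assume $B(u)=B(Q)$: replace $u$ by $\lambda u$ and note that $\int|\lambda u|^2\log|\lambda u|^2=\lambda^2\int|u|^2\log|u|^2+\lambda^2B(u)\log\lambda^2$. The right-hand side of the inequality transforms the same way, so the inequality is invariant under $u\mapsto\lambda u$. Set
\begin{equation*}
J(u)=2A(u)-\int_{\mathbb{R}^N}|u|^2\log|u|^2\,\mathrm{d}x=2A(u)-2C_0(u).
\end{equation*}
The claim then reads $J(u)\ge 0$ on the sphere $\{B(u)=B(Q)\}$, with equality exactly at translates of $\pm Q$.

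\emph{Nonnegativity.} Consider $m=\inf\{J(u):u\in\mathcal D_s,\ B(u)=B(Q)\}$. Since $A(|u|)\le A(u)$, we may restrict to $u\ge0$, and by fractional P\'olya--Szeg\H{o} we may further restrict to radial decreasing $u$; the entropy term is equimeasurable. Existence of a minimizer comes from compactness of radial decreasing functions: $\int u^2\log u^2$ is controlled from above via the fractional Sobolev/Gagliardo--Nirenberg inequality, and the negative part $u^2\log(1/u)$ is handled by Fatou.

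A minimizer $u$ satisfies the Euler--Lagrange equation $(-\Delta)^su=u\log u+\mu u$ with Lagrange multiplier $\mu$. The rescaling $v=\mathrm{e}^{\mu}u$ solves \eqref{eq:log}. Minimality on the $L^2$-sphere combined with the invariance above forces $v$ to be a Nehari minimizer, i.e.\ a ground state. Indeed, the ground state energy equals $\mathcal I_0=\tfrac14B$ on $\mathcal N_0$, so ground states are exactly the Nehari elements of least mass, and the mass-constrained problem matches this.

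By Theorem \ref{thm:main}(2), $v=Q(\cdot-y)$. Plugging in, $\mathrm{e}^{\mu}u=Q$ and $B(u)=B(Q)$ give $\mu=0$. Then $m=J(Q)=2A(Q)-2C_0(Q)=0$, because $Q\in\mathcal N_0$. Hence $J\ge0$, which is the first inequality.

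\emph{Equality cases.} If equality holds for $u$, then after normalization $u$ is a minimizer. So $|u|$ is also a minimizer and hence a translate of $Q$. Equality $A(|u|)=A(u)$ in the Gagliardo form \eqref{eq:gagliardo} forces $u$ to have constant sign, because $|u(x)-u(y)|>||u(x)|-|u(y)||$ whenever $u(x)u(y)<0$. This gives $u=cQ(\cdot-y)$. Conversely, any such $u$ attains equality by the computation above. Note that this step needs no symmetrization: the minimizer is identified through the Euler--Lagrange equation directly, using that positive solutions which are ground states are unique up to translation.

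\emph{Second inequality.} Apply the first inequality to $u_t(x)=u(tx)$ with $t>0$. We have $A(u_t)=t^{2s-N}A(u)$, $B(u_t)=t^{-N}B(u)$, and $\int|u_t|^2\log|u_t|^2=t^{-N}\int|u|^2\log|u|^2$. Multiplying by $t^N$ gives
\begin{equation*}
\int|u|^2\log|u|^2\le 2t^{2s}A(u)+B(u)\bigl[\log B(u)-N\log t-\log B(Q)\bigr].
\end{equation*}
Choose $t=\mathrm{e}\,a\,B(Q)^{-1/N}$. Then $N\log t=N(1+\log a)-\log B(Q)$ and $t^{2s}=\mathrm{e}^{2s}a^{2s}B(Q)^{-2s/N}$, which gives the stated bound. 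Equality holds iff $u_t=cQ(\cdot-y')$, i.e.\ $u(x)=cQ(x/t-y')$, which is the stated form with $1/t=B(Q)^{1/N}/(\mathrm{e}a)$.

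\emph{Main obstacle.} The main obstacle is existence of the constrained minimizer, since the log entropy is neither bounded below nor weakly continuous. We plan to handle it by working in the radial decreasing class and splitting $\log$ into its positive part, which is subcritical and compact for radial functions, and its negative part, which is lower semicontinuous by Fatou. Identifying the multiplier and showing the minimizer is a genuine ground state in the sense of \cite{Ardila-2017} also needs care.
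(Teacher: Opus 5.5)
Your argument is correct in outline. Your treatment of the equality cases and of the $a$-dependent inequality is essentially the paper's: the paper uses the mass-preserving dilation $\rho^{N/2}u(\rho x)$ with $\rho=\mathrm{e}aM^{-1/N}$, which is equivalent to your $u(tx)$ combined with the $\lambda$-homogeneity.

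The real difference is how you obtain $J\geq0$ on the sphere. The paper gets it in two lines from Lemma \ref{lem:fmass}. For $v\in\mathcal{S}_M\cap\mathcal{D}_s$, the ray $t\mapsto tv$ meets $\mathcal{N}_0$ exactly at $t(v)=\exp\bigl((A(v)-C_0(v))/M\bigr)$, and
\begin{equation*}
\frac{M}{4}=c_0\leq\mathcal{I}_0\bigl(t(v)v\bigr)=\frac{1}{4}t(v)^2M
\end{equation*}
forces $t(v)\geq1$, i.e.\ $A(v)\geq C_0(v)$. This uses only the definition of $c_0$ and $B(Q)=4c_0$. No constrained minimizer, no Euler--Lagrange equation and no uniqueness are needed. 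Theorem \ref{thm:main}-(2) enters only in the equality characterization, exactly where you also use it. Since $J(Q)=0$ already gives $m\leq0$, the only content is $m\geq0$, and your detour through the existence of a minimizer (your ``main obstacle'') buys nothing beyond the fibering argument.

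If you keep your route, three points need repair.

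\emph{Loss of mass.} Lemma \ref{lem:rcomp} gives strong convergence only in $L^q$ for $q>2$, so the weak limit of a radial minimizing sequence may lose $L^2$ mass. Fatou on the negative part of the entropy gives lower semicontinuity, but it does not put the limit back on $\{B=M\}$. You can close this in either of two ways:
\begin{itemize}
\item use the mechanism of Lemma \ref{lem:lcomp}: mass $m_0$ escaping beyond $B_R$ sits where $u_n\leq C_MR^{-N/2}$ and contributes at most $m_0\log(C_MR^{-N/2})\to-\infty$ to $C_0$;
\item or, when $m<0$, note that a nonzero limit $u$ of mass $\beta<M$ satisfies $J(\lambda u)=\lambda^2\bigl(J(u)-\beta\log\lambda^2\bigr)<\lambda^2m<m$ with $\lambda^2=M/\beta$; a zero limit gives $m\geq0$ directly.
\end{itemize}

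\emph{Identifying $\mu$.} This does not need uniqueness. Once $\mathrm{e}^{\mu}u$ is a ground state, its mass is $4c_0=M$, which already forces $\mu=0$.

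\emph{Positivity.} Whenever you invoke Theorem \ref{thm:main}-(2), you must first upgrade the nonnegative ground state to a positive one. This follows from the positivity argument cited in the proof of Lemma \ref{lem:lcomp}.
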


We briefly describe the main ideas of the proof. The main difficulty in the nondegeneracy analysis is that the linearized potential
\begin{equation*}
V_Q=-1-\log Q
\end{equation*}
is unbounded, since $V_Q(r)\to+\infty$ as $r\to+\infty$. The oscillation theory in \cite[Theorem 2.3]{Frank-Lenzmann-Silvestre-2016} requires bounded radial potentials and therefore cannot be applied to $L_Q$ to obtain the one-node property of the second radial eigenfunction. To overcome this difficulty, we approximate $V_Q$ by the bounded radial potentials
\begin{equation*}
V_m(r)=\min\{V_Q(r),m\},\quad L_m=(-\Delta)^s+V_m.
\end{equation*}
For all sufficiently large $m$, the oscillation theorem in \cite{Frank-Lenzmann-Silvestre-2016} yields the one-node property of the second radial eigenfunction of $L_m$. We then establish the convergence of the first two radial eigenvalues and the associated eigenfunctions as $m\to\infty$, which allows us to pass this nodal property to a second radial eigenfunction of the original operator $L_Q$.

The fixed-mass variational characterization shows that the quadratic form associated with $L_Q$ has Morse index one. Using this fact, the one-node property of the second radial eigenfunction, and two orthogonality identities obtained from the equation and a scaling argument, we exclude nontrivial radial elements of $\ker L_Q$. For $N\geq2$, the spherical-harmonic decomposition and the strict ordering of the angular sectors give
\begin{equation*}
\ker(L_Q)=\operatorname{span}\left\{\partial_{x_1}Q,\cdots,\partial_{x_N}Q\right\}.
\end{equation*}
When $N=1$, the even sector is the radial sector and the odd sector is treated directly by the quadratic form.

The uniqueness proof is based on the fixed-mass variational characterization and a power approximation. We first prove compactness of radial ground states. The radial nondegeneracy then implies that every radial logarithmic ground state is isolated in $L^2(\mathbb{R}^N)$. For powers $2+\varepsilon$ close to $2$, we minimize the corresponding power functional in a small neighborhood of a fixed logarithmic ground state. The resulting minimizer converges to the prescribed logarithmic ground state, and its linearized quadratic form also has Morse index one. Moreover, after a suitable rescaling, it becomes a positive solution of
\begin{equation*}
(-\Delta)^sW+W=W^{1+\varepsilon} \quad\text{in }\mathbb{R}^N.
\end{equation*}
These properties and the uniqueness theory in \cite{Frank-Lenzmann-Silvestre-2016} then imply that two distinct logarithmic ground states cannot occur. This proves the uniqueness of the positive ground state of \eqref{eq:log} up to translations.

Finally, the fixed-mass characterization, together with the uniqueness result and a scaling argument, yields the sharp fractional logarithmic Sobolev inequalities in Theorem \ref{th1.2} and determines all equality cases.

\medskip
\textbf{Plan of the paper}. The paper is organized as follows. Section \ref{sec:variational-spectral} collects the basic properties and variational characterization of ground states of \eqref{eq:log} and develops the spectral framework for the linearized operator. In Section \ref{sec:nondegeneracy}, we combine the radial nodal structure with the analysis of the nonradial sectors to prove the nondegeneracy of positive ground states of \eqref{eq:log}. Section \ref{sec:compactness-isolation} establishes compactness of radial minimizing sequences and the local isolation of radial ground states. In Section \ref{sec:power-approximation}, we combine a local power approximation with the uniqueness theory for the fractional power equation to prove uniqueness of the logarithmic ground state. Finally, Section \ref{sec:log-sobolev-applications} derives the sharp logarithmic Sobolev inequalities and characterizes all equality cases.

\medskip

\section{Preliminaries}\label{sec:variational-spectral}

In this section, we collect some basic facts that will be used throughout the paper, including the regularity, symmetry, radial monotonicity and decay of positive ground states of \eqref{eq:log}, the fixed-mass variational characterization, several basic properties of the fractional Laplacian, and the self-adjoint realization and radial spectral properties of the linearized operator $L_Q$.

\begin{notations}\label{not2.1}
We use the following notation throughout the paper.
\begin{itemize}
\item For $R>0$, we set
\begin{equation*}
	B_R=\left\{x\in\mathbb{R}^N:|x|<R\right\}.
\end{equation*}

\item For a real-valued function $f$, we write
\begin{equation*}
	f_+=\max\{f,0\},\quad f_-=\max\{-f,0\}.
\end{equation*}

\item For a measurable set $E\subset\mathbb{R}^N$, $|E|$ denotes its Lebesgue measure and $\mathbf{1}_E$ its characteristic function, while $|\mathbb{S}^{N-1}|$ denotes the surface measure of the unit sphere $\mathbb{S}^{N-1}$.

\item For $f,g\in L^2(\mathbb{R}^N)$, we write $f\perp g$ if $\langle f,g\rangle_{L^2(\mathbb{R}^N)}=0$. For $0\neq g\in L^2(\mathbb{R}^N)$, we set
\begin{equation*}
	g^\perp=\left\{f\in L^2(\mathbb{R}^N):\langle f,g\rangle_{L^2(\mathbb{R}^N)}=0\right\}.
\end{equation*}

\item We set
\begin{equation*}
	2_s^*=
	\begin{cases}
		\frac{2N}{N-2s}, & N>2s,\\
		+\infty, & N\leq2s.
	\end{cases}
\end{equation*}
\end{itemize}
Throughout the paper, all functions are real-valued and all function spaces are considered over $\mathbb{R}$.
\end{notations}

\subsection{Basic estimates and properties of ground states}

This subsection collects several estimates for the fractional Dirichlet form and the logarithmic nonlinearity, recalls a fractional interpolation inequality, and presents the regularity, radial symmetry, radial monotonicity, and decay of positive ground states of \eqref{eq:log}. 

\begin{lemma}\label{lem:elementary-log}
For every $\delta>0$, there exists $C_\delta>0$ such that
\begin{equation}\label{eq:log-growth}
t^2\log t \leq C_\delta t^{2+\delta} \quad\text{for every }t\geq0.
\end{equation}
Moreover,
\begin{equation}\label{eq:log-negative}
0\leq-t^2\log t\leq\frac{1}{2\mathrm{e}} \quad\text{for }0\leq t\leq1.
\end{equation} 
\end{lemma}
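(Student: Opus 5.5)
Both inequalities reduce to one-variable calculus, so I would prove them directly by maximizing explicit functions of $t$.

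For \eqref{eq:log-growth}, I split into $0\le t\le 1$ and $t>1$. On $[0,1]$ we have $\log t\le 0$, so $t^2\log t\le 0\le C_\delta t^{2+\delta}$ for any $C_\delta>0$; at $t=0$ both sides vanish by the convention $0\log 0=0$. For $t>1$, it suffices to bound $\log t\le C_\delta t^\delta$. To get this, set $g(t)=t^{-\delta}\log t$ on $(1,\infty)$. Then
\[
g'(t)=t^{-\delta-1}(1-\delta\log t),
\]
so $g$ attains its maximum at $t=\mathrm{e}^{1/\delta}$, with value $\frac{1}{\mathrm{e}\delta}$. Hence we may take $C_\delta=\frac{1}{\mathrm{e}\delta}$. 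Multiplying $\log t\le C_\delta t^\delta$ by $t^2$ gives the claim.

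For \eqref{eq:log-negative}, the lower bound $-t^2\log t\ge0$ is immediate on $[0,1]$, since $\log t\le0$ there. For the upper bound, let $h(t)=-t^2\log t$, which is continuous on $[0,1]$ with $h(0)=h(1)=0$. Its derivative is
\[
h'(t)=-t(2\log t+1),
\]
which vanishes only at $t=\mathrm{e}^{-1/2}$. The maximum is therefore
\[
h(\mathrm{e}^{-1/2})=\mathrm{e}^{-1}\cdot\tfrac12=\tfrac{1}{2\mathrm{e}}.
\]

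The only point that needs care is the endpoint $t=0$, where $0\log 0=0$ by convention and $t^2\log t\to0$, so $h$ extends continuously. There is no substantive obstacle beyond this.
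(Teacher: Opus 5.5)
Your proposal is correct and follows the paper's proof essentially verbatim: you split at $t=1$, maximize $t^{-\delta}\log t$ at $t=\mathrm{e}^{1/\delta}$ to get $C_\delta=\frac{1}{\delta\mathrm{e}}$, and maximize $-t^2\log t$ on $[0,1]$ at $t=\mathrm{e}^{-1/2}$ to get $\frac{1}{2\mathrm{e}}$. Your remark about continuity at $t=0$ under the convention $0\log0=0$ is a harmless extra detail that the paper leaves implicit.
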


\begin{proof}
For $t\geq1$, set $f(t)=t^{-\delta}\log t$. Then
\begin{equation*}
f'(t)=t^{-\delta-1}\left(1-\delta\log t\right).
\end{equation*}
Hence $f$ attains its maximum at $t=\mathrm{e}^{\frac{1}{\delta}}$ and 
\begin{equation*}
t^2\log t\leq\frac{1}{\delta\mathrm{e}}t^{2+\delta} \quad\text{for }t\geq1.
\end{equation*}
For $0\leq t\leq1$, we have $t^2\log t\leq0$. This proves \eqref{eq:log-growth}.

Let $g(t)=-t^2\log t$ on $[0,1]$, with $g(0)=0$. Since
\begin{equation*}
g'(t)=-t\left(2\log t+1\right),
\end{equation*}
the function $g$ attains its maximum at $t=\mathrm{e}^{-\frac{1}{2}}$. Thus
\begin{equation*}
\max_{0\leq t\leq1}g(t)=g\left(\mathrm{e}^{-\frac{1}{2}}\right)=\frac{1}{2\mathrm{e}},
\end{equation*}
which proves \eqref{eq:log-negative}. 
\end{proof}

\begin{lemma}\label{lem:fractional-gn}
Let $2<q<2_s^*$ and set
\begin{equation*}
\theta_q=\frac{N(q-2)}{4s}.
\end{equation*}
Then there exists $C=C(N,s,q)>0$ such that
\begin{equation*}
\|u\|_{L^q(\mathbb{R}^N)}^q
\leq C A(u)^{\theta_q}B(u)^{\frac{q}{2}-\theta_q}
\quad\text{for every }u\in H^s(\mathbb{R}^N).
\end{equation*}
\end{lemma}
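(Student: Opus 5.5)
The plan is to combine the fractional Sobolev embedding with H\"older interpolation between $L^2$ and $L^{2_s^*}$, and then to handle the case $N\leq 2s$ separately. Throughout, $\|(-\Delta)^{s/2}u\|_{L^2}^2=A(u)$.

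\emph{Case $N>2s$.} The fractional Sobolev inequality (see \cite{Nezza-BSM-2012}) gives
\begin{equation*}
\|u\|_{L^{2_s^*}}^2\leq S\,A(u).
\end{equation*}
Fix $2<q<2_s^*$ and write $\frac1q=\frac{1-\lambda}{2}+\frac{\lambda}{2_s^*}$. Since $\frac12-\frac1{2_s^*}=\frac sN$, this gives $\lambda=\frac{N(q-2)}{2sq}$. H\"older's inequality yields
\begin{equation*}
\|u\|_{L^q}\leq\|u\|_{L^2}^{1-\lambda}\|u\|_{L^{2_s^*}}^{\lambda}.
\end{equation*}
Raising to the power $q$ gives
\begin{equation*}
\|u\|_{L^q}^q\leq C\,A(u)^{\lambda q/2}B(u)^{(1-\lambda)q/2}.
\end{equation*}
Here $\lambda q/2=\frac{N(q-2)}{4s}=\theta_q$ and $(1-\lambda)q/2=\frac q2-\theta_q$, which is exactly the claimed inequality.

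\emph{Case $N\leq 2s$.} Here $N=1$ and $s\geq\frac12$, so there is no $L^{2_s^*}$ endpoint and we work directly in Fourier space instead. By Hausdorff--Young with $q'=\frac{q}{q-1}$,
\begin{equation*}
\|u\|_{L^q}\leq C\|\hat u\|_{L^{q'}}.
\end{equation*}
Split $\|\hat u\|_{L^{q'}}$ into the regions $|\xi|\leq R$ and $|\xi|>R$. On each region, apply H\"older to the product $|\hat u|\cdot 1$ or $|\hat u||\xi|^{s}\cdot|\xi|^{-s}$ with exponents $\frac{2}{q'}$ and $\frac{2}{2-q'}$. The low-frequency part is bounded by $C R^{N(\frac1{q'}-\frac12)}B(u)^{1/2}$. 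The high-frequency part is bounded by $C R^{N(\frac1{q'}-\frac12)-s}A(u)^{1/2}$, since $\int_{|\xi|>R}|\xi|^{-\frac{2sq'}{2-q'}}\,d\xi<\infty$.

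\emph{Main obstacle.} The delicate point is checking this last integrability condition, $\frac{2sq'}{2-q'}>N$. It is equivalent to $\frac1{q'}-\frac12<\frac sN$, that is $q<\frac{2N}{N-2s}$ when $N>2s$. When $N\leq 2s$ it reduces to $\frac12-\frac1q<\frac sN$, which always holds because $\frac sN\geq\frac12$.

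\emph{Conclusion.} Optimizing in $R$ by choosing $R^s=(A/B)^{1/2}$ gives
\begin{equation*}
\|u\|_{L^q}\leq C\,A^{\frac{N}{2s}(\frac12-\frac1q)}B^{\frac12-\frac N{2s}(\frac12-\frac1q)}.
\end{equation*}
Raising to the power $q$, the exponent of $A$ is $\frac{N(q-2)}{4s}=\theta_q$, as required. This Fourier argument also works for $N>2s$, so it could serve as a single unified proof. Finally, one checks that the resulting $C$ depends only on $N,s,q$.
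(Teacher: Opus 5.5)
Your proof is correct, but it takes a different route from the paper, whose proof is just a citation. It quotes the fractional Gagliardo--Nirenberg inequality \cite[(3.4)]{Frank-Lenzmann-Silvestre-2016}, $\|u\|_{L^q}\leq C\|(-\Delta)^{s/2}u\|_{L^2}^{\vartheta}\|u\|_{L^2}^{1-\vartheta}$ with $\vartheta=\frac{N(q-2)}{2sq}$, and only checks that $\frac{q\vartheta}{2}=\theta_q$. Your $\lambda$ is exactly this $\vartheta$.

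You instead derive the inequality itself. When $N>2s$ you use the Sobolev inequality and H\"older interpolation between $L^2$ and $L^{2_s^*}$. When $N\leq 2s$, i.e.\ $N=1$, $s\geq\frac12$, the convention $2_s^*=+\infty$ leaves no finite Sobolev endpoint, so you use Hausdorff--Young with a low/high frequency splitting and the choice $R^s=(A/B)^{1/2}$. The computations are right: the integrability condition $\frac{2sq'}{2-q'}>N$ is equivalent to $\frac12-\frac1q<\frac sN$, and the balancing gives $\|u\|_{L^q}\leq C A^{\lambda/2}B^{(1-\lambda)/2}$.

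The citation is shorter and delegates everything to a standard reference. Your version is self-contained, and its Fourier half alone gives one proof for all $N\geq1$, $0<s<1$, $2<q<2_s^*$. It also shows that the restriction $q<2_s^*$ is precisely the condition for $|\xi|^{-2sq'/(2-q')}$ to be integrable at infinity.

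Two small points to add when writing it out. First, treat $u=0$ separately; for $u\neq0$ one has $A(u)>0$ and $B(u)>0$, so $R$ is well defined. Second, note that the frequency splitting itself shows $\widehat u\in L^{q'}(\mathbb{R}^N)$, which is what justifies applying Hausdorff--Young to $u$.
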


\begin{proof}
The fractional Gagliardo--Nirenberg inequality
\cite[(3.4)]{Frank-Lenzmann-Silvestre-2016}, with $\alpha=q-2$, gives
\begin{equation*}
\|u\|_{L^q(\mathbb{R}^N)}
\leq C\|(-\Delta)^{\frac{s}{2}}u\|_{L^2(\mathbb{R}^N)}^{\vartheta}
\|u\|_{L^2(\mathbb{R}^N)}^{1-\vartheta},
\quad \vartheta=\frac{N(q-2)}{2sq}.
\end{equation*}
Raising this inequality to the power $q$ and using
\begin{equation*}
\frac{q\vartheta}{2}=\frac{N(q-2)}{4s}=\theta_q
\end{equation*}
gives the conclusion.
\end{proof}

\begin{lemma}\label{lem:modulus}
For every $u\in H^s(\mathbb{R}^N)$, we have $|u|\in H^s(\mathbb{R}^N)$ and
\begin{equation}\label{eq:modulus}
A\left(|u|\right)\leq A(u).
\end{equation}
Equality holds if and only if $u\geq0$ almost everywhere or $u\leq0$ almost everywhere.
\end{lemma}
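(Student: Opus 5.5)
The argument works directly from the Gagliardo form \eqref{eq:gagliardo} and the elementary triangle inequality $\bigl||a|-|b|\bigr|\leq|a-b|$ for $a,b\in\mathbb{R}$. For $u\in H^s(\mathbb{R}^N)$ we have $|u|\in L^2(\mathbb{R}^N)$ with $B(|u|)=B(u)$. Pointwise for almost every $(x,y)\in\mathbb{R}^{2N}$,
\begin{equation*}
\frac{\bigl||u(x)|-|u(y)|\bigr|^2}{|x-y|^{N+2s}}\leq\frac{|u(x)-u(y)|^2}{|x-y|^{N+2s}}.
\end{equation*}
Integrating over $\mathbb{R}^{2N}$ shows that the Gagliardo seminorm of $|u|$ is finite. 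Hence $|u|\in H^s(\mathbb{R}^N)$, and \eqref{eq:modulus} follows from \eqref{eq:gagliardo} applied to both $u$ and $|u|$.

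For the equality case, one direction is immediate: if $u\geq0$ a.e. then $|u|=u$, and if $u\leq0$ a.e. then $|u|=-u$, so $A(|u|)=A(u)$ in both cases since $A(-u)=A(u)$. Conversely, suppose $A(|u|)=A(u)<\infty$. The difference of the two integrands is nonnegative and integrates to zero, so it vanishes almost everywhere on $\mathbb{R}^{2N}$. Equality in $\bigl||a|-|b|\bigr|\leq|a-b|$ holds iff $ab\geq0$, because squaring gives $|a|^2-2|ab|+|b|^2=a^2-2ab+b^2$, i.e.\ $|ab|=ab$. Therefore
\begin{equation*}
u(x)u(y)\geq0\quad\text{for a.e. }(x,y)\in\mathbb{R}^{2N}.
\end{equation*}

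The only point needing care is turning this product condition into a sign condition on $u$. Set $E_+=\{u>0\}$ and $E_-=\{u<0\}$, both measurable. On $E_+\times E_-$ we have $u(x)u(y)<0$, so this set has $2N$-dimensional measure zero. By Fubini, $|E_+\times E_-|=|E_+|\,|E_-|$, so either $|E_+|=0$ or $|E_-|=0$. In the first case $u\leq0$ a.e., and in the second $u\geq0$ a.e., which proves the equality characterization.
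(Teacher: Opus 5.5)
Your proposal is correct and follows essentially the same route as the paper: the pointwise inequality $\bigl||u(x)|-|u(y)|\bigr|\leq|u(x)-u(y)|$ integrated against the Gagliardo kernel, with equality forcing $u(x)u(y)\geq0$ for almost every $(x,y)$. The only difference is that you spell out the final step with the product set $E_+\times E_-$ and Fubini–Tonelli, which the paper leaves implicit when it writes ``Hence $u$ has a constant sign.''
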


\begin{proof}
The pointwise inequality
\begin{equation*}
\left||u(x)|-|u(y)|\right|\leq|u(x)-u(y)|
\end{equation*}
and the Gagliardo representation \eqref{eq:gagliardo} give $|u|\in H^s(\mathbb{R}^N)$ and \eqref{eq:modulus}. If equality holds, then
\begin{equation*}
u(x)u(y)\geq0 \quad\text{for almost every }(x,y)\in\mathbb{R}^{2N}.
\end{equation*}
Hence $u$ has a constant sign. The converse is immediate.
\end{proof}

By symmetric decreasing rearrangement, see \cite[Section 3.3]{Lieb-Loss-2001} and \cite[Chapter 15]{Leoni-2017}, we obtain the following radial reduction.

\begin{lemma}\label{lem:rearrangement}
Let $u\in H^s(\mathbb{R}^N)$ and let $u^*$ be the symmetric decreasing rearrangement of $|u|$. Then
\begin{equation*}
B(u^*)=B(u), \quad A(u^*)\leq A(u).
\end{equation*}
If $u\in\mathcal{D}_s$, then $u^*\in\mathcal{D}_s$ and
\begin{equation*}
C_0(u^*)=C_0(u).
\end{equation*}
Moreover, for every $q\in[1,+\infty)$ and $u\in L^q(\mathbb{R}^N)$,
\begin{equation*}
\|u^*\|_{L^q(\mathbb{R}^N)}=\|u\|_{L^q(\mathbb{R}^N)}.
\end{equation*}
\end{lemma}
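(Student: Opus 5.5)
The proof reduces to standard properties of the symmetric decreasing rearrangement: equimeasurability and the P\'olya--Szeg\H{o} principle for the Gagliardo seminorm. All the integrals involved depend only on the distribution function of $|u|$, except $A$.

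\textbf{Step 1: equimeasurability.} By construction $u^*$ and $|u|$ are equimeasurable, i.e. $|\{u^*>t\}|=|\{|u|>t\}|$ for every $t>0$. By the layer-cake representation (see \cite[Section 3.3]{Lieb-Loss-2001}), for every Borel function $F:[0,\infty)\to[0,\infty)$ with $F(0)=0$ we have
\begin{equation*}
\int_{\mathbb{R}^N}F(u^*)\,\mathrm{d}x=\int_{\mathbb{R}^N}F(|u|)\,\mathrm{d}x .
\end{equation*}
Taking $F(t)=t^q$ gives the $L^q$ identities, and $q=2$ gives $B(u^*)=B(u)$.

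\textbf{Step 2: the logarithmic term.} For $u\in\mathcal{D}_s$, apply Step 1 separately to the two nonnegative functions
\begin{equation*}
F_+(t)=t^2(\log t)_+,\qquad F_-(t)=t^2(\log t)_-,
\end{equation*}
both extended by $0$ at $t=0$. Summing the two identities shows $\int (u^*)^2|\log u^*|\,\mathrm{d}x=\int u^2|\log|u||\,\mathrm{d}x<\infty$. Subtracting them, which is legitimate because both sides are finite, yields $C_0(u^*)=C_0(u)$.

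\textbf{Step 3: the Dirichlet form.} First, Lemma \ref{lem:modulus} gives $A(|u|)\le A(u)$. It remains to show $A(u^*)\le A(|u|)$, which is the fractional P\'olya--Szeg\H{o} inequality \cite[Chapter 15]{Leoni-2017}. I would derive it from the Riesz rearrangement inequality via the heat-kernel representation
\begin{equation*}
A(v)=\lim_{t\to0^+}\frac{1}{t}\Bigl(\|v\|_{L^2}^2-\langle v,\mathrm{e}^{-t(-\Delta)^s}v\rangle\Bigr).
\end{equation*}
The kernel of $\mathrm{e}^{-t(-\Delta)^s}$ is positive, radial and symmetric decreasing. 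Hence Riesz's inequality gives $\langle v^*,\mathrm{e}^{-t(-\Delta)^s}v^*\rangle\ge\langle v,\mathrm{e}^{-t(-\Delta)^s}v\rangle$, while $\|v^*\|_{L^2}=\|v\|_{L^2}$ by Step 1. Dividing by $t$ and letting $t\to0^+$ gives $A(v^*)\le A(v)$ for $v=|u|$. As a consequence, $u^*\in H^s(\mathbb{R}^N)$, and with Step 2, $u^*\in\mathcal{D}_s$.

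The only nontrivial ingredient is the P\'olya--Szeg\H{o} step. Since it is classical, I would simply cite it from the references rather than reprove it.
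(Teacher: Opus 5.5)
Your proposal is correct and follows the paper's proof. Equimeasurability gives $B$, the $L^q$ norms and the logarithmic term, and $A(u^*)\le A(|u|)\le A(u)$ follows from the fractional P\'olya--Szeg\H{o} inequality combined with Lemma \ref{lem:modulus}. Your splitting into $t^2(\log t)_\pm$ is slightly more careful than the paper's one-line treatment of $C_0$. Your heat-kernel/Riesz argument is a valid self-contained substitute for the paper's citation of \cite[Theorem 9.2-(1)]{Almgren-Lieb-1989}; note that the paper cites \cite{Leoni-2017} only for equimeasurability, not for the fractional P\'olya--Szeg\H{o} step.
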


\begin{proof}
By \cite[Section 3.3]{Lieb-Loss-2001} and \cite[Theorem 15.10]{Leoni-2017}, $u^*$ and $|u|$ are equimeasurable. Hence
\begin{equation*}
B(u^*)=B(u), \quad \|u^*\|_{L^q(\mathbb{R}^N)}=\|u\|_{L^q(\mathbb{R}^N)}
\end{equation*}
for $u\in L^q(\mathbb{R}^N)$. If $u\in\mathcal{D}_s$, the same theorem applied to $t^2|\log t|$ shows that $u^*\in\mathcal{D}_s$ and 
\begin{equation*}
C_0(u^*)=C_0(u).
\end{equation*}
By the fractional P\'olya--Szeg\H{o} inequality \cite[Theorem 9.2-(1)]{Almgren-Lieb-1989} (see also \cite[Theorem A.1]{Frank-Seiringer-2008}) and Lemma \ref{lem:modulus},
\begin{equation*}
A(u^*)\leq A(|u|)\leq A(u).
\end{equation*} 
\end{proof}

Since the linearized potential $V_Q$ is unbounded, we will use cutoff arguments in the spectral analysis of $L_Q$. Therefore, we record the following localization identity, which allows us to pass from distributional identities to the quadratic-form setting.

Hereafter, $H^s_{\mathrm{loc}}(\mathbb{R}^N)$ denotes the space of functions $w$ such that $\chi w\in H^s(\mathbb{R}^N)$ for every $\chi\in C_c^\infty(\mathbb{R}^N)$.

\begin{lemma}\label{lem:cutoff}
Let $\eta\in C_c^\infty(\mathbb{R}^N)$ satisfy $0\leq\eta\leq1$ and set $\eta_R(x)=\eta\left(\frac{x}{R}\right)$. For every $w\in H^s(\mathbb{R}^N)$,
\begin{equation}\label{eq:cutid}
A(\eta_Rw) = \left\langle(-\Delta)^sw,\eta_R^2w\right\rangle + \mathcal{E}_R(w),
\end{equation}
where $\langle\cdot,\cdot\rangle$ denotes the duality pairing between $H^{-s}(\mathbb{R}^N)$ and $H^s(\mathbb{R}^N)$, and
\begin{equation*}
\mathcal{E}_R(w) = \frac{c_{N,s}}{2} \iint_{\mathbb{R}^{2N}} \frac{w(x)w(y)\left(\eta_R(x)-\eta_R(y)\right)^2} {|x-y|^{N+2s}}\mathrm{d}x\mathrm{d}y
\end{equation*}
satisfying
\begin{equation}\label{eq:cuter}
|\mathcal{E}_R(w)| \leq CR^{-2s}\|w\|_{L^2(\mathbb{R}^N)}^2.
\end{equation}
The identity also extends to $w\in L^2(\mathbb{R}^N)\cap H^s_{\mathrm{loc}}(\mathbb{R}^N)$ satisfying $(-\Delta)^sw=F$ in $\mathcal D'(\mathbb{R}^N)$ with $F\in L^2_{\mathrm{loc}}(\mathbb{R}^N)$. More precisely,
\begin{equation*}
A(\eta_Rw)=\int_{\mathbb{R}^N}F\eta_R^2w\mathrm{d}x+\mathcal E_R(w).
\end{equation*}
\end{lemma}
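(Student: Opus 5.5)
The plan is to argue on the Gagliardo representation \eqref{eq:gagliardo} with a pointwise algebraic identity. For $w\in H^s(\mathbb{R}^N)$, write $a=w(x)$, $b=w(y)$, $\alpha=\eta_R(x)$, $\beta=\eta_R(y)$. One checks directly that
\begin{equation*}
(\alpha a-\beta b)^2=(a-b)(\alpha^2a-\beta^2b)+ab(\alpha-\beta)^2 .
\end{equation*}
Indeed, the right-hand side expands to $\alpha^2a^2-\beta^2ab-\alpha^2ab+\beta^2b^2+ab\alpha^2-2ab\alpha\beta+ab\beta^2=\alpha^2a^2-2\alpha\beta ab+\beta^2b^2$.

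Multiplying by $\frac{c_{N,s}}{2}|x-y|^{-N-2s}$ and integrating gives $A(\eta_Rw)$ on the left. On the right, the first term is $\frac{c_{N,s}}{2}\iint\frac{(w(x)-w(y))(\eta_R^2w(x)-\eta_R^2w(y))}{|x-y|^{N+2s}}$. Since $\eta_R^2w\in H^s$ (smooth bounded multipliers with bounded gradient preserve $H^s$), this equals $\langle(-\Delta)^{s/2}w,(-\Delta)^{s/2}(\eta_R^2w)\rangle=\langle(-\Delta)^sw,\eta_R^2w\rangle$. The second term is $\mathcal{E}_R(w)$. To justify splitting the integral, we need each piece to be absolutely integrable. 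The first piece is handled by Cauchy--Schwarz in the Gagliardo seminorms. The second is handled by the bound below.

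For \eqref{eq:cuter}, use $|w(x)w(y)|\le\frac12(w(x)^2+w(y)^2)$ and symmetry to get
\begin{equation*}
|\mathcal{E}_R(w)|\le \frac{c_{N,s}}{2}\int w(x)^2\Big(\int\frac{(\eta_R(x)-\eta_R(y))^2}{|x-y|^{N+2s}}\mathrm{d}y\Big)\mathrm{d}x .
\end{equation*}
The inner integral is bounded uniformly in $x$ by splitting $|x-y|<R$ and $|x-y|\ge R$. On the first region we use $|\eta_R(x)-\eta_R(y)|\le\|\nabla\eta\|_\infty|x-y|/R$, which gives $CR^{-2}\int_0^R r^{1-2s}\,\mathrm{d}r\sim R^{-2s}$. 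On the second region we use $|\eta_R(x)-\eta_R(y)|\le1$, which gives $C\int_R^\infty r^{-1-2s}\,\mathrm{d}r\sim R^{-2s}$. (Equivalently, rescale $y=Rz$.) Hence $|\mathcal{E}_R(w)|\le CR^{-2s}\|w\|_{L^2}^2$ with $C=C(N,s,\eta)$.

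The extension to $w\in L^2\cap H^s_{\rm loc}$ with $(-\Delta)^sw=F\in L^2_{\rm loc}$ is the only delicate step. The identity and the error bound are pointwise and use only $w\in L^2$ for $\mathcal{E}_R$. The issue is identifying the first term with $\int F\eta_R^2w$. First, $\eta_Rw\in H^s$, so $A(\eta_Rw)$ is finite. Then the bilinear term $\iint(w(x)-w(y))(\eta_R^2w(x)-\eta_R^2w(y))|x-y|^{-N-2s}$ is absolutely integrable, since it equals the difference of two absolutely integrable quantities by the algebraic identity. It therefore defines the pairing of $w$ with the compactly supported test function $\varphi=\eta_R^2w$. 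I would approximate $\varphi$ by mollifications $\varphi_\epsilon\in C_c^\infty$, supported in a fixed ball, with $\varphi_\epsilon\to\varphi$ in $H^s$ and in $L^2$. By the distributional equation, the bilinear form evaluated at $\varphi_\epsilon$ equals $\int F\varphi_\epsilon$. Since $F\in L^2$ on the fixed ball, the right side converges to $\int F\eta_R^2w$.

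Convergence of the left side requires controlling the nonlocal tail, and this is the main obstacle. Write the form as $\iint_{B_{2\rho}\times B_{2\rho}}$ plus cross terms. The local part is controlled by the $H^s$ norm of $\chi w$ for a cutoff $\chi\equiv1$ on $B_{2\rho}$. For the cross terms with $x$ in the support and $|y|$ large, the expression becomes $\int\varphi_\epsilon(x)\int_{|y|>2\rho}(w(x)-w(y))|x-y|^{-N-2s}\,\mathrm{d}y\,\mathrm{d}x$. Its dependence on $\varphi_\epsilon$ is continuous in $L^2$, because $\int_{|y|>2\rho}|w(y)||x-y|^{-N-2s}\,\mathrm{d}y\le C\|w\|_{L^2}$ uniformly for $x\in B_\rho$. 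This gives convergence and completes the proof.
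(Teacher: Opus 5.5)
Your argument is correct. The pointwise identity and the estimate \eqref{eq:cuter} are exactly the paper's; you are slightly more careful in checking that each piece is absolutely integrable before splitting the integral.

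For the extension to $w\in L^2(\mathbb{R}^N)\cap H^s_{\mathrm{loc}}(\mathbb{R}^N)$ you take a different route. The paper truncates the function itself. It applies the $H^s$ identity to $w_k=\theta_kw$, using $\eta_Rw_k=\eta_Rw$ for large $k$. It then shows $(-\Delta)^s(w_k-w)=G_k$ on $B_{L+1}$ with $\|G_k\|_{L^\infty(B_{L+1})}\to0$. Next it approximates $\eta_R^2w$ by $\phi_j\in C_c^\infty(B_{L+1})$, using only the $H^{-s}$--$H^s$ continuity of $(-\Delta)^sw_k$. Finally it sends $k\to\infty$ in $\mathcal{E}_R(w_k)$ and in $\int G_k\eta_R^2w\,\mathrm{d}x$.

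You keep $w$ fixed and instead show that
\[
\mathcal{G}(w,\psi)=\frac{c_{N,s}}{2}\iint_{\mathbb{R}^{2N}}\frac{(w(x)-w(y))(\psi(x)-\psi(y))}{|x-y|^{N+2s}}\,\mathrm{d}x\,\mathrm{d}y
\]
is continuous in $\psi$ among functions supported in a fixed ball $B_\rho$. The part on $B_{2\rho}\times B_{2\rho}$ is continuous in $H^s$ through $\chi w$. The cross terms are continuous in $L^2$ through $\int_{|y|>2\rho}|w(y)||x-y|^{-N-2s}\,\mathrm{d}y\leq C\|w\|_{L^2(\mathbb{R}^N)}$. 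This saves the second limit and the two auxiliary convergences. It also gives the slightly stronger by-product that $\mathcal{G}(w,\cdot)$ extends continuously to all $H^s$ functions supported in $B_\rho$.

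The price is one step you leave implicit. Your sentence ``by the distributional equation, the bilinear form evaluated at $\varphi_\epsilon$ equals $\int F\varphi_\epsilon$'' presupposes that $\int_{\mathbb{R}^N}w\,(-\Delta)^s\phi\,\mathrm{d}x=\mathcal{G}(w,\phi)$ for $\phi\in C_c^\infty(B_\rho)$, even though $w\notin H^s(\mathbb{R}^N)$. This is true. Write $w=\chi w+(1-\chi)w$. For $\chi w\in H^s$, use Plancherel and the polarized form of \eqref{eq:gagliardo}. For $(1-\chi)w$, which vanishes on $B_{2\rho}$, use Fubini; no principal value is needed there. This splitting is precisely what the paper's truncation, together with the explicit formula for $G_k$, carries out. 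You should state it explicitly, unless you take the Gagliardo form as the definition of the weak equation.
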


\begin{proof}
Using \eqref{eq:gagliardo}, we compute
\begin{equation*}
\begin{aligned}
	&\left(\eta_R(x)w(x)-\eta_R(y)w(y)\right)^2 - \left(w(x)-w(y)\right) \left(\eta_R(x)^2w(x)-\eta_R(y)^2w(y)\right) \\
	&= w(x)w(y)\left(\eta_R(x)-\eta_R(y)\right)^2.
\end{aligned}
\end{equation*}
Integration gives \eqref{eq:cutid}. Moreover, using Cauchy-Schwarz inequality and Fubini Theorem,
\begin{equation*}
\begin{aligned}
	|\mathcal{E}_R(w)| \leq& \frac{c_{N,s}}{4} \iint_{\mathbb{R}^{2N}} \frac{\left(w(x)^2+w(y)^2\right) \left(\eta_R(x)-\eta_R(y)\right)^2} {|x-y|^{N+2s}}\mathrm{d}x\mathrm{d}y \\
	=& \frac{c_{N,s}}{2} \int_{\mathbb{R}^N}w(x)^2 \left( \int_{\mathbb{R}^N} \frac{\left(\eta_R(x)-\eta_R(y)\right)^2} {|x-y|^{N+2s}}\mathrm{d}y \right)\mathrm{d}x.
\end{aligned}
\end{equation*}
With $y=x+Rz$ and
\begin{equation*}
|\eta(\xi)-\eta(\xi+z)| \leq C\min\{|z|,1\},
\end{equation*}
we obtain
\begin{equation} \label{eq:symmetry}
\begin{aligned}
	\int_{\mathbb{R}^N}\frac{(\eta_R(x)-\eta_R(y))^2}{|x-y|^{N+2s}} \mathrm{d}y
	&=R^{-2s} \int_{\mathbb{R}^N} \frac{\left(\eta\left(\frac{x}{R}\right)-\eta\left(\frac{x}{R}+z\right)\right)^2} {|z|^{N+2s}}\mathrm{d}z\\
	&\leq CR^{-2s}\left(\int_{|z|\leq1}\frac{|z|^2}{|z|^{N+2s}} \mathrm{d}z+\int_{|z|>1}\frac{1}{|z|^{N+2s}} \mathrm{d}z\right)\\
	&\leq CR^{-2s}.
\end{aligned}
\end{equation}
This proves \eqref{eq:cuter}.

Now let $w\in L^2(\mathbb{R}^N)\cap H^s_{\mathrm{loc}}(\mathbb{R}^N)$ and suppose that $(-\Delta)^sw=F$ in $\mathcal{D}'(\mathbb{R}^N)$ with $F\in L^2_{\mathrm{loc}}(\mathbb{R}^N)$. Choose $L>0$ such that $\operatorname{supp}\eta_R\subset B_L$. Let $\theta_k\in C_c^\infty(\mathbb{R}^N)$ satisfy
\begin{equation*}
0\leq\theta_k\leq1,\quad \theta_k=1\ \text{on }B_k,\quad \theta_k=0\ \text{on }\mathbb{R}^N\setminus B_{2k},
\end{equation*}
and set $w_k=\theta_kw$. By the definition of $H^s_{\mathrm{loc}}(\mathbb{R}^N)$, we have $w_k\in H^s(\mathbb{R}^N)$. Moreover, $w_k\to w$ in $L^2(\mathbb{R}^N)$ by the dominated convergence theorem. For $k>2(L+1)$,
\begin{equation*}
\eta_Rw_k=\eta_Rw,\quad \eta_R^2w_k=\eta_R^2w.
\end{equation*}
Applying \eqref{eq:cutid} to $w_k$, we obtain
\begin{equation}\label{eq:cutwk}
A(\eta_Rw)=\langle(-\Delta)^sw_k,\eta_R^2w\rangle+\mathcal E_R(w_k).
\end{equation} 
For $x\in B_{L+1}$, define
\begin{equation*}
G_k(x)=c_{N,s}\int_{\mathbb{R}^N}\frac{(1-\theta_k(y))w(y)}{|x-y|^{N+2s}}\mathrm{d}y.
\end{equation*}
Since $w_k-w=-(1-\theta_k)w$ vanishes on $B_k$, we have
\begin{equation*}
(-\Delta)^s(w_k-w)(x)=G_k(x).
\end{equation*}
Therefore, the equation for $w$ yields
\begin{equation} \label{eq:distributional}
(-\Delta)^sw_k=F+G_k\quad\text{in }\mathcal{D}'(B_{L+1}).
\end{equation}
Moreover, for $x\in B_{L+1}$ and $|y|>k$, we have $|x-y|\geq\frac{|y|}{2}$ when $k>2(L+1)$. Hence, by Cauchy-Schwarz inequality, we have
\begin{equation*}
\begin{aligned}
	\|G_k\|_{L^\infty(B_{L+1})}
	&\leq C\int_{|y|>k}\frac{|w(y)|}{|y|^{N+2s}}\mathrm{d}y\\
	&\leq C\|w\|_{L^2(\mathbb{R}^N)}\left(\int_{|y|>k}|y|^{-2N-4s}\mathrm{d}y\right)^{\frac{1}{2}}\\
	&\leq Ck^{-\frac{N}{2}-2s}\|w\|_{L^2(\mathbb{R}^N)}\to0.
\end{aligned}
\end{equation*}

Since $\eta_R^2w\in H^s(\mathbb{R}^N)$ and $\operatorname{supp}(\eta_R^2w)\subset B_L$, there exist $\phi_j\in C_c^\infty(B_{L+1})$ such that $\phi_j\to\eta_R^2w$ in $H^s(\mathbb{R}^N)$.
For every $j$, using \eqref{eq:distributional},
\begin{equation*}
\langle(-\Delta)^sw_k,\phi_j\rangle=\int_{B_{L+1}}(F+G_k)\phi_j\mathrm{d}x.
\end{equation*}
Since $w_k\in H^s(\mathbb{R}^N)$, the left-hand side is continuous with respect to $H^s$ convergence. Since $F+G_k\in L^2(B_{L+1})$, the right-hand side is continuous with respect to $L^2$ convergence. Letting $j\to+\infty$, we obtain
\begin{equation*}
\langle(-\Delta)^sw_k,\eta_R^2w\rangle
=\int_{\mathbb{R}^N}F\eta_R^2w\mathrm{d}x+\int_{B_{L+1}}G_k\eta_R^2w\mathrm{d}x.
\end{equation*} 
By the definition of $\mathcal E_R$, the symmetry in $x$ and $y$, and the Cauchy-Schwarz inequality,
\begin{equation*}
\begin{aligned}
	|\mathcal E_R(w_k)-\mathcal E_R(w)|
	&\leq \frac{c_{N,s}}{2}\iint_{\mathbb{R}^{2N}}\frac{|w_k(x)-w(x)||w_k(y)+w(y)|(\eta_R(x)-\eta_R(y))^2}{|x-y|^{N+2s}}\mathrm{d}x\mathrm{d}y\\
	&\leq \frac{c_{N,s}}{2}\left(\int_{\mathbb{R}^N}|w_k(x)-w(x)|^2\int_{\mathbb{R}^N}\frac{(\eta_R(x)-\eta_R(y))^2}{|x-y|^{N+2s}}\mathrm{d}y\mathrm{d}x\right)^{\frac{1}{2}}\\
	&\quad\times\left(\int_{\mathbb{R}^N}|w_k(y)+w(y)|^2\int_{\mathbb{R}^N}\frac{(\eta_R(x)-\eta_R(y))^2}{|x-y|^{N+2s}}\mathrm{d}x\mathrm{d}y\right)^{\frac{1}{2}}\\
	&\leq CR^{-2s}\|w_k-w\|_{L^2(\mathbb{R}^N)}\|w_k+w\|_{L^2(\mathbb{R}^N)}\to0,
\end{aligned}
\end{equation*}
where the last inequality follows from \eqref{eq:symmetry}.
Also, since $\eta_R^2w$ has compact support and belongs to $L^2(\mathbb{R}^N)$, we have $\eta_R^2w\in L^1(\mathbb{R}^N)$. Therefore,
\begin{equation*}
\left|\int_{B_{L+1}}G_k\eta_R^2w\mathrm{d}x\right|
\leq\|G_k\|_{L^\infty(B_{L+1})}\|\eta_R^2w\|_{L^1(\mathbb{R}^N)}
\to0.
\end{equation*}
Letting $k\to+\infty$ in \eqref{eq:cutwk}, we conclude that
\begin{equation*}
A(\eta_Rw)=\int_{\mathbb{R}^N}F\eta_R^2w\mathrm{d}x+\mathcal E_R(w).
\end{equation*}
This proves the claimed extension of \eqref{eq:cutid}.
\end{proof}

\begin{proposition}\label{prop:known}
Let $Q$ be a positive ground state of \eqref{eq:log}. After a translation, $Q$ is radial about the origin and
\begin{equation*}
Q(r)>0, \quad Q'(r)<0 \quad\text{for }r>0, \quad Q(r)\to0 \quad\text{as }r\to\infty.
\end{equation*}
Moreover,
\begin{equation}\label{eq:qreg}
Q\in C^\infty(\mathbb{R}^N), \quad Q(x)\leq\frac{C}{1+|x|^{N+2s}}.
\end{equation}
Consequently,
\begin{equation}\label{eq:qint}
Q\in H^{2s}(\mathbb{R}^N)\cap L^1(\mathbb{R}^N), \quad \nabla Q\in L^2(\mathbb{R}^N), \quad x\cdot\nabla Q\in L^1(\mathbb{R}^N), \quad Q\log Q\in L^2(\mathbb{R}^N).
\end{equation}
Finally, $V_Q=-1-\log Q$ is smooth, bounded from below, strictly increasing in $r$ and $V_Q(r)\to+\infty$ as $r\to\infty$.
\end{proposition}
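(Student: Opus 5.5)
This proposition collects known facts, so the proof will assemble them from the literature plus short derivations rather than build new machinery. We proceed in five steps: symmetry, regularity, decay, integrability, and properties of $V_Q$.

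\textbf{Radial symmetry.} Let $Q$ be a positive ground state. By Lemma \ref{lem:rearrangement}, $Q^*$ has the same $B$ and $C_0$ and satisfies $A(Q^*)\leq A(Q)$. Rescaling $tQ^*$ back onto $\mathcal N_0$ shows that $Q^*$ is again a minimizer. This forces $A(Q^*)=A(Q)$. The strict fractional P\'olya--Szeg\H{o} inequality (the equality case in \cite{Frank-Seiringer-2008}) then gives $Q=Q^*(\cdot-x_0)$, so after translation $Q$ is radial and nonincreasing. Alternatively, one can run the moving-plane method for the fractional Laplacian in the spirit of \cite{Chen-Li-CPAM-2006}, once decay is known. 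Strict monotonicity $Q'(r)<0$ will follow from the strong maximum principle applied to $\partial_r Q$, or via the symmetric rearrangement argument of \cite{Frank-Lenzmann-Silvestre-2016}.

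\textbf{Regularity.} By Lemma \ref{lem:elementary-log}, the nonlinearity satisfies $|Q\log Q|\leq C(Q^{1-\delta}+Q^{1+\delta})$. A standard Moser/bootstrap argument in $H^s$ then yields $Q\in L^\infty$. Next, since $t\mapsto t\log t$ is H\"older continuous on bounded sets, Schauder-type estimates for $(-\Delta)^s$ give $Q\in C^{0,\alpha}$. Because $Q>0$, the map $Q\mapsto Q\log Q$ is smooth wherever $Q>0$, which is everywhere. Iterating the Schauder estimates therefore gives $Q\in C^\infty$.

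\textbf{Decay.} This is the main obstacle. Positivity and $Q\to0$ (a consequence of radial monotonicity and $L^2$) imply that $Q\log Q<0$ for large $r$. In fact, $(-\Delta)^sQ+MQ\leq0$ outside a large ball $B_{R_M}$, for any $M$, since $\log Q\to-\infty$. We compare with the fractional Bessel kernel $\mathcal G_M$ of $(-\Delta)^s+M$, which decays like $|x|^{-N-2s}$. The comparison principle on the exterior domain then gives $Q\leq C|x|^{-N-2s}$. The comparison needs care because the domain is unbounded and the operator is nonlocal; I would follow \cite[Lemma C.2]{Frank-Lenzmann-Silvestre-2016}.

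\textbf{Consequences \eqref{eq:qint}.} These follow from the decay bound and the equation.

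\begin{itemize}
\item $Q\in L^1$ is immediate from the decay estimate.
\item $Q\log Q\in L^2$ holds because $|Q\log Q|\leq CQ^{1-\delta}$ near infinity, and $Q^{2-2\delta}$ is integrable for small $\delta$.
\item $Q\in H^{2s}$ then follows from $(-\Delta)^sQ=Q\log Q\in L^2$.
\item $\nabla Q\in L^2$ follows by interpolation and elliptic regularity, using smoothness plus the decay of derivatives $|\nabla Q|\leq C|x|^{-N-2s}$. The derivative decay is obtained by applying the same comparison argument to the equation differentiated once.
\item $x\cdot\nabla Q\in L^1$ uses the derivative decay $|\nabla Q|\lesssim |x|^{-N-2s-1}$. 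If only $|x|^{-N-2s}$ is available, I would instead use radial monotonicity: write $\int r|Q'|\,r^{N-1}dr$ and integrate by parts against $Q$, which is integrable with weight $r^{N}$... since $r^N Q\leq Cr^{-2s}$ gives only borderline behaviour, the sharper derivative decay is preferable.
\end{itemize}

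\textbf{Properties of $V_Q$.} Finally, $V_Q=-1-\log Q$ is smooth because $Q>0$ is smooth. It is strictly increasing because $Q'<0$. It tends to $+\infty$ at infinity because $Q\to0$. It is bounded below by $-1-\log Q(0)$.
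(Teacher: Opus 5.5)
Your symmetry step, via the equality case of the fractional P\'olya--Szeg\H{o} inequality applied to the minimizer $Q$, is a valid alternative to the paper's moving-plane argument. Your regularity and decay steps match the paper's in substance. The genuine gap is in the gradient part of \eqref{eq:qint}, above all $x\cdot\nabla Q\in L^1(\mathbb{R}^N)$, which is exactly what Lemma \ref{lem:rker} later needs. You rest it on the decay $|\nabla Q(x)|\leq C|x|^{-N-2s-1}$, which you never prove. It also cannot come out of the comparison argument you describe. The barrier $\mathcal G_m$ itself only decays like $|x|^{-N-2s}$. Moreover $\partial_{x_j}Q$ changes sign, so even the bound $|\nabla Q|\leq C|x|^{-N-2s}$ would need an extra ingredient: a Kato-type inequality for $|\partial_{x_j}Q|$, plus a priori boundedness of $\nabla Q$ and its vanishing at infinity. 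That weaker bound gives $|x\cdot\nabla Q|\leq C|x|^{1-N-2s}$, which is integrable only for $s>\frac12$. So for $0<s\leq\frac12$ your proposal does not establish $x\cdot\nabla Q\in L^1$.

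The route you discarded is the correct one, and your reason for discarding it is a miscalculation. Since $Q'\leq0$, for every $R>0$
\[
\int_0^R(-Q'(r))r^N\,\mathrm{d}r=-R^NQ(R)+N\int_0^RQ(r)r^{N-1}\,\mathrm{d}r\leq N\int_0^\infty Q(r)r^{N-1}\,\mathrm{d}r.
\]
The boundary term has a favourable sign, and in any case $R^NQ(R)\leq CR^{-2s}\to0$. Hence $\|x\cdot\nabla Q\|_{L^1}=N\|Q\|_{L^1}<\infty$, and nothing is borderline; this is the paper's argument.

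The same trick gives $\nabla Q\in L^2$:
\[
\int|\nabla Q|^2\leq|\mathbb{S}^{N-1}|\,\|Q'\|_{L^\infty}\int_0^\infty(-Q')r^{N-1}\,\mathrm{d}r.
\]
The last integral is $Q(0)$ for $N=1$ and at most $(N-1)\int_0^\infty Qr^{N-2}\,\mathrm{d}r<\infty$ for $N\geq2$. This requires a \emph{global} bound on $\nabla Q$. That is why the paper carries its bootstrap to $\widetilde Q\in C^{1,\beta}(\mathbb{R}^N)$, whereas your regularity step only yields local smoothness.

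Finally, for $Q'<0$ you must apply the strong maximum principle to $w=-\partial_{x_1}Q$ on the half-space $\{x_1>0\}$, using its oddness in $x_1$. This $w$ solves $(-\Delta)^sw=(1+\log Q)w$ because $(-\Delta)^s$ commutes with $\partial_{x_1}$. By contrast, $\partial_rQ$ satisfies no such equation, since $(-\Delta)^s$ does not commute with $\partial_r$.
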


\begin{proof}
Fix $a>1$ and set
$
\widetilde{Q}(x)=aQ\left(2^{\frac{1}{2s}}x\right).
$
Then using $(-\Delta)^sQ=Q\log Q$, we have
\begin{equation*}
(-\Delta)^s\widetilde{Q}
=2\widetilde{Q}\log\left(\frac{\widetilde{Q}}{a}\right)
=\widetilde{Q}\log\widetilde{Q}^2-2\log(a)\widetilde{Q},
\end{equation*}
and hence
\begin{equation*}
(-\Delta)^s\widetilde{Q}+2\log(a)\widetilde{Q}
=\widetilde{Q}\log\widetilde{Q}^2.
\end{equation*}
The estimates in \cite[Lemma 2.4 and Proposition 2.5]{Li-Peng-Shuai-2022} give
\begin{equation*}
\widetilde{Q}\in L^\infty(\mathbb{R}^N)\cap C^{0,\alpha_0}(\mathbb{R}^N)
\end{equation*}
for some $\alpha_0\in(0,1)$. Choose $\sigma\in(0,1)$ and $\eta\in(0,2s)$ such that
$
\frac{2s-\eta}{1-\sigma}>1,
$
and define
\begin{equation*}
F_a(t)=t\log t^2-2\log(a)t, \quad F_a(0)=0, \quad \alpha_{k+1}=2s-\eta+\sigma\alpha_k.
\end{equation*}
Since $F_a\in C_{\mathrm{loc}}^{0,\sigma}([0,+\infty))$, the Schauder estimate
\cite[Theorem 1.1-(b)]{RosOton-Serra-2016} gives
\begin{equation*}
\widetilde{Q}\in C^{0,\alpha_k}(\mathbb{R}^N)
\Longrightarrow
F_a(\widetilde{Q})\in C^{0,\sigma\alpha_k}(\mathbb{R}^N)
\Longrightarrow
\widetilde{Q}\in C^{0,\alpha_{k+1}}(\mathbb{R}^N),
\end{equation*}
where
$
\alpha_{k+1}=2s-\eta+\sigma\alpha_k
$
whenever $\alpha_{k+1}<1$. Since
\begin{equation*}
\alpha_k\to\frac{2s-\eta}{1-\sigma}>1,
\end{equation*}
there exists $k$ such that $\alpha_k< 1$ and
$
2s-\eta+\sigma\alpha_k\geq1.
$
Since $F_a(\widetilde{Q})\in C^{0,\sigma\alpha_k}(\mathbb{R}^N)$, another application of \cite[Theorem 1.1-(b)]{RosOton-Serra-2016} yields
\begin{equation*}
\widetilde{Q}\in C^{1,\beta}(\mathbb{R}^N)
\end{equation*}
for some $\beta>0$. Fix $R>0$. Since $\widetilde{Q}>0$, $F_a$ is smooth on the range of $\widetilde{Q}$ in $B_{2R}$. Repeated application of \cite[Corollary 3.5]{RosOton-Serra-2016} to
\begin{equation*}
(-\Delta)^s\widetilde{Q}=F_a(\widetilde{Q})
\end{equation*}
yields $\widetilde{Q}\in C^\infty(B_R)$.
Since $R>0$ is arbitrary, $Q\in C^\infty(\mathbb{R}^N)$.

Since $\widetilde Q\in C^{0,\alpha_0}(\mathbb{R}^N)\cap L^2(\mathbb{R}^N)$, we have $\widetilde Q(x)\to0$ as $|x|\to+\infty$. The comparison argument in the proof of \cite[Lemma 3.9]{Li-Peng-Shuai-2022}, applied with the constant potential $V_0=2\log a>0$, then gives \eqref{eq:qreg}. The moving-plane argument in \cite[Theorem 4.1]{Chen-Li-2018} gives, after a translation, that $Q=Q(r)$ is nonincreasing in $r$. We prove that the monotonicity is strict. Let
\begin{equation*}
H=\{x=(x_1,x')\in\mathbb{R}^N:x_1>0\},\quad w=-\partial_{x_1}Q.
\end{equation*}
Since $Q$ is radial and nonincreasing,
\begin{equation*}
w(x)=-Q'(|x|)\frac{x_1}{|x|}\geq0\quad\text{in }H.
\end{equation*}
Moreover, $w$ is antisymmetric with respect to $\partial H$. Differentiating $(-\Delta)^sQ=Q\log Q$, we obtain
\begin{equation*}
(-\Delta)^sw=(1+\log Q)w.
\end{equation*}
Suppose that $w(x_0)=0$ for some $x_0\in H$. Since $w\geq0$ in $H$, the point $x_0$ is a local minimum of $w$. Since $w\in C^2_{\mathrm{loc}}(\mathbb{R}^N)\cap L^\infty(\mathbb{R}^N)$ and $x_0$ is a local minimum of $w$, we have $\nabla w(x_0)=0$ and
\begin{equation*}
w(y)-w(x_0)=O(|y-x_0|^2)\quad\text{as }y\to x_0.
\end{equation*}
Since $0<s<1$, the pointwise integral representation of $(-\Delta)^sw$ is therefore valid at $x_0$. For $y=(y_1,y')\in H$, set $y^*=(-y_1,y')$. Using $w(y^*)=-w(y)$, we obtain
\begin{equation*}
(-\Delta)^sw(x_0)=-c_{N,s}\int_Hw(y)\left(\frac{1}{|x_0-y|^{N+2s}}-\frac{1}{|x_0-y^*|^{N+2s}}\right)\mathrm{d}y.
\end{equation*}
For $x_0,y\in H$,
\begin{equation*}
|x_0-y^*|^2-|x_0-y|^2=4(x_0)_1y_1>0,
\end{equation*}
and hence the expression in parentheses is positive. Moreover, $w\not\equiv0$ in $H$, since otherwise $Q'(r)=0$ for every $r>0$, contradicting $Q(r)\to0$ as $r\to+\infty$. Therefore
\begin{equation*}
(-\Delta)^sw(x_0)<0.
\end{equation*}
On the other hand,
\begin{equation*}
(-\Delta)^sw(x_0)=(1+\log Q(x_0))w(x_0)=0,
\end{equation*}
a contradiction. Therefore $w>0$ in $H$ and hence
\begin{equation*}
Q'(r)<0\quad\text{for every }r>0.
\end{equation*}

The decay in \eqref{eq:qreg} gives
\begin{equation*}
\int_{\mathbb{R}^N}Q\mathrm{d}x \leq C+C|\mathbb{S}^{N-1}|\int_1^\infty r^{-1-2s}\mathrm{d}r <\infty.
\end{equation*}
Since $Q'\leq0$ and $r^NQ(r)\to0$,
\begin{equation*}
\int_{\mathbb{R}^N}|x\cdot\nabla Q|\mathrm{d}x = |\mathbb{S}^{N-1}|\int_0^\infty(-Q'(r))r^N\mathrm{d}r = N\int_{\mathbb{R}^N}Q\mathrm{d}x.
\end{equation*}
Moreover,
\begin{equation*}
\int_{\mathbb{R}^N}|\nabla Q|^2\mathrm{d}x \leq \|Q'\|_{L^\infty(0,\infty)}|\mathbb{S}^{N-1}| \int_0^\infty(-Q'(r))r^{N-1}\mathrm{d}r<\infty,
\end{equation*}
where
\begin{equation*}
\int_0^\infty(-Q'(r))r^{N-1}\mathrm{d}r
=
\begin{cases}
	Q(0), & N=1, \\
	(N-1)\displaystyle\int_0^\infty Q(r)r^{N-2}\mathrm{d}r, & N\geq2.
\end{cases}
\end{equation*}
Choose $R>0$ such that $Q\leq1$ on $\mathbb{R}^N\setminus B_R$ and choose $\delta\in(0,1)$ with $2(1-\delta)(N+2s)>N$. Then
\begin{equation*}
Q\log Q\in L^\infty(B_R), \quad |Q\log Q|^2\leq C_\delta Q^{2(1-\delta)} \quad\text{on }\mathbb{R}^N\setminus B_R,
\end{equation*}
so $Q\log Q\in L^2(\mathbb{R}^N)$. Therefore
\begin{equation*}
(-\Delta)^sQ=Q\log Q\in L^2(\mathbb{R}^N) \Longrightarrow Q\in H^{2s}(\mathbb{R}^N).
\end{equation*}
Finally,
\begin{equation}\label{eq:qwgt}
\int_{\mathbb{R}^N}(V_Q)_+Q^2\mathrm{d}x \leq B(Q)+\int_{\mathbb{R}^N}Q^2|\log Q|\mathrm{d}x <\infty.
\end{equation}

Since $Q>0$ and $Q\in C^\infty(\mathbb{R}^N)$, we have
\begin{equation*}
V_Q=-1-\log Q\in C^\infty(\mathbb{R}^N).
\end{equation*} 
Also,
\begin{equation*}
V_Q\geq-1-\log\|Q\|_{L^\infty(\mathbb{R}^N)}, \quad V_Q'(r)=-\frac{Q'(r)}{Q(r)}>0.
\end{equation*}
Since $Q(r)\to0$ as $r\to+\infty$, we also have $V_Q(r)\to+\infty$.
\end{proof}

\subsection{The fixed-mass characterization}
This subsection gives a fixed-mass characterization of ground states of \eqref{eq:log}, by reformulating the variational problem as a minimization problem under a fixed $L^2$-mass constraint.

Let
\begin{equation*}
c_0 = \inf_{u\in\mathcal N_0}\mathcal{I}_0(u).
\end{equation*}
If $Q$ is a ground state of \eqref{eq:log}, then $Q\in H^{2s}(\mathbb{R}^N)$ and $Q\log Q\in L^2(\mathbb{R}^N)$ by \eqref{eq:qint}. Multiplying $(-\Delta)^sQ=Q\log Q$ with $Q$ gives
\begin{equation*}
A(Q)=C_0(Q), \quad \mathcal{I}_0(Q)=\frac{1}{4}B(Q)=c_0.
\end{equation*}
Thus every ground state of \eqref{eq:log} has the same mass
\begin{equation}\label{eq:M}
M=4c_0.
\end{equation}
Define the full fixed-mass sphere
\begin{equation*}
\mathcal{S}_M = \left\{u\in H^s(\mathbb{R}^N):B(u)=M\right\}.
\end{equation*}
When $\mathcal{I}_0$ is considered on $\mathcal{S}_M$, we set
$
\mathcal{I}_0(u)=+\infty $ for $u\notin\mathcal{D}_s.
$

The common mass $M$ allows us to reformulate the ground state problem as a constrained minimization problem.

\begin{lemma}\label{lem:fmass}
For every $u\in\mathcal{S}_M\cap\mathcal{D}_s$, we have
\begin{equation}\label{eq:fmi}
A(u)-C_0(u)\geq0.
\end{equation}
Moreover, the minimizers of $\mathcal{I}_0$ on $\mathcal{S}_M$ are exactly the ground states of \eqref{eq:log}.
\end{lemma}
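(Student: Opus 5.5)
The whole argument rests on the mass-preserving $L^2$-dilation $u_t(x)=t^{N/2}u(tx)$ and the logarithmic scaling identity it produces. For $u\in\mathcal{S}_M\cap\mathcal{D}_s$ one has
\begin{equation*}
B(u_t)=B(u),\quad A(u_t)=t^{2s}A(u),\quad C_0(u_t)=C_0(u)+\frac{N}{2}M\log t .
\end{equation*}
Alternatively, use the amplitude scaling $\lambda u$ together with the Nehari constraint. The cleanest route, I think, is the following. For $u\in\mathcal{D}_s\setminus\{0\}$ the function
\begin{equation*}
\lambda\mapsto A(\lambda u)-C_0(\lambda u)=\lambda^2\bigl(A(u)-C_0(u)-B(u)\log\lambda\bigr)
\end{equation*}
has exactly one zero $\lambda_u>0$, namely $\log\lambda_u=(A(u)-C_0(u))/B(u)$. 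Moreover,
\begin{equation*}
\mathcal{I}_0(\lambda_u u)=\tfrac14 B(\lambda_u u)=\tfrac14\lambda_u^2B(u),
\end{equation*}
since on $\mathcal N_0$ we have $\mathcal{I}_0=\frac14 B$. So $c_0=\inf_{u\ne0}\frac14 B(u)\exp\bigl(2(A(u)-C_0(u))/B(u)\bigr)$, and the infimum is attained exactly at ground states.

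With this formula, \eqref{eq:fmi} follows at once. Take $u\in\mathcal{S}_M\cap\mathcal{D}_s$. Then $\lambda_uu\in\mathcal N_0$, so
\begin{equation*}
\frac M4=c_0\le \mathcal{I}_0(\lambda_u u)=\frac M4\exp\Bigl(\frac{2(A(u)-C_0(u))}{M}\Bigr),
\end{equation*}
which gives $A(u)-C_0(u)\ge0$. Consequently, on $\mathcal{S}_M$,
\begin{equation*}
\mathcal{I}_0(u)=\tfrac12(A-C_0)(u)+\tfrac M4\ge\tfrac M4=c_0,
\end{equation*}
and equality holds iff $A(u)=C_0(u)$. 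The value $c_0$ is attained on $\mathcal{S}_M$ by any ground state $Q$, since $B(Q)=M$ by \eqref{eq:M}. Hence $\inf_{\mathcal{S}_M}\mathcal{I}_0=c_0$, and the minimizers are precisely the $u\in\mathcal{S}_M\cap\mathcal{N}_0$ with $\mathcal{I}_0(u)=c_0$, that is, the minimizers of $\mathcal{I}_0$ on $\mathcal N_0$.

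It remains to check that these Nehari minimizers are weak solutions, i.e. ground states. Conversely, a ground state lies in $\mathcal{S}_M\cap\mathcal N_0$ and therefore minimizes. For the first direction I plan to avoid Lagrange multipliers on a non-$C^1$ functional. Instead I use the variational inequality directly. Let $u$ be a minimizer on $\mathcal{S}_M$, so that $\Phi(v):=B(v)\exp(2(A(v)-C_0(v))/B(v))$ attains its minimum over $\mathcal{D}_s\setminus\{0\}$ at $u$. For $\varphi\in C_c^\infty$, the map $\varepsilon\mapsto C_0(u+\varepsilon\varphi)$ is differentiable at $0$ with derivative $\int(2u\log|u|+u)\varphi$. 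This follows by dominated convergence, using Lemma \ref{lem:elementary-log} and the compact support of $\varphi$; near the zero set of $u$ the derivative of $t^2\log|t|$ is continuous. Differentiating $\Phi(u+\varepsilon\varphi)$ at $\varepsilon=0$ and using $A(u)=C_0(u)$ and $B(u)=M$, the terms coming from $B$ cancel against those from $-C_0$. What is left is
\begin{equation*}
2\langle(-\Delta)^{s/2}u,(-\Delta)^{s/2}\varphi\rangle-2\int u\log|u|\,\varphi=0,
\end{equation*}
which is the weak form of \eqref{eq:log}.

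I expect the main obstacle to be this differentiability of $C_0$ along test directions, given the singularity of $\log$ at $0$. The bound $|\frac{d}{dt}(t^2\log|t|)|\le C(1+|t|^{1+\delta})$ on bounded sets, together with $u\in L^{2}\cap L^{2+\delta}$ from Lemma \ref{lem:fractional-gn}, should settle it.
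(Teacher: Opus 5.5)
Your argument is correct. The first half is exactly the paper's proof: you project $u\in\mathcal{S}_M\cap\mathcal{D}_s$ onto $\mathcal{N}_0$ by amplitude scaling, compare $\frac14\lambda_u^2M$ with $c_0=\frac M4$, and write $\mathcal{I}_0=\frac M4+\frac12(A-C_0)$ on $\mathcal{S}_M$. The dilation $t^{N/2}u(tx)$ in your opening sentence is never used and can be dropped.

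The difference lies in showing that a minimizer solves \eqref{eq:log}.
\begin{itemize}
\item The paper stays on the sphere. It differentiates $\mathcal{I}_0$ along the mass-normalized curve $u_t=\alpha(t)(u+t\phi)\in\mathcal{S}_M$. This gives the weak form of \eqref{eq:log} plus the multiplier-type term $-\frac{\langle u,\phi\rangle}{M}(A(u)-C_0(u))$, which vanishes because $A(u)=C_0(u)$.
\item You instead use $\Phi(v)=4\mathcal{I}_0(\lambda_v v)$. Since $\Phi$ is invariant under $v\mapsto\lambda v$, its minimality on all of $\mathcal{D}_s\setminus\{0\}$ is a restatement of the first half, and no constraint or normalizing factor is needed. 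Your first-variation computation is right: the $B'$-term cancels the $\int u\varphi$ part of $C_0'$, and the remaining cross term carries the factor $A(u)-C_0(u)=0$.
\end{itemize}
The analytic input is the same in both. The map $t\mapsto t^2\log|t|$ is $C^1$ with derivative bounded by $C(1+|t|^{1+\delta})$. This gives dominated convergence on $\operatorname{supp}\varphi$, and also $u+\varepsilon\varphi\in\mathcal{D}_s$. State the latter explicitly, since minimality of $\Phi$ can only be invoked at points of $\mathcal{D}_s\setminus\{0\}$.

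What each route buys:
\begin{itemize}
\item Your route makes the link to Section 5 transparent, since $\Phi(v)\geq M$ is literally \eqref{eq:mass-log-sob}. It would also work at second order: for $h\in C_c^\infty(\mathbb{R}^N)$ with $h\perp Q$, one finds $\frac{\mathrm{d}^2}{\mathrm{d}\varepsilon^2}\log\Phi(Q+\varepsilon h)\big|_{\varepsilon=0}=\frac4M\mathfrak{q}_Q(h,h)$.
\item The paper's route buys uniformity. The same normalized curve is reused in Lemma \ref{lem:morse}, and in Lemma \ref{lem:pmorse} for the merely local power minimizers on $\mathcal{S}_M$.
\end{itemize}

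One small inaccuracy: $\Phi(\lambda v)=\Phi(v)$ for every $\lambda\neq0$. So the infimum of $\Phi$ is attained at all nonzero multiples of minimizers on $\mathcal{N}_0$ (e.g.\ at $2Q$), not ``exactly at ground states''. You never use this claim, so nothing breaks.
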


\begin{proof}
Fix $u\in\mathcal{S}_M\cap\mathcal{D}_s$. For $t>0$,
\begin{equation*}
A(tu)=t^2A(u),\quad C_0(tu)=t^2C_0(u)+t^2M\log t.
\end{equation*}
Hence $tu\in\mathcal{N}_0$ if and only if
\begin{equation*}
A(u)-C_0(u)-M\log t=0,
\end{equation*}
which has the unique solution
\begin{equation}\label{eq:t0}
t(u)=\exp\left(\frac{A(u)-C_0(u)}{M}\right).
\end{equation}
Since $t(u)u\in\mathcal{N}_0$,
\begin{equation*}
\mathcal{I}_0(t(u)u)=\frac{1}{4}t(u)^2M.
\end{equation*}
By the definition of $c_0$ and \eqref{eq:M},
\begin{equation*}
\frac{M}{4}=c_0\leq\frac{1}{4}t(u)^2M.
\end{equation*}
Thus $t(u)\geq1$ and \eqref{eq:t0} gives \eqref{eq:fmi}. 
For $u\in\mathcal{S}_M\cap\mathcal{D}_s$,
\begin{equation*}
\mathcal{I}_0(u)=\frac{M}{4}+\frac{1}{2}\left(A(u)-C_0(u)\right)\geq c_0.
\end{equation*}
For $u\in\mathcal{S}_M\setminus\mathcal{D}_s$, the same inequality follows from the convention $\mathcal{I}_0(u)=+\infty$. Since every ground state has mass $M$ and energy $c_0$, every ground state is a minimizer of $\mathcal{I}_0$ on $\mathcal{S}_M$.

Conversely, let $u$ be a minimizer of $\mathcal{I}_0$ on $\mathcal{S}_M$. Then $\mathcal{I}_0(u)=c_0<+\infty$, so $u\in\mathcal{D}_s$. The preceding identity gives
\begin{equation}\label{eq:min-nehari}
A(u)=C_0(u).
\end{equation}
Fix $\phi\in C_c^\infty(\mathbb{R}^N)$ and, for $|t|$ sufficiently small, define
\begin{equation*}
\alpha(t)=\frac{\sqrt{M}}{\|u+t\phi\|_{L^2(\mathbb{R}^N)}},\quad u_t=\alpha(t)(u+t\phi).
\end{equation*}
Then $B(u_t)=M$. A direct computation gives
\begin{equation*}
\alpha(0)=1,\quad \alpha'(0)=-\frac{\langle u,\phi\rangle_{L^2(\mathbb{R}^N)}}{M},
\end{equation*}
and hence
\begin{equation}\label{eq:first-path}
u_0=u,\quad u_0'=\phi-\frac{\langle u,\phi\rangle_{L^2(\mathbb{R}^N)}}{M}u.
\end{equation}

Set
\begin{equation*}
G(r)=\frac{1}{4}r^2-\frac{1}{2}r^2\log|r|\quad\text{for }r\neq0,\quad G(0)=0.
\end{equation*}
Then $G\in C^1(\mathbb{R})$ and
\begin{equation*}
G'(r)=-r\log|r|,\quad G'(0)=0.
\end{equation*}
Let $K=\operatorname{supp}\phi$ and choose $p\in(2,2_s^*)$. For $|t|$ sufficiently small,
\begin{equation*}
|u+t\phi|^2\left|\log|u+t\phi|\right|\leq C\left(1+|u|^p\right)\quad\text{on }K.
\end{equation*}
Since $u+t\phi=u$ on $\mathbb{R}^N\setminus K$, we have $u+t\phi\in\mathcal{D}_s$. Since $\alpha(t)$ remains bounded above and away from zero, it follows that $u_t\in\mathcal{D}_s$.

Moreover, $\alpha$ and $\alpha'$ are bounded for $|t|$ sufficiently small and
\begin{equation*}
|G'(u_t)u_t'|\leq C\left(1+|u|^p\right)\mathbf{1}_K+Cu^2\left(1+|\log|u||\right)\mathbf{1}_{\mathbb{R}^N\setminus K}.
\end{equation*}
The right-hand side belongs to $L^1(\mathbb{R}^N)$. Hence differentiation under the integral sign is justified.

Since
\begin{equation*}
\mathcal{I}_0(v)=\frac{1}{2}A(v)+\int_{\mathbb{R}^N}G(v)\mathrm{d}x,
\end{equation*}
we obtain from \eqref{eq:first-path}
\begin{equation*}
\left.\frac{\mathrm{d}}{\mathrm{d}t}\frac{1}{2}A(u_t)\right|_{t=0}
=\left\langle(-\Delta)^{\frac{s}{2}}u,(-\Delta)^{\frac{s}{2}}\phi\right\rangle_{L^2(\mathbb{R}^N)}-\frac{\langle u,\phi\rangle_{L^2(\mathbb{R}^N)}}{M}A(u),
\end{equation*}
and
\begin{equation*}
\left.\frac{\mathrm{d}}{\mathrm{d}t}\int_{\mathbb{R}^N}G(u_t)\mathrm{d}x\right|_{t=0}
=-\int_{\mathbb{R}^N}u\log|u|\phi\mathrm{d}x+\frac{\langle u,\phi\rangle_{L^2(\mathbb{R}^N)}}{M}C_0(u).
\end{equation*}
Therefore, by the minimality of $u$ on $\mathcal{S}_M$,
\begin{equation*}
\begin{aligned}
	0=\left.\frac{\mathrm{d}}{\mathrm{d}t}\mathcal{I}_0(u_t)\right|_{t=0}
	=&\left\langle(-\Delta)^{\frac{s}{2}}u,(-\Delta)^{\frac{s}{2}}\phi\right\rangle_{L^2(\mathbb{R}^N)} \\
	&-\int_{\mathbb{R}^N}u\log|u|\phi\mathrm{d}x
	-\frac{\langle u,\phi\rangle_{L^2(\mathbb{R}^N)}}{M}\left(A(u)-C_0(u)\right).
\end{aligned}
\end{equation*}
By \eqref{eq:min-nehari},
\begin{equation*}
\left\langle(-\Delta)^{\frac{s}{2}}u,(-\Delta)^{\frac{s}{2}}\phi\right\rangle_{L^2(\mathbb{R}^N)}
=\int_{\mathbb{R}^N}u\log|u|\phi\mathrm{d}x.
\end{equation*}
Thus $u$ is a weak solution of \eqref{eq:log}. Since $B(u)=M>0$, \eqref{eq:min-nehari} gives $u\in\mathcal{N}_0$. Together with $\mathcal{I}_0(u)=c_0$, this shows that $u$ is a ground state.
\end{proof}

Applying this constrained minimality to the mass-preserving dilation gives the corresponding scaling identity.

\begin{lemma}\label{lem:plog}
Every ground state of \eqref{eq:log} satisfies
\begin{equation*}
A(Q)=\frac{N}{4s}B(Q)=\frac{NM}{4s}.
\end{equation*}
\end{lemma}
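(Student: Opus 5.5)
We will exploit the minimality of $Q$ on $\mathcal{S}_M$ from Lemma \ref{lem:fmass} along the mass-preserving dilation
\begin{equation*}
Q_\lambda(x)=\lambda^{\frac{N}{2}}Q(\lambda x),\quad\lambda>0.
\end{equation*}
A change of variables gives $B(Q_\lambda)=B(Q)=M$, so $Q_\lambda\in\mathcal{S}_M$. The kinetic part scales as $A(Q_\lambda)=\lambda^{2s}A(Q)$; this follows from the Fourier symbol $|\xi|^{2s}$ or, equivalently, from the Gagliardo representation \eqref{eq:gagliardo}. For the logarithmic term we compute
\begin{equation*}
C_0(Q_\lambda)=\int_{\mathbb{R}^N}\lambda^NQ(\lambda x)^2\left(\tfrac{N}{2}\log\lambda+\log Q(\lambda x)\right)\mathrm{d}x=C_0(Q)+\frac{N}{2}M\log\lambda .
\end{equation*}
In particular $Q_\lambda\in\mathcal{D}_s$ for every $\lambda>0$, since $Q^2|\log Q|\in L^1$ and $Q\in L^2$.

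This yields the explicit one-variable function
\begin{equation*}
f(\lambda)=\mathcal{I}_0(Q_\lambda)=\frac{\lambda^{2s}}{2}A(Q)+\frac{M}{4}-\frac{1}{2}C_0(Q)-\frac{NM}{4}\log\lambda .
\end{equation*}
By Lemma \ref{lem:fmass}, $Q$ minimizes $\mathcal{I}_0$ on $\mathcal{S}_M$, so $f$ attains its minimum at $\lambda=1$. Because $f$ is smooth on $(0,\infty)$, we get $f'(1)=0$, which reads
\begin{equation*}
sA(Q)-\frac{NM}{4}=0 .
\end{equation*}
Hence $A(Q)=\frac{NM}{4s}$. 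Recalling from \eqref{eq:M} that $B(Q)=M$ for every ground state, this is exactly the claimed identity $A(Q)=\frac{N}{4s}B(Q)$.

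No step poses a serious obstacle, since every term is explicit. The only points to check are that $Q_\lambda$ stays in $\mathcal{D}_s$, which holds by the computation above, and that $A(Q)<\infty$, which holds because $Q\in H^s$ by \eqref{eq:qint}. Both are routine consequences of Proposition \ref{prop:known}. We do not need to invoke the Euler--Lagrange equation or justify a Pohozaev identity directly, since the variational characterization in Lemma \ref{lem:fmass} replaces it.
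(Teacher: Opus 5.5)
Your proposal is correct and follows the paper's proof essentially verbatim: the mass-preserving dilation $Q_\lambda(x)=\lambda^{\frac{N}{2}}Q(\lambda x)$, the explicit formula for $\mathcal{I}_0(Q_\lambda)$, and $\frac{\mathrm{d}}{\mathrm{d}\lambda}\mathcal{I}_0(Q_\lambda)\big|_{\lambda=1}=0$ from the minimality in Lemma \ref{lem:fmass}. Your added checks that $Q_\lambda\in\mathcal{D}_s$ and $A(Q)<\infty$ are harmless. For the latter, note that $Q\in\mathcal{D}_s\subset H^s(\mathbb{R}^N)$ already holds by definition for any ground state, so you do not need Proposition \ref{prop:known}.
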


\begin{proof}
For $\lambda>0$, set
\begin{equation*}
Q_\lambda(x)=\lambda^{\frac{N}{2}}Q(\lambda x).
\end{equation*}
Then
\begin{equation*}
B(Q_\lambda)=\int_{\mathbb{R}^N}\lambda^NQ(\lambda x)^2 \mathrm{d}x=\int_{\mathbb{R}^N}Q(y)^2 \mathrm{d}y=M,\quad A(Q_\lambda)=\lambda^{2s}A(Q),
\end{equation*}
so $Q_\lambda\in\mathcal{S}_M$. Moreover, with $y=\lambda x$,
\begin{equation*}
\begin{aligned}
	C_0(Q_\lambda)
	&=\int_{\mathbb{R}^N}\lambda^NQ(\lambda x)^2\log\left(\lambda^{\frac{N}{2}}Q(\lambda x)\right) \mathrm{d}x\\
	&=\int_{\mathbb{R}^N}Q(y)^2\left(\frac{N}{2}\log\lambda+\log Q(y)\right)\mathrm{d}y
	=C_0(Q)+\frac{N}{2}M\log\lambda.
\end{aligned}
\end{equation*}
Hence
\begin{equation*}
\mathcal{I}_0(Q_\lambda)=\frac{1}{2}\lambda^{2s}A(Q)+\frac{1}{4}M-\frac{1}{2}C_0(Q)-\frac{N}{4}M\log\lambda.
\end{equation*}
By Lemma \ref{lem:fmass}, $Q$ is a global minimizer of $\mathcal{I}_0$ on $\mathcal{S}_M$. Therefore, $\lambda=1$ minimizes $\mathcal{I}_0(Q_\lambda)$ and
\begin{equation*}
0=\left.\frac{\mathrm{d}}{\mathrm{d}\lambda}\mathcal{I}_0(Q_\lambda)\right|_{\lambda=1}=sA(Q)-\frac{N}{4}M.
\end{equation*}
Thus $A(Q)=\frac{NM}{4s}=\frac{N}{4s}B(Q)$.
\end{proof}

\subsection{The linearized operator and radial spectral theory}

The unbounded potential $V_Q$ requires a quadratic-form realization of the linearized operator $L_Q$. This subsection introduces the corresponding form domain and develops the radial spectral theory of $L_Q$. In particular, we prove that the second radial eigenfunction changes sign exactly once, which will be used in Lemma \ref{lem:rker} to exclude nontrivial radial zero modes.

Define
\begin{equation*}
\mathcal{X}_Q = \left\{ h\in H^s(\mathbb{R}^N): \int_{\mathbb{R}^N}(V_Q)_+h^2 \mathrm{d}x<\infty \right\},
\end{equation*}
equipped with the inner product
\begin{equation*}
\langle h,g\rangle_Q = \langle h,g\rangle_{H^s(\mathbb{R}^N)} + \int_{\mathbb{R}^N}(V_Q)_+hg \mathrm{d}x
\end{equation*}
and the induced norm
\begin{equation*}
\|h\|_Q^2 = \|h\|_{H^s(\mathbb{R}^N)}^2 + \int_{\mathbb{R}^N}(V_Q)_+h^2 \mathrm{d}x.
\end{equation*}
Then $\mathcal{X}_Q$ is a Hilbert space. Indeed, let $\{h_n\}\subset\mathcal{X}_Q$ be Cauchy with respect to $\|\cdot\|_Q$. Since $\{h_n\}$ is Cauchy in $H^s(\mathbb{R}^N)$, there exists $h\in H^s(\mathbb{R}^N)$ such that $h_n\to h$ in $H^s(\mathbb{R}^N)$. Passing to a subsequence, we may also assume that $h_n\to h$ almost everywhere in $\mathbb{R}^N$. For every fixed $n$, Fatou's lemma gives
\begin{equation*}
\int_{\mathbb{R}^N}(V_Q)_+|h_n-h|^2 \mathrm{d}x
\leq \liminf_{m\to\infty}\int_{\mathbb{R}^N}(V_Q)_+|h_n-h_m|^2 \mathrm{d}x.
\end{equation*}
Since $\{h_n\}$ is Cauchy with respect to $\|\cdot\|_Q$,
\begin{equation*}
\int_{\mathbb{R}^N}(V_Q)_+|h_n-h|^2 \mathrm{d}x\to0.
\end{equation*}
In particular, $h\in\mathcal{X}_Q$, and together with $h_n\to h$ in $H^s(\mathbb{R}^N)$ this yields $\|h_n-h\|_Q\to0$. Hence $\mathcal{X}_Q$ is complete.

\begin{lemma}\label{lem:density}
If $h\in\mathcal{X}_Q$, then there exists a sequence $\{h_n\}\subset C_c^\infty(\mathbb{R}^N)$ such that $\|h_n-h\|_Q\to0$.
\end{lemma}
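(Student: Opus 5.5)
The plan is the standard two-step approximation: first truncate in space by a smooth cutoff, then mollify. Throughout, $(V_Q)_+$ is smooth and locally bounded but grows at infinity, so it is harmless on compact sets. The only issue is the tail, where it is controlled by the assumption $\int(V_Q)_+h^2<\infty$.

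\emph{Step 1 (truncation).} Fix $\eta\in C_c^\infty(\mathbb{R}^N)$ with $0\le\eta\le1$, $\eta=1$ on $B_1$, $\operatorname{supp}\eta\subset B_2$, and set $\eta_R(x)=\eta(x/R)$. I claim $\eta_Rh\to h$ in $\mathcal{X}_Q$ as $R\to\infty$, and treat the three parts of the norm separately.
\begin{itemize}
\item The $L^2$ part and the weighted part converge by dominated convergence, since
\begin{equation*}
(V_Q)_+(1-\eta_R)^2h^2\le (V_Q)_+h^2\in L^1(\mathbb{R}^N)
\end{equation*}
and the integrand tends to $0$ pointwise.
\item For the Gagliardo part, write
\begin{equation*}
(1-\eta_R)(x)h(x)-(1-\eta_R)(y)h(y)=(1-\eta_R(x))\bigl(h(x)-h(y)\bigr)-h(y)\bigl(\eta_R(x)-\eta_R(y)\bigr).
\end{equation*}
The first term is dominated by $|h(x)-h(y)|^2|x-y|^{-N-2s}\in L^1(\mathbb{R}^{2N})$ and tends to $0$ pointwise, so dominated convergence applies.
\item The second term is bounded using \eqref{eq:symmetry} from the proof of Lemma \ref{lem:cutoff}:
\begin{equation*}
\int h(y)^2\int\frac{(\eta_R(x)-\eta_R(y))^2}{|x-y|^{N+2s}}\,\mathrm{d}x\,\mathrm{d}y\le CR^{-2s}\|h\|_{L^2}^2\to0.
\end{equation*}
\end{itemize}
Hence it suffices to approximate compactly supported $h\in\mathcal{X}_Q$.

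\emph{Step 2 (mollification).} Let $h\in\mathcal{X}_Q$ with $\operatorname{supp}h\subset B_{2R}$, and let $\rho_\varepsilon$ be a standard mollifier. Then $\rho_\varepsilon*h\in C_c^\infty(B_{2R+1})$, and $\rho_\varepsilon*h\to h$ in $H^s(\mathbb{R}^N)$, which is standard via Fourier transform since $|\widehat{\rho_\varepsilon}|\le1$ and $\widehat{\rho_\varepsilon}\to1$ pointwise. Since $(V_Q)_+$ is continuous by Proposition \ref{prop:known}, it is bounded by some $C_R$ on $B_{2R+1}$. Therefore
\begin{equation*}
\int(V_Q)_+|\rho_\varepsilon*h-h|^2\le C_R\|\rho_\varepsilon*h-h\|_{L^2}^2\to0.
\end{equation*}
A diagonal argument combining Step 1 and Step 2 produces the desired sequence.

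\emph{Main obstacle.} The only delicate point is the commutator term in Step 1, where the nonlocal seminorm of the cutoff product must be controlled. This is exactly the estimate \eqref{eq:symmetry} already established, so I expect no real difficulty.
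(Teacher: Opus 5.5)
Your proposal is correct and follows essentially the same route as the paper: truncate by $\eta_R h$, and use dominated convergence for the $L^2$ part, the weighted part and the main Gagliardo term. The commutator term is controlled by \eqref{eq:symmetry}, and the argument finishes with mollification (using that $(V_Q)_+$ is bounded on $B_{2R+1}$) and a diagonal argument; the only cosmetic difference is that you write the exact product identity before squaring, while the paper applies $|a+b|^2\le 2|a|^2+2|b|^2$ directly.
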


\begin{proof}
Choose $\eta\in C_c^\infty(\mathbb{R}^N)$ such that $0\leq\eta\leq1$, $\eta=1$ on $B_1$ and $\eta=0$ on $\mathbb{R}^N\setminus B_2$. Set $\eta_R(x)=\eta\left(\frac{x}{R}\right)$ and
$h_R=\eta_Rh$. Then
\begin{equation}\label{eq:denw}
\|h_R-h\|_{L^2(\mathbb{R}^N)}^2 + \int_{\mathbb{R}^N}(V_Q)_+|h_R-h|^2\mathrm{d}x \to0
\end{equation}
by dominated convergence theorem. Put $\zeta_R=1-\eta_R$. Since
\begin{equation*}
|\zeta_R(x)h(x)-\zeta_R(y)h(y)|^2
\leq 2\zeta_R(x)^2|h(x)-h(y)|^2+2h(y)^2|\zeta_R(x)-\zeta_R(y)|^2,
\end{equation*}
we obtain
\begin{equation*}
\begin{aligned}
	A(h-h_R)=A(\zeta_Rh)
	&\leq C\iint_{\mathbb{R}^{2N}}\frac{\zeta_R(x)^2|h(x)-h(y)|^2}{|x-y|^{N+2s}} \mathrm{d}x\mathrm{d}y\\
	&\quad+C\int_{\mathbb{R}^N}h(y)^2\left(\int_{\mathbb{R}^N}\frac{|\zeta_R(x)-\zeta_R(y)|^2}{|x-y|^{N+2s}} \mathrm{d}x\right)\mathrm{d}y.
\end{aligned}
\end{equation*}
Since $|\zeta_R(x)-\zeta_R(y)|=|\eta_R(x)-\eta_R(y)|$, the estimate \eqref{eq:symmetry} gives 
\begin{equation*}
A(\zeta_Rh)\leq C\iint_{\mathbb{R}^{2N}}\frac{\zeta_R(x)^2|h(x)-h(y)|^2}{|x-y|^{N+2s}} \mathrm{d}x\mathrm{d}y+CR^{-2s}\|h\|_{L^2(\mathbb{R}^N)}^2.
\end{equation*}
For every $x\in\mathbb{R}^N$, $\zeta_R(x)\to0$ as $R\to\infty$ and
\begin{equation*}
0\leq\frac{\zeta_R(x)^2|h(x)-h(y)|^2}{|x-y|^{N+2s}}\leq\frac{|h(x)-h(y)|^2}{|x-y|^{N+2s}}.
\end{equation*}
Since $h\in H^s(\mathbb{R}^N)$, the dominated convergence theorem yields $A(\zeta_Rh)\to0$. Together with \eqref{eq:denw}, this gives
\begin{equation}\label{eq:denhs}
h_R\to h \quad\text{in }H^s(\mathbb{R}^N).
\end{equation}

For fixed $R$, $(V_Q)_+$ is bounded on $B_{2R+1}$. Choose $\rho\in C_c^\infty(B_1)$ with $\rho\geq0$ and $\int_{\mathbb{R}^N}\rho\mathrm{d}x=1$, set
\begin{equation*}
\rho_\delta(x)=\delta^{-N}\rho\left(\frac{x}{\delta}\right), \quad h_{R,\delta}=\rho_\delta*h_R,
\end{equation*}
and let $0<\delta<1$. Then
$
h_{R,\delta}\in C_c^\infty(B_{2R+1}),
$
and
\begin{equation*}
\|h_{R,\delta}-h_R\|_{H^s(\mathbb{R}^N)} + \left( \int_{\mathbb{R}^N}(V_Q)_+|h_{R,\delta}-h_R|^2\mathrm{d}x \right)^{\frac{1}{2}} \to0
\end{equation*}
as $\delta\to0$. Combining \eqref{eq:denw} and \eqref{eq:denhs} and using a diagonal argument, we obtain a sequence in $C_c^\infty(\mathbb{R}^N)$ converging to $h$ in $\mathcal X_Q$.
\end{proof}

We next use the form domain $\mathcal X_Q$ to define the self-adjoint realization of the linearized operator. Define the symmetric bilinear form
\begin{equation*}
\mathfrak{q}_Q(h,k)=\left\langle(-\Delta)^{\frac{s}{2}}h,(-\Delta)^{\frac{s}{2}}k\right\rangle_{L^2(\mathbb{R}^N)}+\int_{\mathbb{R}^N}V_Qhk\mathrm{d}x,\quad h,k\in\mathcal X_Q.
\end{equation*}
Since $(V_Q)_-\in L^\infty(\mathbb{R}^N)$, the form $\mathfrak q_Q$ is well defined on $\mathcal X_Q$. Indeed,
\begin{equation*}
\int_{\mathbb{R}^N}(V_Q)_+|hk| \mathrm{d}x \leq \left(\int_{\mathbb{R}^N}(V_Q)_+h^2 \mathrm{d}x\right)^{\frac{1}{2}}\left(\int_{\mathbb{R}^N}(V_Q)_+k^2 \mathrm{d}x\right)^{\frac{1}{2}},
\end{equation*}
and $(V_Q)_-\in L^\infty(\mathbb{R}^N)$. Moreover,
\begin{equation}\label{eq:q-lower}
\mathfrak{q}_Q(h,h)\geq-\|(V_Q)_-\|_{L^\infty(\mathbb{R}^N)}\|h\|_{L^2(\mathbb{R}^N)}^2
\end{equation}
for every $h\in\mathcal{X}_Q$. 
Choose $C_Q>\|(V_Q)_-\|_{L^\infty(\mathbb{R}^N)}+1$. Then
\begin{equation*}
\begin{aligned}
\mathfrak{q}_Q(h,h)+C_Q\|h\|_{L^2(\mathbb{R}^N)}^2
&=\|(-\Delta)^{\frac{s}{2}}h\|_{L^2(\mathbb{R}^N)}^2+\int_{\mathbb{R}^N}(V_Q)_+h^2 \mathrm{d}x+\int_{\mathbb{R}^N}(C_Q-(V_Q)_-)h^2 \mathrm{d}x.
\end{aligned}
\end{equation*}
Hence $\mathfrak{q}_Q(h,h)+C_Q\|h\|_{L^2(\mathbb{R}^N)}^2$ is equivalent to $\|h\|_Q^2$. Since $\mathcal{X}_Q$ is complete with respect to $\|\cdot\|_Q$, the form $\mathfrak{q}_Q$ is closed and bounded from below. Since $V_Q$ is locally bounded, $C_c^\infty(\mathbb{R}^N)\subset\mathcal{X}_Q$, so $\mathfrak{q}_Q$ is densely defined in $L^2(\mathbb{R}^N)$.

\begin{lemma}\label{lem:LQ-domain}
The form $\mathfrak q_Q$ determines a unique lower-bounded self-adjoint realization of $L_Q$ on $L^2(\mathbb{R}^N)$, with
\begin{equation*}
\mathcal D(L_Q)=\left\{h\in\mathcal X_Q:(-\Delta)^sh+V_Qh\in L^2(\mathbb{R}^N)\text{ in }\mathcal D'(\mathbb{R}^N)\right\},
\end{equation*}
and $L_Qh=(-\Delta)^sh+V_Qh$ for $h\in\mathcal D(L_Q)$.
\end{lemma}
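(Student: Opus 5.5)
The plan is to apply the representation theorem for closed, lower-bounded, densely defined quadratic forms (Kato's first representation theorem / Friedrichs). The paragraph preceding the statement already shows that $\mathfrak q_Q$ is closed, bounded from below by $-\|(V_Q)_-\|_{L^\infty(\mathbb{R}^N)}$, and densely defined on $L^2(\mathbb{R}^N)$ with form domain $\mathcal X_Q$. The representation theorem therefore gives a unique self-adjoint operator $T$, bounded from below, such that
\begin{equation*}
\mathcal D(T)=\left\{h\in\mathcal X_Q:\exists f\in L^2(\mathbb{R}^N)\text{ with }\mathfrak q_Q(h,k)=\langle f,k\rangle_{L^2(\mathbb{R}^N)}\ \forall k\in\mathcal X_Q\right\},\quad Th=f.
\end{equation*}
The remaining task is to identify this abstract domain with the distributional description in the statement, and to check that $Th=(-\Delta)^sh+V_Qh$.

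\emph{Inclusion $\mathcal D(T)\subset$ the stated set.} Let $h\in\mathcal D(T)$ and test the defining identity with $k=\varphi\in C_c^\infty(\mathbb{R}^N)\subset\mathcal X_Q$. Since $\langle(-\Delta)^{s/2}h,(-\Delta)^{s/2}\varphi\rangle=\langle h,(-\Delta)^s\varphi\rangle$ and $V_Q\varphi$ is bounded with compact support, this identity says exactly that $(-\Delta)^sh+V_Qh=f$ in $\mathcal D'(\mathbb{R}^N)$, and $f\in L^2(\mathbb{R}^N)$. This also shows $Th=(-\Delta)^sh+V_Qh$.

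\emph{Reverse inclusion.} Let $h\in\mathcal X_Q$ with $(-\Delta)^sh+V_Qh=f\in L^2(\mathbb{R}^N)$ in $\mathcal D'(\mathbb{R}^N)$. Reading the distributional identity against test functions gives $\mathfrak q_Q(h,\varphi)=\langle f,\varphi\rangle_{L^2(\mathbb{R}^N)}$ for every $\varphi\in C_c^\infty(\mathbb{R}^N)$. Both sides are continuous in $\varphi$ with respect to $\|\cdot\|_Q$:
\begin{equation*}
|\mathfrak q_Q(h,k)|\le C\|h\|_Q\|k\|_Q,
\end{equation*}
by Cauchy--Schwarz applied to the $(V_Q)_+$ term together with $(V_Q)_-\in L^\infty(\mathbb{R}^N)$, and $|\langle f,k\rangle|\le\|f\|_{L^2(\mathbb{R}^N)}\|k\|_Q$. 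By Lemma \ref{lem:density}, $C_c^\infty(\mathbb{R}^N)$ is dense in $\mathcal X_Q$, so the identity extends to all $k\in\mathcal X_Q$. Hence $h\in\mathcal D(T)$ with $Th=f$.

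Uniqueness follows from the uniqueness part of the representation theorem: any lower-bounded self-adjoint operator associated with the same closed form coincides with $T$.

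The step where care is needed is the reverse inclusion. Because $V_Q$ is unbounded, one must not test directly with general $k\in\mathcal X_Q$ at the distributional level, where $V_Qhk$ need not be pairable as a distribution. The density result in Lemma \ref{lem:density}, together with the $\|\cdot\|_Q$-continuity of both sides, is precisely what closes this gap. Everything else is routine.
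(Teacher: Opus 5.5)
Your proposal is correct and follows essentially the same route as the paper. The paper also applies Kato's representation theorem to the closed, lower-bounded, densely defined form $\mathfrak q_Q$, tests with $C_c^\infty(\mathbb{R}^N)$ functions to obtain the distributional identity, and for the reverse inclusion extends the identity from $C_c^\infty(\mathbb{R}^N)$ to all of $\mathcal X_Q$ using Lemma~\ref{lem:density} and the $\|\cdot\|_Q$-continuity of both sides.
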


\begin{proof}
Since $\mathfrak{q}_Q$ is densely defined, closed, symmetric and bounded from below, \cite[Chapter VI, Theorems 2.1 and 2.6]{Kato-1995} (see also \cite[Theorem VIII.15]{Reed-Simon-1978-1}) yields a unique self-adjoint operator $L_Q$ associated with $\mathfrak{q}_Q$. By \cite[Chapter VI, Theorems 2.1-(i) and -(iii)]{Kato-1995}, a function $\tilde{h}\in\mathcal{X}_Q$ belongs to $\mathcal{D}(L_Q)$ if and only if there exists $\tilde{f}\in L^2(\mathbb{R}^N)$ such that
\begin{equation}\label{eq:LQ-form}
\mathfrak{q}_Q(\tilde{h},\phi)=\langle \tilde{f},\phi\rangle_{L^2(\mathbb{R}^N)}
\end{equation}
for every $\phi\in\mathcal{X}_Q$. In this case, $L_Q\tilde{h}=\tilde{f}$. Let $h\in\mathcal{D}(L_Q)$ and set $f=L_Qh$. Since $C_c^\infty(\mathbb{R}^N)\subset\mathcal{X}_Q$, \eqref{eq:LQ-form} gives
\begin{equation*}
\left\langle(-\Delta)^{\frac{s}{2}}h,(-\Delta)^{\frac{s}{2}}\phi\right\rangle_{L^2(\mathbb{R}^N)}+\int_{\mathbb{R}^N}V_Qh\phi \mathrm{d}x=\langle f,\phi\rangle_{L^2(\mathbb{R}^N)}
\end{equation*}
for every $\phi\in C_c^\infty(\mathbb{R}^N)$. Hence $(-\Delta)^sh+V_Qh=f$ in $\mathcal{D}'(\mathbb{R}^N)$, and therefore $(-\Delta)^sh+V_Qh\in L^2(\mathbb{R}^N)$.

Conversely, let $h\in\mathcal{X}_Q$ and suppose that $(-\Delta)^sh+V_Qh=F$ in $\mathcal{D}'(\mathbb{R}^N)$ for some $F\in L^2(\mathbb{R}^N)$. Then
\begin{equation*}
\mathfrak{q}_Q(h,\phi)=\langle F,\phi\rangle_{L^2(\mathbb{R}^N)}
\end{equation*}
for every $\phi\in C_c^\infty(\mathbb{R}^N)$. By Lemma \ref{lem:density}, for every $\phi\in\mathcal{X}_Q$ there exists $\phi_n\in C_c^\infty(\mathbb{R}^N)$ such that $\|\phi_n-\phi\|_Q\to0$. Since
\begin{equation*}
|\mathfrak{q}_Q(h,\phi_n-\phi)|\leq C_h\|\phi_n-\phi\|_Q
\end{equation*}
and
\begin{equation*}
|\langle F,\phi_n-\phi\rangle_{L^2(\mathbb{R}^N)}|\leq \|F\|_{L^2(\mathbb{R}^N)}\|\phi_n-\phi\|_Q,
\end{equation*}
letting $n\to\infty$ gives
\begin{equation*}
\mathfrak{q}_Q(h,\phi)=\langle F,\phi\rangle_{L^2(\mathbb{R}^N)}
\end{equation*}
for every $\phi\in\mathcal{X}_Q$. Hence $h\in\mathcal{D}(L_Q)$ and $L_Qh=F$.
\end{proof}

The condition $V_Q(x)\to+\infty$ as $|x|\to+\infty$ also implies compactness of the resolvent.

\begin{corollary}\label{cor:compact-resolvent}
The operator $L_Q$ has compact resolvent. Consequently, its spectrum consists of isolated eigenvalues of finite multiplicity accumulating only at $+\infty$.
\end{corollary}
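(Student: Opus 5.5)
The plan is to show that the form domain $\mathcal X_Q$, with the norm $\|\cdot\|_Q$, embeds compactly into $L^2(\mathbb{R}^N)$. This is the standard criterion: for a self-adjoint operator associated with a closed, lower-bounded form, the resolvent $(L_Q+C_Q)^{-1}$ factors as a bounded map $L^2\to\mathcal X_Q$ followed by the embedding $\mathcal X_Q\hookrightarrow L^2$. Boundedness of the first map follows from the equivalence of $\mathfrak q_Q(h,h)+C_Q\|h\|_{L^2}^2$ with $\|h\|_Q^2$ established above. Indeed, if $u=(L_Q+C_Q)^{-1}f$, then
\[
\|u\|_Q^2\le C\bigl(\mathfrak q_Q(u,u)+C_Q\|u\|_{L^2}^2\bigr)=C\langle f,u\rangle_{L^2}\le C\|f\|_{L^2}\|u\|_{L^2}\le C\|f\|_{L^2}\|u\|_Q,
\]
so $\|u\|_Q\le C\|f\|_{L^2}$. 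Compactness of the embedding then makes the resolvent compact. The spectral statement (isolated eigenvalues of finite multiplicity accumulating only at $+\infty$) is the standard consequence for lower-bounded self-adjoint operators with compact resolvent, via the spectral theorem for compact self-adjoint operators applied to $(L_Q+C_Q)^{-1}$.

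To prove the compact embedding, take a sequence $\{h_n\}$ bounded in $\mathcal X_Q$, say $\|h_n\|_Q\le K$. Fix $\varepsilon>0$. By Proposition~\ref{prop:known}, $V_Q(r)\to+\infty$, so we can choose $R$ with $(V_Q)_+\ge K^2/\varepsilon$ on $\mathbb{R}^N\setminus B_R$. Then
\[
\int_{|x|>R}h_n^2\,\mathrm{d}x\le \frac{\varepsilon}{K^2}\int_{\mathbb{R}^N}(V_Q)_+h_n^2\,\mathrm{d}x\le\varepsilon
\]
uniformly in $n$. On the ball $B_R$, the sequence is bounded in $H^s(\mathbb{R}^N)$. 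By the compact local embedding $H^s(\mathbb{R}^N)\hookrightarrow L^2(B_R)$ (Rellich–Kondrachov for fractional spaces, e.g.\ \cite{Nezza-BSM-2012}), a subsequence converges in $L^2(B_R)$. One could also apply it to $\eta_R h_n$, which is bounded in $H^s$ by the cutoff estimate in Lemma~\ref{lem:cutoff} and supported in a fixed ball.

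A diagonal argument over $R=R_k\to\infty$, with $\varepsilon_k\to0$, produces a subsequence that is Cauchy in $L^2(\mathbb{R}^N)$. Hence the embedding is compact.

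There is no serious obstacle here. The only points needing care are citing the correct fractional compact embedding on bounded sets, and checking the uniform tail estimate, which uses only $\int(V_Q)_+h_n^2\le\|h_n\|_Q^2$ and the divergence of $V_Q$.
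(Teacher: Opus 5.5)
Your proposal is correct and follows essentially the same route as the paper: a compact embedding $\mathcal X_Q\hookrightarrow L^2(\mathbb{R}^N)$ from local Rellich compactness plus a uniform tail bound driven by $V_Q(r)\to+\infty$. This is combined with the energy estimate showing that the resolvent maps $L^2$-bounded sets into $\mathcal X_Q$-bounded sets. The only cosmetic differences are that you shift by $C_Q$, where the paper uses $\gamma>C_Q+1$, and that you control the tail with $(V_Q)_+$ and $\|\cdot\|_Q$, where the paper uses $V_Q+C_Q$.
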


\begin{proof}
Recall that $C_Q>\|(V_Q)_-\|_{L^\infty(\mathbb{R}^N)}+1$. Then
\begin{equation*}
V_Q+C_Q\geq1\quad\text{in }\mathbb{R}^N.
\end{equation*}
Let $\{u_n\}\subset\mathcal X_Q$ satisfy
\begin{equation*}
\sup_n\left\{A(u_n)+\int_{\mathbb{R}^N}(V_Q+C_Q)u_n^2\mathrm{d}x\right\}<\infty.
\end{equation*}
For every fixed $R>0$, the sequence $\{u_n\}$ is bounded in $H^s(B_R)$ and hence is precompact in $L^2(B_R)$. Moreover,
\begin{equation*}
\int_{|x|>R}u_n^2\mathrm{d}x \leq \frac{1}{\inf_{r>R}(V_Q(r)+C_Q)}\int_{\mathbb{R}^N}(V_Q+C_Q)u_n^2\mathrm{d}x.
\end{equation*}
Since $V_Q(r)\to+\infty$ as $r\to+\infty$, the right-hand side tends to zero uniformly in $n$ as $R\to+\infty$. Thus $\mathcal X_Q\hookrightarrow L^2(\mathbb{R}^N)$ is compact.

Choose $\gamma>C_Q+1$. Let $\{f_n\}$ be bounded in $L^2(\mathbb{R}^N)$ and set
\begin{equation*}
u_n=(L_Q+\gamma)^{-1}f_n.
\end{equation*}
Testing $(L_Q+\gamma)u_n=f_n$ by $u_n$ gives
\begin{equation*}
A(u_n)+\int_{\mathbb{R}^N}(V_Q+C_Q)u_n^2\mathrm{d}x+(\gamma-C_Q)\|u_n\|_{L^2(\mathbb{R}^N)}^2
=\langle f_n,u_n\rangle_{L^2(\mathbb{R}^N)}.
\end{equation*}
Since $\gamma-C_Q>1$, the Cauchy-Schwarz and Young inequalities yield
\begin{equation*}
A(u_n)+\int_{\mathbb{R}^N}(V_Q+C_Q)u_n^2\mathrm{d}x+\|u_n\|_{L^2(\mathbb{R}^N)}^2\leq C.
\end{equation*}
Hence $\{u_n\}$ is bounded in $\mathcal X_Q$ and therefore precompact in $L^2(\mathbb{R}^N)$. Thus $(L_Q+\gamma)^{-1}$ is compact and $L_Q$ has compact resolvent.
\end{proof}

By Lemma \ref{lem:LQ-domain},
\begin{equation}\label{eq:weak-kernel}
h\in\ker L_Q \iff \mathfrak{q}_Q(h,\phi)=0\quad\text{for every }\phi\in\mathcal{X}_Q.
\end{equation} 
With the self-adjoint realization of $L_Q$ fixed, we can now place the infinitesimal translation modes in its operator kernel.

\begin{lemma}\label{lem:itrans}
For $j=1,\cdots,N$, we have $L_Q\left(\partial_{x_j}Q\right)=0$.
\end{lemma}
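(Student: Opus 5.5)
To prove Lemma \ref{lem:itrans}, we use the domain characterization in Lemma \ref{lem:LQ-domain}. It suffices to show three things for $h=\partial_{x_j}Q$: that $h\in\mathcal X_Q$, that $(-\Delta)^sh+V_Qh=0$ in $\mathcal D'(\mathbb{R}^N)$, and then to read off $L_Qh=0$. The distributional identity is the easy part. Since $Q\in C^\infty(\mathbb{R}^N)$, $Q\in L^2$ and $(-\Delta)^sQ=Q\log Q$ holds in $\mathcal D'$, differentiating in $x_j$ commutes with $(-\Delta)^s$ in the sense of distributions: pair with $\phi\in C_c^\infty$ and use $\langle(-\Delta)^sQ,-\partial_j\phi\rangle=\langle Q,-(-\Delta)^s\partial_j\phi\rangle$. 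This gives
\begin{equation*}
(-\Delta)^s\partial_{x_j}Q=\partial_{x_j}(Q\log Q)=(1+\log Q)\partial_{x_j}Q,
\end{equation*}
so $(-\Delta)^sh+V_Qh=0\in L^2(\mathbb{R}^N)$. This computation was already used in the proof of Proposition \ref{prop:known}.

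The main work is membership in $\mathcal X_Q$, that is, $h\in H^s(\mathbb{R}^N)$ and $\int(V_Q)_+h^2<\infty$. For $H^s$ membership, $h\in L^2$ because $\nabla Q\in L^2$ by \eqref{eq:qint}. For the Gagliardo seminorm, we use $Q\in H^{2s}(\mathbb{R}^N)$ when $2s\ge1$, which gives $\partial_jQ\in H^{2s-1}$; this is not enough, since we need order $s$. A cleaner route is Fourier-based: $A(\partial_jQ)\le\int|\xi|^{2+2s}|\hat Q|^2\,\mathrm{d}\xi$, which is finite if $Q\in H^{1+s}$. We would obtain $Q\in H^{1+s}$ by bootstrapping. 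Since $(1+|\xi|^{2s})\hat Q$ equals the transform of $Q\log Q+Q$, it suffices to show $Q\log Q\in H^{1-s}$. That follows if $\nabla(Q\log Q)=(1+\log Q)\nabla Q\in L^2$.

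An alternative that avoids the bootstrap is Lemma \ref{lem:cutoff}, the extension part. Take $w=h\in L^2\cap H^s_{\mathrm{loc}}$; local $H^s$ holds by smoothness. Then $F=(1+\log Q)h\in L^2_{\mathrm{loc}}$, and
\begin{equation*}
A(\eta_Rh)=\int(1+\log Q)\eta_R^2h^2\,\mathrm{d}x+\mathcal E_R(h)\le C\|h\|_{L^2}^2+CR^{-2s}\|h\|_{L^2}^2,
\end{equation*}
using that $1+\log Q$ is bounded above. The bound on $A(\eta_Rh)$ is therefore uniform in $R$, and Fatou's lemma gives $A(h)<\infty$. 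The same identity also yields the weighted bound: move the term $\int(V_Q)_+\eta_R^2h^2$ to the left, since $1+\log Q=-V_Q$. This gives
\begin{equation*}
A(\eta_Rh)+\int(V_Q)_+\eta_R^2h^2\le\|(V_Q)_-\|_\infty\|h\|_{L^2}^2+CR^{-2s}\|h\|_{L^2}^2,
\end{equation*}
and letting $R\to\infty$ with monotone convergence gives both $h\in H^s$ and $\int(V_Q)_+h^2<\infty$. This is the step I expect to be the crux. Its ingredients are $\nabla Q\in L^2$ from \eqref{eq:qint}, $(V_Q)_-\in L^\infty$ from Proposition \ref{prop:known}, and the sign structure of $V_Q$ in the cutoff identity.

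With $h\in\mathcal X_Q$ and $(-\Delta)^sh+V_Qh=0$ in $\mathcal D'$, Lemma \ref{lem:LQ-domain} yields $h\in\mathcal D(L_Q)$ and $L_Q\partial_{x_j}Q=0$.
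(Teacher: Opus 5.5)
Your cutoff argument is the paper's proof: apply the extension part of Lemma \ref{lem:cutoff} to $w=\partial_{x_j}Q\in L^2(\mathbb{R}^N)\cap H^s_{\mathrm{loc}}(\mathbb{R}^N)$, move the $(V_Q)_+$ term to the left and use $(V_Q)_-\in L^\infty(\mathbb{R}^N)$ to get a bound uniform in $R$, pass to the limit by lower semicontinuity and Fatou to get $w\in\mathcal X_Q$, and conclude with Lemma \ref{lem:LQ-domain}. The Fourier bootstrap you sketch first is not needed, and it would require a bound such as $(1+\log Q)\nabla Q\in L^2(\mathbb{R}^N)$ that the paper never establishes, so simply drop it.
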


\begin{proof}
Set $w=\partial_{x_j}Q$. By Proposition \ref{prop:known},
$
w\in L^2(\mathbb{R}^N)\cap H^s_{\mathrm{loc}}(\mathbb{R}^N).
$
Differentiating
\begin{equation*}
(-\Delta)^sQ=Q\log Q
\end{equation*}
in the distributional sense gives
\begin{equation}\label{eq:trdist}
(-\Delta)^sw=(1+\log Q)w=-V_Qw.
\end{equation}

Choose $\eta\in C_c^\infty(\mathbb{R}^N)$ such that
\begin{equation*}
0\leq\eta\leq1,\quad \eta=1\ \text{on }B_1,\quad \eta=0\ \text{on }\mathbb{R}^N\setminus B_2,
\end{equation*}
and set $\eta_R(x)=\eta(\frac{x}{R})$. Since $w\in H^s_{\mathrm{loc}}(\mathbb{R}^N)$, we see that
$
\eta_R^2w\in H^s(\mathbb{R}^N)
$
and has compact support. Moreover, since $V_Q$ is bounded on $B_{2R}$ and $w\in L^2_{\mathrm{loc}}(\mathbb{R}^N)$, we have $V_Qw\in L^2(B_{2R})$. Hence Lemma \ref{lem:cutoff}, applied to \eqref{eq:trdist}, gives
\begin{equation*}
A(\eta_Rw)+\int_{\mathbb{R}^N}V_Q\eta_R^2w^2\mathrm{d}x=\mathcal E_R(w).
\end{equation*}
Therefore,
\begin{equation*}
\begin{aligned}
	A(\eta_Rw)+\int_{\mathbb{R}^N}(V_Q)_+\eta_R^2w^2\mathrm{d}x
	&=\mathcal E_R(w)+\int_{\mathbb{R}^N}(V_Q)_-\eta_R^2w^2\mathrm{d}x\\
	&\leq\left(CR^{-2s}+\|(V_Q)_-\|_{L^\infty(\mathbb{R}^N)}\right)\|w\|_{L^2(\mathbb{R}^N)}^2.
\end{aligned}
\end{equation*}
Hence, for $R\geq1$, $\{\eta_Rw\}$ is bounded in $H^s(\mathbb{R}^N)$ and
\begin{equation*}
\sup_{R\geq1}\int_{\mathbb{R}^N}(V_Q)_+\eta_R^2w^2 \mathrm{d}x<\infty.
\end{equation*}
Since $\eta_Rw\to w$ in $L^2(\mathbb{R}^N)$, weak lower semicontinuity and Fatou's lemma give
\begin{equation*}
w\in H^s(\mathbb{R}^N),\quad \int_{\mathbb{R}^N}(V_Q)_+w^2 \mathrm{d}x<\infty.
\end{equation*}
Thus $w\in\mathcal{X}_Q$. 
Lemma \ref{lem:LQ-domain} and \eqref{eq:trdist} therefore give
$w\in\mathcal{D}(L_Q)$ and $L_Qw=0$.
\end{proof}

Fix a radial ground state of \eqref{eq:log}. By Proposition \ref{prop:known},
\begin{equation*}
V_Q(r)=-1-\log Q(r)
\end{equation*}
is radial and strictly increasing, is bounded from below, and tends to $+\infty$ as $r\to\infty$.

Define
\begin{equation*}
L^2_{\mathrm{rad}}(\mathbb{R}^N)=\left\{u\in L^2(\mathbb{R}^N):u(Rx)=u(x)\text{ for every }R\in\mathbb{O}(N)\right\},
\end{equation*}
where $\mathbb{O}(N)$ is the orthogonal group of $\mathbb{R}^N$. For a radial function $u$, we write $u(r)$ for its radial profile, where $r=|x|$. Set
\begin{equation*}
H^s_{\mathrm{rad}}(\mathbb{R}^N)=H^s(\mathbb{R}^N)\cap L^2_{\mathrm{rad}}(\mathbb{R}^N),\quad
\mathcal{X}_Q^{\mathrm{rad}}=\mathcal{X}_Q\cap L^2_{\mathrm{rad}}(\mathbb{R}^N).
\end{equation*}
Since $V_Q$ is radial, $L^2_{\mathrm{rad}}(\mathbb{R}^N)$ is a reducing subspace for $L_Q$. By Corollary \ref{cor:compact-resolvent}, the restriction of $L_Q$ to $L^2_{\mathrm{rad}}(\mathbb{R}^N)$ also has compact resolvent. 
For $0\neq u\in\mathcal{X}_Q^{\mathrm{rad}}$, define
\begin{equation*}
\mathcal{R}_Q(u)=\frac{\mathfrak{q}_Q(u,u)}{\|u\|_{L^2(\mathbb{R}^N)}^2}.
\end{equation*}
For $j=1,2$, set
\begin{equation*}
\lambda_j^{\mathrm{rad}}
=\inf_{\substack{E\subset\mathcal{X}_Q^{\mathrm{rad}}\\ \dim E=j}}\sup_{0\neq u\in E}\mathcal{R}_Q(u).
\end{equation*}
By \eqref{eq:q-lower} and the min-max principle, $\lambda_1^{\mathrm{rad}}$ and $\lambda_2^{\mathrm{rad}}$ are finite eigenvalues of the radial restriction of $L_Q$, counted with multiplicity.

The key radial input is the following nodal property.

\begin{proposition}\label{prop:prs}
There exists a normalized radial eigenfunction $\psi\in\mathcal D(L_Q)$ associated with $\lambda_2^{\mathrm{rad}}$. Moreover, after changing the sign of $\psi$ if necessary, there exists $r_*>0$ such that
\begin{equation*}
\psi\geq0\quad\text{almost everywhere in }B_{r_*},\quad
\psi\leq0\quad\text{almost everywhere in }\mathbb{R}^N\setminus B_{r_*},
\end{equation*}
and $\psi$ does not vanish almost everywhere on either region.
\end{proposition}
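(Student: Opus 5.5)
The plan follows the strategy announced in the introduction. We approximate $V_Q$ by the bounded radial potentials $V_m=\min\{V_Q,m\}$ and set $L_m=(-\Delta)^s+V_m$. We then transfer the one-node property of the second radial eigenfunction from $L_m$, where \cite[Theorem 2.3]{Frank-Lenzmann-Silvestre-2016} applies, to $L_Q$.

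Since $V_Q$ is radial and strictly increasing, $V_m$ is radial, nondecreasing in $r$, bounded, and equal to $m$ for $r\geq r_m$, where $V_Q(r_m)=m$ and $r_m\to\infty$. Hence $\sigma_{\mathrm{ess}}(L_m)=[m,\infty)$. Fix $m_0>\lambda_2^{\mathrm{rad}}+1$. Since $V_m\leq V_Q$, testing the min-max characterization with the same two-dimensional subspaces gives
\begin{equation*}
\lambda_j^{\mathrm{rad}}(L_m)\leq\lambda_j^{\mathrm{rad}}<m \quad (j=1,2,\ m\geq m_0).
\end{equation*}
So for $m\geq m_0$, $L_m$ has at least two radial eigenvalues below its essential spectrum. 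The oscillation theorem of \cite{Frank-Lenzmann-Silvestre-2016} then provides a second radial eigenfunction $\psi_m$, normalized in $L^2$, that changes sign exactly once: $\psi_m\geq0$ on $B_{r_m^*}$ and $\psi_m\leq0$ outside. We also record that the radial eigenvalues of $L_m$ are simple. Monotonicity in $m$ gives $\lambda_j^{\mathrm{rad}}(L_m)\nearrow\mu_j\leq\lambda_j^{\mathrm{rad}}$.

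\textbf{Convergence (main obstacle).} We need uniform confinement of the eigenfunctions, so that the $L^2$ mass cannot escape to infinity. This is exactly where the growth of $V_Q$ is used. Testing $L_m\psi_m=\lambda_m\psi_m$ with $\psi_m$ gives
\begin{equation*}
A(\psi_m)+\int V_m\psi_m^2\leq\lambda_2^{\mathrm{rad}},
\end{equation*}
so $A(\psi_m)\leq C$ and $\int(V_m)_+\psi_m^2\leq C$. For $R$ large and $m\geq V_Q(R)$, we have $V_m\geq V_Q(R)$ on $\{|x|>R\}$. Therefore
\begin{equation*}
\int_{|x|>R}\psi_m^2\leq \frac{C}{V_Q(R)}\quad\text{uniformly in }m.
\end{equation*}
Together with local compactness in $H^s$, this gives $\psi_m\to\psi$ strongly in $L^2$ and weakly in $H^s$ along a subsequence, so $\|\psi\|_{L^2}=1$. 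Fatou's lemma gives $\int(V_Q)_+\psi^2<\infty$, so $\psi\in\mathcal X_Q^{\mathrm{rad}}$. Passing to the limit in the weak formulation with test functions $\phi\in C_c^\infty$ (using $V_m=V_Q$ on $\operatorname{supp}\phi$ for $m$ large), together with Lemma \ref{lem:density} and Lemma \ref{lem:LQ-domain}, shows $L_Q\psi=\mu_2\psi$. The same argument applied to the first eigenfunctions $\phi_m$ yields a limit $\phi_1$ with $L_Q\phi_1=\mu_1\phi_1$ and $\langle\phi_1,\psi\rangle=0$ by strong convergence.

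Next we identify $\mu_2=\lambda_2^{\mathrm{rad}}$. The space $\operatorname{span}\{\phi_1,\psi\}$ is two-dimensional in $\mathcal X_Q^{\mathrm{rad}}$, and $\mathfrak q_Q$ restricted to it has Rayleigh quotient at most $\mu_2$, since $\phi_1,\psi$ are orthogonal eigenfunctions. Hence $\lambda_2^{\mathrm{rad}}\leq\mu_2\leq\lambda_2^{\mathrm{rad}}$, and $\psi$ is an eigenfunction for $\lambda_2^{\mathrm{rad}}$.

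\textbf{Nodal structure.} The node radii $r_m^*$ stay in a compact subset of $(0,\infty)$. If $r_m^*\to\infty$, then $\psi\geq0$ a.e.; if $r_m^*\to0$, then $\psi\leq0$ a.e. Either case contradicts orthogonality to $\phi_1$, which is the positive ground state of $L_Q$. Here positivity of $\phi_1$ comes from the limit of the positive $\phi_m$, and $\phi_1>0$ everywhere follows from a strong maximum principle argument as in Proposition \ref{prop:known}, or by using $|\phi_1|$ in the Rayleigh quotient. Passing to a subsequence $r_m^*\to r_*\in(0,\infty)$, a.e. convergence gives $\psi\geq0$ a.e. in $B_{r_*}$ and $\psi\leq0$ a.e. outside.

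Finally, $\psi$ cannot vanish a.e. on either region. If $\psi\equiv0$ on $B_{r_*}$, then $\psi\leq0$ with $\psi\neq0$, again contradicting $\langle\psi,\phi_1\rangle=0$ with $\phi_1>0$. The same argument applies to the exterior region.
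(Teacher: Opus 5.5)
Your outline follows the paper's proof closely: the truncation $V_m$, the essential spectrum and the Frank--Lenzmann--Silvestre oscillation theorem for $L_m$, tightness from the growth of $V_Q$, the identification $\mu_2=\lambda_2^{\mathrm{rad}}$ via $\operatorname{span}\{\phi_1,\psi\}$, and control of the node radii. The exception is the one step that needs a real idea, namely excluding a one-signed limit $\psi$. There you rely on $\phi_1>0$ almost everywhere, and neither of your justifications establishes it.

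\begin{itemize}
\item Testing the Rayleigh quotient with $|\phi_1|$, together with the strict case of Lemma \ref{lem:modulus}, only shows that $\phi_1$ has constant sign, i.e.\ $\phi_1\geq0$. You already have that from $\phi_{1,m}\geq0$. It does not exclude a nonnegative $\phi_1$ vanishing on a set of positive measure, and in that case $\int\phi_1\psi=0$ with $\psi\geq0$ gives no contradiction.
\item The strong maximum principle ``as in Proposition \ref{prop:known}'' evaluates $(-\Delta)^s\phi_1$ pointwise at a zero. That requires $\phi_1$ to be bounded and locally $C^{2s+\alpha}$. Because $V_Q$ is unbounded, this needs a Moser iteration (as in Lemma \ref{lem:eiglinfty}, which the paper proves only later) plus a Schauder bootstrap. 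You have not supplied either.
\end{itemize}

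So the claim is true but unproved as written. The same gap affects your final assertion that $\psi$ does not vanish on $B_{r_*}$ or on its complement.

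The paper avoids strict positivity altogether. Suppose $\psi\geq0$ almost everywhere (the case $\psi\leq0$ follows by replacing $\psi$ with $-\psi$). Since $\phi_1\geq0$ and $\langle\phi_1,\psi\rangle=0$, we get $\phi_1\psi=0$ almost everywhere. The potential term in $\mathfrak q_Q(\phi_1,\psi)$ then drops out. Expanding the Gagliardo integrand gives
\[
\mathfrak q_Q(\phi_1,\psi)=-c_{N,s}\iint_{\mathbb{R}^{2N}}\frac{\phi_1(x)\psi(y)}{|x-y|^{N+2s}}\,\mathrm{d}x\,\mathrm{d}y<0.
\]
This is equivalent to $A(\phi_1+\psi)<A(\phi_1-\psi)$, the strict case of Lemma \ref{lem:modulus}. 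It contradicts $\mathfrak q_Q(\phi_1,\psi)=\mu_1\langle\phi_1,\psi\rangle=0$.

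This argument uses only $\phi_1\geq0$ and the positivity of the nonlocal kernel, with no regularity of eigenfunctions. It shows $\psi$ must change sign, and that single fact covers all three cases you need: $r_m^*\to0$, $r_m^*\to\infty$, and vanishing of $\psi$ on either region. If you replace your positivity claim by this argument, the proof is complete.
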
 

Note that $V_Q$ is unbounded, so the radial oscillation theorem in \cite[Theorem 2.3]{Frank-Lenzmann-Silvestre-2016} cannot be applied directly to prove Proposition \ref{prop:prs}. To overcome this difficulty, we first approximate $V_Q$ by bounded truncations and analyze the first two radial eigenvalues and the corresponding eigenfunctions of the truncated operators.

For every $m>V_Q(0)=\inf_{\mathbb{R}^N}V_Q$, the strict monotonicity of $V_Q$ and the fact that $V_Q(r)\to+\infty$ as $r\to+\infty$ yield a unique $R_m>0$ such that $V_Q(R_m)=m$. Set
\begin{equation*}
V_m(r)=\min\{V_Q(r),m\}.
\end{equation*}
Since $V_m\in L^\infty(\mathbb{R}^N)$, the corresponding symmetric bilinear form
\begin{equation*}
\mathfrak{q}_m(u,v)=\left\langle(-\Delta)^{\frac{s}{2}}u,(-\Delta)^{\frac{s}{2}}v\right\rangle_{L^2(\mathbb{R}^N)}+\int_{\mathbb{R}^N}V_muv\mathrm{d}x
\end{equation*}
with domain $H^s(\mathbb{R}^N)$ is closed and bounded from below. Let $L_m$ be the associated self-adjoint operator. For $0\neq u\in H^s_{\mathrm{rad}}(\mathbb{R}^N)$, set
\begin{equation*}
\mathcal{R}_m(u)=\frac{A(u)+\int_{\mathbb{R}^N}V_mu^2\mathrm{d}x}{\|u\|_{L^2(\mathbb{R}^N)}^2},
\end{equation*}
and define (see \cite[Theorem XIII.1]{Reed-Simon-1978-4})
\begin{equation*}
\lambda_{j,m}^{\mathrm{rad}}=\inf_{\substack{E\subset H^s_{\mathrm{rad}}(\mathbb{R}^N)\\ \dim E=j}}\sup_{0\neq u\in E}\mathcal{R}_m(u),
\quad j=1,2.
\end{equation*}

\begin{lemma}\label{lem:prs-truncated}
For all sufficiently large integers $m$, $\lambda_{1,m}^{\mathrm{rad}}$ and $\lambda_{2,m}^{\mathrm{rad}}$ are simple radial eigenvalues of $L_m$ lying below $m$. Moreover, a normalized eigenfunction associated with $\lambda_{2,m}^{\mathrm{rad}}$ changes sign exactly once on $(0,+\infty)$.
\end{lemma}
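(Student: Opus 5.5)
The plan has two parts: first show that the first two radial eigenvalues of $L_m$ lie strictly below the bottom $m$ of the essential spectrum, and then import the radial oscillation theory of \cite{Frank-Lenzmann-Silvestre-2016}, which applies because $V_m$ is bounded.

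\textbf{Location of the essential spectrum.} Since $V_m\in L^\infty(\mathbb{R}^N)$ is radial, $V_m=m$ for $r\geq R_m$, and $V_m$ is nondecreasing, the operator $L_m$ is a compactly supported bounded perturbation of $(-\Delta)^s+m$. The perturbation is $V_m-m=(V_Q-m)\mathbf{1}_{B_{R_m}}$, which is relatively form compact. By Weyl's theorem, $\sigma_{\mathrm{ess}}(L_m)=[m,\infty)$, and the same holds for the radial restriction. By the min-max principle \cite[Theorem XIII.1]{Reed-Simon-1978-4}, it therefore suffices to find a two-dimensional subspace $E\subset H^s_{\mathrm{rad}}(\mathbb{R}^N)$ with $\sup_E\mathcal R_m<m$, independently of how large $m$ is. Then $\lambda_{1,m}^{\mathrm{rad}}\le\lambda_{2,m}^{\mathrm{rad}}<m$ are genuine eigenvalues.

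\textbf{Construction of the test space.} Fix two linearly independent radial functions $\phi_1,\phi_2\in C_c^\infty(\mathbb{R}^N)$, for instance with disjoint supports, and let $E_0$ be their span. Set $\Lambda=\sup_{E_0}\mathcal R_Q<\infty$; this is finite because $V_Q$ is bounded on compact sets. The inequality $V_m\le V_Q$ gives $\mathcal R_m\le\mathcal R_Q\le\Lambda$ on $E_0$. Hence for every integer $m>\Lambda$ we get $\lambda_{2,m}^{\mathrm{rad}}\le\Lambda<m$. This settles the claim that both eigenvalues lie below $m$. In fact the same bound holds uniformly in $m$, which will be useful later in the paper.

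\textbf{Simplicity and the nodal count.} With $V_m$ now a bounded radial potential, I would invoke \cite[Theorem 2.3]{Frank-Lenzmann-Silvestre-2016}. Its hypotheses appear to be that $V$ is radial, bounded, and nondecreasing in $r$ (or, more generally, in the admissible class treated there), and that the eigenvalue lies below $\liminf_{r\to\infty}V$. Its conclusion is that every radial eigenvalue below the essential spectrum is simple, and that the $n$-th radial eigenfunction changes sign exactly $n-1$ times on $(0,\infty)$.

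The main obstacle is verifying the precise regularity hypothesis of that theorem. It may ask for Hölder continuity of $V$, or that $V$ be $C^{0,\gamma}$ and nondecreasing. Our $V_m$ is Lipschitz, being the minimum of the smooth function $V_Q$ and a constant, and nondecreasing, since $V_Q$ is strictly increasing by Proposition \ref{prop:known}. So the hypothesis should hold, but I would check it carefully against the source.

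Should the theorem demand more smoothness, I would mollify $V_m$ radially, keeping it monotone and bounded. I would then apply the theorem to the mollified potential and pass to the limit using norm-resolvent convergence together with the spectral gap from the bound $\Lambda<m$ above. Simplicity of $\lambda_{1,m}^{\mathrm{rad}}$ also follows independently from positivity of the ground state, via Lemma \ref{lem:modulus} and the Perron--Frobenius argument for $(-\Delta)^s$. Simplicity of $\lambda_{2,m}^{\mathrm{rad}}$ is exactly the radial uniqueness (Wronskian-type) result of \cite{Frank-Lenzmann-Silvestre-2016}.
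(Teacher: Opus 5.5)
Your proposal is correct and follows essentially the same route as the paper: Weyl's theorem places $\sigma_{\mathrm{ess}}(L_m)$ and its radial part in $[m,\infty)$; the comparison $V_m\le V_Q$ pushes the first two radial min-max levels uniformly below $m$ (you use an explicit two-dimensional radial test space in $C_c^\infty$, the paper uses $\lambda_2^{\mathrm{rad}}$, which amounts to the same thing); and then Corollary 1 and Theorem 2 of \cite{Frank-Lenzmann-Silvestre-2016} apply once one checks that the bounded, radial, nondecreasing, globally Lipschitz potential $V_m-m$ satisfies (V1)--(V2), so your mollification fallback is never needed. One small correction: the cited oscillation result is stated only for the second radial eigenfunction, not as an $(n-1)$-sign-change statement for every $n$, but only the case $n=2$ is used here, so this does not affect your argument.
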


\begin{proof}
Since $V_m-m=0$ on $\mathbb{R}^N\setminus B_{R_m}$, multiplication by $V_m-m$ is compact from $H^s(\mathbb{R}^N)$ to $L^2(\mathbb{R}^N)$. Indeed, if $z_n\rightharpoonup0$ in $H^s(\mathbb{R}^N)$, then
\begin{equation*}
\|(V_m-m)z_n\|_{L^2(\mathbb{R}^N)} \leq \|V_m-m\|_{L^\infty(\mathbb{R}^N)}\|z_n\|_{L^2(B_{R_m})}\to0
\end{equation*}
by the compact embedding $H^s(B_{R_m})\hookrightarrow L^2(B_{R_m})$. Moreover, Plancherel's identity gives
\begin{equation*}
\left\|((-\Delta)^s+m+1)^{-1}f\right\|_{H^{2s}(\mathbb{R}^N)}^2
=\int_{\mathbb{R}^N}\frac{(1+|\xi|^2)^{2s}}{(|\xi|^{2s}+m+1)^2}|\widehat f(\xi)|^2\mathrm{d}\xi
\leq C_m\|f\|_{L^2(\mathbb{R}^N)}^2.
\end{equation*}
Since $H^{2s}(\mathbb{R}^N)\hookrightarrow H^s(\mathbb{R}^N)$, it follows that
\begin{equation*}
(V_m-m)\left((-\Delta)^s+m+1\right)^{-1}:L^2(\mathbb{R}^N)\to L^2(\mathbb{R}^N)
\end{equation*}
is compact. Since $-1$ belongs to the resolvent set of $(-\Delta)^s+m$, the potential $V_m-m$ is relatively compact with respect to $(-\Delta)^s+m$. Writing
\begin{equation*}
L_m=((-\Delta)^s+m)+(V_m-m),
\end{equation*}
Weyl's theorem \cite[Theorem XIII.14]{Reed-Simon-1978-4} gives
\begin{equation*}
\sigma_{\mathrm{ess}}(L_m)=\sigma_{\mathrm{ess}}\left((-\Delta)^s+m\right)=[m,+\infty).
\end{equation*}

Since $V_m\leq V_{m+1}\leq V_Q$ and $\mathcal{X}_Q^{\mathrm{rad}}\subset H^s_{\mathrm{rad}}(\mathbb{R}^N)$,
\begin{equation}\label{eq:mmmono}
\lambda_{j,m}^{\mathrm{rad}}\leq\lambda_{j,m+1}^{\mathrm{rad}}\leq\lambda_j^{\mathrm{rad}}, \quad j=1,2.
\end{equation}
For all sufficiently large integers $m$,
\begin{equation*}
\lambda_{1,m}^{\mathrm{rad}}\leq\lambda_{2,m}^{\mathrm{rad}}\leq\lambda_2^{\mathrm{rad}}<m.
\end{equation*}
Since $V_m$ is radial, $L_m$ preserves radial functions, and its restriction to $L^2_{\mathrm{rad}}(\mathbb{R}^N)$ is self-adjoint. Applying the same compact-perturbation argument as above to this radial restriction gives
\begin{equation*}
\sigma_{\mathrm{ess}}\left(L_m\big|_{L^2_{\mathrm{rad}}(\mathbb{R}^N)}\right)=[m,+\infty).
\end{equation*}
Thus
\begin{equation*}
\lambda_{j,m}^{\mathrm{rad}}<m=\inf\sigma_{\mathrm{ess}}\left(L_m\big|_{L^2_{\mathrm{rad}}(\mathbb{R}^N)}\right),\quad j=1,2.
\end{equation*}
By the min-max principle, $\lambda_{1,m}^{\mathrm{rad}}$ and $\lambda_{2,m}^{\mathrm{rad}}$ are discrete eigenvalues of the radial restriction of $L_m$, and hence radial eigenvalues of $L_m$.

Since $V_Q$ is smooth, $V_Q'$ is bounded on $[0,R_m]$. Since $V_Q(R_m)=m$ and
\begin{equation*}
W_m(r)=
\begin{cases}
	V_Q(r)-m, & 0\leq r<R_m,\\
	0, & r\geq R_m,
\end{cases}
\end{equation*}
the function $W_m$ is radial, nondecreasing, bounded and globally Lipschitz. Hence
$W_m\in C^{0,\gamma}(\mathbb{R}^N)$ for every $0<\gamma<1$. Choosing
\begin{equation*}
\max\{0,1-2s\}<\gamma<1,
\end{equation*}
we see that $W_m$ satisfies (V1)--(V2) of \cite{Frank-Lenzmann-Silvestre-2016}. In particular, \cite[Corollary 1]{Frank-Lenzmann-Silvestre-2016} gives
\begin{equation*}
\lambda_{1,m}^{\mathrm{rad}}<\lambda_{2,m}^{\mathrm{rad}}<m.
\end{equation*}
Applying \cite[Theorem 2]{Frank-Lenzmann-Silvestre-2016} to
\begin{equation*}
L_m-m=(-\Delta)^s+W_m
\end{equation*}
shows that a normalized radial eigenfunction $\psi_m$ associated with $\lambda_{2,m}^{\mathrm{rad}}$ changes sign exactly once on $(0,+\infty)$.
\end{proof} 

We now show that, as $m\to\infty$, the first two radial min-max levels $\lambda_{1,m}^{\mathrm{rad}}$ and $\lambda_{2,m}^{\mathrm{rad}}$ of $L_m$ converge to the corresponding radial min-max levels $\lambda_1^{\mathrm{rad}}$ and $\lambda_2^{\mathrm{rad}}$ of $L_Q$.

\begin{lemma}\label{lem:prs-econv}
We have
\begin{equation}\label{eq:econv}
\lambda_{j,m}^{\mathrm{rad}}\to\lambda_j^{\mathrm{rad}}
\quad\text{for }j=1,2.
\end{equation}
Moreover, each $\lambda_j^{\mathrm{rad}}$ is attained by a normalized radial function $\phi_j\in\mathcal{D}(L_Q)$ satisfying
\begin{equation}\label{eq:rad-eig}
L_Q\phi_j=\lambda_j^{\mathrm{rad}}\phi_j.
\end{equation}
\end{lemma}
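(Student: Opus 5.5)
We first observe that by Corollary \ref{cor:compact-resolvent} restricted to $L^2_{\mathrm{rad}}(\mathbb{R}^N)$ and the min-max principle, the second half of the statement is essentially immediate once we know the min-max levels are attained. The radial restriction has compact resolvent, so $\lambda_1^{\mathrm{rad}}\leq\lambda_2^{\mathrm{rad}}$ are its two lowest eigenvalues counted with multiplicity. Hence there exist $L^2$-orthonormal radial eigenfunctions $\phi_1,\phi_2\in\mathcal D(L_Q)$ with $L_Q\phi_j=\lambda_j^{\mathrm{rad}}\phi_j$ and $\mathcal R_Q(\phi_j)=\lambda_j^{\mathrm{rad}}$. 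The main work is therefore the convergence \eqref{eq:econv}. By \eqref{eq:mmmono}, $\lambda_{j,m}^{\mathrm{rad}}$ is nondecreasing in $m$ and bounded by $\lambda_j^{\mathrm{rad}}$, so $\mu_j:=\lim_m\lambda_{j,m}^{\mathrm{rad}}\leq\lambda_j^{\mathrm{rad}}$ exists. It remains to prove $\mu_j\geq\lambda_j^{\mathrm{rad}}$.

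For this we use the eigenfunctions from Lemma \ref{lem:prs-truncated}. For $m$ large, pick $L^2$-orthonormal radial eigenfunctions $\phi_{1,m},\phi_{2,m}\in H^s_{\mathrm{rad}}(\mathbb{R}^N)$ of $L_m$ with eigenvalues $\lambda_{1,m}^{\mathrm{rad}},\lambda_{2,m}^{\mathrm{rad}}$. Testing the equations with $\phi_{j,m}$ gives
\begin{equation*}
A(\phi_{j,m})+\int_{\mathbb{R}^N}(V_m+C_Q)\phi_{j,m}^2\mathrm{d}x=\lambda_{j,m}^{\mathrm{rad}}+C_Q\leq\lambda_2^{\mathrm{rad}}+C_Q,
\end{equation*}
where $V_m+C_Q\geq1$ because $(V_m)_-=(V_Q)_-$ for large $m$. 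This yields a uniform $H^s$ bound and a uniform bound on $\int(V_m)_+\phi_{j,m}^2$. This is the key tightness estimate, and we expect it to be the main obstacle. Since $V_m=\min\{V_Q,m\}$ and $V_Q(r)\to\infty$, for every $R$ and every $m$ with $m\geq V_Q(R)$ we get
\begin{equation*}
\int_{|x|>R}\phi_{j,m}^2\mathrm{d}x\leq\frac{C}{\min\{V_Q(R),m\}+C_Q},
\end{equation*}
uniformly small as $R\to\infty$. Combined with local compactness $H^s(B_R)\hookrightarrow L^2(B_R)$, this lets us extract a subsequence with $\phi_{j,m}\rightharpoonup\phi_j^\infty$ weakly in $H^s$ and strongly in $L^2(\mathbb{R}^N)$. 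In particular, $\phi_1^\infty,\phi_2^\infty$ are orthonormal and radial.

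Next we pass to the limit in the quadratic forms. Weak lower semicontinuity gives $A(\phi^\infty)\leq\liminf A(\phi_m)$. For the potential, $V_m\uparrow V_Q$ pointwise and $\phi_{j,m}\to\phi_j^\infty$ a.e. along a subsequence, so Fatou's lemma applied to $(V_m+C_Q)\phi_m^2\geq0$ gives
\begin{equation*}
\int(V_Q+C_Q)(\phi^\infty)^2\leq\liminf\int(V_m+C_Q)\phi_m^2.
\end{equation*}
To control arbitrary combinations, we apply the same argument to $u_m=a\phi_{1,m}+b\phi_{2,m}$ with $a^2+b^2=1$. By orthogonality, $\mathcal R_m(u_m)=a^2\lambda_{1,m}^{\mathrm{rad}}+b^2\lambda_{2,m}^{\mathrm{rad}}\leq\mu_2$. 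It follows that $\phi_j^\infty\in\mathcal X_Q^{\mathrm{rad}}$ and that $\mathcal R_Q(a\phi_1^\infty+b\phi_2^\infty)\leq a^2\mu_1+b^2\mu_2$.

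Taking $E=\operatorname{span}\{\phi_1^\infty\}$ gives $\lambda_1^{\mathrm{rad}}\leq\mu_1$. Taking $E=\operatorname{span}\{\phi_1^\infty,\phi_2^\infty\}$, which is two-dimensional, gives $\lambda_2^{\mathrm{rad}}\leq\mu_2$. Hence \eqref{eq:econv} holds. As a byproduct, $\mathcal R_Q(\phi_j^\infty)=\lambda_j^{\mathrm{rad}}$, so the limits are themselves minimizers. The variational characterization, or equivalently the spectral decomposition of the compact-resolvent radial operator, then shows that they satisfy \eqref{eq:rad-eig} in the weak form \eqref{eq:weak-kernel}-type sense. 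Lemma \ref{lem:LQ-domain} upgrades this to $\phi_j\in\mathcal D(L_Q)$. This limit identification is also what will later be needed to transfer the nodal property to the limit.
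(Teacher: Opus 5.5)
Your proof is correct, but it is organized differently from the paper's.

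The paper does not use the a priori existence of radial eigenfunctions of $L_Q$. It passes to the limit in the weak equations $\mathfrak q_m(\phi_{j,m},\eta)=\lambda_{j,m}^{\mathrm{rad}}\langle\phi_{j,m},\eta\rangle_{L^2(\mathbb{R}^N)}$ for $\eta\in C_c^\infty(\mathbb{R}^N)$, using $V_m=V_Q$ on $\operatorname{supp}\eta$ for large $m$. It then proves $\phi_j\in\mathcal X_Q^{\mathrm{rad}}$ by lower semicontinuity and monotone convergence, and extends the identity to all $\eta\in\mathcal X_Q$ via Lemma \ref{lem:density}. This yields $L_Q\phi_j=\lambda_{j,\infty}^{\mathrm{rad}}\phi_j$ directly, so $\mathfrak q_Q$ can be computed exactly on $\operatorname{span}\{\phi_1,\phi_2\}$.

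You instead take the eigenfunctions from the compact resolvent. You obtain $\lambda_j^{\mathrm{rad}}\le\mu_j$ purely from lower semicontinuity of the form along $a\phi_{1,m}+b\phi_{2,m}$, namely weak lower semicontinuity of $A$ plus Fatou with $V_m\uparrow V_Q$. This is a hands-on version of monotone convergence for increasing forms, and it avoids both the density lemma and the passage to the limit in the equation.

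What the paper's route buys is used immediately in the proof of Proposition \ref{prop:prs}. There, the $L^2$-limit of \emph{any} sequence of truncated radial eigenfunctions ($\psi_m$ and $\phi_{1,m}$) is automatically an eigenfunction of $L_Q$ with the limiting eigenvalue. In your route this needs one more step than your ``byproduct'' sentence indicates. You do get $\mathcal R_Q(\phi_1^\infty)=\lambda_1^{\mathrm{rad}}$, but for $j=2$ you have only shown $\mathcal R_Q(\phi_2^\infty)\le\lambda_2^{\mathrm{rad}}$. The reverse inequality holds because $\phi_2^\infty\perp\phi_1^\infty$ and $\phi_1^\infty$ is a first eigenfunction. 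Hence $\mathcal R_Q(\phi_2^\infty)\ge\inf\{\mathcal R_Q(u):0\neq u\in\mathcal X_Q^{\mathrm{rad}},\ u\perp\phi_1^\infty\}=\lambda_2^{\mathrm{rad}}$, and a minimizer on this complement is an eigenfunction, since $\mathfrak q_Q(\phi_2^\infty,\phi_1^\infty)=0$.

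None of this affects the lemma as stated, which your first paragraph already settles.
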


\begin{proof}
For all sufficiently large $m$, choose radial eigenfunctions $\phi_{1,m}$ and $\phi_{2,m}$ satisfying
\begin{equation*}
L_m\phi_{j,m}=\lambda_{j,m}^{\mathrm{rad}}\phi_{j,m},\quad
\langle\phi_{i,m},\phi_{j,m}\rangle_{L^2(\mathbb{R}^N)}=\delta_{ij}.
\end{equation*}
Choose $c>0$ such that $V_Q+c\geq1$ in $\mathbb{R}^N$. Fix a positive integer $k>V_Q(0)=\inf_{\mathbb{R}^N}V_Q$ and let $m\geq k$. Since $V_k\leq V_m$, \eqref{eq:mmmono} gives
\begin{equation*}
\begin{aligned}
	A(\phi_{j,m})+\int_{\mathbb{R}^N}(V_k+c)\phi_{j,m}^2\mathrm{d}x
	&\leq A(\phi_{j,m})+\int_{\mathbb{R}^N}(V_m+c)\phi_{j,m}^2\mathrm{d}x\\
	&=\lambda_{j,m}^{\mathrm{rad}}+c
	\leq\lambda_j^{\mathrm{rad}}+c.
\end{aligned}
\end{equation*}
Fixing $k$, we get that $\{\phi_{j,m}\}$ is bounded in $H^s(\mathbb{R}^N)$. Moreover, since $V_k=k$ on $\mathbb{R}^N\setminus B_{R_k}$,
\begin{equation*}
\int_{|x|\geq R_k}\phi_{j,m}^2\mathrm{d}x
\leq\frac{\lambda_j^{\mathrm{rad}}+c}{k+c}
\quad\text{for }m\geq k.
\end{equation*}
As $k\to\infty$, the right-hand side tends to zero. Hence $\{\phi_{j,m}\}$ is tight in $L^2(\mathbb{R}^N)$. Together with the compact embedding $H^s(B_R)\hookrightarrow L^2(B_R)$, after passing to a subsequence,
\begin{equation}\label{eq:philimit}
\phi_{j,m}\rightharpoonup\phi_j\quad\text{weakly in }H^s(\mathbb{R}^N),\quad
\phi_{j,m}\to\phi_j\quad\text{strongly in }L^2(\mathbb{R}^N),
\end{equation}
for $j=1,2$. Since $L^2_{\mathrm{rad}}(\mathbb{R}^N)$ is closed in $L^2(\mathbb{R}^N)$, each $\phi_j$ is radial. Moreover,
\begin{equation}\label{eq:orthlim}
\langle\phi_i,\phi_j\rangle_{L^2(\mathbb{R}^N)}=\delta_{ij}.
\end{equation}

By \eqref{eq:mmmono}, the limits
\begin{equation*}
\lambda_{j,\infty}^{\mathrm{rad}}:=\lim_{m\to\infty}\lambda_{j,m}^{\mathrm{rad}}\leq\lambda_j^{\mathrm{rad}}
\end{equation*}
exist. Let $\eta\in C_c^\infty(\mathbb{R}^N)$. Since $V_Q$ is bounded on $\operatorname{supp}\eta$, for all sufficiently large $m$, we have $V_m=V_Q$ on $\operatorname{supp}\eta$. Passing to the limit in
\begin{equation*}
\left\langle(-\Delta)^{\frac{s}{2}}\phi_{j,m},(-\Delta)^{\frac{s}{2}}\eta\right\rangle_{L^2(\mathbb{R}^N)}+\int_{\mathbb{R}^N}V_m\phi_{j,m}\eta\mathrm{d}x
=\lambda_{j,m}^{\mathrm{rad}}\langle\phi_{j,m},\eta\rangle_{L^2(\mathbb{R}^N)}
\end{equation*}
gives
\begin{equation}\label{eq:limweak}
\left\langle(-\Delta)^{\frac{s}{2}}\phi_j,(-\Delta)^{\frac{s}{2}}\eta\right\rangle_{L^2(\mathbb{R}^N)}+\int_{\mathbb{R}^N}V_Q\phi_j\eta\mathrm{d}x
=\lambda_{j,\infty}^{\mathrm{rad}}\langle\phi_j,\eta\rangle_{L^2(\mathbb{R}^N)}.
\end{equation}

We next prove that $\phi_j\in\mathcal{X}_Q^{\mathrm{rad}}$. For every fixed $k$, weak lower semicontinuity of $A$ and the strong $L^2$ convergence in \eqref{eq:philimit} give
\begin{equation*}
\begin{aligned}
	A(\phi_j)+\int_{\mathbb{R}^N}(V_k+c)\phi_j^2\mathrm{d}x
	&\leq\liminf_{m\to\infty}\left[A(\phi_{j,m})+\int_{\mathbb{R}^N}(V_k+c)\phi_{j,m}^2\mathrm{d}x\right]\\
	&\leq\lambda_{j,\infty}^{\mathrm{rad}}+c.
\end{aligned}
\end{equation*}
Since $V_k+c$ increases pointwise to $V_Q+c$, the monotone convergence theorem yields
\begin{equation*}
A(\phi_j)+\int_{\mathbb{R}^N}(V_Q+c)\phi_j^2\mathrm{d}x
\leq\lambda_{j,\infty}^{\mathrm{rad}}+c.
\end{equation*}
Thus $\phi_j\in\mathcal{X}_Q^{\mathrm{rad}}$.

By Lemma \ref{lem:density} and the continuity of $\mathfrak{q}_Q$ on $\mathcal{X}_Q$, \eqref{eq:limweak} extends to
\begin{equation}\label{eq:limform}
\mathfrak{q}_Q(\phi_j,\eta)
=\lambda_{j,\infty}^{\mathrm{rad}}\langle\phi_j,\eta\rangle_{L^2(\mathbb{R}^N)}
\end{equation}
for every $\eta\in\mathcal{X}_Q$. Hence
\begin{equation*}
\phi_j\in\mathcal{D}(L_Q),\quad
L_Q\phi_j=\lambda_{j,\infty}^{\mathrm{rad}}\phi_j.
\end{equation*}

By the definition of $\lambda_1^{\mathrm{rad}}$ and \eqref{eq:orthlim},
\begin{equation*}
\lambda_1^{\mathrm{rad}}\leq\mathcal{R}_Q(\phi_1)=\lambda_{1,\infty}^{\mathrm{rad}}.
\end{equation*}
Together with $\lambda_{1,\infty}^{\mathrm{rad}}\leq\lambda_1^{\mathrm{rad}}$, this gives
$
\lambda_{1,\infty}^{\mathrm{rad}}=\lambda_1^{\mathrm{rad}}.
$
For $u=a_1\phi_1+a_2\phi_2$, \eqref{eq:limform} and \eqref{eq:orthlim} give
\begin{equation*}
\mathfrak{q}_Q(u,u)
=\lambda_{1,\infty}^{\mathrm{rad}}a_1^2+\lambda_{2,\infty}^{\mathrm{rad}}a_2^2
\leq \lambda_{2,\infty}^{\mathrm{rad}}(a_1^2+a_2^2).
\end{equation*}
Therefore, by the definition of $\lambda_2^{\mathrm{rad}}$,
\begin{equation*}
\lambda_2^{\mathrm{rad}}
\leq \sup_{0\neq u\in\operatorname{span}\{\phi_1,\phi_2\}}\mathcal{R}_Q(u)
=\lambda_{2,\infty}^{\mathrm{rad}}.
\end{equation*}
Together with $\lambda_{2,\infty}^{\mathrm{rad}}\leq\lambda_2^{\mathrm{rad}}$, this proves \eqref{eq:econv}. The preceding identities also give \eqref{eq:rad-eig}.
\end{proof}

Combining Lemmas \ref{lem:prs-truncated} and \ref{lem:prs-econv}, we now pass the one-node property of the second radial eigenfunctions of $L_m$ to $L_Q$ as $m\to\infty$.

\begin{proof}[Proof of Proposition \ref{prop:prs}]
Let $\psi_m$ be a normalized second radial eigenfunction of $L_m$. By Lemma \ref{lem:prs-truncated}, after changing its sign if necessary, there exists $r_m>0$ such that
\begin{equation*}
\psi_m(r)\geq0\quad\text{for }0\leq r<r_m,\quad
\psi_m(r)\leq0\quad\text{for }r>r_m,
\end{equation*}
and neither restriction vanishes almost everywhere.

Let $\phi_{1,m}$ be a normalized first radial eigenfunction of $L_m$. By Lemma \ref{lem:modulus},
\begin{equation*}
\mathcal{R}_m(|\phi_{1,m}|)\leq \mathcal{R}_m(\phi_{1,m})=\lambda_{1,m}^{\mathrm{rad}}.
\end{equation*}
Since $\lambda_{1,m}^{\mathrm{rad}}$ is the minimum of $\mathcal{R}_m$, equality holds. We may therefore choose $\phi_{1,m}\geq0$.

Since $\psi_m$ is normalized and $\lambda_{2,m}^{\mathrm{rad}}\leq\lambda_2^{\mathrm{rad}}$, the estimates in the proof of Lemma \ref{lem:prs-econv} show that $\{\psi_m\}$ is bounded in $H^s(\mathbb{R}^N)$ and tight in $L^2(\mathbb{R}^N)$. After passing to a subsequence,
\begin{equation*}
\psi_m\to\psi\quad\text{strongly in }L^2(\mathbb{R}^N),
\end{equation*}
where the proof of Lemma \ref{lem:prs-econv} and \eqref{eq:econv} give
\begin{equation*}
\psi\in\mathcal{D}(L_Q),\quad
L_Q\psi=\lambda_2^{\mathrm{rad}}\psi,\quad
\|\psi\|_{L^2(\mathbb{R}^N)}=1.
\end{equation*}

After passing to a further subsequence, the same argument applied to $\phi_{1,m}$ gives 
\begin{equation*}
\phi_{1,m}\to\phi_1\quad\text{strongly in }L^2(\mathbb{R}^N),
\end{equation*}
where
\begin{equation*}
\phi_1\in\mathcal{D}(L_Q),\quad
L_Q\phi_1=\lambda_1^{\mathrm{rad}}\phi_1,\quad
\|\phi_1\|_{L^2(\mathbb{R}^N)}=1.
\end{equation*}
Moreover,
\begin{equation*}
\|(\phi_1)_-\|_{L^2(\mathbb{R}^N)}
\leq \|\phi_1-\phi_{1,m}\|_{L^2(\mathbb{R}^N)}\to0,
\end{equation*}
so $\phi_1\geq0$ almost everywhere. Since $L_m$ is self-adjoint and
$
\lambda_{1,m}^{\mathrm{rad}}<\lambda_{2,m}^{\mathrm{rad}},
$
we have
$
\langle\psi_m,\phi_{1,m}\rangle_{L^2(\mathbb{R}^N)}=0.
$
Hence
\begin{equation}\label{eq:psiorth}
\langle\psi,\phi_1\rangle_{L^2(\mathbb{R}^N)}=0.
\end{equation}

We first show that $\psi$ changes sign. Suppose that $\psi\geq0$ almost everywhere. Since $\phi_1\geq0$, \eqref{eq:psiorth} gives $\phi_1\psi=0$ almost everywhere. Since both functions are nonzero, $\phi_1-\psi$ changes sign and
\begin{equation*}
|\phi_1-\psi|=\phi_1+\psi\quad\text{almost everywhere}.
\end{equation*}
The strict part of Lemma \ref{lem:modulus} gives
\begin{equation*}
A(\phi_1+\psi)<A(\phi_1-\psi).
\end{equation*}
Since $\phi_1\psi=0$ almost everywhere, the potential terms in $\mathfrak{q}_Q(\phi_1+\psi,\phi_1+\psi)$ and $\mathfrak{q}_Q(\phi_1-\psi,\phi_1-\psi)$ are equal. Hence
\begin{equation*}
\mathfrak{q}_Q(\phi_1+\psi,\phi_1+\psi)
<
\mathfrak{q}_Q(\phi_1-\psi,\phi_1-\psi).
\end{equation*}
On the other hand,
\begin{equation*}
\begin{aligned}
	&\mathfrak{q}_Q(\phi_1+\psi,\phi_1+\psi)-\mathfrak{q}_Q(\phi_1-\psi,\phi_1-\psi)\\
	&=4\mathfrak{q}_Q(\phi_1,\psi)
	=4\lambda_1^{\mathrm{rad}}\langle\phi_1,\psi\rangle_{L^2(\mathbb{R}^N)}
	=0,
\end{aligned}
\end{equation*}
a contradiction. The case $\psi\leq0$ is identical after replacing $\psi$ by $-\psi$.

The sequence $\{r_m\}$ is bounded away from zero. Otherwise, along a subsequence,
$
r_m\to0.
$
For every $a>0$, we have $\psi_m\leq0$ on $\{r>a\}$ for all sufficiently large $m$. Hence
\begin{equation*}
\|\psi_+\|_{L^2(\{r>a\})}\leq \|\psi-\psi_m\|_{L^2(\mathbb{R}^N)} \to0.
\end{equation*}
Thus $\psi\leq0$ almost everywhere on $\{r>a\}$. Since $a>0$ is arbitrary, $\psi\leq0$ almost everywhere in $\mathbb{R}^N$, a contradiction.

Similarly, $\{r_m\}$ is bounded. Otherwise, along a subsequence,
$
r_m\to\infty.
$
For every $b>0$, we have $\psi_m\geq0$ on $\{r<b\}$ for all sufficiently large $m$, and hence
\begin{equation*}
\|\psi_-\|_{L^2(\{r<b\})}\leq\|\psi-\psi_m\|_{L^2(\mathbb{R}^N)}\to0.
\end{equation*}
Thus $\psi\geq0$ almost everywhere in $\mathbb{R}^N$, again a contradiction. After passing to a subsequence, $r_m\to r_*$ for some $r_*\in(0,+\infty)$.

Fix $0<a<r_*<b$. For all sufficiently large $m$,
$
a<r_m<b.
$
Thus
\begin{equation*}
\|\psi_-\|_{L^2(\{r<a\})}+\|\psi_+\|_{L^2(\{r>b\})}\leq 2\|\psi-\psi_m\|_{L^2(\mathbb{R}^N)}
\to0.
\end{equation*}
It follows that
\begin{equation*}
\psi\geq0\quad\text{almost everywhere on }\{r<r_*\},\quad
\psi\leq0\quad\text{almost everywhere on }\{r>r_*\}.
\end{equation*}
If either restriction vanished almost everywhere, then $\psi$ would have one sign in $\mathbb{R}^N$, which is impossible. Hence neither restriction vanishes almost everywhere.
\end{proof}

\medskip

\section{Nondegeneracy of positive ground states}\label{sec:nondegeneracy}

Let $Q$ be a positive radial ground state of \eqref{eq:log}. In this section, we prove the nondegeneracy of $Q$. Using the Morse index of the associated quadratic form, the one-node property obtained in Section \ref{sec:variational-spectral} and the scaling identities, we first exclude nontrivial radial zero modes. The nonradial zero modes are then determined by the angular decomposition, which gives the full characterization of $\ker L_Q$.

\subsection{Morse index}

We first recall the definition of the Morse index and then prove that the quadratic form associated with $L_Q$ has Morse index one. This fact will be used in Lemma \ref{lem:rker} to show that any nontrivial radial zero mode would be a second radial eigenfunction.

For a symmetric quadratic form $\mathfrak{q}$ with domain $\mathcal{D}(\mathfrak{q})$, we define its Morse index by
\begin{equation}\label{eq:morse-index}
\operatorname{ind}(\mathfrak{q})
=\sup\left\{
\dim E:
\begin{array}{l}
E\subset\mathcal{D}(\mathfrak{q})\text{ is a linear subspace},\\
\mathfrak{q}(h,h)<0\text{ for every }h\in E\setminus\{0\}
\end{array}
\right\}.
\end{equation}
By Corollary \ref{cor:compact-resolvent}, $L_Q$ has compact resolvent. Hence, by the min-max principle, $\operatorname{ind}(\mathfrak q_Q)$ equals the number of negative eigenvalues of $L_Q$, counted with multiplicity \cite[Theorem XIII.2]{Reed-Simon-1978-4}.

\begin{lemma}\label{lem:LQ-on-Q}
We have $Q\in\mathcal{D}(L_Q)$ and
\begin{equation*}
L_Q Q=-Q \quad\text{in }L^2(\mathbb{R}^N).
\end{equation*}
Consequently,
\begin{equation}\label{eq:qneg}
\mathfrak{q}_Q(Q,Q)=-M<0.
\end{equation}
\end{lemma}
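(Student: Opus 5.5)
We verify $Q\in\mathcal D(L_Q)$ through the characterization in Lemma \ref{lem:LQ-domain}. We need $Q\in\mathcal X_Q$ and that $(-\Delta)^sQ+V_QQ$ is a distribution in $L^2(\mathbb{R}^N)$.

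\emph{Form domain.} By \eqref{eq:qint}, $Q\in H^{2s}(\mathbb{R}^N)\subset H^s(\mathbb{R}^N)$. By \eqref{eq:qwgt}, $\int_{\mathbb{R}^N}(V_Q)_+Q^2\,\mathrm{d}x<\infty$. Hence $Q\in\mathcal X_Q$.

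\emph{Action of $L_Q$.} Since $Q$ solves $(-\Delta)^sQ=Q\log Q$ with $Q\log Q\in L^2(\mathbb{R}^N)$ by \eqref{eq:qint}, we have, in $\mathcal D'(\mathbb{R}^N)$,
\begin{equation*}
(-\Delta)^sQ+V_QQ=Q\log Q-Q-Q\log Q=-Q.
\end{equation*}
The weak formulation of \eqref{eq:log} for test functions $\varphi\in C_c^\infty(\mathbb{R}^N)$ gives this identity distributionally. Moreover, $V_QQ=-Q-Q\log Q\in L^2$ is locally integrable, so the product is a genuine function. Since $-Q\in L^2(\mathbb{R}^N)$, Lemma \ref{lem:LQ-domain} yields $Q\in\mathcal D(L_Q)$ and $L_QQ=-Q$.

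\emph{The identity $\mathfrak q_Q(Q,Q)=-M$.} By Kato's representation, used in the proof of Lemma \ref{lem:LQ-domain}, $\mathfrak q_Q(h,\phi)=\langle L_Qh,\phi\rangle_{L^2(\mathbb{R}^N)}$ for $h\in\mathcal D(L_Q)$ and $\phi\in\mathcal X_Q$. Taking $h=\phi=Q$ gives
\begin{equation*}
\mathfrak q_Q(Q,Q)=-B(Q)=-M,
\end{equation*}
using \eqref{eq:M}. As a sanity check, one can compute directly: $\mathfrak q_Q(Q,Q)=A(Q)-B(Q)-C_0(Q)=-B(Q)$, since $A(Q)=C_0(Q)$.

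The only step needing care is that the distributional identity holds with the unbounded potential. This is harmless because $V_Q$ is smooth, so $V_QQ\in L^2_{\mathrm{loc}}$, and globally $V_QQ=-Q-Q\log Q\in L^2(\mathbb{R}^N)$. So there is no real obstacle.
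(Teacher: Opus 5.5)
Your proposal is correct and follows the paper's proof essentially step for step. You verify $Q\in\mathcal X_Q$ via \eqref{eq:qint} and \eqref{eq:qwgt}, compute $(-\Delta)^sQ+V_QQ=-Q$ in $\mathcal D'(\mathbb{R}^N)$, invoke Lemma \ref{lem:LQ-domain}, and then read off $\mathfrak q_Q(Q,Q)=\langle L_QQ,Q\rangle_{L^2(\mathbb{R}^N)}=-M$; your direct check via $A(Q)=C_0(Q)$ is a valid bonus confirmation.
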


\begin{proof}
Since $Q\in\mathcal{D}_s$ and $(V_Q)_+\leq1+|\log Q|$, we have $Q\in\mathcal{X}_Q$. Moreover, the equation for $Q$ gives
\begin{equation*}
(-\Delta)^sQ+V_Q Q = Q\log Q-(1+\log Q)Q = -Q \quad\text{in }\mathcal{D}'(\mathbb{R}^N).
\end{equation*}
Since $Q\in L^2(\mathbb{R}^N)$, Lemma \ref{lem:LQ-domain} yields
\begin{equation*}
Q\in\mathcal{D}(L_Q),\quad L_Q Q=-Q.
\end{equation*}
Therefore
\begin{equation*}
\mathfrak{q}_Q(Q,Q) = \langle L_Q Q,Q\rangle_{L^2(\mathbb{R}^N)} = -\|Q\|_{L^2(\mathbb{R}^N)}^2 = -M.
\end{equation*}
\end{proof} 

Lemma \ref{lem:LQ-on-Q} gives $\operatorname{ind}(\mathfrak q_Q)\geq1$. The fixed-mass minimality of $Q$ yields the reverse inequality. 

\begin{lemma}\label{lem:morse}
The quadratic form $\mathfrak{q}_Q$ has Morse index one, that is,
\begin{equation*}
\operatorname{ind}(\mathfrak{q}_Q)=1.
\end{equation*}
\end{lemma}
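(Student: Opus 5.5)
Since Lemma \ref{lem:LQ-on-Q} gives $\operatorname{ind}(\mathfrak q_Q)\geq1$, it remains to show that $\mathfrak q_Q(h,h)\geq0$ for every $h\in\mathcal X_Q$ with $h\perp Q$. Once this holds, no two-dimensional subspace $E$ on which $\mathfrak q_Q$ is negative definite can exist: such an $E$ would contain a nonzero $h$ with $\langle h,Q\rangle_{L^2(\mathbb{R}^N)}=0$. The plan is to derive this from the fixed-mass minimality in Lemma \ref{lem:fmass}, through a second variation along a mass-preserving curve.

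Fix first $\phi\in C_c^\infty(\mathbb{R}^N)$ with $\langle\phi,Q\rangle_{L^2(\mathbb{R}^N)}=0$. Consider the curve
\begin{equation*}
u_t=\sqrt M\,\frac{Q+t\phi}{\|Q+t\phi\|_{L^2(\mathbb{R}^N)}},\qquad |t|\text{ small}.
\end{equation*}
It lies in $\mathcal S_M\cap\mathcal D_s$, as shown in the proof of Lemma \ref{lem:fmass}. Since $\phi\perp Q$, the normalization factor equals $\alpha(t)=(1+t^2B(\phi)/M)^{-1/2}=1-\tfrac{t^2}{2M}B(\phi)+O(t^3)$. Hence
\begin{equation*}
u_t=Q+t\phi-\tfrac{t^2}{2M}B(\phi)Q+O(t^3).
\end{equation*}
Minimality gives $\tfrac{d^2}{dt^2}\mathcal I_0(u_t)|_{t=0}\geq0$. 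Write $\mathcal I_0(v)=\tfrac12A(v)+\int G(v)$ with $G''(r)=-1-\log|r|$. The second derivative is
\begin{equation*}
A(\phi)+\int G''(Q)\phi^2\,\mathrm dx+\langle \mathcal I_0'(Q),u''_0\rangle .
\end{equation*}
The first two terms give exactly $\mathfrak q_Q(\phi,\phi)$. In the last term, $u''_0=-\tfrac1M B(\phi)Q$ and $\mathcal I_0'(Q)=(-\Delta)^sQ-Q\log Q=0$, so that term vanishes. This yields $\mathfrak q_Q(\phi,\phi)\geq0$.

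Differentiating twice under the integral sign is justified because $\phi$ has compact support and $Q$ is bounded below by a positive constant on $\operatorname{supp}\phi$. Hence $G(u_t)$ is smooth in $t$ there, with uniform bounds. Outside $\operatorname{supp}\phi$ we have $u_t=\alpha(t)Q$, and $G(\alpha Q)=\alpha^2G(Q)-\tfrac12\alpha^2\log\alpha\,Q^2$ is explicit and smooth in $\alpha$, with integrable coefficients since $Q\in\mathcal D_s$. This is the main technical point; it avoids any Taylor expansion of $\log$ near $0$.

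Finally, we extend from test functions to all of $\mathcal X_Q$. Given $h\in\mathcal X_Q$ with $h\perp Q$, Lemma \ref{lem:density} provides $h_n\in C_c^\infty(\mathbb{R}^N)$ with $h_n\to h$ in $\mathcal X_Q$. Fix $\chi\in C_c^\infty(\mathbb{R}^N)$ with $\langle\chi,Q\rangle_{L^2(\mathbb{R}^N)}\neq0$ and set
\begin{equation*}
\tilde h_n=h_n-\frac{\langle h_n,Q\rangle_{L^2(\mathbb{R}^N)}}{\langle \chi,Q\rangle_{L^2(\mathbb{R}^N)}}\chi .
\end{equation*}
Then $\tilde h_n\perp Q$, $\tilde h_n\in C_c^\infty(\mathbb{R}^N)$, and $\tilde h_n\to h$ in $\mathcal X_Q$, because $\langle h_n,Q\rangle_{L^2(\mathbb{R}^N)}\to0$. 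Since $\mathfrak q_Q$ is continuous on $\mathcal X_Q$, we get $\mathfrak q_Q(h,h)=\lim_n\mathfrak q_Q(\tilde h_n,\tilde h_n)\geq0$, and therefore $\operatorname{ind}(\mathfrak q_Q)=1$.
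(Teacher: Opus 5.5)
Your proposal is correct and follows essentially the same route as the paper: the second variation of $\mathcal{I}_0$ along the normalized curve $\alpha(t)(Q+t\phi)$ for $\phi\in C_c^\infty(\mathbb{R}^N)\cap Q^\perp$, the vanishing of the $u_0''$ term by the equation for $Q$, extension to $\mathcal{X}_Q\cap Q^\perp$ by density with the $\chi$-correction, and the codimension-one argument. The only difference is cosmetic. Off $\operatorname{supp}\phi$ you justify differentiation through the explicit formula for $G(\alpha Q)$, whereas the paper uses an $L^1$ dominating bound.
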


\begin{proof}
By \eqref{eq:qneg},
\begin{equation*}
\mathfrak{q}_Q(Q,Q)=-M<0.
\end{equation*}
Thus $\operatorname{span}\{Q\}\subset\mathcal{X}_Q$ is a negative subspace. The definition \eqref{eq:morse-index} gives
$
\operatorname{ind}(\mathfrak{q}_Q)\geq1.
$

Let $h\in C_c^\infty(\mathbb{R}^N)$ satisfy
\begin{equation}\label{eq:htan}
\int_{\mathbb{R}^N}Qh\mathrm{d}x=0.
\end{equation}
Define
\begin{equation*}
\alpha(t) = \frac{\sqrt M}{\|Q+th\|_{L^2(\mathbb{R}^N)}}, \quad u_t=\alpha(t)(Q+th).
\end{equation*}
Then $B(u_t)=M$. By \eqref{eq:htan},
\begin{equation*}
\|Q+th\|_{L^2(\mathbb{R}^N)}^2 =M+t^2\|h\|_{L^2(\mathbb{R}^N)}^2,
\end{equation*}
so
\begin{equation*}
\alpha(0)=1, \quad \alpha'(0)=0, \quad \alpha''(0)=-\frac{\|h\|_{L^2(\mathbb{R}^N)}^2}{M},
\end{equation*}
and
\begin{equation}\label{eq:upath}
u_0=Q, \quad u_0'=h, \quad u_0''=-\frac{\|h\|_{L^2(\mathbb{R}^N)}^2}{M}Q.
\end{equation}

Put
\begin{equation*}
G(t)=\frac{1}{4}t^2-\frac{1}{2}t^2\log t, \quad t>0.
\end{equation*}
Then
\begin{equation}\label{eq:gder}
G'(t)=-t\log t, \quad G''(t)=-(1+\log t).
\end{equation}
Let $K=\operatorname{supp}h$. Since $Q$ is continuous and positive,
\begin{equation*}
m_K = \min_{x\in K}Q(x)>0.
\end{equation*}
Choose $t_0>0$ such that, for $|t|\leq t_0$,
\begin{equation*}
\frac{1}{2}\leq\alpha(t)\leq2, \quad |t|\|h\|_{L^\infty(\mathbb{R}^N)}\leq\frac{m_K}{2}.
\end{equation*}
Then
\begin{equation*}
Q+th\geq\frac{m_K}{2}\quad\text{on }K,
\end{equation*}
while $u_t=\alpha(t)Q>0$ on $\mathbb{R}^N\setminus K$. Hence $u_t>0$ in $\mathbb{R}^N$.

Since $\alpha$, $\alpha'$ and $\alpha''$ are bounded on $[-t_0,t_0]$, the functions $u_t$, $u_t'$ and $u_t''$ are uniformly bounded on $K$, whereas on $\mathbb{R}^N\setminus K$ they are constant multiples of $Q$. Therefore,
\begin{equation*}
\begin{aligned}
	&\left|G'(u_t)u_t'\right| + \left|G''(u_t)(u_t')^2+G'(u_t)u_t''\right| \\
	&\leq C\mathbf 1_K + CQ^2\left(1+|\log Q|\right)\mathbf 1_{\mathbb{R}^N\setminus K} \in L^1(\mathbb{R}^N),
\end{aligned}
\end{equation*}
uniformly for $|t|\leq t_0$. Hence differentiation under the integral sign is justified.
Since
\begin{equation*}
\mathcal{I}_0(u) = \frac{1}{2}A(u)+\int_{\mathbb{R}^N}G(u)\mathrm{d}x \quad\text{for }u>0,
\end{equation*}
relations \eqref{eq:upath} and \eqref{eq:gder} give
\begin{equation}\label{eq:secp}
\begin{aligned}
	\left.\frac{\mathrm{d}^2}{\mathrm{d}t^2}\mathcal{I}_0(u_t)\right|_{t=0} =& A(h) + \left\langle(-\Delta)^sQ,u_0''\right\rangle_{L^2(\mathbb{R}^N)} \\
	&- \int_{\mathbb{R}^N}(1+\log Q)h^2\mathrm{d}x - \int_{\mathbb{R}^N}Q\log Q u_0''\mathrm{d}x.
\end{aligned}
\end{equation}
Since $Q\log Q\in L^2(\mathbb{R}^N)$, $u_0''$ is a constant multiple of $Q$ and
$
(-\Delta)^sQ=Q\log Q,
$
we have
\begin{equation*}
\left\langle(-\Delta)^sQ,u_0''\right\rangle_{L^2(\mathbb{R}^N)} = \int_{\mathbb{R}^N}Q\log Q u_0''\mathrm{d}x.
\end{equation*}
Therefore, \eqref{eq:secp} reduces to
\begin{equation}\label{eq:secp-form}
\left.\frac{\mathrm{d}^2}{\mathrm{d}t^2}\mathcal{I}_0(u_t)\right|_{t=0} = A(h)-\int_{\mathbb{R}^N}(1+\log Q)h^2\mathrm{d}x = \mathfrak{q}_Q(h,h).
\end{equation}
By Lemma \ref{lem:fmass}, $Q$ minimizes $\mathcal{I}_0$ on $\mathcal{S}_M$. Since $u_t\in\mathcal{S}_M$, \eqref{eq:secp-form} gives
\begin{equation}\label{eq:tnng}
\mathfrak{q}_Q(h,h)\geq0\quad\text{for every }h\in C_c^\infty(\mathbb{R}^N)\cap Q^\perp.
\end{equation}

Now let $h\in\mathcal{X}_Q\cap Q^\perp$. By Lemma \ref{lem:density}, choose $\widetilde h_n\in C_c^\infty(\mathbb{R}^N)$ such that $\|\widetilde h_n-h\|_Q\to0$. Since $Q\neq0$ and $C_c^\infty(\mathbb{R}^N)$ is dense in $L^2(\mathbb{R}^N)$, we may choose $\chi\in C_c^\infty(\mathbb{R}^N)$ such that $\langle Q,\chi\rangle_{L^2(\mathbb{R}^N)}\neq0$. Set
\begin{equation*}
h_n=\widetilde h_n-\frac{\langle Q,\widetilde h_n\rangle_{L^2(\mathbb{R}^N)}}{\langle Q,\chi\rangle_{L^2(\mathbb{R}^N)}}\chi.
\end{equation*}
Then $h_n\in C_c^\infty(\mathbb{R}^N)\cap Q^\perp$. Moreover, since $\widetilde h_n\to h$ in $L^2(\mathbb{R}^N)$ and $h\perp Q$,
\begin{equation*}
\langle Q,\widetilde h_n\rangle_{L^2(\mathbb{R}^N)}\to0,
\end{equation*}
and therefore $\|h_n-h\|_Q\to0$. Passing to the limit in \eqref{eq:tnng}, we obtain
\begin{equation}\label{eq:tnga}
\mathfrak{q}_Q(h,h)\geq0\quad\text{for every }h\in\mathcal{X}_Q\cap Q^\perp.
\end{equation}

If $E\subset\mathcal{X}_Q$ is a negative subspace with $\dim E\geq2$, then
\begin{equation*}
\dim(E\cap Q^\perp)\geq\dim E-1\geq1.
\end{equation*}
Hence there exists $0\neq h\in E\cap Q^\perp$. Since $E$ is negative, $\mathfrak{q}_Q(h,h)<0$, contradicting \eqref{eq:tnga}. Therefore
$
\operatorname{ind}(\mathfrak{q}_Q)=1.
$
\end{proof}

\subsection{The radial kernel}

In this subsection, we show that $L_Q$ has trivial kernel in $L^2_{\mathrm{rad}}(\mathbb{R}^N)$. The main idea is to use the strict radial monotonicity of $Q$ from Proposition \ref{prop:known}, the one-node property of the second radial eigenfunction from Proposition \ref{prop:prs} and the two orthogonality relations \eqref{eq:qorth} and \eqref{eq:qlorth} to derive a contradiction under the assumption that
$
\ker(L_Q)\cap L^2_{\mathrm{rad}}(\mathbb{R}^N)\neq\{0\}.
$
We first establish an $L^\infty$ bound for eigenfunctions of $L_Q$.

\begin{lemma}\label{lem:eiglinfty}
Let $\mu\in\mathbb{R}$ and $v\in\mathcal{D}(L_Q)$ satisfy
\begin{equation*}
L_Qv=\mu v \quad\text{in }L^2(\mathbb{R}^N).
\end{equation*}
Then $v\in L^\infty(\mathbb{R}^N)$.
\end{lemma}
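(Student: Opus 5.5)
The plan is a Moser/De Giorgi-type truncation argument, or alternatively a comparison with the free resolvent. The central observation is that $V_Q$ is bounded from below and tends to $+\infty$. Hence, on the set where $V_Q-\mu\geq1$, the equation $(-\Delta)^sv=(\mu-V_Q)v$ has a sign that pushes $|v|$ down. Only a compact region $B_{R_0}$, where $V_Q-\mu<1$, contributes a source term, and there $V_Q$ is bounded. Concretely, fix $R_0$ with $V_Q(x)-\mu\geq1$ for $|x|\geq R_0$. Write
\[
(-\Delta)^sv+v=f:=(1+\mu-V_Q)v,
\]
so that $f\leq0$ times $\operatorname{sign}$ of $v$ outside $B_{R_0}$, that is, $f\,\operatorname{sign}(v)\leq0$ there. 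Inside $B_{R_0}$ we have $|f|\leq C|v|$.

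First I will prove Kato's inequality in weak form: $(-\Delta)^s|v|+|v|\leq \operatorname{sign}(v)f\leq g:=C|v|\mathbf 1_{B_{R_0}}$ in the sense of quadratic forms against nonnegative test functions in $\mathcal X_Q$. This follows by testing the form identity $\mathfrak q_Q(v,\phi)=\mu\langle v,\phi\rangle$ with $\phi=\varphi\,v/\sqrt{v^2+\varepsilon^2}$ for $0\leq\varphi\in C_c^\infty$. One then uses the pointwise inequality
\[
(v(x)-v(y))(\phi(x)-\phi(y))\geq(\sqrt{v^2+\varepsilon^2}(x)-\sqrt{v^2+\varepsilon^2}(y))(\varphi(x)-\varphi(y))
\]
(convexity of $t\mapsto\sqrt{t^2+\varepsilon^2}$), and lets $\varepsilon\to0$. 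The test functions are admissible because $\phi\in\mathcal X_Q$ by Lemma \ref{lem:density} and truncation; the cutoff identity of Lemma \ref{lem:cutoff} handles localization if needed.

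Next I will compare with $w=((-\Delta)^s+1)^{-1}g=G_s*g$, where $G_s$ is the Bessel-type kernel of $(-\Delta)^s+1$, which is positive and belongs to $L^p$ for $p<N/(N-2s)$ (or all $p$ when $N\leq2s$). Then $z=|v|-w$ satisfies $(-\Delta)^sz+z\leq0$ weakly. Testing with $z_+\in H^s$ and using $\langle(-\Delta)^{s/2}z,(-\Delta)^{s/2}z_+\rangle\geq A(z_+)$ yields $z_+=0$, so $|v|\leq w$.

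Finally I will bootstrap. Since $g\in L^2$ with compact support, Young's inequality gives $w\in L^{q_1}$ with $q_1>2$ (an improvement by a fixed amount in $1/q$). Then $g\in L^{q_1}(B_{R_0})$ because $|v|\leq w$, and iterating finitely many times gives $g\in L^q$ with $q>N/(2s)$. One more application of $G_s*$ (with $G_s\in L^{q'}$) gives $w\in L^\infty$, hence $v\in L^\infty$.

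The main obstacle is the rigorous justification of Kato's inequality and the comparison step in the form setting, given the unbounded potential. Specifically, one must check that the test functions $\varphi v/\sqrt{v^2+\varepsilon^2}$ and $z_+$ lie in the right spaces. For $z_+$ this requires $w\in H^s$, which holds since $g\in L^2$, and $z_+\leq|v|$, which gives $z_+\in\mathcal X_Q$. The potential term outside $B_{R_0}$ only helps, by its sign. I would handle the passage from $\varphi\in C_c^\infty$ to $z_+$ by the density in Lemma \ref{lem:density} combined with monotone convergence on the nonnegative potential part.
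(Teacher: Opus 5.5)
Your argument is correct, but it takes a different route from the paper's proof.

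\textbf{The paper's route.} The paper runs a global Moser iteration. It tests $L_Qv=\mu v$ with $\psi_{\beta,T}(v)=\varphi_{\beta,T}(v)\varphi_{\beta,T}'(v)$, which lies in $\mathcal X_Q$ because it is a Lipschitz function of $v$ vanishing at $0$. It then invokes the convex truncation inequality $A(\varphi_{\beta,T}(v))\le\langle(-\Delta)^{s/2}v,(-\Delta)^{s/2}\psi_{\beta,T}(v)\rangle$. The only other ingredients are the lower bound $V_Q-\mu\ge -C_\mu$ and the Sobolev embedding. Iterating over $\beta_k=\chi^k$ gives $\|v\|_{L^\infty}\le C\|v\|_{L^2}$.

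\textbf{Your route.} You linearize instead, in three steps.
\begin{enumerate}
\item Kato's inequality for $|v|$. Your convexity inequality for $t\mapsto\sqrt{t^2+\varepsilon^2}$ is correct. Your test functions $\varphi v/\sqrt{v^2+\varepsilon^2}$ are compactly supported elements of $H^s$, so they lie in $\mathcal X_Q$ directly; Lemma \ref{lem:density} is not needed there.
\item The weak maximum principle for $(-\Delta)^s+1$, tested with $z_+$. This only needs the inequality $(-\Delta)^s|v|+|v|\le g$ extended from nonnegative $C_c^\infty$ test functions to nonnegative $H^s$ ones. That follows by density, since cut-off and mollification preserve nonnegativity.
\item A Young-inequality bootstrap with the Bessel kernel $G_s$.
\end{enumerate}

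\textbf{What each approach buys.} Your method yields the pointwise domination $|v|\le C\,G_s*(|v|\mathbf 1_{B_{R_0}})$. Combined with the tail bound $G_s(x)\lesssim|x|^{-N-2s}$, this gives decay of eigenfunctions, $|v(x)|\lesssim(1+|x|)^{-N-2s}$, in addition to boundedness. The cost is reliance on properties of the kernel: positivity, and $G_s\in L^t$ for $1\le t<N/(N-2s)$. The paper's argument avoids all kernel information and uses only the lower bound on the potential.

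\textbf{Two remarks.}
\begin{itemize}
\item Your argument does not actually need the confinement $V_Q\to+\infty$. Using only $V_Q-\mu\ge-C_\mu$, you get $(-\Delta)^s|v|+|v|\le(1+C_\mu)|v|$. The same comparison and bootstrap then go through with a non-compactly supported $L^2$ source; only the decay bonus is lost.
\item For $N=1$ and $s=\frac12$, the kernel $G_s$ has a logarithmic singularity at the origin. It therefore lies in every $L^p$ with $p<\infty$ but not in $L^\infty$, contrary to your phrase ``all $p$ when $N\le 2s$''. This is harmless, since you only use finite exponents.
\end{itemize}
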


\begin{proof}
Choose $C_\mu>0$ such that
$
V_Q-\mu\geq-C_\mu.
$
For $\beta\geq1$ and $T>0$, define as in \cite[Lemma 2.4]{Li-Peng-Shuai-2022},
\begin{equation*}
\varphi_{\beta,T}(t)
=\begin{cases}
	0, & t\leq0,\\
	t^\beta, & 0<t<T,\\
	\beta T^{\beta-1}(t-T)+T^\beta, & t\geq T.
\end{cases}
\end{equation*}
Then $\varphi_{\beta,T}$ is increasing, convex, Lipschitz and
\begin{equation}\label{eq:mosphi}
t\varphi_{\beta,T}'(t)
\leq
\beta\varphi_{\beta,T}(t)
\quad\text{for almost every }t\in\mathbb{R}.
\end{equation}

Set
\begin{equation*}
\psi_{\beta,T}(t)=\varphi_{\beta,T}(t)\varphi_{\beta,T}'(t)=
\begin{cases}
	0, & t\leq0,\\
	\beta t^{2\beta-1}, & 0<t<T,\\
	\beta^2T^{2\beta-2}(t-T)+\beta T^{2\beta-1}, & t\geq T.
\end{cases}
\end{equation*}
Since $\beta\geq1$, the function $\psi_{\beta,T}$ is Lipschitz on $\mathbb{R}$ and satisfies $\psi_{\beta,T}(0)=0$. Hence
\begin{equation*}
\psi_{\beta,T}(v)\in H^s(\mathbb{R}^N), \quad
|\psi_{\beta,T}(v)|\leq C_{\beta,T}|v|.
\end{equation*}
Since $v\in\mathcal{D}(L_Q)\subset\mathcal{X}_Q$, it follows that
\begin{equation*}
\int_{\mathbb{R}^N}(V_Q)_+|\psi_{\beta,T}(v)|^2\mathrm{d}x
\leq C_{\beta,T}\int_{\mathbb{R}^N}(V_Q)_+v^2\mathrm{d}x<\infty.
\end{equation*}
Therefore, $\psi_{\beta,T}(v)\in\mathcal{X}_Q$ and is an admissible test function.

By the convex truncation estimate in \cite[(2.13)--(2.15)]{Li-Peng-Shuai-2022},
\begin{equation*}
A\left(\varphi_{\beta,T}(v)\right)
\leq\left\langle(-\Delta)^{\frac{s}{2}}v,(-\Delta)^{\frac{s}{2}}\psi_{\beta,T}(v)\right\rangle_{L^2(\mathbb{R}^N)}.
\end{equation*}
Testing $L_Qv=\mu v$ by $\psi_{\beta,T}(v)$ gives
\begin{equation*}
\begin{aligned}
	A\left(\varphi_{\beta,T}(v)\right)&\leq \int_{\mathbb{R}^N}(\mu-V_Q)
	v\psi_{\beta,T}(v)\mathrm{d}x\\
	&\leq C_\mu \int_{\mathbb{R}^N}v\varphi_{\beta,T}(v)\varphi_{\beta,T}'(v)\mathrm{d}x
	\leq C_\mu\beta \int_{\mathbb{R}^N}\varphi_{\beta,T}(v)^2\mathrm{d}x,
\end{aligned}
\end{equation*}
where we used $v\psi_{\beta,T}(v)\geq0$ and \eqref{eq:mosphi}.

Fix $q$ such that
$
2<q<2_s^*
$
and set
$
\chi=\frac{q}{2}>1.
$
By the Sobolev embedding theorem,
\begin{equation*}
\|\varphi_{\beta,T}(v)\|_{L^q(\mathbb{R}^N)}^2
\leq C\left(A\left(\varphi_{\beta,T}(v)\right)+\|\varphi_{\beta,T}(v)\|_{L^2(\mathbb{R}^N)}^2\right)
\leq C\beta \|\varphi_{\beta,T}(v)\|_{L^2(\mathbb{R}^N)}^2,
\end{equation*}
where $C>0$ is independent of $\beta$ and $T$. 
Assume that $v_+\in L^{2\beta}(\mathbb{R}^N)$. Since
$
0\leq\varphi_{\beta,T}(t)\leq t_+^\beta
$
and $\varphi_{\beta,T}(t)$ increases to $t_+^\beta$ as $T\to+\infty$, the monotone convergence theorem yields 
\begin{equation}\label{eq:mositer}
\|v_+\|_{L^{2\beta\chi}(\mathbb{R}^N)}
\leq (C\beta)^{\frac{1}{2\beta}}\|v_+\|_{L^{2\beta}(\mathbb{R}^N)}.
\end{equation} 
Set $\beta_k=\chi^k$ for $k\geq0$. Since $\beta_0=1$ and $v_+\in L^2(\mathbb{R}^N)$, we may apply \eqref{eq:mositer} successively with $\beta=\beta_k$ to obtain
\begin{equation*}
\|v_+\|_{L^{2\beta_{k+1}}(\mathbb{R}^N)}
\leq \|v_+\|_{L^2(\mathbb{R}^N)} \prod_{j=0}^k (C\beta_j)^{\frac{1}{2\beta_j}}.
\end{equation*}
Since $\beta_k\to+\infty$ and
$
\sum_{j=0}^\infty\frac{1+\log\beta_j}{\beta_j}<\infty,
$ 
\begin{equation*}
\|v_+\|_{L^\infty(\mathbb{R}^N)}\leq C\|v_+\|_{L^2(\mathbb{R}^N)}.
\end{equation*}
Applying the same argument to $-v$ gives
$
\|v_-\|_{L^\infty(\mathbb{R}^N)}\leq C\|v_-\|_{L^2(\mathbb{R}^N)}.
$
Therefore,
\begin{equation*}
\|v\|_{L^\infty(\mathbb{R}^N)}\leq C\|v\|_{L^2(\mathbb{R}^N)}.
\end{equation*}
\end{proof}

The following elementary estimate will be used to derive \eqref{eq:qlorth}.

\begin{lemma}\label{lem:scaling-remainder}
For $0<a\leq b$, define
\begin{equation*}
R(a,b) = a\left(1+\log b-\log a\right)-b.
\end{equation*}
Then
\begin{equation}\label{eq:rbd}
|R(a,b)|\leq b-a.
\end{equation}
Moreover,
\begin{equation}\label{eq:rsmall}
\frac{|R(a,b)|}{b-a}\to0 \quad\text{as }\frac{a}{b}\to1,
\end{equation}
where the quotient is defined as zero when $a=b$.
\end{lemma}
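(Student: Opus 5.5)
The plan is to reduce everything to a one-variable function of the ratio $\tau=a/b\in(0,1]$. Since $R(a,b)=a(1+\log(b/a))-b$, we have
\begin{equation*}
R(a,b)=b\,g(\tau),\qquad g(\tau)=\tau\left(1-\log\tau\right)-1,\qquad b-a=b(1-\tau).
\end{equation*}
Both \eqref{eq:rbd} and \eqref{eq:rsmall} then become statements about $g$ on $(0,1]$. Specifically, \eqref{eq:rbd} is $|g(\tau)|\leq1-\tau$, and \eqref{eq:rsmall} is $g(\tau)/(1-\tau)\to0$ as $\tau\to1^-$.

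First I would record the sign and size of $g$. We have $g(1)=0$ and $g'(\tau)=(1-\log\tau)-1=-\log\tau\geq0$ on $(0,1]$. Hence $g$ is nondecreasing and $g(\tau)\leq g(1)=0$, so $|g(\tau)|=-g(\tau)=1-\tau+\tau\log\tau$.

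Since $\tau\log\tau\leq0$ for $\tau\in(0,1]$, this gives $|g(\tau)|\leq1-\tau$. Multiplying by $b$ yields \eqref{eq:rbd}. The lower bound $|g|\geq0$ is automatic, so no further case split is needed.

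For \eqref{eq:rsmall}, I would write $g(\tau)=-\int_\tau^1(-\log\sigma)\,\mathrm{d}\sigma$, which gives
\begin{equation*}
\frac{|g(\tau)|}{1-\tau}\leq\sup_{\sigma\in[\tau,1]}|\log\sigma|=-\log\tau\to0\quad\text{as }\tau\to1^-.
\end{equation*}
Alternatively, Taylor expansion gives $g(\tau)=-\tfrac12(1-\tau)^2+O((1-\tau)^3)$. The case $a=b$ is covered by the stated convention that the quotient is zero.

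There is no real obstacle here. The only point requiring care is the homogeneity reduction, which uses $b>0$ so that dividing by $b$ is legitimate and $\tau\in(0,1]$ is well defined under the hypothesis $0<a\leq b$.
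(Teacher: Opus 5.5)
Your proposal is correct and follows essentially the same proof as the paper. You reduce to $z=a/b$, and the bound comes from the monotonicity fact $\frac{\mathrm{d}}{\mathrm{d}z}(z-z\log z-1)=-\log z\geq0$. For the limit, your integral estimate $|g(\tau)|/(1-\tau)\leq-\log\tau$ is a quantitative version of the paper's step, which inserts $\log z=(z-1)+o(|z-1|)$ into the identity $|R(a,b)|/(b-a)=1+z\log z/(1-z)$.
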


\begin{proof}
Set $z=\frac{a}{b}\in(0,1]$. Then
\begin{equation*}
R(a,b) = b\left(z-z\log z-1\right).
\end{equation*}
Since
\begin{equation*}
\frac{\mathrm{d}}{\mathrm{d}z}\left(z-z\log z-1\right)=-\log z\geq0, \quad \left.\left(z-z\log z-1\right)\right|_{z=1}=0,
\end{equation*}
we have $z-z\log z-1\leq0$, and therefore
\begin{equation*}
|R(a,b)| = b\left(1-z+z\log z\right) \leq b(1-z) = b-a,
\end{equation*}
which proves \eqref{eq:rbd}. If $z<1$, then
\begin{equation*}
\frac{|R(a,b)|}{b-a} = 1+\frac{z\log z}{1-z}.
\end{equation*}
Since $\log z=(z-1)+o(|z-1|)$ as $z\to1$, the right-hand side tends to zero. This proves \eqref{eq:rsmall}.
\end{proof}

We now combine the Morse index, the one-node property and the preceding estimates to exclude a radial zero mode.

\begin{lemma}\label{lem:rker}
It holds that
\begin{equation*}
\ker\left(L_Q\right)\cap L^2_{\mathrm{rad}}(\mathbb{R}^N)=\left\{0\right\}.
\end{equation*}
\end{lemma}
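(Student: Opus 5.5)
We argue by contradiction and suppose that $0\neq v\in\ker(L_Q)\cap L^2_{\mathrm{rad}}(\mathbb{R}^N)$. The plan is to show, in order, that $v$ must be a second radial eigenfunction, that $v$ must be orthogonal to two specific radial test functions, and that the one-node structure of $v$ is incompatible with these two orthogonality relations.

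\textbf{Step 1: $v$ is a second radial eigenfunction.} Since $L_QQ=-Q$ by Lemma \ref{lem:LQ-on-Q} and $Q$ is radial, $\lambda_1^{\mathrm{rad}}\le -1<0$. By Lemma \ref{lem:morse}, $L_Q$ has exactly one negative eigenvalue, so $\lambda_2^{\mathrm{rad}}\ge 0$. The function $v$ is a radial eigenfunction with eigenvalue $0$, and $\lambda_1^{\mathrm{rad}}<0$, so $0$ is not the first radial eigenvalue. Combining these facts gives $\lambda_2^{\mathrm{rad}}=0$. Since $v\perp\phi_1$, we aim to deduce that $v$ is a second radial eigenfunction, and then apply Proposition \ref{prop:prs}. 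If $0$ had multiplicity at least two in the radial sector, we would need to check that Proposition \ref{prop:prs} still applies to $v$ itself. I would instead first show that the radial kernel is at most one-dimensional. Two independent radial kernel elements could be combined into one orthogonal to $Q$ that vanishes at a prescribed radius, which contradicts the argument below; alternatively, one can combine them so that $\psi$ from Proposition \ref{prop:prs} is the relevant function. In either case we obtain a radial $\psi\in\ker L_Q$, positive on $B_{r_*}$ and negative outside, up to sign.

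\textbf{Step 2: two orthogonality relations.} First, since $L_Q$ is self-adjoint and $L_QQ=-Q$, we get
\begin{equation}\label{eq:qorth}
\langle\psi,Q\rangle_{L^2(\mathbb{R}^N)}=-\langle\psi,L_QQ\rangle_{L^2(\mathbb{R}^N)}=-\langle L_Q\psi,Q\rangle_{L^2(\mathbb{R}^N)}=0 .
\end{equation}
Second, differentiate the family $Q_\lambda(x)=Q(\lambda x)$. It satisfies $(-\Delta)^sQ_\lambda=\lambda^{2s}Q_\lambda\log Q_\lambda$, and differentiating at $\lambda=1$ formally gives $L_Q(x\cdot\nabla Q)=-2sQ\log Q$. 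Because $x\cdot\nabla Q$ need not lie in $\mathcal{D}(L_Q)$, I would avoid differentiating directly and work with difference quotients $(Q(\lambda x)-Q(x))/(\lambda-1)$ tested against $\psi$. The logarithmic terms are expanded using Lemma \ref{lem:scaling-remainder}, with $a,b$ equal to $Q(\lambda x)$ and $Q(x)$ (ordered by radial monotonicity, since $\lambda>1$ gives $Q(\lambda x)\le Q(x)$). Dominated convergence is justified by $x\cdot\nabla Q\in L^1$ and $\psi\in L^\infty$ from Lemma \ref{lem:eiglinfty}. Pairing $L_Q\psi=0$ against these quotients and passing to the limit should yield
\begin{equation}\label{eq:qlorth}
\int_{\mathbb{R}^N}\psi\, Q\log Q\,\mathrm{d}x=0 .
\end{equation}

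\textbf{Step 3: contradiction.} Combining \eqref{eq:qorth} and \eqref{eq:qlorth}, $\psi$ is orthogonal to $Q(\log Q-c)$ for every constant $c$. Choose $c=\log Q(r_*)$. Since $Q$ is strictly decreasing, $Q(\log Q-\log Q(r_*))>0$ on $B_{r_*}$ and $<0$ outside. Thus $\psi\cdot Q(\log Q-\log Q(r_*))\ge0$ almost everywhere, and it is not identically zero because $\psi$ does not vanish almost everywhere on either region. Hence the integral is strictly positive, contradicting orthogonality.

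The main obstacle is Step 2, namely the rigorous derivation of \eqref{eq:qlorth}. It requires passing the limit inside $\mathfrak q_Q(\psi,\cdot)$ with the unbounded potential, and controlling the logarithmic remainder $R(a,b)$ uniformly via \eqref{eq:rbd} and \eqref{eq:rsmall}. A secondary point is the possible multiplicity issue in Step 1.
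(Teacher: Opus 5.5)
Your proposal is correct and follows the paper's proof essentially step for step. The ingredients match: $\lambda_2^{\mathrm{rad}}=0$ from the Morse index, the one-node eigenfunction of Proposition \ref{prop:prs}, orthogonality to $Q$ and, via $Q(\lambda x)$ and the remainder of Lemma \ref{lem:scaling-remainder}, to $Q\log Q$, and the sign contradiction against $Q(\log Q-\log Q(r_*))$.

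The multiplicity worry in Step 1 is unnecessary. Once $\lambda_2^{\mathrm{rad}}=0$, Proposition \ref{prop:prs} itself supplies a one-node $\psi\in\ker L_Q$, and the paper runs the contradiction on $\psi$ rather than on your $v$. Drop the ``vanishing at a prescribed radius'' alternative, since it would not give the sign structure needed in Step 3.
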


\begin{proof}
Suppose by contradiction that
\begin{equation*}
\ker(L_Q)\cap L^2_{\mathrm{rad}}(\mathbb{R}^N)\neq\{0\},
\end{equation*}
and choose $0\neq w\in\ker(L_Q)\cap L^2_{\mathrm{rad}}(\mathbb{R}^N)$. We claim that
\begin{equation}\label{eq:lambda2-zero}
\lambda_2^{\mathrm{rad}}=0.
\end{equation}

By Lemma \ref{lem:morse}, $\operatorname{ind}(\mathfrak{q}_Q)=1$, and therefore $\lambda_2^{\mathrm{rad}}\geq0$. Indeed, if $\lambda_2^{\mathrm{rad}}<0$, then by the definition of $\lambda_2^{\mathrm{rad}}$ there exists a two-dimensional subspace $E\subset\mathcal{X}_Q^{\mathrm{rad}}$ such that
\begin{equation*}
\sup_{0\neq u\in E}\mathcal{R}_Q(u)<0.
\end{equation*}
Hence $\mathfrak{q}_Q(u,u)<0$ for every $u\in E\setminus\{0\}$, contradicting $\operatorname{ind}(\mathfrak{q}_Q)=1$.

On the other hand, Lemma \ref{lem:LQ-on-Q} gives $Q\in\mathcal{D}(L_Q)$ and $L_QQ=-Q$. Since $w\in\ker L_Q$, \eqref{eq:weak-kernel} and the symmetry of $\mathfrak{q}_Q$ give
\begin{equation*}
\mathfrak{q}_Q(Q,w)=\mathfrak{q}_Q(w,Q)=0,\quad
\mathfrak{q}_Q(w,w)=0,
\end{equation*}
while
\begin{equation*}
\mathfrak{q}_Q(Q,Q)=\langle L_QQ,Q\rangle_{L^2(\mathbb{R}^N)}
=-\|Q\|_{L^2(\mathbb{R}^N)}^2=-M.
\end{equation*}
Moreover, $Q$ and $w$ are linearly independent, since $L_QQ=-Q$ whereas $L_Qw=0$. Thus $E=\operatorname{span}\{Q,w\}$ is a two-dimensional subspace of $\mathcal{X}_Q^{\mathrm{rad}}$, and for $u=aQ+bw$,
\begin{equation*}
\mathfrak{q}_Q(u,u)=-Ma^2\leq0.
\end{equation*}
Since equality is attained for every nonzero multiple of $w$,
\begin{equation*}
\sup_{0\neq u\in E}\mathcal{R}_Q(u)=0.
\end{equation*}
The definition of $\lambda_2^{\mathrm{rad}}$ therefore gives $\lambda_2^{\mathrm{rad}}\leq0$. Hence \eqref{eq:lambda2-zero} follows.

By Proposition \ref{prop:prs}, there exist a normalized radial function $v\in\mathcal{D}(L_Q)$ and $r_*>0$ such that
\begin{equation*}
L_Qv=\lambda_2^{\mathrm{rad}}v=0,
\end{equation*}
and, after replacing $v$ by $-v$ if necessary,
\begin{equation}\label{eq:vsign}
v\geq0\quad\text{almost everywhere in }B_{r_*},\quad
v\leq0\quad\text{almost everywhere in }\mathbb{R}^N\setminus B_{r_*},
\end{equation}
with $v\not\equiv0$ on either region. Moreover, Lemma \ref{lem:eiglinfty} gives $v\in L^\infty(\mathbb{R}^N)$.

Since $v\in\ker L_Q$, \eqref{eq:weak-kernel} and the symmetry of $\mathfrak{q}_Q$ yield
\begin{equation*}
0=\mathfrak{q}_Q(v,Q)=\mathfrak{q}_Q(Q,v)
=\langle L_QQ,v\rangle_{L^2(\mathbb{R}^N)}
=-\int_{\mathbb{R}^N}Qv\mathrm{d}x.
\end{equation*}
Hence
\begin{equation}\label{eq:qorth}
\int_{\mathbb{R}^N}Qv\mathrm{d}x=0.
\end{equation} 

For $\lambda\geq1$, set $Q_\lambda(x)=Q(\lambda x)$. Since $Q$ is radially decreasing, $0<Q_\lambda\leq Q$. Hence \eqref{eq:qwgt} gives $Q_\lambda\in\mathcal{X}_Q$. Taking $Q_\lambda$ as a test function in $L_Qv=0$, we obtain
\begin{equation}\label{eq:tvq}
\left\langle(-\Delta)^{\frac{s}{2}}v,(-\Delta)^{\frac{s}{2}}Q_\lambda\right\rangle_{L^2(\mathbb{R}^N)}
=\int_{\mathbb{R}^N}(1+\log Q)vQ_\lambda\mathrm{d}x.
\end{equation}
Since
\begin{equation*}
(-\Delta)^sQ_\lambda=\lambda^{2s}Q_\lambda\log Q_\lambda
\end{equation*}
and $Q_\lambda\log Q_\lambda\in L^2(\mathbb{R}^N)$ by \eqref{eq:qint} and scaling, we may take $v$ as a test function in this equation. Hence
\begin{equation*}
\left\langle(-\Delta)^{\frac{s}{2}}Q_\lambda,(-\Delta)^{\frac{s}{2}}v\right\rangle_{L^2(\mathbb{R}^N)}
=\lambda^{2s}\int_{\mathbb{R}^N}vQ_\lambda\log Q_\lambda\mathrm{d}x.
\end{equation*}
Together with \eqref{eq:tvq}, this gives
\begin{equation}\label{eq:fscale}
\int_{\mathbb{R}^N}vQ_\lambda\left(1+\log Q-\lambda^{2s}\log Q_\lambda\right)\mathrm{d}x=0.
\end{equation}

We next derive a second orthogonality relation. By \eqref{eq:qint}, $x\cdot\nabla Q\in L^1(\mathbb{R}^N)$. For $\lambda>1$,
\begin{equation*}
\frac{Q-Q_\lambda}{\lambda-1}=-\frac{1}{\lambda-1}\int_1^\lambda x\cdot\nabla Q(tx)\mathrm{d}t.
\end{equation*}
Since
\begin{equation*}
x\cdot\nabla Q(tx)\to x\cdot\nabla Q(x)
\quad\text{in }L^1(\mathbb{R}^N)
\end{equation*}
as $t\to1$, it follows that
\begin{equation*}
\frac{Q-Q_\lambda}{\lambda-1}\to-x\cdot\nabla Q
\quad\text{in }L^1(\mathbb{R}^N)
\end{equation*}
as $\lambda\downarrow1$. For $0<a\leq b$, define
\begin{equation*}
R(a,b) = a\left(1+\log b-\log a\right)-b.
\end{equation*}
By \eqref{eq:rbd},
\begin{equation*}
\left|\frac{R(Q_\lambda,Q)}{\lambda-1}\right|
\leq\frac{Q-Q_\lambda}{\lambda-1}.
\end{equation*}
The right-hand side converges in $L^1(\mathbb{R}^N)$ and is therefore uniformly integrable. Moreover, \eqref{eq:rsmall} implies
\begin{equation*}
\frac{R(Q_\lambda,Q)}{\lambda-1}\to0
\quad\text{a.e. in }\mathbb{R}^N.
\end{equation*}
Vitali's theorem and $v\in L^\infty(\mathbb{R}^N)$ yield
\begin{equation}\label{eq:rlim}
\frac{1}{\lambda-1}\int_{\mathbb{R}^N}vR(Q_\lambda,Q)\mathrm{d}x\to0.
\end{equation}

Using \eqref{eq:qorth} and the definition of $R$, we rewrite \eqref{eq:fscale} as
\begin{equation*}
\int_{\mathbb{R}^N}vR(Q_\lambda,Q)\mathrm{d}x-(\lambda^{2s}-1)\int_{\mathbb{R}^N}vQ_\lambda\log Q_\lambda\mathrm{d}x=0.
\end{equation*}
Since
\begin{equation*}
Q_\lambda\log Q_\lambda\to Q\log Q \quad\text{in }L^2(\mathbb{R}^N),
\quad \frac{\lambda^{2s}-1}{\lambda-1}\to2s,
\end{equation*}
dividing by $\lambda-1$ and using \eqref{eq:rlim} gives
\begin{equation}\label{eq:qlorth}
\int_{\mathbb{R}^N}Qv\log Q\mathrm{d}x=0.
\end{equation}

Combining \eqref{eq:qorth} and \eqref{eq:qlorth}, we obtain
\begin{equation*}
0=\int_{\mathbb{R}^N}Qv\left(\log Q-\log Q(r_*)\right)\mathrm{d}x.
\end{equation*}
By \eqref{eq:vsign} and the strict radial monotonicity of $Q$,
\begin{equation*}
Qv\left(\log Q-\log Q(r_*)\right)\geq0
\quad\text{a.e. in }\mathbb{R}^N,
\end{equation*}
and the inequality is strict on a set of positive measure. This is a contradiction.
\end{proof}

\subsection{The nonradial sectors}
In the last subsection, we determine the nonradial zero modes of $L_Q$ by the spherical-harmonic decomposition and the ordering of the angular sectors. In particular, we show that the nonradial part of $\ker L_Q$ is generated by the translation modes $\partial_{x_1}Q,\cdots,\partial_{x_N}Q$.

Assume that $N\geq2$ and write $x=r\omega$, where $r=|x|$ and $\omega\in\mathbb{S}^{N-1}=\{\omega\in\mathbb{R}^N:|\omega|=1\}$. Set
\begin{equation*}
\mathcal{K}=L^2\left((0,\infty),r^{N-1}\mathrm{d}r\right),
\end{equation*}
with inner product and norm
\begin{equation*}
\langle f,g\rangle_{\mathcal{K}}=\int_0^\infty f(r)g(r)r^{N-1}\mathrm{d}r,\quad
\|f\|_{\mathcal{K}}^2=\int_0^\infty |f(r)|^2r^{N-1}\mathrm{d}r.
\end{equation*}
Let $\mathbb{N}_0=\{0,1,2,\cdots\}$. For $\ell\in\mathbb{N}_0$, let $\mathcal{Y}_\ell$ be the eigenspace of the Laplace-Beltrami operator $-\Delta_{\mathbb{S}^{N-1}}$ corresponding to the eigenvalue $\ell(\ell+N-2)$. Set $d_\ell=\dim\mathcal{Y}_\ell$ and let $\{Y_{\ell,k}\}_{k=1}^{d_\ell}$ be an orthonormal basis of $\mathcal{Y}_\ell$ in $L^2(\mathbb{S}^{N-1})$. Define
\begin{equation*}
\mathcal{H}_\ell=\left\{\sum_{k=1}^{d_\ell}f_k(r)Y_{\ell,k}(\omega):f_k\in\mathcal{K}\right\}.
\end{equation*}
Then
\begin{equation*}
L^2(\mathbb{R}^N)=\bigoplus_{\ell=0}^\infty\mathcal{H}_\ell
\end{equation*}
is the orthogonal decomposition into spherical harmonic sectors. We refer to \cite[Section 7.2 and Appendix C.3]{Frank-Lenzmann-Silvestre-2016} for this decomposition.

For $\ell\in\mathbb{N}_0$, consider the differential operator
\begin{equation*}
-\frac{\mathrm{d}^2}{\mathrm{d}r^2}-\frac{N-1}{r}\frac{\mathrm{d}}{\mathrm{d}r}+\frac{\ell(\ell+N-2)}{r^2}
\end{equation*}
initially defined on $C_c^\infty(0,\infty)$. We denote its Friedrichs extension in $\mathcal{K}$ by $-\Delta_\ell$. Since $-\Delta_\ell$ is nonnegative and self-adjoint, its fractional powers are defined by the spectral theorem, see \cite[Section 7.2 and Appendix C.3]{Frank-Lenzmann-Silvestre-2016} and \cite[Chapter VI, Section 5]{Kato-1995}.

For every $\ell\in\mathbb{N}_0$, set
\begin{equation*}
\mathcal{D}(\mathfrak{q}_\ell)=\left\{f\in\mathcal{D}\left((-\Delta_\ell)^{\frac{s}{2}}\right):\int_0^\infty(V_Q)_+(r)|f(r)|^2r^{N-1}\mathrm{d}r<\infty\right\}
\end{equation*}
and
\begin{equation*}
\mathfrak{q}_\ell(f,g)=\left\langle(-\Delta_\ell)^{\frac{s}{2}}f,(-\Delta_\ell)^{\frac{s}{2}}g\right\rangle_{\mathcal{K}}+\int_0^\infty V_Q(r)f(r)g(r)r^{N-1}\mathrm{d}r.
\end{equation*}
We equip $\mathcal{D}(\mathfrak{q}_\ell)$ with the norm
\begin{equation*}
\|f\|_{\ell,Q}^2=\|f\|_{\mathcal{K}}^2+\left\|(-\Delta_\ell)^{\frac{s}{2}}f\right\|_{\mathcal{K}}^2+\int_0^\infty(V_Q)_+(r)|f(r)|^2r^{N-1}\mathrm{d}r.
\end{equation*}
Since $(V_Q)_-\in L^\infty(\mathbb{R}^N)$, $\mathfrak{q}_\ell$ is well defined and symmetric on $\mathcal{D}(\mathfrak{q}_\ell)$. Define
\begin{equation*}
\mathcal{N}_\ell=\left\{f\in\mathcal{D}(\mathfrak{q}_\ell):\mathfrak{q}_\ell(f,g)=0
\quad\text{for every }g\in\mathcal{D}(\mathfrak{q}_\ell)\right\}.
\end{equation*}

\begin{lemma}\label{lem:sector-dec}
Let
\begin{equation*}
h(r,\omega)=\sum_{\ell=0}^\infty\sum_{k=1}^{d_\ell}f_{\ell,k}(r)Y_{\ell,k}(\omega)
\end{equation*}
be the spherical harmonic expansion of $h\in L^2(\mathbb{R}^N)$. Then
\begin{equation*}
h\in\mathcal{X}_Q \iff f_{\ell,k}\in\mathcal{D}(\mathfrak{q}_\ell)\text{ for every }\ell,k,\quad
\sum_{\ell=0}^\infty\sum_{k=1}^{d_\ell}\|f_{\ell,k}\|_{\ell,Q}^2<\infty.
\end{equation*}
If $h,\phi\in\mathcal{X}_Q$ have expansions
\begin{equation*}
h=\sum_{\ell=0}^\infty\sum_{k=1}^{d_\ell}f_{\ell,k}Y_{\ell,k},\quad \phi=\sum_{\ell=0}^\infty\sum_{k=1}^{d_\ell}g_{\ell,k}Y_{\ell,k},
\end{equation*}
then
\begin{equation}\label{eq:sector-form}
\mathfrak{q}_Q(h,\phi)=\sum_{\ell=0}^\infty\sum_{k=1}^{d_\ell}\mathfrak{q}_\ell(f_{\ell,k},g_{\ell,k}).
\end{equation}
Moreover, for $h\in\mathcal{X}_Q$,
\begin{equation}\label{eq:sector-kernel-dec}
h\in\ker L_Q \iff f_{\ell,k}\in\mathcal{N}_\ell \quad\text{for every }\ell,k.
\end{equation}
\end{lemma}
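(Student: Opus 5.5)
The plan is to reduce every assertion to three ingredients: the unitary spherical-harmonic decomposition of $L^2(\mathbb{R}^N)$, the fact that $(-\Delta)^s$ acts sectorwise as $(-\Delta_\ell)^s$, and the radiality of $V_Q$. By \cite[Section 7.2 and Appendix C.3]{Frank-Lenzmann-Silvestre-2016}, the map $h\mapsto(f_{\ell,k})$ is unitary from $L^2(\mathbb{R}^N)$ onto $\bigoplus_{\ell,k}\mathcal{K}$. Since $-\Delta$ commutes with rotations, on $\mathcal H_\ell$ it acts as $-\Delta_\ell\otimes\mathrm{id}$ on each coefficient. By the spectral theorem (functional calculus commutes with this decomposition), this gives
\begin{equation*}
h\in H^s(\mathbb{R}^N)\iff f_{\ell,k}\in\mathcal D\bigl((-\Delta_\ell)^{\frac s2}\bigr),\ \sum_{\ell,k}\bigl\|(-\Delta_\ell)^{\frac s2}f_{\ell,k}\bigr\|_{\mathcal K}^2<\infty.
\end{equation*}
In that case
\begin{equation*}
\bigl\langle(-\Delta)^{\frac s2}h,(-\Delta)^{\frac s2}\phi\bigr\rangle_{L^2(\mathbb{R}^N)}=\sum_{\ell,k}\bigl\langle(-\Delta_\ell)^{\frac s2}f_{\ell,k},(-\Delta_\ell)^{\frac s2}g_{\ell,k}\bigr\rangle_{\mathcal K}.
\end{equation*}

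For the potential terms, I would integrate in polar coordinates. For a.e. $r$, the function $\omega\mapsto h(r\omega)$ lies in $L^2(\mathbb S^{N-1})$ with coefficients $f_{\ell,k}(r)$, so Parseval on the sphere gives
\begin{equation*}
\int_{\mathbb{R}^N}(V_Q)_+h^2\,\mathrm dx=\int_0^\infty(V_Q)_+(r)\sum_{\ell,k}|f_{\ell,k}(r)|^2r^{N-1}\,\mathrm dr=\sum_{\ell,k}\int_0^\infty(V_Q)_+|f_{\ell,k}|^2r^{N-1}\,\mathrm dr.
\end{equation*}
The interchange of sum and integral is justified by Tonelli, since all terms are nonnegative; this identity holds in $[0,\infty]$. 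Adding it to the $H^s$ identity yields $\|h\|_Q^2=\sum_{\ell,k}\|f_{\ell,k}\|_{\ell,Q}^2$, which is exactly the characterization of $\mathcal X_Q$.

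For \eqref{eq:sector-form}, I would polarize: the bilinear term $\int V_Qh\phi\,\mathrm dx$ equals $\sum_{\ell,k}\int_0^\infty V_Q\,f_{\ell,k}\,g_{\ell,k}\,r^{N-1}\,\mathrm dr$. This follows by applying Parseval on each sphere to the product $h\phi$ and using dominated convergence. The domination comes from Cauchy--Schwarz, $\sum|f_{\ell,k}g_{\ell,k}|\le(\sum f^2)^{1/2}(\sum g^2)^{1/2}$, so the sum is bounded by $|V_Q||h||\phi|$ in spherical mean, which is integrable because $h,\phi\in\mathcal X_Q$ and $(V_Q)_-$ is bounded. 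The fractional term was already handled above.

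For \eqref{eq:sector-kernel-dec}, I would use \eqref{eq:weak-kernel}. If every $f_{\ell,k}\in\mathcal N_\ell$, then \eqref{eq:sector-form} gives $\mathfrak q_Q(h,\phi)=0$ for all $\phi\in\mathcal X_Q$, so $h\in\ker L_Q$. Conversely, given $g\in\mathcal D(\mathfrak q_\ell)$, I take the single-sector test function $\phi=g(r)Y_{\ell,k}(\omega)$. By the first part, $\phi\in\mathcal X_Q$, and \eqref{eq:sector-form} collapses to $\mathfrak q_\ell(f_{\ell,k},g)=0$ by orthonormality. The only delicate point in the whole argument is the identification of $(-\Delta)^{s/2}$ with $(-\Delta_\ell)^{s/2}$ sectorwise, including the form domains. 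For that I would cite the unitary equivalence in \cite{Frank-Lenzmann-Silvestre-2016} (via Hankel transforms) rather than reprove it. Everything else is Parseval plus monotone and dominated convergence.
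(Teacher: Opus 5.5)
Your proposal is correct and follows the paper's proof essentially step by step. The kinetic part comes from the spherical-harmonic reduction in \cite[Appendix C.3]{Frank-Lenzmann-Silvestre-2016}, the potential part from Parseval on $\mathbb{S}^{N-1}$ plus Tonelli, the identity \eqref{eq:sector-form} from absolute convergence via Cauchy--Schwarz, and the kernel equivalence from \eqref{eq:weak-kernel} with single-sector test functions $g(r)Y_{\ell,k}(\omega)$. One small correction to your polarization step: the dominating function should be $|V_Q(r)|\,\|h(r\cdot)\|_{L^2(\mathbb{S}^{N-1})}\|\phi(r\cdot)\|_{L^2(\mathbb{S}^{N-1})}$, not a spherical mean of $|h||\phi|$. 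It is integrable against $r^{N-1}\mathrm{d}r$ by a further Cauchy--Schwarz in $r$, using $\int_{\mathbb{R}^N}(V_Q)_+h^2\mathrm{d}x<\infty$, $\int_{\mathbb{R}^N}(V_Q)_+\phi^2\mathrm{d}x<\infty$ and $(V_Q)_-\in L^\infty(\mathbb{R}^N)$.
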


\begin{proof}
By the spherical-harmonic decomposition in \cite[Appendix C.3, equations (C.17)--(C.18)]{Frank-Lenzmann-Silvestre-2016}, applied with zero potential,
\begin{equation*}
\|h\|_{L^2(\mathbb{R}^N)}^2=\sum_{\ell=0}^\infty\sum_{k=1}^{d_\ell}\|f_{\ell,k}\|_{\mathcal{K}}^2,
\end{equation*}
and
\begin{equation*}
h\in H^s(\mathbb{R}^N) \iff f_{\ell,k}\in\mathcal D\left((-\Delta_\ell)^{\frac{s}{2}}\right)\text{ for every }\ell,k,\quad
\sum_{\ell=0}^\infty\sum_{k=1}^{d_\ell}
\left\|(-\Delta_\ell)^{\frac{s}{2}}f_{\ell,k}\right\|_{\mathcal{K}}^2<\infty.
\end{equation*}
Moreover,
\begin{equation}\label{eq:sector-kinetic}
\left\|(-\Delta)^{\frac{s}{2}}h\right\|_{L^2(\mathbb{R}^N)}^2=\sum_{\ell=0}^\infty\sum_{k=1}^{d_\ell}
\left\|(-\Delta_\ell)^{\frac{s}{2}}f_{\ell,k}\right\|_{\mathcal{K}}^2.
\end{equation}

Since $(V_Q)_+$ is radial, Parseval's identity on $\mathbb{S}^{N-1}$ and Tonelli's theorem give
\begin{equation*}
\int_{\mathbb{R}^N}(V_Q)_+(x)|h(x)|^2\mathrm{d}x=\sum_{\ell=0}^\infty\sum_{k=1}^{d_\ell}
\int_0^\infty(V_Q)_+(r)|f_{\ell,k}(r)|^2r^{N-1}\mathrm{d}r.
\end{equation*}
This proves the characterization of $\mathcal{X}_Q$.

Let $h,\phi\in\mathcal{X}_Q$. Orthogonality of the spherical harmonics gives
\begin{equation*}
\left\langle(-\Delta)^{\frac{s}{2}}h,(-\Delta)^{\frac{s}{2}}\phi\right\rangle_{L^2(\mathbb{R}^N)}=\sum_{\ell=0}^\infty\sum_{k=1}^{d_\ell}
\left\langle(-\Delta_\ell)^{\frac{s}{2}}f_{\ell,k},(-\Delta_\ell)^{\frac{s}{2}}g_{\ell,k}\right\rangle_{\mathcal{K}}.
\end{equation*}
Since $V_Q$ is radial,
\begin{equation*}
\int_{\mathbb{R}^N}V_Q(x)h(x)\phi(x)\mathrm{d}x=\sum_{\ell=0}^\infty\sum_{k=1}^{d_\ell}
\int_0^\infty V_Q(r)f_{\ell,k}(r)g_{\ell,k}(r)r^{N-1}\mathrm{d}r.
\end{equation*}
The series on the right-hand side are absolutely convergent by the Cauchy-Schwarz inequality, using $(V_Q)_-\in L^\infty(\mathbb{R}^N)$ for the negative part of the potential term. Hence \eqref{eq:sector-form} follows.

It remains to prove \eqref{eq:sector-kernel-dec}. Suppose first that $h\in\ker L_Q$. Fix $\ell$, $k$ and $g\in\mathcal{D}(\mathfrak{q}_\ell)$, set
\begin{equation*}
\phi(r,\omega)=g(r)Y_{\ell,k}(\omega).
\end{equation*}
By the first assertion, $\phi\in\mathcal{X}_Q$. Hence \eqref{eq:weak-kernel} and \eqref{eq:sector-form} give
\begin{equation*}
\mathfrak{q}_\ell(f_{\ell,k},g)=\mathfrak{q}_Q(h,\phi)=0.
\end{equation*}
Since this holds for every $g\in\mathcal{D}(\mathfrak{q}_\ell)$, we have $f_{\ell,k}\in\mathcal{N}_\ell$.

Conversely, suppose that $f_{\ell,k}\in\mathcal{N}_\ell$ for every $\ell$ and $k$. Then for every $\phi\in\mathcal{X}_Q$, \eqref{eq:sector-form} gives
\begin{equation*}
\mathfrak{q}_Q(h,\phi)=\sum_{\ell=0}^\infty\sum_{k=1}^{d_\ell}
\mathfrak{q}_\ell(f_{\ell,k},g_{\ell,k})=0.
\end{equation*}
Thus $h\in\ker L_Q$ by \eqref{eq:weak-kernel}.
\end{proof}

We finally compare the quadratic forms $\mathfrak{q}_\ell$, $\ell\in\mathbb{N}_0$, associated with different angular sectors.

\begin{lemma}\label{lem:angular}
Let $0\leq\ell<\ell'$. Then
\begin{equation}\label{eq:angular-ordering}
\mathcal{D}(\mathfrak{q}_{\ell'})\subset\mathcal{D}(\mathfrak{q}_\ell),\quad
\mathfrak{q}_\ell(f,f)<\mathfrak{q}_{\ell'}(f,f)
\end{equation}
for every $0\neq f\in\mathcal{D}(\mathfrak{q}_{\ell'})$.
\end{lemma}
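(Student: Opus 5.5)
Since the potential term in $\mathfrak q_\ell$ does not depend on $\ell$, the whole statement reduces to comparing the kinetic parts $\|(-\Delta_\ell)^{s/2}f\|_{\mathcal K}^2$ for different $\ell$. I will derive this comparison from the operator ordering of the local operators $-\Delta_\ell$, pushed through the fractional power $s\in(0,1)$ by operator monotonicity. Strictness will come from a spectral representation of the fractional kinetic energy.

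\emph{Step 1 (local ordering).} Write $-\Delta_{\ell'}=-\Delta_\ell+c\,r^{-2}$ with $c=\ell'(\ell'+N-2)-\ell(\ell+N-2)>0$. On $C_c^\infty(0,\infty)$ this gives
\begin{equation*}
\langle -\Delta_{\ell'}f,f\rangle_{\mathcal K}=\langle -\Delta_\ell f,f\rangle_{\mathcal K}+c\int_0^\infty r^{-2}|f|^2r^{N-1}\mathrm{d}r .
\end{equation*}
Both operators are Friedrichs extensions, so their form domains are the closures of $C_c^\infty(0,\infty)$ in the respective form norms. Hence the form domain of $-\Delta_{\ell'}$ is contained in that of $-\Delta_\ell$, and $-\Delta_\ell\le-\Delta_{\ell'}$ in the form sense. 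Equivalently, $(-\Delta_\ell+t)^{-1}\ge(-\Delta_{\ell'}+t)^{-1}$ for every $t>0$.

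\emph{Step 2 (fractional ordering via Balakrishnan).} For a nonnegative self-adjoint $T$ and $0<s<1$, I will use
\begin{equation*}
\|T^{s/2}f\|^2=\frac{\sin(\pi s)}{\pi}\int_0^\infty t^{s-1}\left\langle T(T+t)^{-1}f,f\right\rangle\mathrm{d}t ,
\end{equation*}
where the right-hand side is $+\infty$ exactly when $f\notin\mathcal D(T^{s/2})$. Since $T(T+t)^{-1}=1-t(T+t)^{-1}$, the resolvent ordering from Step 1 gives, pointwise in $t$,
\begin{equation*}
\left\langle(-\Delta_\ell)(-\Delta_\ell+t)^{-1}f,f\right\rangle\le\left\langle(-\Delta_{\ell'})(-\Delta_{\ell'}+t)^{-1}f,f\right\rangle .
\end{equation*}
Integrating yields both $\mathcal D((-\Delta_{\ell'})^{s/2})\subset\mathcal D((-\Delta_\ell)^{s/2})$ and the non-strict kinetic inequality; this is the L\"owner--Heinz inequality. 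The potential parts coincide, so the domain inclusion $\mathcal D(\mathfrak q_{\ell'})\subset\mathcal D(\mathfrak q_\ell)$ and $\mathfrak q_\ell(f,f)\le\mathfrak q_{\ell'}(f,f)$ follow.

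\emph{Step 3 (strictness, the main obstacle).} I must show that equality forces $f=0$. If equality holds, the integrands in Step 2 agree for a.e. $t$, hence by continuity for every $t>0$:
\begin{equation*}
\left\langle\left[(-\Delta_\ell+t)^{-1}-(-\Delta_{\ell'}+t)^{-1}\right]f,f\right\rangle=0 .
\end{equation*}
The bracketed operator is nonnegative, so its square root annihilates $f$, and therefore $(-\Delta_\ell+t)^{-1}f=(-\Delta_{\ell'}+t)^{-1}f=:u$.

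Then $u$ is simultaneously in $\mathcal D(-\Delta_\ell)$ and $\mathcal D(-\Delta_{\ell'})$, with $-\Delta_\ell u=-\Delta_{\ell'}u$. Pairing with $u$ and using the form identity of Step 1 (valid on the form domain of $-\Delta_{\ell'}$ by density) gives
\begin{equation*}
c\int_0^\infty r^{-2}u^2r^{N-1}\mathrm{d}r=0 .
\end{equation*}
Hence $u=0$, and so $f=(-\Delta_\ell+t)u=0$.

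If justifying the form identity on the full domain proves awkward, I will fall back on a heat-kernel or Fourier--Bessel representation. There $(-\Delta_\ell)^{s/2}$ is diagonalized by the Hankel transform of order $\nu=\ell+\frac{N-2}{2}$, and strict monotonicity of the relevant kernel in $\nu$ gives strictness directly; this route is in the spirit of \cite[Appendix C.3]{Frank-Lenzmann-Silvestre-2016}.
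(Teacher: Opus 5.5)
Your proof is correct. Its skeleton matches the paper's: the potential term $\int_0^\infty V_Q f^2 r^{N-1}\mathrm{d}r$ does not depend on $\ell$, so everything reduces to the kinetic comparison $(-\Delta_\ell)^s<(-\Delta_{\ell'})^s$ together with the inclusion $\mathcal{D}((-\Delta_{\ell'})^{s/2})\subset\mathcal{D}((-\Delta_\ell)^{s/2})$.

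The difference is that the paper does not prove this comparison. It imports it from \cite[Lemma C.7]{Frank-Lenzmann-Silvestre-2016}, applied with zero potential. You derive it in three steps:
\begin{enumerate}
\item You get the form ordering of the Friedrichs extensions from the extra Hardy term $c\,r^{-2}$.
\item You transfer it to fractional powers through the Balakrishnan/L\"owner--Heinz representation. This gives the domain inclusion for free, since the integral is $+\infty$ exactly off $\mathcal{D}(T^{s/2})$.
\item You obtain strictness by showing that equality forces $(-\Delta_\ell+t)^{-1}f=(-\Delta_{\ell'}+t)^{-1}f=u$ at some $t>0$. The strictly positive Hardy term then makes $u=0$, hence $f=0$.
\end{enumerate}

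What each route buys: the paper's is a one-line citation, consistent with its reliance on the same appendix of \cite{Frank-Lenzmann-Silvestre-2016} for the sector decomposition in Lemma \ref{lem:sector-dec}. Yours is self-contained and abstract. It shows $A^s<B^s$ strictly whenever $0\leq A\leq B$ in the form sense and the difference form is positive definite, and it uses no Bessel-function or Hankel-transform information.

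In particular, your fallback is unnecessary. The identity between the form of $-\Delta_{\ell'}$ and that of $-\Delta_\ell$ plus $c\int_0^\infty r^{-2}u^2r^{N-1}\mathrm{d}r$ passes from $C_c^\infty(0,\infty)$ to the whole form domain of $-\Delta_{\ell'}$. The reason is that the Hardy term is dominated by the $\ell'$-form norm, so along an approximating sequence $u_n$ the functions $r^{-1}u_n$ converge in $\mathcal{K}$ to $r^{-1}u$.
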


\begin{proof}
Applying \cite[Lemma C.7]{Frank-Lenzmann-Silvestre-2016} with zero potential gives
$
(-\Delta_{\ell'})^s>(-\Delta_\ell)^s
$
in the sense of quadratic forms. Thus
\begin{equation*}
\mathcal{D}\left((-\Delta_{\ell'})^{\frac{s}{2}}\right)
\subset \mathcal{D}\left((-\Delta_\ell)^{\frac{s}{2}}\right), \quad
\left\|(-\Delta_\ell)^{\frac{s}{2}}f\right\|_{\mathcal{K}}^2
<\left\|(-\Delta_{\ell'})^{\frac{s}{2}}f\right\|_{\mathcal{K}}^2
\end{equation*}
for every $0\neq f\in\mathcal{D}((-\Delta_{\ell'})^{\frac{s}{2}})$. Since
\begin{equation*}
\int_0^\infty(V_Q)_+(r)|f(r)|^2r^{N-1}\mathrm{d}r<\infty
\end{equation*}
is independent of $\ell$, we obtain
$
\mathcal{D}(\mathfrak{q}_{\ell'})\subset\mathcal{D}(\mathfrak{q}_\ell).
$
The potential term is the same in both sectors. Therefore, for every $0\neq f\in\mathcal{D}(\mathfrak{q}_{\ell'})$,
\begin{equation*}
\mathfrak{q}_{\ell'}(f,f)-\mathfrak{q}_\ell(f,f)=\left\|(-\Delta_{\ell'})^{\frac{s}{2}}f\right\|_{\mathcal{K}}^2-\left\|(-\Delta_\ell)^{\frac{s}{2}}f\right\|_{\mathcal{K}}^2
>0.
\end{equation*}
This proves \eqref{eq:angular-ordering}.
\end{proof}

The angular ordering, together with $\operatorname{ind}(\mathfrak q_Q)=1$, now determines all nonradial zero modes. For $\omega=(\omega_1,\cdots,\omega_N)\in\mathbb{S}^{N-1}$, we denote by $\omega_j$ its $j$-th coordinate, so that $\omega_j=\frac{x_j}{r}$.

\begin{lemma}\label{lem:nonradial-kernel}
If $N\geq2$, then
\begin{equation*}
\mathcal{N}_1=\operatorname{span}\{Q'\},\quad
\mathcal{N}_\ell=\{0\}\quad\text{for }\ell\geq2.
\end{equation*}
Consequently,
\begin{equation*}
\ker L_Q\cap\mathcal{H}_1=\operatorname{span}\left\{\partial_{x_1}Q,\cdots,\partial_{x_N}Q\right\}.
\end{equation*}
\end{lemma}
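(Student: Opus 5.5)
We first place $Q'$ in $\mathcal{N}_1$. Since $\partial_{x_j}Q=Q'(r)\omega_j$ and $\omega_j$ is (up to normalization) an element of $\mathcal{Y}_1$, Lemma \ref{lem:itrans} gives $\partial_{x_j}Q\in\ker L_Q\cap\mathcal{X}_Q$. Its spherical harmonic expansion has a single nonzero sector $\ell=1$, with radial coefficient a constant multiple of $Q'$. By \eqref{eq:sector-kernel-dec} in Lemma \ref{lem:sector-dec}, $Q'\in\mathcal{N}_1$, and in particular $Q'\in\mathcal{D}(\mathfrak q_1)$.

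Next we show that $Q'$ is the ground state of $\mathfrak q_1$, i.e. $\mathfrak q_1(f,f)\geq0$ for all $f\in\mathcal{D}(\mathfrak q_1)$, with equality only on $\operatorname{span}\{Q'\}$. Nonnegativity follows from the Morse index. If $\mathfrak q_1(f,f)<0$ for some $f$, then Lemma \ref{lem:angular} gives $\mathfrak q_0(f,f)<\mathfrak q_1(f,f)<0$ with $f\in\mathcal{D}(\mathfrak q_0)$. Set $h_0=f$ (radial) and $h_1=f\,Y_{1,1}$. By \eqref{eq:sector-form}, $\mathfrak q_Q(a h_0+bh_1,a h_0+bh_1)=a^2\mathfrak q_0(f,f)+b^2\mathfrak q_1(f,f)<0$ for $(a,b)\neq0$. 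This produces a two-dimensional negative subspace of $\mathcal X_Q$, contradicting Lemma \ref{lem:morse}.

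For the equality statement, let $f\in\mathcal{N}_1$. Since $Q'<0$ on $(0,\infty)$ by Proposition \ref{prop:known}, the plan is to use the positivity-improving structure of $\mathfrak q_1$.
\begin{itemize}
\item[(i)] Any minimizer $g$ of $\mathfrak q_1(g,g)/\|g\|_{\mathcal K}^2$ at level $0$ satisfies $\mathfrak q_1(|g|,|g|)\le\mathfrak q_1(g,g)$. We get this by applying Lemma \ref{lem:modulus} to $h=g(r)\omega_1$: the function $|g(r)|\,\omega_1$ differs from $|h|$, so instead we use the representation of $(-\Delta_1)^s$ through $\mathbb{R}^N$ with kernel $\int_{\mathbb{S}^{N-1}}\frac{\omega\cdot\sigma}{|r\omega-\rho\sigma|^{N+2s}}$-type weights. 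By \cite[Appendix C]{Frank-Lenzmann-Silvestre-2016}, the heat kernel of $(-\Delta_\ell)^s$ is positivity improving on $\mathcal K$.
\item[(ii)] Hence the lowest eigenvalue of the self-adjoint operator associated with $\mathfrak q_1$ is simple with a positive eigenfunction.
\item[(iii)] Since $\mathfrak q_1\ge0$, elements of $\mathcal N_1$ are exactly minimizers at level $0$. Simplicity and $-Q'>0$ being such a minimizer give $\mathcal N_1=\operatorname{span}\{Q'\}$.
\end{itemize}

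For $\ell\ge2$ and $0\ne f\in\mathcal N_\ell$, Lemma \ref{lem:angular} gives $0=\mathfrak q_\ell(f,f)>\mathfrak q_1(f,f)\ge0$, a contradiction. So $\mathcal N_\ell=\{0\}$. The consequence then follows from \eqref{eq:sector-kernel-dec}: an element of $\ker L_Q\cap\mathcal H_1$ has the form $\sum_k c_kQ'(r)Y_{1,k}(\omega)$, and $\{Y_{1,k}\}$ spans $\{\omega_1,\dots,\omega_N\}$.

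The main obstacle is step (i)–(ii), the simplicity of the zero level of $\mathfrak q_1$ with an unbounded potential. I would first try to import the Perron–Frobenius property for $(-\Delta_\ell)^s+V$ from \cite[Lemma C.7 and Section 7]{Frank-Lenzmann-Silvestre-2016}, approximating $V_Q$ by the truncations $V_m$ exactly as in Lemma \ref{lem:prs-econv}. Alternatively, one can argue directly: a minimizer $g$ gives a minimizer $|g|$, both are $\mathfrak q_1$-harmonic, and the strict inequality in the kernel comparison forces $g$ to have one sign. Two linearly independent one-signed elements, $Q'$ and $f$, could then be combined into a sign-changing minimizer, which is a contradiction.
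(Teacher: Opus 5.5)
Your first step ($Q'\in\mathcal N_1$), your nonnegativity argument for $\mathfrak q_1$, the exclusion of $\ell\ge 2$, and the final identification of $\ker L_Q\cap\mathcal H_1$ are all fine. The nonnegativity argument is a valid variant of the paper's: you lift $f$ to the radial sector via Lemma \ref{lem:angular} instead of pairing $fY$ with $Q$, and the harmless factor $|\mathbb S^{N-1}|$ in $\mathfrak q_Q(h_0,h_0)$ does not affect the sign.

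\textbf{The gap.} The claim $\dim\mathcal N_1=1$ is not proved, and you flag steps (i)--(ii) yourself as the obstacle. The positivity of the angular kernel in (i) is asserted rather than checked. Even granting that $e^{-t(-\Delta_1)^s}$ is positivity improving, you would still need a Perron--Frobenius statement for the form sum $(-\Delta_1)^s+V_Q$ with $V_Q$ unbounded above, and you do not establish one. The truncation strategy cannot supply it either. Each $(-\Delta_1)^s+V_m$ may have a simple lowest eigenvalue, but simplicity is not preserved as $m\to\infty$, because the first two eigenvalues may merge in the limit. A convergence argument of the type in Lemma \ref{lem:prs-econv} transfers eigenvalue levels, not simplicity.

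\textbf{The missing idea.} You already used it in the nonnegativity step. If $0\neq f\in\mathcal N_1$, then $\mathfrak q_1(f,f)=0$, so Lemma \ref{lem:angular} gives $f\in\mathcal D(\mathfrak q_0)$ and $\mathfrak q_0(f,f)<0$. Set $Y_0=|\mathbb S^{N-1}|^{-1/2}$. The linear space $\{fY_0:f\in\mathcal N_1\}\subset\mathcal X_Q$ has dimension $\dim\mathcal N_1$, and every nonzero element satisfies $\mathfrak q_Q(fY_0,fY_0)=\mathfrak q_0(f,f)<0$. Lemma \ref{lem:morse} then forces $\dim\mathcal N_1\le 1$, and since $Q'\in\mathcal N_1$ you get $\mathcal N_1=\operatorname{span}\{Q'\}$. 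This is the paper's argument, and no positivity or unbounded-potential issues arise.

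\textbf{Repairing your own route.} If you prefer the sign-based approach, you can make it rigorous without semigroups by the odd reflection across $\{x_1=0\}$, exactly as in Lemma \ref{lem:odd-kernel}. For $h=g(r)\omega_1$ you have $\operatorname{sgn}(x_1)|h|=|g(r)|\omega_1$. The difference of the two forms is then an integral over $\{x_1>0\}^2$ of $|h(x)h(y)|-h(x)h(y)$ against the positive kernel $|x-y|^{-N-2s}-|x-y^*|^{-N-2s}$. This gives $\mathfrak q_1(|g|,|g|)\le\mathfrak q_1(g,g)$, with equality only for one-signed $g$. Your orthogonalization against $Q'<0$ then finishes the proof.
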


\begin{proof}
Since $\mathcal{Y}_1=\operatorname{span}\{\omega_1,\cdots,\omega_N\}$, we may write $Y_{1,1}=\sum_{j=1}^N a_j\omega_j$ for some $(a_1,\cdots,a_N)\neq0$. Hence
\begin{equation*}
Q'(r)Y_{1,1}(\omega)=\sum_{j=1}^N a_jQ'(r)\frac{x_j}{r}=\sum_{j=1}^N a_j\partial_{x_j}Q.
\end{equation*}
By Lemma \ref{lem:itrans}, $Q'(r)Y_{1,1}(\omega)\in\ker L_Q$. Lemma \ref{lem:sector-dec} therefore gives $Q'\in\mathcal{N}_1$.

We first show that
\begin{equation}\label{eq:q1-positive}
\mathfrak{q}_1(f,f)\geq0
\quad\text{for every }f\in\mathcal{D}(\mathfrak{q}_1).
\end{equation}
Otherwise, let $0\neq f\in\mathcal{D}(\mathfrak{q}_1)$ satisfy $\mathfrak{q}_1(f,f)<0$. Choose $Y\in\mathcal{Y}_1$ with $\|Y\|_{L^2(\mathbb{S}^{N-1})}=1$ and set $h(r,\omega)=f(r)Y(\omega)$. Since $Q$ is radial and $Y\perp\mathcal{Y}_0$, we have
$
\langle Q,h\rangle_{L^2(\mathbb{R}^N)}=0.
$
By the sector decomposition and $\|Y\|_{L^2(\mathbb{S}^{N-1})}=1$,
\begin{equation*}
\mathfrak{q}_Q(h,h)=\mathfrak{q}_1(f,f)<0.
\end{equation*}
Since $L_QQ=-Q$,
\begin{equation*}
\mathfrak{q}_Q(Q,h)=-\langle Q,h\rangle_{L^2(\mathbb{R}^N)}=0,\quad\mathfrak{q}_Q(Q,Q)=-M<0.
\end{equation*}
Therefore, for every $(a,b)\neq(0,0)$,
\begin{equation*}
\mathfrak{q}_Q(aQ+bh,aQ+bh)=-Ma^2+b^2\mathfrak{q}_1(f,f)<0.
\end{equation*} Thus $\operatorname{span}\{Q,h\}$ is a two-dimensional negative subspace for $\mathfrak{q}_Q$, contradicting Lemma \ref{lem:morse}. This proves \eqref{eq:q1-positive}.

Suppose that $\dim\mathcal{N}_1\geq2$. For every $0\neq f\in\mathcal{N}_1$, the definition of $\mathcal{N}_1$ gives $\mathfrak{q}_1(f,f)=0$. By Lemma \ref{lem:angular},
\begin{equation*}
\mathfrak{q}_0(f,f)<\mathfrak{q}_1(f,f)=0.
\end{equation*}
Let $Y_0=|\mathbb{S}^{N-1}|^{-\frac{1}{2}}$. Then
\begin{equation*}
\mathfrak{q}_Q(fY_0,fY_0)=\mathfrak{q}_0(f,f)<0
\end{equation*}
for every $0\neq f\in\mathcal{N}_1$. Hence
$
\left\{fY_0:f\in\mathcal{N}_1\right\}
$
is a negative subspace for $\mathfrak{q}_Q$ of dimension at least two, again contradicting Lemma \ref{lem:morse}. Therefore, $\dim\mathcal{N}_1=1$. Since $Q'\in\mathcal{N}_1$ and $Q'\not\equiv0$,
\begin{equation*}
\mathcal{N}_1=\operatorname{span}\{Q'\}.
\end{equation*} 
Let $\ell\geq2$. If $0\neq f\in\mathcal{N}_\ell$, then $\mathfrak{q}_\ell(f,f)=0$. By Lemma \ref{lem:angular}, $f\in\mathcal{D}(\mathfrak{q}_1)$ and
\begin{equation*}
\mathfrak{q}_1(f,f)<\mathfrak{q}_\ell(f,f)=0,
\end{equation*}
contradicting \eqref{eq:q1-positive}. Hence $\mathcal{N}_\ell=\{0\}$ for every $\ell\geq2$.

Finally, let $h\in\ker L_Q\cap\mathcal{H}_1$ and write
\begin{equation*}
h(r,\omega)=\sum_{k=1}^{d_1}f_k(r)Y_{1,k}(\omega).
\end{equation*}
By Lemma \ref{lem:sector-dec}, $f_k\in\mathcal{N}_1$ for every $k$, and hence $f_k=c_kQ'$. Since $\mathcal{Y}_1=\operatorname{span}\{\omega_1,\cdots,\omega_N\}$, we obtain
\begin{equation*}
h\in \operatorname{span}\left\{Q'(r)\frac{x_1}{r},\cdots,Q'(r)\frac{x_N}{r}\right\}=\operatorname{span}\left\{\partial_{x_1}Q,\cdots,\partial_{x_N}Q\right\}.
\end{equation*}
The reverse inclusion follows from Lemma \ref{lem:itrans}.
\end{proof}

When $N=1$, the even sector is already covered by the radial result, while the odd sector requires a separate argument. Define
\begin{equation*}
L^2_{\mathrm{even}}(\mathbb{R})=\left\{f\in L^2(\mathbb{R}):f(-x)=f(x)\text{ for almost every }x\right\}
\end{equation*}
and
\begin{equation*}
L^2_{\mathrm{odd}}(\mathbb{R})=\left\{f\in L^2(\mathbb{R}):f(-x)=-f(x)\text{ for almost every }x\right\}.
\end{equation*}
Radial functions are even, so Lemma \ref{lem:rker} gives
\begin{equation*}
\ker L_Q\cap L^2_{\mathrm{even}}(\mathbb{R})=\{0\}.
\end{equation*}

It remains to identify the odd kernel.

\begin{lemma}\label{lem:odd-kernel}
Assume that $N=1$. Then
\begin{equation*}
\ker L_Q\cap L^2_{\mathrm{odd}}(\mathbb{R})=\operatorname{span}\{Q'\}.
\end{equation*}
\end{lemma}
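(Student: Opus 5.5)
We need to prove that in $N=1$ the odd kernel of $L_Q$ is spanned by $Q'$. The plan is to mimic the $\ell=1$ argument of Lemma \ref{lem:nonradial-kernel}, replacing the spherical-harmonic ordering by a direct comparison between odd functions and their absolute values.

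The inclusion $\operatorname{span}\{Q'\}\subset\ker L_Q\cap L^2_{\mathrm{odd}}(\mathbb{R})$ is immediate. Indeed, $Q'=\partial_{x_1}Q$ lies in $\ker L_Q$ by Lemma \ref{lem:itrans}, and $Q'$ is odd because $Q$ is even.

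For the reverse inclusion, the first step is to show that
\begin{equation*}
\mathfrak q_Q(h,h)\geq0\quad\text{for every odd }h\in\mathcal X_Q.
\end{equation*}
Since $Q$ is even, every odd $h$ satisfies $\langle Q,h\rangle_{L^2(\mathbb{R})}=0$. The claim then follows directly from \eqref{eq:tnga} in the proof of Lemma \ref{lem:morse}.

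Next, suppose $\dim(\ker L_Q\cap L^2_{\mathrm{odd}})\geq2$. Then I can pick an odd kernel element $w\neq0$ that vanishes at a chosen point, or is otherwise not of one sign on $(0,\infty)$. Concretely, take any $w\ne 0$ in the kernel with $\langle w,Q'\rangle\mathbf 1_{(0,\infty)}$-orthogonality, so that $w$ restricted to $(0,\infty)$ must change sign, because $-Q'>0$ there.

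The key comparison is between $w$ and $\tilde w=\operatorname{sgn}(x)|w|$. Both are odd, and the potential terms in $\mathfrak q_Q$ coincide for them. For the kinetic part, I would use the Gagliardo form together with the odd reflection. Writing the double integral over $(0,\infty)^2$ with kernel
\begin{equation*}
K(x,y)=|x-y|^{-1-2s}-|x+y|^{-1-2s}>0
\end{equation*}
plus a diagonal term, one gets
\begin{equation*}
A(w)=c\iint_{(0,\infty)^2}\bigl(w(x)-w(y)\bigr)^2K\,\mathrm dx\,\mathrm dy+\int_0^\infty w^2\,\kappa(x)\,\mathrm dx,
\end{equation*}
where $\kappa(x)=c'x^{-2s}$ comes from the cross term. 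Since $K>0$, the same strict argument as in Lemma \ref{lem:modulus} gives $A(\tilde w)<A(w)$ whenever $w$ changes sign on $(0,\infty)$. This yields
\begin{equation*}
\mathfrak q_Q(\tilde w,\tilde w)<\mathfrak q_Q(w,w)=0,
\end{equation*}
which contradicts the nonnegativity established in the first step. Hence every odd kernel element has one sign on $(0,\infty)$.

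To finish, I would take the two kernel elements $w$ and $Q'$, both nonzero on $(0,\infty)$. Choosing a suitable combination $w-cQ'$ produces a nonzero odd kernel element that changes sign on $(0,\infty)$. For example, pick $c$ so that $\int_0^\infty(w-cQ')Q'=0$; if $w-cQ'\neq0$, it cannot have one sign because $Q'<0$ on $(0,\infty)$. This is a contradiction, so $w=cQ'$.

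I expect the main obstacle to be the rigorous odd-reflection identity for $A$ and the strictness. I would derive it directly from \eqref{eq:gagliardo} by splitting $\mathbb{R}^2$ into quadrants and using $w(-y)=-w(y)$. Strictness holds because equality would force $w(x)w(y)\geq0$ a.e. on $(0,\infty)^2$, as in Lemma \ref{lem:modulus}. One also needs $\tilde w\in\mathcal X_Q$, which follows from the same identity and $|\tilde w|=|w|$.
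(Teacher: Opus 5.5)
Your proposal is correct and follows essentially the same route as the paper: nonnegativity of $\mathfrak{q}_Q$ on odd functions (you quote \eqref{eq:tnga} directly via $h\perp Q$, whereas the paper reruns the $\operatorname{span}\{Q,h\}$ Morse-index argument), then the quadrant decomposition comparing $h$ with $\operatorname{sgn}(x)|h|$ to force a constant sign on $(0,\infty)$, and finally projection against $Q'<0$ on $(0,\infty)$. Your rewriting of the cross-quadrant term as the positive kernel $|x-y|^{-1-2s}-(x+y)^{-1-2s}$ plus a weight proportional to $x^{-2s}$ is equivalent to the paper's $J_s$ identity; since every term is nonnegative, Tonelli justifies the splitting, and the weighted term is automatically finite whenever $A(w)<\infty$.
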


\begin{proof}
Let $h\in\mathcal{X}_Q\cap L^2_{\mathrm{odd}}(\mathbb{R})$ and set $g=h|_{(0,\infty)}$. Since $h(-x)=-h(x)$ and $V_Q$ is even, decomposing $\mathbb{R}^2$ into the four quadrants gives
\begin{equation}\label{eq:oddform}
\frac{1}{2}\mathfrak{q}_Q(h,h)
=\frac{c_{1,s}}{2}\iint_{(0,\infty)^2}\left(\frac{(g(x)-g(y))^2}{|x-y|^{1+2s}}
+\frac{(g(x)+g(y))^2}{(x+y)^{1+2s}}\right)\mathrm{d}x\mathrm{d}y+\int_0^\infty V_Q(x)g(x)^2\mathrm{d}x.
\end{equation}
Define
\begin{equation*}
h^\sharp(x)=\operatorname{sgn}(x)|h(x)|,\quad
\operatorname{sgn}(0)=0.
\end{equation*}
Then $h^\sharp$ is odd and its restriction to $(0,\infty)$ is $|g|$. Since
\begin{equation*}
(g(x)-g(y))^2-\left(|g(x)|-|g(y)|\right)^2=2\left(|g(x)g(y)|-g(x)g(y)\right)
\end{equation*}
and
\begin{equation*}
(g(x)+g(y))^2-\left(|g(x)|+|g(y)|\right)^2=-2\left(|g(x)g(y)|-g(x)g(y)\right),
\end{equation*}
the double-integral term in \eqref{eq:oddform} does not increase when $g$ is replaced by $|g|$. Hence $h^\sharp\in H^s(\mathbb{R})$. Since $|h^\sharp|=|h|$, we also have
\begin{equation*}
\int_{\mathbb{R}}(V_Q)_+|h^\sharp|^2\mathrm{d}x=\int_{\mathbb{R}}(V_Q)_+|h|^2\mathrm{d}x<\infty.
\end{equation*}
Thus $h^\sharp\in\mathcal{X}_Q$, and \eqref{eq:oddform} gives
\begin{equation}\label{eq:odddiam}
\frac{1}{2}\left(\mathfrak{q}_Q(h,h)-\mathfrak{q}_Q(h^\sharp,h^\sharp)\right)=\iint_{(0,\infty)^2}J_s(x,y)\left(|g(x)g(y)|-g(x)g(y)\right)\mathrm{d}x\mathrm{d}y\geq0,
\end{equation}
where
\begin{equation*}
J_s(x,y)=c_{1,s}\left(\frac{1}{|x-y|^{1+2s}}-\frac{1}{(x+y)^{1+2s}}\right)>0
\end{equation*}
for $x,y>0$. Equality in \eqref{eq:odddiam} holds if and only if
\begin{equation}\label{eq:constantsign}
g(x)g(y)\geq0
\quad\text{for almost every }(x,y)\in(0,\infty)^2.
\end{equation}

Let $h\in\mathcal{X}_Q\cap L^2_{\mathrm{odd}}(\mathbb{R})$. Since $L_QQ=-Q$, while $Q$ is even and $h$ is odd,
\begin{equation*}
\mathfrak{q}_Q(Q,h)=-\langle Q,h\rangle_{L^2(\mathbb{R})}=0,\quad\mathfrak{q}_Q(Q,Q)=-M<0.
\end{equation*}
If $\mathfrak{q}_Q(h,h)<0$, then for every $(a,b)\neq(0,0)$,
\begin{equation*}
\mathfrak{q}_Q(aQ+bh,aQ+bh)=-a^2M+b^2\mathfrak{q}_Q(h,h)<0.
\end{equation*}
Thus $\operatorname{span}\{Q,h\}$ is a two-dimensional negative subspace, contradicting Lemma \ref{lem:morse}. Therefore
\begin{equation}\label{eq:odd-nonnegative}
\mathfrak{q}_Q(h,h)\geq0\quad\text{for every }h\in\mathcal{X}_Q\cap L^2_{\mathrm{odd}}(\mathbb{R}).
\end{equation}

Now let $0\neq h\in\ker L_Q\cap L^2_{\mathrm{odd}}(\mathbb{R})$. Since $\mathfrak{q}_Q(h,h)=0$, \eqref{eq:odddiam} gives
\begin{equation*}
\mathfrak{q}_Q(h^\sharp,h^\sharp)\leq0.
\end{equation*}
Since $h^\sharp\in\mathcal{X}_Q\cap L^2_{\mathrm{odd}}(\mathbb{R})$, \eqref{eq:odd-nonnegative} yields $\mathfrak{q}_Q(h^\sharp,h^\sharp)\geq0$. Hence equality holds in \eqref{eq:odddiam}, and \eqref{eq:constantsign} shows that $h$ has a constant sign on $(0,\infty)$.

By Lemma \ref{lem:itrans}, $Q'\in\ker L_Q$, and Proposition \ref{prop:known} gives $Q'(x)<0$ for every $x>0$. Suppose that $h$ and $Q'$ are linearly independent. Set
\begin{equation*}
k=h-\frac{\int_0^\infty hQ'\mathrm{d}x}{\int_0^\infty |Q'|^2\mathrm{d}x}Q'.
\end{equation*}
Then $0\neq k\in\ker L_Q\cap L^2_{\mathrm{odd}}(\mathbb{R})$ and
\begin{equation*}
\int_0^\infty kQ'\mathrm{d}x=0.
\end{equation*}
Applying the conclusion above to $k$, we see that $k$ has a constant sign on $(0,\infty)$. Since $Q'(x)<0$ for every $x>0$ and $k\not\equiv0$,
\begin{equation*}
\int_0^\infty kQ'\mathrm{d}x\neq0,
\end{equation*}
a contradiction. Therefore $h\in\operatorname{span}\{Q'\}$. The reverse inclusion follows from Lemma \ref{lem:itrans}.
\end{proof}

Finally, combining the radial kernel result with the analysis of the nonradial sectors, we prove the nondegeneracy statement in Theorem \ref{thm:main}.

\begin{proof}[Proof of Theorem \ref{thm:main}-(1)]
Assume first that $N\geq2$. Let $h\in\ker L_Q$ and write
\begin{equation*}
h(r,\omega)=\sum_{\ell=0}^\infty\sum_{k=1}^{d_\ell}f_{\ell,k}(r)Y_{\ell,k}(\omega).
\end{equation*}
By Lemma \ref{lem:sector-dec}, $f_{\ell,k}\in\mathcal{N}_\ell$ for every $\ell,k$. The $\ell=0$ component is a radial kernel function and therefore vanishes by Lemma \ref{lem:rker}. By Lemma \ref{lem:nonradial-kernel}, all components with $\ell\geq2$ vanish, while the $\ell=1$ component belongs to
$
\operatorname{span}\left\{\partial_{x_1}Q,\cdots,\partial_{x_N}Q\right\}.
$
Hence
\begin{equation*}
h\in\operatorname{span}\left\{\partial_{x_1}Q,\cdots,\partial_{x_N}Q\right\}.
\end{equation*}
The reverse inclusion follows from Lemma \ref{lem:itrans}. Therefore
\begin{equation*}
\ker L_Q=\operatorname{span}\left\{\partial_{x_1}Q,\cdots,\partial_{x_N}Q\right\}.
\end{equation*}

Assume now that $N=1$. Let $h\in\ker L_Q$ and define
\begin{equation*}
h_{\mathrm{even}}(x)=\frac{h(x)+h(-x)}{2},\quad
h_{\mathrm{odd}}(x)=\frac{h(x)-h(-x)}{2}.
\end{equation*}
Since $V_Q$ is even and the fractional Laplacian is invariant under reflection, both $h_{\mathrm{even}}$ and $h_{\mathrm{odd}}$ belong to $\ker L_Q$. Lemma \ref{lem:rker} gives $h_{\mathrm{even}}=0$, while Lemma \ref{lem:odd-kernel} gives $h_{\mathrm{odd}}\in\operatorname{span}\{Q'\}$. Therefore
\begin{equation*}
\ker L_Q=\operatorname{span}\{Q'\}.
\end{equation*}
\end{proof}

\medskip
\section{Isolation of positive radial ground states}\label{sec:compactness-isolation}

In this section, we study the isolation of positive radial ground states of \eqref{eq:log}. The main idea is to use the compactness of radial minimizing sequences to show that, if two distinct radial ground states approach each other, their normalized difference would converge to a nontrivial radial element of $\ker L_Q$, contradicting the nondegeneracy result from Section \ref{sec:nondegeneracy}.

Fix a positive radial decreasing ground state $Q$ of \eqref{eq:log}, and let $M$ be given by \eqref{eq:M}. For $u\in L^2(\mathbb{R}^N)$, let $u^*$ be the symmetric decreasing rearrangement of $|u|$ and set
\begin{equation*}
d_Q(u)=\|u^*-Q\|_{L^2(\mathbb{R}^N)}.
\end{equation*}
For $u,v\in L^2(\mathbb{R}^N)$, \cite[Theorem 3.4]{Lieb-Loss-2001} gives
\begin{equation*}
\int_{\mathbb{R}^N}u^*v^*\mathrm{d}x\geq\int_{\mathbb{R}^N}|u||v|\mathrm{d}x.
\end{equation*}
Expanding the two squared norms and using Lemma \ref{lem:rearrangement}, we obtain
\begin{equation}\label{eq:rcontr}
\|u^*-v^*\|_{L^2(\mathbb{R}^N)} \leq \||u|-|v|\|_{L^2(\mathbb{R}^N)} \leq \|u-v\|_{L^2(\mathbb{R}^N)}.
\end{equation}

We first record the compactness gained from radial monotonicity.

\begin{lemma}\label{lem:rcomp}
Let $\{v_n\}$ be a sequence of nonnegative, radial and nonincreasing functions bounded in $H^s(\mathbb{R}^N)$. Then after passing to a subsequence, there exists $v\in H^s(\mathbb{R}^N)$ such that
\begin{equation*}
v_n\to v\quad\text{strongly in }L^q(\mathbb{R}^N)
\end{equation*}
for every $2<q<2_s^*$.
\end{lemma}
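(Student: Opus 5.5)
We will argue along the lines of Strauss' radial lemma, adapted to nonincreasing profiles so that the case $N=1$ is covered as well. Since $\{v_n\}$ is bounded in $H^s(\mathbb{R}^N)$, after extracting a subsequence we have $v_n\rightharpoonup v$ weakly in $H^s(\mathbb{R}^N)$. The compact embedding $H^s(B_R)\hookrightarrow L^2(B_R)$ combined with a diagonal argument lets us also assume $v_n\to v$ strongly in $L^2(B_R)$ for every $R>0$ and almost everywhere. The argument then has two steps: a uniform pointwise decay bound coming from monotonicity, and an interpolation that upgrades local convergence to global convergence in $L^q$.

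The first step is the decay estimate. Let $w\ge0$ be radial and nonincreasing. For $|x|=r$,
\begin{equation*}
w(r)^2\,|B_r|\le\int_{B_r}w^2\,\mathrm{d}x\le\|w\|_{L^2(\mathbb{R}^N)}^2,
\end{equation*}
and therefore $v_n(x)\le C|x|^{-N/2}$ uniformly in $n$, where $C$ depends only on $\sup_n\|v_n\|_{L^2}$. The same bound passes to $v$ almost everywhere, since $v$ is the a.e.\ limit of the $v_n$. The bound is valid in every dimension and does not use any derivative of $v_n$.

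The second step is the tail estimate. Fix $2<q<2_s^*$. On $\{|x|>R\}$ we use $|v_n-v|^q\le|v_n-v|^2\,\bigl(2CR^{-N/2}\bigr)^{q-2}$, which gives
\begin{equation*}
\int_{|x|>R}|v_n-v|^q\,\mathrm{d}x\le C'R^{-N(q-2)/2}\sup_n\|v_n-v\|_{L^2}^2\to0\quad(R\to\infty),
\end{equation*}
uniformly in $n$. On $B_R$ we interpolate between $L^2$ and a fixed exponent $p\in(q,2_s^*)$:
\begin{equation*}
\|v_n-v\|_{L^q(B_R)}\le\|v_n-v\|_{L^2(B_R)}^{\theta}\,\|v_n-v\|_{L^p}^{1-\theta}.
\end{equation*}
The $L^p$ factor is bounded by the Sobolev embedding, or by Lemma \ref{lem:fractional-gn} when $N\le 2s$, in which case $2_s^*=\infty$ and any finite $p$ works. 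The $L^2(B_R)$ factor tends to zero. Choosing $R$ large first and then letting $n\to\infty$ gives $v_n\to v$ in $L^q(\mathbb{R}^N)$. The extracted subsequence does not depend on $q$, so the convergence holds for every $q$ in the range simultaneously.

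We do not expect a genuine obstacle. The only point needing care is the decay bound: we deliberately avoid a Strauss-type estimate involving derivatives, because such estimates fail for $N=1$ and for small $s$. The elementary monotonicity bound handles every case. Note also that the argument requires $q>2$ strictly, since strong $L^2$ convergence on all of $\mathbb{R}^N$ can fail (for example by spreading), which is why the lemma excludes $q=2$.
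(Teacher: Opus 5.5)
Your proposal is correct and follows essentially the same route as the paper: the monotonicity bound $v_n(r)\le Cr^{-N/2}$ controls the tails on $\{|x|>R\}$ uniformly in $n$, and local compactness handles the balls $B_R$. The only cosmetic differences are that you get local $L^q$ convergence by interpolating local $L^2$ compactness against a Sobolev $L^p$ bound, whereas the paper invokes the compact embedding into $L^q(B_R)$ directly, and you bound $|v_n-v|^q$ in the tail at once rather than bounding $v_n^q$ and (via Fatou) $v^q$ separately.
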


\begin{proof}
After passing to a subsequence, we may assume that
\begin{equation*}
v_n\rightharpoonup v\quad\text{weakly in }H^s(\mathbb{R}^N), \quad v_n\to v \quad\text{almost everywhere in }\mathbb{R}^N. 
\end{equation*}
By the local compact Sobolev embedding,
\begin{equation*}
v_n\to v\quad\text{strongly in }L^q(B_R)
\end{equation*}
for every fixed $R>0$ and every $2<q<2_s^*$.

Since $v_n$ is radial and nonincreasing,
\begin{equation*}
|B_1|r^Nv_n(r)^2\leq\int_{|x|\leq r}v_n^2\mathrm{d}x\leq C,
\end{equation*}
and hence
$
v_n(r)\leq Cr^{-\frac{N}{2}}.
$
Therefore, for every $q>2$,
\begin{equation*}
\int_{|x|>R}v_n^q\mathrm{d}x\leq\sup_{r>R}v_n(r)^{q-2}\int_{\mathbb{R}^N}v_n^2\mathrm{d}x\leq CR^{-\frac{N(q-2)}{2}}.
\end{equation*}
By Fatou's lemma, the same bound holds for $v$. Thus
\begin{equation*}
\int_{|x|>R}|v_n-v|^q\mathrm{d}x\leq CR^{-\frac{N(q-2)}{2}}
\end{equation*}
uniformly in $n$. Together with the strong convergence on $B_R$, this yields the strong convergence in $L^q(\mathbb{R}^N)$ by first letting $n\to\infty$ and then $R\to+\infty$.
\end{proof}

We now apply this radial compactness to fixed-mass minimizing sequences and rule out loss of mass at infinity.

\begin{lemma}\label{lem:lcomp}
Let $u_n\in\mathcal{S}_M$ satisfy $\mathcal{I}_0(u_n)\to c_0$. After replacing $u_n$ by $u_n^*$ and passing to a subsequence, there exists a positive radial decreasing ground state $U$ of \eqref{eq:log} such that
\begin{equation*} u_n\to U\quad\text{strongly in }L^2(\mathbb{R}^N). \end{equation*}
If every $u_n$ is a ground state of \eqref{eq:log}, then the convergence is strong in $H^s(\mathbb{R}^N)$.
\end{lemma}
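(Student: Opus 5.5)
The plan is to reduce to radial, nonincreasing functions, get bounds, pass to a limit, and then show that no mass is lost. The mass-preserving step relies on the convexity-type inequality from Lemma \ref{lem:fmass}.

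\emph{Reduction and bounds.} By Lemma \ref{lem:rearrangement}, $u_n^*\in\mathcal{S}_M$ and $\mathcal{I}_0(u_n^*)\leq\mathcal{I}_0(u_n)$. Since $\mathcal{I}_0\geq c_0$ on $\mathcal{S}_M$, the sequence $u_n^*$ is again minimizing. We therefore assume that $u_n$ is nonnegative, radial and nonincreasing. To bound $A(u_n)$, we split $C_0(u_n)$ into its positive and negative parts. By \eqref{eq:log-growth} and Lemma \ref{lem:fractional-gn} with $q=2+\delta$ and $\delta$ small,
\begin{equation*}
\int u_n^2(\log u_n)_+\leq C\|u_n\|_{L^{2+\delta}}^{2+\delta}\leq CA(u_n)^{\theta}M^{\cdot},
\end{equation*}
where $\theta=N\delta/(4s)<1$. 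Since $\mathcal{I}_0(u_n)=\frac12A+\frac14M-\frac12C_0$ is bounded, this yields $\sup_nA(u_n)<\infty$. It also bounds $\int u_n^2(\log u_n)_-$.

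\emph{Passing to the limit.} By Lemma \ref{lem:rcomp}, after passing to a subsequence, $u_n\to U$ in $L^q$ for $2<q<2_s^*$, $u_n\rightharpoonup U$ in $H^s$, and $u_n\to U$ a.e. Hence $U$ is radial, nonnegative and nonincreasing. The positive part of the logarithmic term converges, by dominated-type bounds $t^2(\log t)_+\leq C(t^{2+\delta}+t^{q})$ and the $L^q$ convergence. For the negative part, Fatou's lemma gives
\begin{equation*}
\int U^2(\log U)_-\leq\liminf\int u_n^2(\log u_n)_-.
\end{equation*}
Together with weak lower semicontinuity of $A$ and of $B$, this shows that $U\in\mathcal{D}_s$, that $\mu:=B(U)\leq M$, and that
\begin{equation*}
\frac12A(U)-\frac12C_0(U)\leq\liminf\Big(\frac12A(u_n)-\frac12C_0(u_n)\Big)=c_0-\frac M4=0.
\end{equation*}

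\emph{Excluding loss of mass (main obstacle).} We must show $\mu=M$. First, $U\neq0$. If $U=0$, then $\|u_n\|_{L^q}\to0$, so the positive part of $C_0(u_n)$ tends to zero. But $A(u_n)-C_0(u_n)\to0$ and $-C_0\geq0$ on the negative part, so both $A(u_n)\to0$ and $\int u_n^2(\log u_n)_-\to0$. Using $u_n(r)\leq Cr^{-N/2}$ and $u_n\to0$, this is incompatible with $B(u_n)=M$: on $\{u_n\leq e^{-1}\}$ the quantity $u_n^2|\log u_n|\geq u_n^2$ is small, and on the complementary set the bound $u_n^2\leq Cu_n^q$ with $\|u_n\|_q\to0$ makes $u_n^2$ small as well. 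Hence $U\neq0$.

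Next, rescale $t=\sqrt{M/\mu}\geq1$, so that $tU\in\mathcal{S}_M$. Then
\begin{equation*}
A(tU)-C_0(tU)=t^2\big(A(U)-C_0(U)\big)-t^2\mu\log t\leq -M\log t.
\end{equation*}
By \eqref{eq:fmi} the left-hand side is $\geq0$, which forces $t=1$, that is, $\mu=M$. Thus $u_n\to U$ strongly in $L^2$ and $U\in\mathcal{S}_M$ with $\mathcal{I}_0(U)\leq c_0$, so $U$ is a minimizer. By Lemma \ref{lem:fmass}, $U$ is a ground state, and it is positive and strictly decreasing by Proposition \ref{prop:known}.

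\emph{Ground states give $H^s$ convergence.} If each $u_n$ is a ground state, then $A(u_n)=\frac{NM}{4s}=A(U)$ by Lemma \ref{lem:plog}. Combined with weak convergence in $H^s$ and the $L^2$ convergence, this norm convergence gives strong convergence in $H^s$.
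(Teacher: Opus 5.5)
Your proof is correct in substance, but you exclude loss of mass by a different route from the paper.

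\textbf{How the two routes differ.} The paper argues directly by contradiction. If $B(U)<M$, then for large $n$ a fixed amount $m_0$ of mass lies in $\{|x|>R\}$, where the radial bound $u_n(r)\leq C_Mr^{-N/2}<1$ gives $\int_{|x|>R}u_n^2\log u_n\,\mathrm{d}x\leq m_0\log(C_MR^{-N/2})$. Letting $R\to\infty$ forces $C_0(u_n)\to-\infty$, which contradicts $C_0(u_n)=A(u_n)+o(1)$. Only after that does the paper prove $\limsup C_0(u_n)\leq C_0(U)$, via Vitali on balls plus a tail bound, to get $A(U)\leq C_0(U)$.

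You reverse the order. Upper semicontinuity of $C_0$ (convergence of the positive part, Fatou for the negative part) gives $A(U)-C_0(U)\leq0$ without any knowledge of $B(U)$. You then combine the scaling law $C_0(tv)=t^2C_0(v)+t^2B(v)\log t$ with \eqref{eq:fmi}. This shows that every nonzero $v\in\mathcal{D}_s$ with $B(v)=\mu<M$ satisfies $A(v)-C_0(v)\geq\frac{\mu}{2}\log\frac{M}{\mu}>0$, which is incompatible with the limit inequality unless $\mu=M$.

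\textbf{What each route buys.} Your version makes the exclusion of partial mass loss essentially algebraic. It also exposes the structural reason, namely strict positivity of $A-C_0$ below the critical mass, with the radial structure entering only through Lemma \ref{lem:rcomp}. The price is that vanishing must be ruled out separately. Your splitting into $\{u_n\leq\mathrm{e}^{-1}\}$ and $\{u_n>\mathrm{e}^{-1}\}$ does this correctly. The paper's single tail estimate handles vanishing and partial loss at once.

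\textbf{Positivity of $U$ is not justified.} Proposition \ref{prop:known} assumes from the outset that the ground state is positive, so it cannot upgrade your nonnegative, nonincreasing limit $U$ to a positive one. You need a strong maximum principle at this point; the paper takes it from \cite{Li-Peng-Shuai-2022}. The idea is: if $U\geq0$, $U\not\equiv0$ and $U(x_0)=0$, then, with enough regularity to evaluate pointwise, $(-\Delta)^sU(x_0)=-c_{N,s}\int_{\mathbb{R}^N}U(y)|x_0-y|^{-N-2s}\mathrm{d}y<0$, whereas $U\log U(x_0)=0$. Only after this does Proposition \ref{prop:known} give $U'(r)<0$.

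\textbf{A small point in the $H^s$ part.} State explicitly that $u_n^*$ is again a ground state. This holds because $c_0\leq\mathcal{I}_0(u_n^*)\leq\mathcal{I}_0(u_n)=c_0$ together with Lemma \ref{lem:fmass}. You need it so that Lemma \ref{lem:plog} applies to the rearranged sequence.
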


\begin{proof}
Since $\mathcal{I}_0(u_n)\to c_0<+\infty$, we have $u_n\in\mathcal{D}_s$ for all sufficiently large $n$. By Lemma \ref{lem:fmass},
\begin{equation}\label{eq:mindif} 
A(u_n)-C_0(u_n)=2\left(\mathcal{I}_0(u_n)-c_0\right)\to0. 
\end{equation}
Choose $\delta>0$ such that $\delta<\frac{4s}{N}$ and $2+\delta<2_s^*$, and set
\begin{equation*} 
\theta=\frac{N\delta}{4s}<1. 
\end{equation*}
By \eqref{eq:log-growth} and Lemma \ref{lem:fractional-gn},
\begin{equation*} 
C_0(u_n)\leq C_\delta\|u_n\|_{L^{2+\delta}(\mathbb{R}^N)}^{2+\delta}\leq CA(u_n)^\theta B(u_n)^{1+\frac{\delta}{2}-\theta}=CA(u_n)^\theta, 
\end{equation*}
where we used $B(u_n)=M$ in the last equality. Combining this estimate with \eqref{eq:mindif} gives
\begin{equation*} 
A(u_n)\leq CA(u_n)^\theta+o(1). 
\end{equation*}
Since $0<\theta<1$, Young's inequality yields $A(u_n)\leq C$. Together with $B(u_n)=M$, this implies that $\{u_n\}$ is bounded in $H^s(\mathbb{R}^N)$.

Lemmas \ref{lem:modulus} and \ref{lem:rearrangement} give
\begin{equation*} 
B(u_n^*)=M,\quad C_0(u_n^*)=C_0(u_n),\quad A(u_n^*)\leq A(u_n), 
\end{equation*}
and therefore
\begin{equation*} c_0\leq\mathcal{I}_0(u_n^*)\leq\mathcal{I}_0(u_n)\to c_0. \end{equation*}
If $u_n$ is a ground state of \eqref{eq:log}, then $A(u_n)=C_0(u_n)$. Hence Lemma \ref{lem:fmass} and the preceding inequalities give
\begin{equation*} 
A(u_n)=C_0(u_n)=C_0(u_n^*)\leq A(u_n^*)\leq A(u_n), 
\end{equation*}
so
\begin{equation}\label{eq:rearr-energy} 
A(u_n^*)=A(u_n). 
\end{equation}
Replacing $u_n$ by $u_n^*$, we may assume that $u_n\geq0$ is radial and nonincreasing. After passing to a subsequence,
\begin{equation}\label{eq:lwk} 
u_n\rightharpoonup U\quad\text{weakly in }H^s(\mathbb{R}^N), 
\end{equation}
and Lemma \ref{lem:rcomp} gives
\begin{equation}\label{eq:lsq} 
u_n\to U\quad\text{strongly in }L^q(\mathbb{R}^N) 
\end{equation}
for every $2<q<2_s^*$. After passing to a further subsequence if necessary, $u_n\to U$ almost everywhere in $\mathbb{R}^N$. In particular, $U\geq0$ is radial and nonincreasing.

We claim that $B(U)=M$. Suppose instead that $B(U)<M$ and set
\begin{equation*}
m_0=\frac{M-B(U)}{2}>0.
\end{equation*}
For every fixed $R>0$, \eqref{eq:lsq} and H\"older's inequality give
\begin{equation*}
\int_{B_R}u_n^2\mathrm{d}x\to\int_{B_R}U^2\mathrm{d}x.
\end{equation*}
Since
\begin{equation*}
\int_{B_R}U^2\mathrm{d}x\leq B(U)=M-2m_0,
\end{equation*}
for all sufficiently large $n$,
\begin{equation}\label{eq:lmass}
\int_{|x|>R}u_n^2\mathrm{d}x\geq m_0.
\end{equation} 
Since $u_n$ is radial and nonincreasing and $B(u_n)=M$,
\begin{equation*}
|B_1|r^Nu_n(r)^2\leq\int_{|x|\leq r}u_n^2\mathrm{d}x\leq M.
\end{equation*}
Hence
\begin{equation*}
u_n(r)\leq C_Mr^{-\frac{N}{2}},\quad C_M=\left(\frac{M}{|B_1|}\right)^{\frac{1}{2}}.
\end{equation*}
Choose $R$ sufficiently large so that $C_MR^{-\frac{N}{2}}<1$. Then, for $|x|>R$,
\begin{equation*}
u_n^2\log u_n\leq u_n^2\log\left(C_MR^{-\frac{N}{2}}\right).
\end{equation*}
Since the logarithm on the right-hand side is negative, \eqref{eq:lmass} yields
\begin{equation*}
\int_{|x|>R}u_n^2\log u_n\mathrm{d}x\leq m_0\log\left(C_MR^{-\frac{N}{2}}\right)
\end{equation*}
for all sufficiently large $n$.

On the other hand, by \eqref{eq:log-growth}, the Sobolev embedding theorem and the boundedness of $\{u_n\}$ in $H^s(\mathbb{R}^N)$,
\begin{equation*}
\int_{B_R}u_n^2\log u_n\mathrm{d}x\leq\int_{\{u_n>1\}}u_n^2\log u_n\mathrm{d}x\leq C_\delta\int_{\mathbb{R}^N}u_n^{2+\delta}\mathrm{d}x\leq C.
\end{equation*}
Therefore
\begin{equation*}
\limsup_{n\to\infty}C_0(u_n)\leq C+m_0\log\left(C_MR^{-\frac{N}{2}}\right).
\end{equation*}
Letting $R\to+\infty$, we obtain
\begin{equation*}
C_0(u_n)\to-\infty.
\end{equation*}
This contradicts \eqref{eq:mindif}, since $C_0(u_n)=A(u_n)+o(1)$ and $A(u_n)\geq0$. Hence
\begin{equation*}
B(U)=M.
\end{equation*}
Since $u_n\rightharpoonup U$ weakly in $L^2(\mathbb{R}^N)$ and $\|u_n\|_{L^2(\mathbb{R}^N)}=\|U\|_{L^2(\mathbb{R}^N)}$, we obtain
\begin{equation*}
u_n\to U\quad\text{strongly in }L^2(\mathbb{R}^N).
\end{equation*}

Set $F(t)=t^2\log t$ for $t>0$ and $F(0)=0$. Fix $p\in(2+\delta,2_s^*)$. On every fixed ball $B_R$,
\begin{equation*}
F(u_n)\to F(U)\quad\text{almost everywhere}.
\end{equation*}
By Lemma \ref{lem:elementary-log},
\begin{equation*}
0\leq[F(u_n)]_-\leq\frac{1}{2\mathrm{e}},\quad [F(u_n)]_+\leq C_\delta u_n^{2+\delta}.
\end{equation*}
Since $\{u_n\}$ is bounded in $L^p(\mathbb{R}^N)$ and $\frac{p}{2+\delta}>1$, the sequence $\{u_n^{2+\delta}\}$ is uniformly integrable on $B_R$. Hence Vitali's theorem gives
\begin{equation}\label{eq:llog}
\int_{B_R}u_n^2\log u_n\mathrm{d}x\to\int_{B_R}U^2\log U\mathrm{d}x.
\end{equation}
Moreover,
\begin{equation*}
\int_{|x|>R}[u_n^2\log u_n]_+\mathrm{d}x\leq C_\delta\int_{|x|>R}u_n^{2+\delta}\mathrm{d}x.
\end{equation*}
By \eqref{eq:lsq} with exponent $2+\delta$, for every fixed $R>0$,
\begin{equation*}
\int_{|x|>R}u_n^{2+\delta}\mathrm{d}x\to\int_{|x|>R}U^{2+\delta}\mathrm{d}x.
\end{equation*}
Then \eqref{eq:llog} yields
\begin{equation*}
\limsup_{n\to\infty}C_0(u_n)\leq\int_{B_R}U^2\log U\mathrm{d}x+C_\delta\int_{|x|>R}U^{2+\delta}\mathrm{d}x.
\end{equation*}
Letting $R\to+\infty$, we obtain
\begin{equation}\label{eq:lsup}
\limsup_{n\to\infty}C_0(u_n)\leq\int_{\mathbb{R}^N}U^2\log U\mathrm{d}x.
\end{equation}
Its positive part is finite since $U\in L^{2+\delta}(\mathbb{R}^N)$. If
\begin{equation*}
\int_{\{U\leq1\}}U^2|\log U|\mathrm{d}x=\infty,
\end{equation*}
then the right-hand side of \eqref{eq:lsup} equals $-\infty$. This contradicts \eqref{eq:mindif}. Hence $U\in\mathcal{D}_s$, and \eqref{eq:lsup} becomes
\begin{equation*}
\limsup_{n\to\infty}C_0(u_n)\leq C_0(U).
\end{equation*}

By the weak lower semicontinuity of $A$ and \eqref{eq:mindif},
\begin{equation*}
A(U)-C_0(U)\leq\liminf_{n\to\infty}A(u_n)-\limsup_{n\to\infty}C_0(u_n)\leq\liminf_{n\to\infty}\left(A(u_n)-C_0(u_n)\right)=0.
\end{equation*}
Since $U\in\mathcal{S}_M\cap\mathcal{D}_s$, Lemma \ref{lem:fmass} gives the reverse inequality. Therefore
\begin{equation*}
A(U)=C_0(U),\quad \mathcal{I}_0(U)=\frac{M}{4}=c_0.
\end{equation*}
Thus $U$ is a global minimizer of $\mathcal{I}_0$ on $\mathcal{S}_M$.

Since $U\in\mathcal{S}_M$ and $\mathcal{I}_0(U)=c_0$, Lemma \ref{lem:fmass} shows that $U$ is a ground state of \eqref{eq:log}. Moreover, $U\geq0$ and $B(U)=M>0$, so $U\not\equiv0$. The positivity argument in \cite[Proof of Theorem 1.1-(i), equation (2.19)]{Li-Peng-Shuai-2022} gives $U>0$ in $\mathbb{R}^N$. Since $U$ is radial about the origin, Proposition \ref{prop:known} gives
\begin{equation*}
U'(r)<0\quad\text{for }r>0.
\end{equation*}

Finally, suppose that the sequence $\{u_n\}$ consists of ground states of \eqref{eq:log}. Since both $u_n$ and $U$ are ground states, Lemma \ref{lem:plog} and \eqref{eq:rearr-energy} give
\begin{equation*}
A(u_n)=A(U)=\frac{NM}{4s}.
\end{equation*}
Together with $B(u_n)=B(U)=M$, this yields
\begin{equation*}
\|u_n\|_{H^s(\mathbb{R}^N)}=\|U\|_{H^s(\mathbb{R}^N)}.
\end{equation*}
Hence \eqref{eq:lwk} and equality of the norms imply
\begin{equation*}
u_n\to U\quad\text{strongly in }H^s(\mathbb{R}^N).
\end{equation*}
\end{proof}

We next prove the isolation of radial ground states of \eqref{eq:log} by combining Lemma \ref{lem:lcomp} with the radial nondegeneracy result in Lemma \ref{lem:rker}.

\begin{lemma}\label{lem:isol}
Every positive radial ground state of \eqref{eq:log} is isolated in $L^2(\mathbb{R}^N)$ among positive radial ground states.
\end{lemma}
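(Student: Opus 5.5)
I would argue by contradiction, following the strategy announced at the start of this section. Suppose $Q$ is a positive radial ground state and $\{Q_n\}$ are positive radial ground states with $Q_n\neq Q$ and $\|Q_n-Q\|_{L^2(\mathbb{R}^N)}\to0$. By Proposition \ref{prop:known}, each $Q_n$ (being radial about the origin, since translations would destroy radiality unless centred at $0$) is radially decreasing, so $Q_n^*=Q_n$. Lemma \ref{lem:lcomp}, applied to the constant-energy sequence $\{Q_n\}\subset\mathcal{S}_M$, then gives $Q_n\to Q$ strongly in $H^s(\mathbb{R}^N)$ along a subsequence; the limit must be $Q$ because of the $L^2$ convergence. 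Set
\[
w_n=\frac{Q_n-Q}{\|Q_n-Q\|_{L^2(\mathbb{R}^N)}},
\]
so $\|w_n\|_{L^2}=1$ and $w_n$ is radial. Subtracting the equations gives
\[
(-\Delta)^s w_n=c_n w_n,\qquad c_n=\frac{Q_n\log Q_n-Q\log Q}{Q_n-Q}=1+\log\xi_n,
\]
with $\xi_n$ between $Q_n$ and $Q$ pointwise (and $c_n=1+\log Q$ where $Q_n=Q$). Equivalently, $(-\Delta)^sw_n+V_Qw_n=(\log\xi_n-\log Q)w_n$.

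The main step is a uniform bound for $w_n$ in $\mathcal X_Q$, so that a subsequence converges strongly in $L^2$ by the compact embedding from Corollary \ref{cor:compact-resolvent}. Testing with $\eta_R^2w_n$ as in Lemma \ref{lem:cutoff} gives
\[
A(\eta_Rw_n)+\int\eta_R^2(-1-\log\xi_n)w_n^2\,\mathrm{d}x=\mathcal E_R(w_n).
\]
Since $\xi_n\le \max\{Q_n,Q\}\le C$ uniformly (the $L^\infty$ bound \eqref{eq:qreg} should hold uniformly along the compact family), $-1-\log\xi_n\ge -C$. We also have $-\log\xi_n\ge -\log\max\{Q,Q_n\}$. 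The real difficulty is controlling the tails, where $\xi_n$ can be much smaller than $Q$. Here I would use that $-\log\xi_n\ge\min\{-\log Q,-\log Q_n\}$. Both $-\log Q$ and $-\log Q_n$ tend to $+\infty$ at infinity, uniformly in $n$, provided the $Q_n$ have a uniform upper decay bound $Q_n(r)\le Cr^{-N/2}$, which follows from monotonicity and fixed mass as in Lemma \ref{lem:rcomp}. This yields a weight $W(r)\to\infty$, independent of $n$, with $\sup_n\big(A(w_n)+\int W w_n^2\big)<\infty$ after letting $R\to\infty$. That weighted bound gives tightness, hence strong $L^2$ convergence $w_n\to w$ with $\|w\|_{L^2}=1$ and $w$ radial.

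Next I pass to the limit in the weak formulation against $\phi\in C_c^\infty$. On $\operatorname{supp}\phi$, $Q$ is bounded below and $Q_n\to Q$ locally uniformly. Locally uniform convergence would come from the $C^{0,\alpha}$ bounds of \cite{Li-Peng-Shuai-2022}, which are uniform by $H^s$-compactness, or more simply from monotonicity plus $L^2$ convergence, which gives pointwise convergence at continuity points. Hence $\log\xi_n\to\log Q$ uniformly on $\operatorname{supp}\phi$, and
\[
\langle(-\Delta)^{s/2}w,(-\Delta)^{s/2}\phi\rangle+\int V_Qw\phi\,\mathrm{d}x=0.
\]
Fatou applied to the weighted bound gives $w\in\mathcal X_Q$ (using $(V_Q)_+\lesssim 1+W$ at infinity). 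Lemma \ref{lem:density} then extends the identity to all $\phi\in\mathcal X_Q$, so $w\in\ker L_Q$ by \eqref{eq:weak-kernel}. Then $w$ is a nonzero radial element of $\ker L_Q$, contradicting Lemma \ref{lem:rker}.

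I expect the uniform weighted bound, i.e. a tail lower bound on $-\log\xi_n$ independent of $n$, to be the main obstacle. If the decay of $Q_n$ is only $r^{-N/2}$, then $-\log Q_n\ge \frac N2\log r-C$, which still tends to infinity, so this should suffice.
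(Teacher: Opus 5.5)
Your argument follows the same route as the paper: the normalized difference $w_n$, the bound $1+\log\xi_n\le 1+\log\max\{Q_n,Q\}$ (the paper derives it from convexity of $t\log t$), and the uniform decay $Q_n(r)\le Cr^{-N/2}$, which turns this bound into an $n$-independent logarithmic confining weight. From there, tightness gives a radial normalized limit solving $(-\Delta)^sw+V_Qw=0$, which contradicts Lemma \ref{lem:rker}.

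\textbf{The gap.} You assert the uniform bound $\sup_n\|Q_n\|_{L^\infty(\mathbb{R}^N)}<\infty$ (``should hold uniformly along the compact family'') but do not prove it, and the argument depends on it. You use it to get $-1-\log\xi_n\geq -C$, which controls the negative part of the potential in the cutoff identity and hence gives $A(w_n)\le C$. You also use it to dominate $\log\xi_n$ near the origin when passing to the limit.

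It does not follow from $H^s$-compactness of $\{Q_n\}$, since $H^s(\mathbb{R}^N)\not\hookrightarrow L^\infty(\mathbb{R}^N)$ when $N\geq 2s$. The estimate \eqref{eq:qreg} was proved for a single ground state without uniform constants. Your remark that the H\"older bounds of \cite{Li-Peng-Shuai-2022} are ``uniform by $H^s$-compactness'' presupposes the same bound.

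The paper closes this with a short argument from radial monotonicity and the fixed mass. Set $m_n=Q_n(0)$ and $R_n=\bigl(4M/(|B_1|m_n^2)\bigr)^{1/N}$. Then $Q_n\le m_n/2$ on $\{|y|\ge R_n\}$, and evaluating the equation at the maximum point gives
\begin{equation*}
m_n\log m_n=c_{N,s}\,\mathrm{P.V.}\int_{\mathbb{R}^N}\frac{m_n-Q_n(y)}{|y|^{N+2s}}\mathrm{d}y\geq\frac{c_{N,s}m_n}{2}\int_{|y|\geq R_n}\frac{\mathrm{d}y}{|y|^{N+2s}}=Cm_n^{1+\frac{4s}{N}}.
\end{equation*}
Hence $\log m_n\geq Cm_n^{4s/N}$, so $\{m_n\}$ is bounded. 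Alternatively, you could bypass the $L^\infty$ bound: use $(\log t)_+\le C_\delta t^\delta$, H\"older's inequality and Lemma \ref{lem:fractional-gn} to absorb $\int\max\{Q_n,Q\}^\delta w_n^2\,\mathrm{d}x$ into a power $A(w_n)^{\theta}$ with $\theta<1$. Either way, some such argument must be supplied.

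\textbf{Two smaller points.}

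First, your justification of $w\in\mathcal{X}_Q$ through $(V_Q)_+\lesssim 1+W$ needs a polynomial lower bound $Q(r)\gtrsim r^{-K}$. That is not available; only the upper bound \eqref{eq:qreg} is. There are two easy fixes. Your cutoff estimate already gives $\sup_n\int(-1-\log\xi_n)_+w_n^2\,\mathrm{d}x<\infty$, and $(-1-\log\xi_n)_+\to(V_Q)_+$ a.e., so Fatou's lemma applies directly with the $n$-dependent weight. Or, as the paper does, apply Lemma \ref{lem:cutoff} to the limit equation.

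Second, your use of monotonicity plus $L^2$ convergence to get $Q_n(r)\to Q(r)$ for every $r>0$ is correct. It is a lighter substitute for the paper's Ros-Oton--Serra estimate and Arzel\`a--Ascoli step. It gives locally uniform convergence only away from the origin, not ``uniformly on $\operatorname{supp}\phi$''. That is still enough: with the uniform $L^\infty$ bound and $Q_n\ge Q_n(R)\to Q(R)>0$ on $B_R$, dominated convergence lets you pass to the limit in the weak formulation.
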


\begin{proof}
Suppose by contradiction that there are distinct positive radial ground states $Q_n$ such that
\begin{equation*}
Q_n\to Q\quad\text{strongly in }L^2(\mathbb{R}^N).
\end{equation*}
By Lemma \ref{lem:lcomp}, after passing to a subsequence,
\begin{equation*}
Q_n\to Q\quad\text{strongly in }H^s(\mathbb{R}^N).
\end{equation*} 
Set
\begin{equation*}
m_n=Q_n(0)=\|Q_n\|_{L^\infty(\mathbb{R}^N)},\quad
R_n=\left(\frac{4M}{|B_1|m_n^2}\right)^{\frac{1}{N}}.
\end{equation*}
We claim that
\begin{equation*}
Q_n(R_n)\leq\frac{m_n}{2}.
\end{equation*}
Indeed, since $Q_n$ is radial and nonincreasing, otherwise
\begin{equation*}
M=\int_{\mathbb{R}^N}Q_n^2\mathrm{d}x
\geq\int_{B_{R_n}}Q_n^2\mathrm{d}x
>\frac{m_n^2}{4}|B_1|R_n^N=M,
\end{equation*}
a contradiction. Hence $Q_n(y)\leq\frac{m_n}{2}$ for $|y|\geq R_n$. Evaluating $(-\Delta)^sQ_n=Q_n\log Q_n$ at the maximum point $0$, we obtain
\begin{equation*}
\begin{aligned}
	m_n\log m_n
	&=c_{N,s}\operatorname{P.V.}\int_{\mathbb{R}^N}\frac{m_n-Q_n(y)}{|y|^{N+2s}}\mathrm{d}y\\
	&\geq\frac{c_{N,s}m_n}{2}\int_{|y|\geq R_n}\frac{\mathrm{d}y}{|y|^{N+2s}}
	=Cm_n^{1+\frac{4s}{N}},
\end{aligned}
\end{equation*}
where $C>0$ is independent of $n$. Therefore
$
\log m_n\geq Cm_n^{\frac{4s}{N}}.
$
It follows that
\begin{equation}\label{eq:uniform-linfty}
\sup_n\|Q_n\|_{L^\infty(\mathbb{R}^N)}<\infty.
\end{equation} 
Since $t\log t$ is bounded on every bounded interval of $[0,+\infty)$, \eqref{eq:uniform-linfty} gives
\begin{equation*}
\sup_n\|Q_n\log Q_n\|_{L^\infty(\mathbb{R}^N)}<\infty.
\end{equation*}
Fix $0<\alpha<\min\{2s,1\}$. When $s=\frac{1}{2}$, choose $\varepsilon>0$ such that $\alpha<1-\varepsilon$. Applying the interior estimate \cite[Theorem 1.1-(a)]{RosOton-Serra-2016} to
\begin{equation*}
(-\Delta)^sQ_n=Q_n\log Q_n,
\end{equation*}
we obtain for every $R>0$,
\begin{equation*}
\|Q_n\|_{C^{0,\alpha}(B_R)}
\leq
C_R\left(\|Q_n\|_{L^\infty(\mathbb{R}^N)}+\|Q_n\log Q_n\|_{L^\infty(\mathbb{R}^N)}\right),
\end{equation*}
where $C_R>0$ is independent of $n$. Hence
\begin{equation*}
\sup_n\|Q_n\|_{C^{0,\alpha}(B_R)}<\infty.
\end{equation*}
By the Arzel\`a--Ascoli theorem, every subsequence of $\{Q_n\}$ has a further subsequence converging locally uniformly. Since $Q_n\to Q$ strongly in $L^2(\mathbb{R}^N)$, every such limit is $Q$. Therefore,
\begin{equation}\label{eq:qloc}
Q_n\to Q\quad\text{locally uniformly in }\mathbb{R}^N.
\end{equation}

Set
\begin{equation*}
w_n=\frac{Q_n-Q}{\|Q_n-Q\|_{L^2(\mathbb{R}^N)}}.
\end{equation*}
Then $\|w_n\|_{L^2(\mathbb{R}^N)}=1$ and
\begin{equation}\label{eq:wn}
(-\Delta)^sw_n=a_nw_n,
\end{equation}
where
\begin{equation*}
a_n(x)=
\begin{cases}
	\displaystyle\frac{Q_n(x)\log Q_n(x)-Q(x)\log Q(x)}{Q_n(x)-Q(x)}, & Q_n(x)\neq Q(x),\\[8pt]
	1+\log Q(x), & Q_n(x)=Q(x).
\end{cases}
\end{equation*}
Since $t\mapsto t\log t$ is convex on $(0,+\infty)$,
\begin{equation}\label{eq:acvx}
a_n(x)\leq1+\log\max\{Q_n(x),Q(x)\}.
\end{equation}
The uniform $L^\infty$ bound therefore gives $a_n(x)\leq C$ in $\mathbb{R}^N$.

Since $Q_n$ and $Q$ are radial decreasing and have mass $M$,
\begin{equation*}
|B_1|r^NQ_n(r)^2\leq M,\quad |B_1|r^NQ(r)^2\leq M.
\end{equation*}
Thus
\begin{equation*}
Q_n(r)+Q(r)\leq Cr^{-\frac{N}{2}}\quad\text{for }r\geq1,
\end{equation*}
and \eqref{eq:acvx} yields
\begin{equation}\label{eq:atail}
a_n(r)\leq C-\frac{N}{2}\log r\quad\text{for }r\geq1.
\end{equation}

Since $Q_n,Q\in H^s(\mathbb{R}^N)$, we have $w_n\in H^s(\mathbb{R}^N)$. Moreover, Proposition \ref{prop:known} gives
\begin{equation*}
Q_n\log Q_n, Q\log Q\in L^2(\mathbb{R}^N),
\end{equation*}
and hence
\begin{equation*}
a_nw_n=\frac{Q_n\log Q_n-Q\log Q}{\|Q_n-Q\|_{L^2(\mathbb{R}^N)}}\in L^2(\mathbb{R}^N).
\end{equation*}
Thus, $w_n$ is an admissible test function in \eqref{eq:wn}. Since $a_n\leq C$ and $\|w_n\|_{L^2(\mathbb{R}^N)}=1$, we obtain
\begin{equation}\label{eq:weng}
A(w_n)=\int_{\mathbb{R}^N}a_nw_n^2\mathrm{d}x\leq C.
\end{equation} 
Choose $R_0>1$ so large that \eqref{eq:atail} gives
\begin{equation*}
a_n(r)\leq-\frac{N}{4}\log r\quad\text{for }r>R_0.
\end{equation*}
Splitting the integral in \eqref{eq:weng} over $B_{R_0}$ and its complement, and using $a_n\leq C$ on $B_{R_0}$, we obtain
\begin{equation*}
A(w_n)\leq C-\frac{N}{4}\int_{|x|>R_0}\log|x|w_n^2\mathrm{d}x.
\end{equation*}
Hence
\begin{equation*}
A(w_n)+\frac{N}{4}\int_{|x|>R_0}\log|x|w_n^2\mathrm{d}x\leq C.
\end{equation*}
In particular, for $R>R_0$,
\begin{equation}\label{eq:wtail}
\int_{|x|>R}w_n^2\mathrm{d}x\leq\frac{C}{\log R},
\end{equation}
uniformly in $n$.

By \eqref{eq:weng} and $\|w_n\|_{L^2(\mathbb{R}^N)}=1$, $\{w_n\}$ is bounded in $H^s(\mathbb{R}^N)$. After passing to a subsequence,
\begin{equation}\label{eq:wweak}
w_n\rightharpoonup w\quad\text{weakly in }H^s(\mathbb{R}^N).
\end{equation}
For every fixed $R>0$, the local compact Sobolev embedding theorem gives
\begin{equation*}
w_n\to w\quad\text{strongly in }L^2(B_R).
\end{equation*}
Using \eqref{eq:wtail}, letting first $n\to\infty$ and then $R\to+\infty$ yields
\begin{equation}\label{eq:wstrong}
w_n\to w\quad\text{strongly in }L^2(\mathbb{R}^N).
\end{equation}
Therefore
$
\|w\|_{L^2(\mathbb{R}^N)}=1.
$
Since each $w_n$ is radial, so is $w$.

Let $K\subset\mathbb{R}^N$ be compact. Since $Q>0$, we have
$
\min_KQ>0.
$
By \eqref{eq:qloc}, $Q_n\to Q$ uniformly on $K$. Therefore, using the definition of $a_n$,
\begin{equation} \label{eq:alocal}
a_n\to1+\log Q\quad\text{uniformly on }K.
\end{equation}
For every $\varphi\in C_c^\infty(\mathbb{R}^N)$, the weak form of \eqref{eq:wn} is
\begin{equation*}
\left\langle(-\Delta)^{\frac{s}{2}}w_n,(-\Delta)^{\frac{s}{2}}\varphi\right\rangle_{L^2(\mathbb{R}^N)}
=\int_{\mathbb{R}^N}a_nw_n\varphi\mathrm{d}x.
\end{equation*}
Passing to the limit by \eqref{eq:wweak}, \eqref{eq:wstrong} and \eqref{eq:alocal}, we obtain
\begin{equation}\label{eq:wlin}
(-\Delta)^sw+V_Qw=0\quad\text{in }\mathcal{D}'(\mathbb{R}^N).
\end{equation}

It remains to show that $w\in\mathcal{X}_Q$. Choose $\eta\in C_c^\infty(B_2)$ with $0\leq\eta\leq1$ and $\eta=1$ on $B_1$, and put $\eta_R(x)=\eta(\frac{x}{R})$. Since $V_Q$ is locally bounded, $V_Qw\in L^2_{\mathrm{loc}}(\mathbb{R}^N)$. Applying Lemma \ref{lem:cutoff} to \eqref{eq:wlin} gives
\begin{equation*}
A(\eta_Rw)+\int_{\mathbb{R}^N}(V_Q)_+\eta_R^2w^2\mathrm{d}x
=
\mathcal{E}_R(w)+\int_{\mathbb{R}^N}(V_Q)_-\eta_R^2w^2\mathrm{d}x.
\end{equation*}
Therefore
\begin{equation*}
A(\eta_Rw)+\int_{\mathbb{R}^N}(V_Q)_+\eta_R^2w^2\mathrm{d}x
\leq \left(CR^{-2s}+\|(V_Q)_-\|_{L^\infty(\mathbb{R}^N)}\right)\|w\|_{L^2(\mathbb{R}^N)}^2.
\end{equation*}
Since $A(\eta_Rw)\geq0$, Fatou's lemma gives
\begin{equation*}
\int_{\mathbb{R}^N}(V_Q)_+w^2\mathrm{d}x<\infty.
\end{equation*}
Hence $w\in\mathcal{X}_Q$. By \eqref{eq:wlin} and Lemma \ref{lem:LQ-domain},
\begin{equation*}
w\in\mathcal{D}(L_Q),\quad L_Qw=0.
\end{equation*}
This contradicts Lemma \ref{lem:rker}, since $w$ is radial and
$
\|w\|_{L^2(\mathbb{R}^N)}=1.
$
\end{proof}

Finally, using Lemmas \ref{lem:lcomp} and \ref{lem:isol}, we can get a positive energy gap on the boundary of a small neighborhood of the fixed radial ground state $Q$.

\begin{lemma}\label{lem:egap}
Let $Q$ be a positive radial ground state of \eqref{eq:log}. Then there exists $\rho_*>0$ such that any positive radial ground state $P$ satisfying
\begin{equation*}
\|P-Q\|_{L^2(\mathbb{R}^N)}\leq2\rho_*
\end{equation*}
must coincide with $Q$. Moreover, for every $\rho\in(0,\rho_*)$,
\begin{equation*}
\inf\left\{\mathcal{I}_0(u):u\in\mathcal{S}_M, d_Q(u)=\rho\right\}>c_0.
\end{equation*}
\end{lemma}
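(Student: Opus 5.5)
The first assertion is a direct consequence of Lemma \ref{lem:isol}. That lemma gives some $\rho_0>0$ such that the only positive radial ground state $P$ with $\|P-Q\|_{L^2(\mathbb{R}^N)}<\rho_0$ is $P=Q$. We then choose $\rho_*=\rho_0/4$, so that $2\rho_*<\rho_0$ and the first assertion holds.

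For the energy gap we argue by contradiction. Fix $\rho\in(0,\rho_*)$ and suppose there are $u_n\in\mathcal{S}_M$ with $d_Q(u_n)=\rho$ and $\mathcal{I}_0(u_n)\to c_0$. By Lemma \ref{lem:lcomp}, after replacing $u_n$ by $u_n^*$ and passing to a subsequence, $u_n^*\to U$ strongly in $L^2(\mathbb{R}^N)$. Here $U$ is a positive radial decreasing ground state of \eqref{eq:log}. The replacement costs nothing, because $d_Q(u_n)$ is defined through $u_n^*$ and the energy does not increase. Since $(u_n^*)^*=u_n^*$, the definition of $d_Q$ gives
\begin{equation*}
\|U-Q\|_{L^2(\mathbb{R}^N)}=\lim_{n\to\infty}\|u_n^*-Q\|_{L^2(\mathbb{R}^N)}=\lim_{n\to\infty}d_Q(u_n)=\rho\leq2\rho_*.
\end{equation*}
By the first assertion, $U=Q$. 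Then $\|U-Q\|_{L^2(\mathbb{R}^N)}=0\neq\rho$, which is a contradiction. Hence the infimum is strictly larger than $c_0$.

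The main point to check is that $U$ is radial about the origin, so that no translation is needed. Lemma \ref{lem:lcomp} produces $U$ as the limit of symmetric decreasing rearrangements, which are radial about the origin. Strong $L^2$ convergence preserves this property, so $U$ is indeed centered at the origin and the first assertion applies to it. The remaining issue is the case $\inf=+\infty$ or the set being empty. In either case the inequality holds trivially, with the convention $\inf\emptyset=+\infty$.
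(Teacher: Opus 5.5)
Your proposal is correct and follows essentially the same route as the paper: Lemma \ref{lem:isol} supplies the isolation radius, and the energy gap follows by contradiction by applying Lemma \ref{lem:lcomp} to rearranged minimizing sequences on $\{d_Q=\rho\}$, whose limit is a positive radial ground state at distance $\rho$ from $Q$. Write out the implicit step that $\mathcal{I}_0\geq c_0$ on $\mathcal{S}_M$ (Lemma \ref{lem:fmass}): it is what turns the negation of the strict inequality into the existence of such a sequence with $\mathcal{I}_0(u_n)\to c_0$.
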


\begin{proof}
By Lemma \ref{lem:isol}, there exists $d_*>0$ such that any positive radial ground state $P$ satisfying
$
\|P-Q\|_{L^2(\mathbb{R}^N)}\leq d_*
$
must coincide with $Q$. Choose
$
0<\rho_*<\frac{d_*}{2}.
$
Then the first assertion follows. 

Fix $\rho\in(0,\rho_*)$. Suppose by contradiction that
\begin{equation*}
\inf\left\{\mathcal{I}_0(u):u\in\mathcal{S}_M, d_Q(u)=\rho\right\}=c_0.
\end{equation*}
Then there exists $u_n\in\mathcal{S}_M$ such that $d_Q(u_n)=\rho$ and $\mathcal{I}_0(u_n)\to c_0$.
Since $\mathcal{I}_0(u_n)<+\infty$, we have $u_n\in\mathcal{D}_s$. By Lemma \ref{lem:rearrangement},
\begin{equation*}
u_n^*\in\mathcal{S}_M,\quad \mathcal{I}_0(u_n^*)\leq\mathcal{I}_0(u_n),
\end{equation*}
and, by the definition of $d_Q$,
\begin{equation*}
\|u_n^*-Q\|_{L^2(\mathbb{R}^N)}=d_Q(u_n)=\rho.
\end{equation*}
By Lemma \ref{lem:fmass},
\begin{equation*}
c_0\leq\mathcal{I}_0(u_n^*)\leq\mathcal{I}_0(u_n),
\end{equation*}
and hence
$
\mathcal{I}_0(u_n^*)\to c_0.
$
Replacing $u_n$ by $u_n^*$, we may assume that $u_n$ is nonnegative, radial and nonincreasing, with
\begin{equation}\label{eq:egap-dist}
\|u_n-Q\|_{L^2(\mathbb{R}^N)}=\rho.
\end{equation}

By Lemma \ref{lem:lcomp}, after passing to a subsequence,
\begin{equation*}
u_n\to U\quad\text{strongly in }L^2(\mathbb{R}^N),
\end{equation*}
where $U$ is a positive radial decreasing ground state of \eqref{eq:log}. Passing to the limit in \eqref{eq:egap-dist}, we obtain
\begin{equation*}
\|U-Q\|_{L^2(\mathbb{R}^N)}=\rho<\rho_*<d_*.
\end{equation*}
Hence $U=Q$ by the choice of $d_*$. This contradicts $\rho>0$.
\end{proof}

\medskip
\section{Uniqueness of positive ground states}\label{sec:power-approximation}

In this section, we prove the uniqueness of positive ground states by the isolation result from Section \ref{sec:compactness-isolation} and the uniqueness theory for the fractional power equation \cite{Frank-Lenzmann-2013,Frank-Lenzmann-Silvestre-2016}. Near each logarithmic ground state, we construct a local minimizer for a nearby power problem. Its linearized quadratic form has Morse index one, and after rescaling the minimizer becomes a positive solution of the normalized fractional power equation. The uniqueness result for the power equation then excludes the existence of two distinct logarithmic ground states.

Fix a positive radial decreasing ground state $Q$ of \eqref{eq:log}. For
\begin{equation*}
0<\varepsilon<\min\left\{1,\alpha_*(s,N)\right\},
\end{equation*}
define
\begin{equation*}
\mathcal{I}_\varepsilon(u)=\frac{1}{2}A(u)+\frac{1}{2\varepsilon}B(u)-\frac{1}{\varepsilon(2+\varepsilon)}\int_{\mathbb{R}^N}|u|^{2+\varepsilon}\mathrm{d}x,\quad u\in H^s(\mathbb{R}^N).
\end{equation*}
Since $\varepsilon<\alpha_*(s,N)$, we have $2+\varepsilon<2_s^*$, and hence $\mathcal{I}_\varepsilon$ is well defined on $H^s(\mathbb{R}^N)$. On $\mathcal{S}_M$, using $B(u)=M$, we obtain
\begin{equation}\label{eq:iren}
\mathcal{I}_\varepsilon(u)=\frac{1}{2}A(u)+\frac{M}{2(2+\varepsilon)}-\frac{1}{2+\varepsilon}\int_{\mathbb{R}^N}u^2\frac{|u|^\varepsilon-1}{\varepsilon}\mathrm{d}x.
\end{equation}

\subsection{Convergence of the power functionals}

We first record the pointwise estimates used to pass to the logarithmic nonlinearity.

\begin{lemma}\label{lem:power-log}
For $\varepsilon>0$ and $0<t\leq1$,
\begin{equation}\label{eq:ell-small}
\log t\leq\frac{t^\varepsilon-1}{\varepsilon}\leq0,\quad 0\leq-t^2\frac{t^\varepsilon-1}{\varepsilon}\leq-t^2\log t.
\end{equation}
For every $\delta>0$, there exists $C_\delta>0$ such that, for $t\geq1$ and $0<\varepsilon<\frac{\delta}{2}$,
\begin{equation}\label{eq:ellg}
0\leq t^2\frac{t^\varepsilon-1}{\varepsilon}\leq C_\delta t^{2+\delta}.
\end{equation} 
\end{lemma}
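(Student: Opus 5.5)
The approach is elementary one-variable calculus: the key tool is the convexity of $\varepsilon\mapsto t^\varepsilon=\mathrm{e}^{\varepsilon\log t}$, together with a mean value bound for the large-$t$ regime.

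\emph{Small values.} Fix $0<t\leq1$ and write $\ell=\log t\leq0$. We have
\begin{equation*}
\frac{t^\varepsilon-1}{\varepsilon}=\frac{\mathrm{e}^{\varepsilon\ell}-1}{\varepsilon}.
\end{equation*}
The inequality $\mathrm{e}^{x}\geq1+x$ with $x=\varepsilon\ell$ gives $\mathrm{e}^{\varepsilon\ell}-1\geq\varepsilon\ell$, hence the quotient is $\geq\ell=\log t$. Since $\ell\leq0$, we also have $\mathrm{e}^{\varepsilon\ell}\leq1$, so the quotient is $\leq0$. This proves the first chain in \eqref{eq:ell-small}. Multiplying that chain by $-t^2\leq0$ reverses the inequalities and yields
\begin{equation*}
0\leq-t^2\frac{t^\varepsilon-1}{\varepsilon}\leq-t^2\log t,
\end{equation*}
which is the second chain.

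\emph{Large values.} Now take $t\geq1$, so $\ell\geq0$. The mean value theorem gives, for some $\theta\in(0,\varepsilon)$,
\begin{equation*}
\frac{\mathrm{e}^{\varepsilon\ell}-1}{\varepsilon}=\ell\,\mathrm{e}^{\theta\ell}\leq t^\varepsilon\log t.
\end{equation*}
The lower bound $0$ is immediate because $t^\varepsilon\geq1$. Hence
\begin{equation*}
0\leq t^2\frac{t^\varepsilon-1}{\varepsilon}\leq t^{2+\varepsilon}\log t\leq t^{2+\delta/2}\log t,
\end{equation*}
where the last step uses $\varepsilon<\delta/2$ and $t\geq1$. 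By the proof of Lemma \ref{lem:elementary-log} with exponent $\delta/2$,
\begin{equation*}
\log t\leq\frac{2}{\delta\mathrm{e}}\,t^{\delta/2}\quad\text{for }t\geq1.
\end{equation*}
Combining the two bounds gives \eqref{eq:ellg} with $C_\delta=\frac{2}{\delta\mathrm{e}}$, which is independent of $\varepsilon$.

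\emph{Main obstacle.} The only genuine point is ensuring that the constant in the large-$t$ regime is uniform in $\varepsilon$. The restriction $\varepsilon<\delta/2$ is exactly what handles this, since it lets us absorb $t^{\varepsilon}$ into $t^{\delta/2}$ and leaves room for the $\log t$ factor. Everything else is routine.
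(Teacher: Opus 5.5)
Your proof is correct and follows essentially the same route as the paper. For $t\leq1$ both of you use $\mathrm{e}^x\geq1+x$. For $t\geq1$, your mean value bound $\frac{t^\varepsilon-1}{\varepsilon}\leq t^\varepsilon\log t$ plays the role of the paper's integral identity $\frac{t^\varepsilon-1}{\varepsilon}=\log t\int_0^1t^{\theta\varepsilon}\,\mathrm{d}\theta$, and you then absorb $t^{\delta/2}\log t$ into $C_\delta t^{\delta}$ just as the paper does; your explicit constant $C_\delta=\frac{2}{\delta\mathrm{e}}$ is correct.
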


\begin{proof}
For $0<t\leq1$, the inequality $\mathrm{e}^x\geq1+x$ gives
\begin{equation*}
\log t\leq\frac{\mathrm{e}^{\varepsilon\log t}-1}{\varepsilon}=\frac{t^\varepsilon-1}{\varepsilon}\leq0,
\end{equation*}
where the last inequality follows from $t^\varepsilon\leq1$. Multiplying by $-t^2$ yields \eqref{eq:ell-small}.

For $t\geq1$, the fundamental theorem of calculus gives
\begin{equation}\label{eq:ell-int}
\frac{t^\varepsilon-1}{\varepsilon}=\log t\int_0^1t^{\theta\varepsilon}\mathrm{d}\theta.
\end{equation}
If $0<\varepsilon<\frac{\delta}{2}$, then
\begin{equation*}
0\leq t^2\frac{t^\varepsilon-1}{\varepsilon}\leq t^{2+\frac{\delta}{2}}\log t\leq C_\delta t^{2+\delta},
\end{equation*}
because $t^{-\frac{\delta}{2}}\log t$ is bounded on $[1,+\infty)$. This proves \eqref{eq:ellg}. 
\end{proof}

The following lemma gives the compactness and lower semicontinuity needed for the power approximation.

\begin{lemma}\label{lem:plimit}
Let $0<\varepsilon_n<\min\left\{1,\alpha_*(s,N)\right\}$ with $\varepsilon_n\to0$, and let $u_n\in\mathcal{S}_M$ satisfy
\begin{equation*}
\sup_n\mathcal{I}_{\varepsilon_n}(u_n)<+\infty.
\end{equation*}
After replacing $u_n$ by $u_n^*$ and passing to a subsequence, there exists $u\in\mathcal{S}_M$ such that
\begin{equation*}
u_n\to u\quad\text{strongly in }L^2(\mathbb{R}^N)
\end{equation*}
and
\begin{equation}\label{eq:ldir}
\mathcal{I}_0(u)\leq\liminf_{n\to\infty}\mathcal{I}_{\varepsilon_n}(u_n).
\end{equation}
Moreover, if $v\in\mathcal{D}_s\cap L^{2+\delta}(\mathbb{R}^N)$ for some $\delta>0$, then
\begin{equation}\label{eq:fconv}
\mathcal{I}_\varepsilon(v)\to\mathcal{I}_0(v)\quad\text{as }\varepsilon\to0.
\end{equation}
\end{lemma}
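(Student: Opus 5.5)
We follow the scheme of Lemma \ref{lem:lcomp}, with the logarithm replaced by the difference quotient $\ell_\varepsilon(t)=\frac{t^\varepsilon-1}{\varepsilon}$ and the pointwise bounds of Lemma \ref{lem:power-log} in place of Lemma \ref{lem:elementary-log}. Writing $\mathcal{I}_\varepsilon$ on $\mathcal{S}_M$ as in \eqref{eq:iren}, set
\begin{equation*}
C_\varepsilon(u)=\int_{\mathbb{R}^N}u^2\ell_\varepsilon(|u|)\mathrm{d}x.
\end{equation*}
By \eqref{eq:ell-small} the contribution of $\{|u|\leq1\}$ to $C_\varepsilon(u)$ is nonpositive. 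By \eqref{eq:ellg}, with $\delta$ fixed so that $2+\delta<2_s^*$ and $\theta=\frac{N\delta}{4s}<1$, Lemma \ref{lem:fractional-gn} gives
\begin{equation*}
C_{\varepsilon_n}(u_n)\leq C A(u_n)^\theta
\end{equation*}
for large $n$. Since $\sup_n\mathcal{I}_{\varepsilon_n}(u_n)<\infty$, this yields $\frac12A(u_n)\leq C+CA(u_n)^\theta$, so $\{u_n\}$ is bounded in $H^s(\mathbb{R}^N)$.

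\textbf{Passing to $u_n^*$.} Rearrangement preserves $B$ and $\int|u|^{2+\varepsilon}\mathrm{d}x$, and does not increase $A$ (Lemma \ref{lem:rearrangement}). Hence $\mathcal{I}_{\varepsilon_n}(u_n^*)\leq\mathcal{I}_{\varepsilon_n}(u_n)$, and we may assume each $u_n$ is nonnegative, radial and nonincreasing. Lemma \ref{lem:rcomp} then gives, along a subsequence, $u_n\rightharpoonup u$ in $H^s$ and $u_n\to u$ in $L^q$ for $2<q<2_s^*$ and almost everywhere.

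\textbf{No loss of mass.} This step is the main obstacle. Suppose $B(u)<M$. Exactly as in Lemma \ref{lem:lcomp}, there is mass $m_0>0$ on $\{|x|>R\}$ for large $n$. On that region $u_n\leq C_MR^{-N/2}<1$, and since $\ell_\varepsilon$ is increasing and $\varepsilon_n\to0$,
\begin{equation*}
\ell_{\varepsilon_n}(u_n)\leq\ell_{\varepsilon_n}\bigl(C_MR^{-N/2}\bigr)\to\log\bigl(C_MR^{-N/2}\bigr).
\end{equation*}
Thus
\begin{equation*}
\limsup_n C_{\varepsilon_n}(u_n)\leq C+m_0\log\bigl(C_MR^{-N/2}\bigr),
\end{equation*}
which tends to $-\infty$ as $R\to\infty$. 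This forces $\mathcal{I}_{\varepsilon_n}(u_n)\to+\infty$, a contradiction. Hence $B(u)=M$, and weak convergence together with equality of norms gives strong $L^2$ convergence.

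\textbf{Lower semicontinuity \eqref{eq:ldir}.} For the kinetic term use weak lower semicontinuity of $A$; the constant term $\frac{M}{2(2+\varepsilon_n)}\to\frac M4$. For the nonlinear term we need $\limsup_n C_{\varepsilon_n}(u_n)\leq C_0(u)$. Split into $\{u_n>1\}$ and $\{u_n\leq1\}$:
\begin{enumerate}
\item On $\{u_n>1\}$, $u_n^2\ell_{\varepsilon_n}(u_n)\to u^2\log u$ almost everywhere and is dominated by $C_\delta u_n^{2+\delta}$. This dominating sequence converges strongly in $L^1$ (because $u_n\to u$ in $L^{2+\delta}$), so the generalized dominated convergence theorem gives convergence of this part.
\item On $\{u_n\leq1\}$ the integrand is nonpositive and converges almost everywhere to $u^2\log u$. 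Fatou's lemma applied to its negative gives the $\limsup$ inequality.
\end{enumerate}
If $\int_{\{u\leq1\}}u^2|\log u|\mathrm{d}x=\infty$, the bound gives $\liminf_n\mathcal{I}_{\varepsilon_n}(u_n)=+\infty$, and \eqref{eq:ldir} holds trivially. Otherwise $u\in\mathcal{D}_s$ and \eqref{eq:ldir} follows.

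\textbf{Convergence \eqref{eq:fconv}.} For fixed $v\in\mathcal{D}_s\cap L^{2+\delta}$, we have $v^2\ell_\varepsilon(|v|)\to v^2\log|v|$ pointwise. It is dominated by $v^2|\log|v||$ on $\{|v|\leq1\}$ by \eqref{eq:ell-small}, and by $C_\delta|v|^{2+\delta}$ on $\{|v|\geq1\}$ by \eqref{eq:ellg}. Both bounds are integrable, so dominated convergence applies. Then
\begin{equation*}
\frac{1}{2\varepsilon}B(v)-\frac{1}{\varepsilon(2+\varepsilon)}\int|v|^{2+\varepsilon}\mathrm{d}x=\frac{B(v)}{2(2+\varepsilon)}-\frac{1}{2+\varepsilon}\int v^2\ell_\varepsilon(|v|)\mathrm{d}x\to\frac{B(v)}{4}-\frac12C_0(v),
\end{equation*}
which gives $\mathcal{I}_\varepsilon(v)\to\mathcal{I}_0(v)$.
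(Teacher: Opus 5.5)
Your proposal is correct and follows essentially the same route as the paper: the same Gagliardo--Nirenberg coercivity bound, the passage to $u_n^*$ together with Lemma \ref{lem:rcomp}, the same contradiction ruling out loss of mass at infinity, and dominated convergence for \eqref{eq:fconv}. The only variation is in proving $\limsup_n C_{\varepsilon_n}(u_n)\leq C_0(u)$: you apply generalized dominated convergence to the positive parts $[u_n^2\ell_{\varepsilon_n}(u_n)]_+$ on all of $\mathbb{R}^N$ (via $u_n\to u$ in $L^{2+\delta}$) and Fatou's lemma to the negative parts, whereas the paper uses a Vitali argument on balls $B_R$ plus a tail estimate; your version is equally valid and slightly shorter.
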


\begin{proof}
For $\varepsilon>0$, set
\begin{equation*}
F_\varepsilon(t)=t^2\frac{t^\varepsilon-1}{\varepsilon}\quad\text{for }t>0,\quad F_\varepsilon(0)=0,
\end{equation*}
and let $F_0(t)=t^2\log t$ for $t>0$, with $F_0(0)=0$.

Choose $\delta_0>0$ such that $\delta_0<\frac{4s}{N}$ and $2+\delta_0<2_s^*$, and set $\theta_0=\frac{N\delta_0}{4s}<1$. For all sufficiently large $n$, $\varepsilon_n<\frac{\delta_0}{2}$. If $v\in\mathcal{S}_M$, then \eqref{eq:ell-small}, \eqref{eq:ellg}, and Lemma \ref{lem:fractional-gn} give
\begin{equation*}
\int_{\mathbb{R}^N}F_{\varepsilon_n}(|v|)\mathrm{d}x\leq C_{\delta_0}\|v\|_{L^{2+\delta_0}(\mathbb{R}^N)}^{2+\delta_0}\leq CA(v)^{\theta_0}.
\end{equation*}
Hence
\begin{equation*}
\mathcal{I}_{\varepsilon_n}(v)\geq\frac{1}{2}A(v)-CA(v)^{\theta_0}.
\end{equation*}
Since $\theta_0<1$, Young's inequality gives 
\begin{equation*}
\mathcal{I}_{\varepsilon_n}(v)\geq\frac{1}{4}A(v)-C.
\end{equation*}
Applying this to $v=u_n$ and using $\sup_n\mathcal{I}_{\varepsilon_n}(u_n)<+\infty$, we obtain
\begin{equation*}
A(u_n)\leq C
\end{equation*}
for all sufficiently large $n$. Since $B(u_n)=M$, the sequence $\{u_n\}$ is bounded in $H^s(\mathbb{R}^N)$.

By Lemma \ref{lem:rearrangement},
\begin{equation*}
u_n^*\in\mathcal{S}_M,\quad \mathcal{I}_{\varepsilon_n}(u_n^*)\leq\mathcal{I}_{\varepsilon_n}(u_n).
\end{equation*}
Replacing $u_n$ by $u_n^*$ and passing to a subsequence, we may assume that $u_n$ is nonnegative, radial, nonincreasing and
\begin{equation*}
u_n\rightharpoonup u\quad\text{weakly in }H^s(\mathbb{R}^N).
\end{equation*}
By Lemma \ref{lem:rcomp},
\begin{equation}\label{eq:psq}
u_n\to u\quad\text{strongly in }L^q(\mathbb{R}^N)\quad\text{for every }2<q<2_s^*.
\end{equation}
After passing to a further subsequence, we may also assume that $u_n\to u$ almost everywhere in $\mathbb{R}^N$.

We prove that $B(u)=M$. Suppose that $B(u)<M$ and set
\begin{equation*}
m_0=\frac{M-B(u)}{2}>0.
\end{equation*}
For every fixed $R>0$, \eqref{eq:psq} and H\"older's inequality give
\begin{equation*}
\int_{B_R}u_n^2\mathrm{d}x\to\int_{B_R}u^2\mathrm{d}x.
\end{equation*}
Since
\begin{equation*}
\int_{B_R}u^2\mathrm{d}x\leq B(u)=M-2m_0,
\end{equation*}
for all sufficiently large $n$,
\begin{equation}\label{eq:pmass}
\int_{|x|>R}u_n^2\mathrm{d}x\geq m_0.
\end{equation}

Since $u_n$ is radial and nonincreasing,
\begin{equation*}
|B_1|r^Nu_n(r)^2\leq\int_{|x|\leq r}u_n^2\mathrm{d}x\leq M,
\end{equation*}
and hence
\begin{equation*}
u_n(r)\leq C_Mr^{-\frac{N}{2}},\quad C_M=\left(\frac{M}{|B_1|}\right)^{\frac{1}{2}}.
\end{equation*}
Fix $R>0$ so large that $b_R:=C_MR^{-\frac{N}{2}}<1$. Since $t\mapsto\frac{t^{\varepsilon_n}-1}{\varepsilon_n}$ is increasing and $\frac{b_R^{\varepsilon_n}-1}{\varepsilon_n}<0$, \eqref{eq:pmass} gives
\begin{equation*}
\int_{|x|>R}F_{\varepsilon_n}(u_n)\mathrm{d}x
\leq\frac{b_R^{\varepsilon_n}-1}{\varepsilon_n}\int_{|x|>R}u_n^2\mathrm{d}x
\leq m_0\frac{b_R^{\varepsilon_n}-1}{\varepsilon_n}.
\end{equation*}
Since $\frac{b_R^{\varepsilon_n}-1}{\varepsilon_n}\to\log b_R<0$,
\begin{equation*}
\limsup_{n\to\infty}\int_{|x|>R}F_{\varepsilon_n}(u_n)\mathrm{d}x\leq\frac{m_0}{2}\log b_R.
\end{equation*}
On the other hand, $[F_{\varepsilon_n}(u_n)]_+=0$ on $\{u_n\leq1\}$. By \eqref{eq:ellg} and Lemma \ref{lem:fractional-gn},
\begin{equation} \label{eq:ppos}
\int_{\mathbb{R}^N}[F_{\varepsilon_n}(u_n)]_+\mathrm{d}x
\leq C_{\delta_0}\int_{\mathbb{R}^N}u_n^{2+\delta_0}\mathrm{d}x
\leq C A(u_n)^{\theta_0}B(u_n)^{1+\frac{\delta_0}{2}-\theta_0}
\leq C,
\end{equation}
where we used $B(u_n)=M$ and the boundedness of $A(u_n)$. Hence
\begin{equation*}
\begin{aligned}
	\int_{\mathbb{R}^N}F_{\varepsilon_n}(u_n)\mathrm{d}x
	&=\int_{B_R}F_{\varepsilon_n}(u_n)\mathrm{d}x+\int_{|x|>R}F_{\varepsilon_n}(u_n)\mathrm{d}x
	\leq C+\int_{|x|>R}F_{\varepsilon_n}(u_n)\mathrm{d}x.
\end{aligned}
\end{equation*}
It follows that
\begin{equation*}
\limsup_{n\to\infty}\int_{\mathbb{R}^N}F_{\varepsilon_n}(u_n)\mathrm{d}x\leq C+\frac{m_0}{2}\log b_R.
\end{equation*}
Letting $R\to+\infty$, we obtain
\begin{equation*}
\int_{\mathbb{R}^N}F_{\varepsilon_n}(u_n)\mathrm{d}x\to-\infty.
\end{equation*}
By \eqref{eq:iren} and $A(u_n)\geq0$,
\begin{equation*}
\mathcal{I}_{\varepsilon_n}(u_n)\geq\frac{M}{2(2+\varepsilon_n)}-\frac{1}{2+\varepsilon_n}\int_{\mathbb{R}^N}F_{\varepsilon_n}(u_n)\mathrm{d}x\to+\infty,
\end{equation*}
This contradicts $\sup_n\mathcal{I}_{\varepsilon_n}(u_n)<+\infty$. Hence
\begin{equation*}
B(u)=M.
\end{equation*}
Since $u_n\rightharpoonup u$ weakly in $L^2(\mathbb{R}^N)$ and $\|u_n\|_{L^2(\mathbb{R}^N)}=\|u\|_{L^2(\mathbb{R}^N)}$, we obtain
\begin{equation*} 
u_n\to u\quad\text{strongly in }L^2(\mathbb{R}^N).
\end{equation*}

We next pass to the nonlinear term. If $t_n\to t>0$, then \eqref{eq:ell-int} gives
\begin{equation*}
F_{\varepsilon_n}(t_n)=t_n^2\log t_n\int_0^1t_n^{\theta\varepsilon_n}\mathrm{d}\theta\to t^2\log t=F_0(t).
\end{equation*}
If $t_n\to0$, then \eqref{eq:ell-small} gives $|F_{\varepsilon_n}(t_n)|\leq-t_n^2\log t_n\to0$. Thus
\begin{equation*}
F_{\varepsilon_n}(u_n)\to F_0(u)\quad\text{almost everywhere in }\mathbb{R}^N.
\end{equation*}

Choose $q$ with $2+\delta_0<q<2_s^*$. By \eqref{eq:ell-small} and \eqref{eq:log-negative},
\begin{equation*}
0\leq[F_{\varepsilon_n}(u_n)]_- \leq [u_n^2\log u_n]_- \leq\frac{1}{2\mathrm{e}},
\end{equation*}
Moreover, \eqref{eq:ellg} gives
\begin{equation*}
[F_{\varepsilon_n}(u_n)]_+\leq C_{\delta_0}u_n^{2+\delta_0}.
\end{equation*}
Since $\{u_n\}$ is bounded in $L^q(\mathbb{R}^N)$ and $q>2+\delta_0$, the sequence $\{u_n^{2+\delta_0}\}$ is uniformly integrable on $B_R$. Hence Vitali's theorem gives
\begin{equation*}
\int_{B_R}F_{\varepsilon_n}(u_n)\mathrm{d}x\to\int_{B_R}u^2\log u\mathrm{d}x.
\end{equation*}
Moreover,
\begin{equation*}
\int_{|x|>R}[F_{\varepsilon_n}(u_n)]_+\mathrm{d}x\leq C_{\delta_0}\int_{|x|>R}u_n^{2+\delta_0}\mathrm{d}x.
\end{equation*}
The strong convergence in $L^{2+\delta_0}(\mathbb{R}^N)$ from \eqref{eq:psq} gives
\begin{equation*}
\lim_{R\to+\infty}\limsup_{n\to\infty}\int_{|x|>R}u_n^{2+\delta_0}\mathrm{d}x=0.
\end{equation*}
Since
\begin{equation*}
\int_{\mathbb{R}^N}F_{\varepsilon_n}(u_n)\mathrm{d}x\leq \int_{B_R}F_{\varepsilon_n}(u_n)\mathrm{d}x+\int_{|x|>R}[F_{\varepsilon_n}(u_n)]_+\mathrm{d}x,
\end{equation*}
letting $n\to\infty$ gives
\begin{equation*}
\limsup_{n\to\infty}\int_{\mathbb{R}^N}F_{\varepsilon_n}(u_n)\mathrm{d}x \leq \int_{B_R}u^2\log u\mathrm{d}x+o_R(1),
\end{equation*}
where $o_R(1)\to0$ as $R\to+\infty$. Moreover,
\begin{equation*}
[u^2\log u]_+\leq C_{\delta_0}u^{2+\delta_0}\in L^1(\mathbb{R}^N).
\end{equation*}
Hence $\int_{\mathbb{R}^N}u^2\log u\mathrm{d}x$ is well defined in $[-\infty,+\infty)$ and
\begin{equation*}
\int_{B_R}u^2\log u\mathrm{d}x\to\int_{\mathbb{R}^N}u^2\log u\mathrm{d}x
\end{equation*}
as $R\to+\infty$. Therefore
\begin{equation}\label{eq:plsup}
\limsup_{n\to\infty}\int_{\mathbb{R}^N}F_{\varepsilon_n}(u_n)\mathrm{d}x
\leq
\int_{\mathbb{R}^N}u^2\log u\mathrm{d}x.
\end{equation}
If $\int_{\{u\leq1\}}u^2|\log u|\mathrm{d}x=+\infty$, then the right-hand side of \eqref{eq:plsup} equals $-\infty$. Hence
\begin{equation*}
\int_{\mathbb{R}^N}F_{\varepsilon_n}(u_n)\mathrm{d}x\to-\infty,
\end{equation*}
which contradicts $\sup_n\mathcal{I}_{\varepsilon_n}(u_n)<+\infty$. Thus $u\in\mathcal{D}_s$.

Denote
\begin{equation*}
J_n=\int_{\mathbb{R}^N}F_{\varepsilon_n}(u_n)\mathrm{d}x.
\end{equation*}
By \eqref{eq:ppos},
\begin{equation*}
J_n\leq\int_{\mathbb{R}^N}[F_{\varepsilon_n}(u_n)]_+\mathrm{d}x\leq C.
\end{equation*}
On the other hand, \eqref{eq:iren} gives
\begin{equation*}
J_n=(2+\varepsilon_n)\left(\frac{1}{2}A(u_n)-\mathcal{I}_{\varepsilon_n}(u_n)\right)+\frac{M}{2}.
\end{equation*}
Since $A(u_n)\geq0$ and $\sup_n\mathcal{I}_{\varepsilon_n}(u_n)<+\infty$, we obtain $J_n\geq-C$. Thus $|J_n|\leq C$ and
\begin{equation*}
\frac{J_n}{2+\varepsilon_n}=\frac{J_n}{2}+o(1).
\end{equation*}
Consequently, \eqref{eq:iren} can be written as
\begin{equation*}
\mathcal{I}_{\varepsilon_n}(u_n)=\frac{1}{2}A(u_n)+\frac{M}{4}-\frac{1}{2}J_n+o(1).
\end{equation*}
Using the weak lower semicontinuity of $A$ and \eqref{eq:plsup}, we obtain
\begin{equation*}
\begin{aligned}
	\liminf_{n\to\infty}\mathcal{I}_{\varepsilon_n}(u_n)
	&\geq\frac{1}{2}A(u)+\frac{M}{4}-\frac{1}{2}\limsup_{n\to\infty}J_n\\
	&\geq\frac{1}{2}A(u)+\frac{M}{4}-\frac{1}{2}\int_{\mathbb{R}^N}u^2\log u\mathrm{d}x
	=\mathcal{I}_0(u).
\end{aligned}
\end{equation*}
This proves \eqref{eq:ldir}.

Finally, let $v\in\mathcal{D}_s\cap L^{2+\delta}(\mathbb{R}^N)$ for some $\delta>0$. For $0<\varepsilon<\frac{\delta}{2}$, \eqref{eq:ell-small} and \eqref{eq:ellg} give
\begin{equation*}
|F_\varepsilon(|v|)|\leq-v^2\log|v|\quad\text{on }\{|v|\leq1\},\quad 0\leq F_\varepsilon(|v|)\leq C_\delta|v|^{2+\delta}\quad\text{on }\{|v|>1\}.
\end{equation*}
Since $v\in\mathcal{D}_s$, the function $-v^2\log|v|$ is integrable on $\{|v|\leq1\}$, while $v\in L^{2+\delta}(\mathbb{R}^N)$ gives $|v|^{2+\delta}\in L^1(\mathbb{R}^N)$. Therefore, using $F_\varepsilon(|v|)\to v^2\log|v|$ pointwise, the dominated convergence theorem yields
\begin{equation*}
\int_{\mathbb{R}^N}F_\varepsilon(|v|)\mathrm{d}x\to\int_{\mathbb{R}^N}v^2\log|v|\mathrm{d}x.
\end{equation*}
Moreover, by the definition of $F_\varepsilon$,
\begin{equation*}
\mathcal{I}_\varepsilon(v)=\frac{1}{2}A(v)+\frac{B(v)}{2(2+\varepsilon)}-\frac{1}{2+\varepsilon}\int_{\mathbb{R}^N}F_\varepsilon(|v|)\mathrm{d}x.
\end{equation*}
Hence
\begin{equation*}
\mathcal{I}_\varepsilon(v)\to\frac{1}{2}A(v)+\frac{1}{4}B(v)-\frac{1}{2}\int_{\mathbb{R}^N}v^2\log|v|\mathrm{d}x=\mathcal{I}_0(v),
\end{equation*}
which proves \eqref{eq:fconv}.
\end{proof}

\subsection{A power local minimizer near each logarithmic ground state}

We next use the energy gap from Lemma \ref{lem:egap} to construct a power local minimizer near a fixed logarithmic ground state $Q$ of \eqref{eq:log}.

Fix $0<\rho<\min\{\rho_*,\sqrt{2M}\}$ and set
\begin{equation*}
\eta_\rho=\min\left\{1,\inf\left\{\mathcal{I}_0(u):u\in\mathcal{S}_M,\ d_Q(u)=\rho\right\}-c_0\right\}>0,
\end{equation*}
where the positivity follows from Lemma \ref{lem:egap}. Define
\begin{equation*}
\mathcal{B}_\rho(Q)=\left\{u\in\mathcal{S}_M:d_Q(u)\leq\rho\right\}.
\end{equation*}
By the Sobolev embedding, $Q\in L^{2+\delta}(\mathbb{R}^N)$ for some $\delta>0$. Since $Q\in\mathcal{D}_s$, Lemma \ref{lem:plimit} gives
\begin{equation}\label{eq:iq}
\mathcal{I}_\varepsilon(Q)\to\mathcal{I}_0(Q)=c_0
\end{equation}
as $\varepsilon\to0$.

We have the following energy gap for $\mathcal{I}_\varepsilon$ on $\partial\mathcal{B}_\rho(Q)$.

\begin{lemma}\label{lem:pgap}
For all sufficiently small $\varepsilon>0$,
\begin{equation*}
\inf\left\{\mathcal{I}_\varepsilon(u):u\in\mathcal{S}_M, d_Q(u)=\rho\right\}\geq c_0+\frac{\eta_\rho}{2}.
\end{equation*}
\end{lemma}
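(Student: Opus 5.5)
We argue by contradiction. Suppose the claim fails. Then there exist $\varepsilon_n\to0$ and $u_n\in\mathcal{S}_M$ with $d_Q(u_n)=\rho$ such that
\begin{equation*}
\mathcal{I}_{\varepsilon_n}(u_n)<c_0+\frac{\eta_\rho}{2}.
\end{equation*}
In particular $\sup_n\mathcal{I}_{\varepsilon_n}(u_n)<+\infty$, so Lemma \ref{lem:plimit} applies. By Lemma \ref{lem:rearrangement}, replacing $u_n$ by $u_n^*$ does not increase $\mathcal{I}_{\varepsilon_n}$: it preserves $B$ and all $L^q$ norms and does not increase $A$. It also preserves the constraint, since $(u_n^*)^*=u_n^*$ gives $d_Q(u_n^*)=d_Q(u_n)=\rho$. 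We may therefore assume that each $u_n$ is nonnegative, radial and nonincreasing, with $\|u_n-Q\|_{L^2(\mathbb{R}^N)}=\rho$.

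Passing to a subsequence, Lemma \ref{lem:plimit} gives $u\in\mathcal{S}_M$ with $u_n\to u$ strongly in $L^2(\mathbb{R}^N)$ and
\begin{equation*}
\mathcal{I}_0(u)\leq\liminf_{n\to\infty}\mathcal{I}_{\varepsilon_n}(u_n)\leq c_0+\frac{\eta_\rho}{2}.
\end{equation*}
The limit $u$ is a strong $L^2$ limit of nonnegative radial nonincreasing functions, so it is itself nonnegative, radial and nonincreasing, and hence $u^*=u$. Passing to the limit in $\|u_n-Q\|_{L^2(\mathbb{R}^N)}=\rho$ gives $\|u-Q\|_{L^2(\mathbb{R}^N)}=\rho$, that is, $d_Q(u)=\rho$. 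The definition of $\eta_\rho$ then yields
\begin{equation*}
\mathcal{I}_0(u)\geq c_0+\eta_\rho>c_0+\frac{\eta_\rho}{2},
\end{equation*}
since $\eta_\rho>0$. This contradicts the preceding upper bound and proves the lemma.

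The main point to check is that the distance constraint survives both steps. For the rearrangement, the identity $d_Q(u_n^*)=\|u_n^*-Q\|_{L^2(\mathbb{R}^N)}$ holds because $u_n^*$ is already symmetric decreasing. For the limit, one needs $u^*=u$: a.e. convergence along a further subsequence preserves nonnegativity, radial symmetry and monotonicity, and the $L^2$ equimeasurability argument of \eqref{eq:rcontr} also gives $\|u_n^*-u^*\|_{L^2(\mathbb{R}^N)}\leq\|u_n-u\|_{L^2(\mathbb{R}^N)}\to0$. Everything else is a direct combination of Lemma \ref{lem:plimit} and Lemma \ref{lem:egap}.
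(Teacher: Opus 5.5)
Your proof is correct and follows the paper's argument essentially step for step: you argue by contradiction with a sequence $\varepsilon_n\to0$, reduce to symmetric decreasing $u_n$ via Lemma~\ref{lem:rearrangement}, use Lemma~\ref{lem:plimit} for strong $L^2$ convergence and lower semicontinuity, and pass to the limit in $d_Q(u_n)=\rho$ to contradict the definition of $\eta_\rho$. Your observation that \eqref{eq:rcontr} makes $d_Q$ $1$-Lipschitz on $L^2(\mathbb{R}^N)$ is a slightly cleaner way to get $d_Q(u)=\rho$, since it removes the need to identify $u^*=u$ through a.e.\ convergence.
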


\begin{proof}
Suppose that there exist $\varepsilon_n\to0$ and $u_n\in\mathcal{S}_M$ such that
\begin{equation*}
d_Q(u_n)=\rho,\quad \mathcal{I}_{\varepsilon_n}(u_n)<c_0+\frac{\eta_\rho}{2}.
\end{equation*}
By Lemma \ref{lem:rearrangement}, replacing $u_n$ by $u_n^*$ if necessary, we may assume that
\begin{equation*}
u_n\geq0,\quad u_n=u_n^*,\quad d_Q(u_n)=\rho.
\end{equation*}
Lemma \ref{lem:plimit} gives, after passing to a subsequence, some $u\in\mathcal{S}_M$ such that
\begin{equation*}
u_n\to u\quad\text{strongly in }L^2(\mathbb{R}^N),\quad \mathcal{I}_0(u)\leq c_0+\frac{\eta_\rho}{2}.
\end{equation*}
Passing to a further subsequence, $u_n\to u$ almost everywhere. Hence $u$ is nonnegative, radial and nonincreasing. Therefore
\begin{equation*}
d_Q(u)=\|u-Q\|_{L^2(\mathbb{R}^N)}=\lim_{n\to\infty}\|u_n-Q\|_{L^2(\mathbb{R}^N)}=\rho.
\end{equation*}
This contradicts the definition of $\eta_\rho$.
\end{proof}

With the boundary energy gap established, we now minimize $\mathcal{I}_\varepsilon$ in a small neighborhood of the fixed logarithmic ground state $Q$.  

\begin{lemma}\label{lem:patt}
There exists $\varepsilon_1>0$ such that, for every $0<\varepsilon<\varepsilon_1$, the value
\begin{equation*}
m_\varepsilon=\inf_{u\in\mathcal{B}_\rho(Q)}\mathcal{I}_\varepsilon(u)
\end{equation*}
is attained by some nonnegative, radial and nonincreasing function $Q_{\varepsilon}\in\mathcal{B}_\rho(Q)$.
\end{lemma}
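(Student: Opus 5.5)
The plan is to apply the direct method on the set $\mathcal{B}_\rho(Q)$, with $\varepsilon$ fixed but small, and to obtain compactness from radial rearrangement.

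\textbf{Step 1: reduce to radial nonincreasing competitors.} By Lemma \ref{lem:rearrangement}, the map $u\mapsto u^*$ preserves $\mathcal{S}_M$, does not increase $A$, and preserves $\int|u|^{2+\varepsilon}$. Hence $\mathcal{I}_\varepsilon(u^*)\leq\mathcal{I}_\varepsilon(u)$. Since $(u^*)^*=u^*$, we also have $d_Q(u^*)=d_Q(u)$. So $u^*\in\mathcal{B}_\rho(Q)$ whenever $u\in\mathcal{B}_\rho(Q)$, and we may take a minimizing sequence $\{u_n\}$ of nonnegative, radial, nonincreasing functions in $\mathcal{B}_\rho(Q)$. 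For these, $d_Q(u_n)=\|u_n-Q\|_{L^2(\mathbb{R}^N)}$.

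\textbf{Step 2: bounds on $m_\varepsilon$ and on the sequence.} Since $Q\in\mathcal{B}_\rho(Q)$, \eqref{eq:iq} gives $m_\varepsilon\leq\mathcal{I}_\varepsilon(Q)<c_0+\eta_\rho/4$ for $\varepsilon<\varepsilon_1$, with $\varepsilon_1$ chosen small. By Lemma \ref{lem:fractional-gn} with $q=2+\varepsilon$ and $\theta_q=N\varepsilon/(4s)<1$, we get $\mathcal{I}_\varepsilon(u)\geq\frac12A(u)-C_\varepsilon A(u)^{\theta_q}-C$ on $\mathcal{S}_M$. Thus $m_\varepsilon>-\infty$ and $\{u_n\}$ is bounded in $H^s(\mathbb{R}^N)$.

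\textbf{Step 3: pass to the limit.} By Lemma \ref{lem:rcomp}, a subsequence satisfies $u_n\rightharpoonup Q_\varepsilon$ weakly in $H^s$, $u_n\to Q_\varepsilon$ strongly in $L^{2+\varepsilon}$, and $u_n\to Q_\varepsilon$ almost everywhere. The limit $Q_\varepsilon$ is therefore nonnegative, radial and nonincreasing. Weak lower semicontinuity of $A$ gives $\mathcal{I}_\varepsilon(Q_\varepsilon)\leq m_\varepsilon$, provided $B(Q_\varepsilon)=M$. It remains to show that no mass is lost.

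\textbf{Step 4: no mass loss (the main obstacle).} Unlike the logarithmic case, the power functional has no $-\infty$ penalty for vanishing mass, so the argument must use the constraint $d_Q(u_n)\leq\rho$. Weak $L^2$ convergence gives $\|Q_\varepsilon-Q\|_{L^2}\leq\liminf\|u_n-Q\|_{L^2}\leq\rho$. Also $\langle u_n,Q\rangle\to\langle Q_\varepsilon,Q\rangle$, so
\begin{equation*}
\langle Q_\varepsilon,Q\rangle=\lim_{n\to\infty}\frac{2M-\|u_n-Q\|_{L^2(\mathbb{R}^N)}^2}{2}\geq M-\frac{\rho^2}{2}>0,
\end{equation*}
using $\rho<\sqrt{2M}$. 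Hence $Q_\varepsilon\neq0$, though $B(Q_\varepsilon)\le M$ could still be strict.

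To rule out $B(Q_\varepsilon)<M$, write $u_n=Q_\varepsilon+r_n$ with $r_n\rightharpoonup0$, $B(r_n)\to M-B(Q_\varepsilon)=:\mu>0$, and $r_n\to0$ in $L^{2+\varepsilon}$. The Brezis--Lieb splitting of $A$ and $B$ then gives
\begin{equation*}
m_\varepsilon=\lim_{n\to\infty}\mathcal{I}_\varepsilon(u_n)\geq\mathcal{I}_\varepsilon(Q_\varepsilon)+\frac{\mu}{2\varepsilon}.
\end{equation*}
Set $t=\sqrt{M/B(Q_\varepsilon)}>1$ and $\tilde u=tQ_\varepsilon\in\mathcal{S}_M$, which is radial and nonincreasing. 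A direct computation shows
\begin{equation*}
\mathcal{I}_\varepsilon(\tilde u)\leq t^2\mathcal{I}_\varepsilon(Q_\varepsilon)\leq\mathcal{I}_\varepsilon(Q_\varepsilon)+(t^2-1)\,\mathcal{I}_\varepsilon(Q_\varepsilon),
\end{equation*}
where the first inequality uses $t^{2+\varepsilon}\geq t^2$. In addition, $\mathcal{I}_\varepsilon(Q_\varepsilon)\leq m_\varepsilon\leq c_0+1$ and $(t^2-1)=\mu/B(Q_\varepsilon)\leq C\mu$. Since $1/(2\varepsilon)\gg C(c_0+1)$ for small $\varepsilon$, this yields $\mathcal{I}_\varepsilon(\tilde u)<m_\varepsilon$.

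It then remains to check $\tilde u\in\mathcal{B}_\rho(Q)$. If $d_Q(\tilde u)>\rho$, then by continuity along the path $\tau\mapsto\sqrt{M/B(\cdot)}\,(\text{interpolation between }\tilde u\text{ and }u_n)$ there is a point of $\mathcal{S}_M$ with $d_Q=\rho$ and energy below $c_0+\eta_\rho/2$. That contradicts Lemma \ref{lem:pgap}. If this path argument proves awkward, the fallback is to first show that $\mathcal{I}_\varepsilon$ sublevel sets near $Q$ are uniformly tight, arguing as in Lemma \ref{lem:plimit}, where the $\varepsilon$-uniform mass concentration came from the logarithmic limit. In that version one takes $\varepsilon_n\to0$ with hypothetical mass loss and derives a contradiction from Lemma \ref{lem:plimit}, which forces $B(\lim)=M$.

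\textbf{Conclusion.} Once $B(Q_\varepsilon)=M$, we have $u_n\to Q_\varepsilon$ in $L^2$ and $d_Q(Q_\varepsilon)\leq\rho$. Hence $Q_\varepsilon\in\mathcal{B}_\rho(Q)$ attains $m_\varepsilon$, which proves Lemma \ref{lem:patt}.
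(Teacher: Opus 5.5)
\textbf{The gap: you never prove $\tilde u=tQ_\varepsilon\in\mathcal{B}_\rho(Q)$.} Your path argument claims that a normalized interpolation between $u_n$ and $\tilde u$ crosses $\{d_Q=\rho\}$ at energy below $c_0+\eta_\rho/2$. Nothing you have proved controls $\mathcal{I}_\varepsilon$ at the intermediate points. Since $\mathcal{I}_\varepsilon$ is not convex on $\mathcal{S}_M$, bounds at the two endpoints give no information along the path. You would have to compute the limiting energy along the whole path and show it stays below $m_\varepsilon$; this is essentially your fibering comparison redone at every intermediate scale, and you do not do it. The fallback does not help either. Lemma \ref{lem:plimit} concerns limits as $\varepsilon_n\to0$, and says nothing about whether, at a fixed $\varepsilon$, the weak limit of a minimizing sequence has full mass.

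\textbf{The fix: no path is needed.} Since $tQ_\varepsilon$ is nonnegative, radial and nonincreasing, we have $(tQ_\varepsilon)^*=tQ_\varepsilon$. Using $t^2B(Q_\varepsilon)=M$,
\begin{equation*}
d_Q(tQ_\varepsilon)^2=2M-2t\langle Q_\varepsilon,Q\rangle_{L^2(\mathbb{R}^N)}\leq2M-2\langle Q_\varepsilon,Q\rangle_{L^2(\mathbb{R}^N)}=\lim_{n\to\infty}\|u_n-Q\|_{L^2(\mathbb{R}^N)}^2\leq\rho^2 .
\end{equation*}
The inequality holds because $t>1$ and you already showed $\langle Q_\varepsilon,Q\rangle_{L^2(\mathbb{R}^N)}\geq M-\frac{\rho^2}{2}>0$. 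Scaling up toward the sphere moves you closer to $Q$, and this is exactly how the paper closes the step.

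\textbf{Comparison with the paper.} With this fix, the rest of your argument is correct and reaches the energy contradiction by a genuinely different route.
\begin{itemize}
\item The paper proves $\int Q_\varepsilon^{2+\varepsilon}\geq c_*>\varepsilon A(Q_\varepsilon)$ from the localized mass on a ball $B_R$ and H\"older. It then uses that $r\mapsto\frac{r^2}{2}A(Q_\varepsilon)-\frac{r^{2+\varepsilon}}{\varepsilon(2+\varepsilon)}\int Q_\varepsilon^{2+\varepsilon}$ decreases on $[1,\infty)$.
\item You instead use the Hilbert-space splitting $m_\varepsilon\geq\mathcal{I}_\varepsilon(Q_\varepsilon)+\frac{\mu}{2\varepsilon}$, the bound $\mathcal{I}_\varepsilon(tQ_\varepsilon)\leq t^2\mathcal{I}_\varepsilon(Q_\varepsilon)$, and $m_\varepsilon\leq c_0+1$. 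This avoids $R$ and $c_*$ altogether; the required lower bound on the $L^{2+\varepsilon}$ term is hidden in the energy bound.
\end{itemize}
You should make your constant explicit so that $\varepsilon_1$ is uniform. Set $\kappa=M-\rho^2/2$. Cauchy--Schwarz applied to the overlap gives $B(Q_\varepsilon)\geq\kappa^2/M$, so $t^2-1=\mu/B(Q_\varepsilon)\leq M\mu/\kappa^2$. Then any $\varepsilon_1<\kappa^2/(2M(c_0+1))$ works, together with the smallness needed for $\mathcal{I}_\varepsilon(Q)\leq c_0+1$.
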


\begin{proof}
The estimate established in the proof of Lemma \ref{lem:plimit}, together with Young's inequality, gives $\bar{\varepsilon}<\min\{1,\alpha_*(s,N)\}$ and $C>0$ such that
\begin{equation}\label{eq:pcoer}
\mathcal{I}_\varepsilon(v)\geq\frac{1}{4}A(v)-C\quad\text{for every }v\in\mathcal{S}_M
\end{equation}
if $0<\varepsilon<\bar{\varepsilon}$. By \eqref{eq:iq}, reducing $\bar{\varepsilon}$ if necessary, we may also assume
\begin{equation*}
\mathcal{I}_\varepsilon(Q)\leq c_0+1\quad\text{for }0<\varepsilon<\bar{\varepsilon}.
\end{equation*}
It follows from \eqref{eq:pcoer} that whenever $v\in\mathcal{S}_M$ satisfies $\mathcal{I}_\varepsilon(v)\leq\mathcal{I}_\varepsilon(Q)+1$,
\begin{equation*}
A(v)\leq4(c_0+C+2)=:C_A.
\end{equation*} 
Set $\kappa=M-\frac{\rho^2}{2}>0$. Choose $R>0$ such that
\begin{equation*}
\|Q\|_{L^2(\mathbb{R}^N\setminus B_R)}\leq\frac{\kappa}{2\sqrt{M}}.
\end{equation*}
There exists $c_*>0$ such that
\begin{equation*}
|B_R|^{-\frac{\alpha}{2}}\left(\frac{\kappa^2}{4M}\right)^{1+\frac{\alpha}{2}}\geq c_*\quad\text{for every }0\leq\alpha\leq1.
\end{equation*}
We now choose $0<\varepsilon_1<\bar{\varepsilon}$ so that $\varepsilon_1C_A<c_*$.

Fix $0<\varepsilon<\varepsilon_1$. By \eqref{eq:pcoer}, $m_\varepsilon>-\infty$, while $Q\in\mathcal{B}_\rho(Q)$ gives $m_\varepsilon\leq\mathcal{I}_\varepsilon(Q)$. Let $\{u_n\}\subset\mathcal{B}_\rho(Q)$ be a minimizing sequence such that
\begin{equation*}
\mathcal{I}_\varepsilon(u_n)\leq\mathcal{I}_\varepsilon(Q)+1.
\end{equation*}
By Lemma \ref{lem:rearrangement}, replacing $u_n$ by $u_n^*$ if necessary, we may assume
\begin{equation*}
u_n\geq0,\quad u_n=u_n^*,\quad d_Q(u_n)\leq\rho.
\end{equation*}
By the choice of $C_A$, we have $A(u_n)\leq C_A$. Since $B(u_n)=M$, the sequence $\{u_n\}$ is bounded in $H^s(\mathbb{R}^N)$. Passing to a subsequence,
\begin{equation*}
u_n\rightharpoonup u\quad\text{weakly in }H^s(\mathbb{R}^N).
\end{equation*}
Since $\varepsilon<\alpha_*(s,N)$, we have $2+\varepsilon<2_s^*$. Lemma \ref{lem:rcomp} therefore yields
\begin{equation}\label{eq:felp}
u_n\to u\quad\text{strongly in }L^{2+\varepsilon}(\mathbb{R}^N).
\end{equation}
Passing to a further subsequence, $u_n\to u$ almost everywhere. Thus $u$ is nonnegative, radial and nonincreasing. 
Since $u_n=u_n^*$ and $B(u_n)=B(Q)=M$,
\begin{equation*}
d_Q(u_n)^2=\|u_n-Q\|_{L^2(\mathbb{R}^N)}^2=2M-2\int_{\mathbb{R}^N}u_nQ\mathrm{d}x.
\end{equation*}
The bound $d_Q(u_n)\leq\rho$ gives
\begin{equation*}
\int_{\mathbb{R}^N}u_nQ\mathrm{d}x\geq M-\frac{\rho^2}{2}=\kappa.
\end{equation*}
Since $u_n\rightharpoonup u$ weakly in $L^2(\mathbb{R}^N)$,
\begin{equation}\label{eq:ovlp}
\int_{\mathbb{R}^N}uQ\mathrm{d}x=\lim_{n\to\infty}\int_{\mathbb{R}^N}u_nQ\mathrm{d}x\geq\kappa.
\end{equation}
In particular, $u\neq0$. By weak lower semicontinuity of $B$ and $A$,
\begin{equation*}
B(u)\leq M,\quad A(u)\leq C_A.
\end{equation*} 
We claim that $B(u)=M$. 

Suppose that $B(u)<M$ and set
$
t=\left(\frac{M}{B(u)}\right)^{\frac{1}{2}}>1.
$
Then $tu\in\mathcal{S}_M$ and $(tu)^*=tu$. Moreover,
\begin{equation*}
2M-2\int_{\mathbb{R}^N}uQ\mathrm{d}x
=\lim_{n\to\infty}d_Q(u_n)^2\leq\rho^2.
\end{equation*}
Since \eqref{eq:ovlp} gives $\int_{\mathbb{R}^N}uQ\mathrm{d}x>0$ and $t>1$,
\begin{equation*}
d_Q(tu)^2=2M-2t\int_{\mathbb{R}^N}uQ\mathrm{d}x\leq2M-2\int_{\mathbb{R}^N}uQ\mathrm{d}x\leq\rho^2.
\end{equation*}
Thus $tu\in\mathcal{B}_\rho(Q)$. By \eqref{eq:ovlp}, the choice of $R$ and $B(u)\leq M$,
\begin{equation*}
\begin{aligned}
	\int_{B_R}uQ\mathrm{d}x
	&\geq\kappa-\|u\|_{L^2(\mathbb{R}^N\setminus B_R)}\|Q\|_{L^2(\mathbb{R}^N\setminus B_R)}
	\geq\frac{\kappa}{2}.
\end{aligned}
\end{equation*}
The Cauchy-Schwarz inequality then gives
\begin{equation*}
\int_{B_R}u^2\mathrm{d}x\geq\frac{\left(\int_{B_R}uQ\mathrm{d}x\right)^2}{\int_{B_R}Q^2\mathrm{d}x}\geq\frac{\kappa^2}{4M}.
\end{equation*}
Hence H\"older's inequality and the definition of $c_*$ yield
\begin{equation*}
\int_{\mathbb{R}^N}u^{2+\varepsilon}\mathrm{d}x
\geq |B_R|^{-\frac{\varepsilon}{2}}\left(\frac{\kappa^2}{4M}\right)^{1+\frac{\varepsilon}{2}}
\geq c_*.
\end{equation*}
Since $\varepsilon<\varepsilon_1$, $A(u)\leq C_A$ and $\varepsilon_1C_A<c_*$, we obtain
\begin{equation}\label{eq:pgt}
\int_{\mathbb{R}^N}u^{2+\varepsilon}\mathrm{d}x>\varepsilon A(u).
\end{equation} 
On the other hand, since $\{u_n\}$ is a minimizing sequence, weak lower semicontinuity and \eqref{eq:felp} give
\begin{equation}\label{eq:pliminf}
m_\varepsilon\geq\frac{1}{2}A(u)+\frac{M}{2\varepsilon}-\frac{1}{\varepsilon(2+\varepsilon)}\int_{\mathbb{R}^N}u^{2+\varepsilon}\mathrm{d}x.
\end{equation}
For $r\geq1$, set
\begin{equation*}
g(r)=\frac{r^2}{2}A(u)-\frac{r^{2+\varepsilon}}{\varepsilon(2+\varepsilon)}\int_{\mathbb{R}^N}u^{2+\varepsilon}\mathrm{d}x.
\end{equation*}
By \eqref{eq:pgt},
\begin{equation*}
\begin{aligned}
	g'(r)
	&=rA(u)-\frac{r^{1+\varepsilon}}{\varepsilon}\int_{\mathbb{R}^N}u^{2+\varepsilon}\mathrm{d}x
	\leq r\left(A(u)-\frac{1}{\varepsilon}\int_{\mathbb{R}^N}u^{2+\varepsilon}\mathrm{d}x\right)<0
\end{aligned}
\end{equation*}
for every $r\geq1$. Since $t>1$,
\begin{equation*}
\mathcal{I}_\varepsilon(tu)=\frac{M}{2\varepsilon}+g(t)
<\frac{M}{2\varepsilon}+g(1)\leq m_\varepsilon,
\end{equation*}
where the last inequality follows from \eqref{eq:pliminf}. This contradicts $tu\in\mathcal{B}_\rho(Q)$. Hence $B(u)=M$.

Since $u_n\rightharpoonup u$ weakly in $L^2(\mathbb{R}^N)$ and $B(u_n)=B(u)=M$, we conclude that
\begin{equation*}
u_n\to u\quad\text{strongly in }L^2(\mathbb{R}^N).
\end{equation*}
Therefore
\begin{equation*}
d_Q(u)=\|u-Q\|_{L^2(\mathbb{R}^N)}
=\lim_{n\to\infty}\|u_n-Q\|_{L^2(\mathbb{R}^N)}
\leq\rho,
\end{equation*}
so $u\in\mathcal{B}_\rho(Q)$. Finally, weak lower semicontinuity and \eqref{eq:felp} yield
\begin{equation*}
m_\varepsilon\leq\mathcal{I}_\varepsilon(u)\leq\liminf_{n\to\infty}\mathcal{I}_\varepsilon(u_n)=m_\varepsilon.
\end{equation*}
Thus $u$ attains $m_\varepsilon$. We denote this minimizer by $Q_{\varepsilon}$. By construction, $Q_{\varepsilon}$ is nonnegative, radial and nonincreasing.
\end{proof}

We next show that the minimizer obtained in Lemma \ref{lem:patt} lies in the interior of $\mathcal B_\rho(Q)$ and converges to $Q$ as $\varepsilon\to0$.

\begin{proposition}\label{prop:plmin}
There exists $\varepsilon_0>0$ such that, for every $0<\varepsilon<\varepsilon_0$, a minimizer $Q_{\varepsilon}$ chosen as in Lemma \ref{lem:patt} satisfies
$
d_Q(Q_{\varepsilon})<\rho.
$
In particular, $Q_{\varepsilon}$ is nonnegative, radial, nonincreasing and is a local minimizer of $\mathcal{I}_\varepsilon$ on $\mathcal{S}_M$. Moreover,
\begin{equation}\label{eq:qconv}
Q_{\varepsilon}\to Q\quad\text{strongly in }L^2(\mathbb{R}^N)
\end{equation}
as $\varepsilon\to0$.
\end{proposition}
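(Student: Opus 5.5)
The plan is to argue by contradiction through Lemma \ref{lem:plimit}, combining the upper bound $m_\varepsilon\leq\mathcal{I}_\varepsilon(Q)\to c_0$ from \eqref{eq:iq} with the isolation statement of Lemma \ref{lem:egap}. The convergence \eqref{eq:qconv} comes first, and the interior property then follows from it together with the boundary gap of Lemma \ref{lem:pgap}.

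\emph{Step 1: convergence.} Let $\varepsilon_n\to0$ and let $Q_{\varepsilon_n}$ be minimizers as in Lemma \ref{lem:patt}. Since $Q\in\mathcal{B}_\rho(Q)$, we have $\mathcal{I}_{\varepsilon_n}(Q_{\varepsilon_n})=m_{\varepsilon_n}\leq\mathcal{I}_{\varepsilon_n}(Q)$, and by \eqref{eq:iq} the right-hand side tends to $c_0$. In particular $\sup_n\mathcal{I}_{\varepsilon_n}(Q_{\varepsilon_n})<\infty$.

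Each $Q_{\varepsilon_n}$ already equals its rearrangement, so Lemma \ref{lem:plimit} gives, along a subsequence, $Q_{\varepsilon_n}\to u$ strongly in $L^2$. Here $u\in\mathcal{S}_M$ is nonnegative, radial and nonincreasing (via a.e. convergence), and
\begin{equation*}
\mathcal{I}_0(u)\leq\liminf_{n\to\infty}m_{\varepsilon_n}\leq c_0.
\end{equation*}
Lemma \ref{lem:fmass} then shows that $u$ is a minimizer, hence a ground state. It is positive by the argument invoked in Lemma \ref{lem:lcomp}, and radial about the origin.

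Because $u=u^*$, we get $\|u-Q\|_{L^2}=\lim_n d_Q(Q_{\varepsilon_n})\leq\rho<2\rho_*$. The first assertion of Lemma \ref{lem:egap} therefore forces $u=Q$. Since the limit is the same for every subsequence, \eqref{eq:qconv} holds for the whole family $\varepsilon\to0$. More precisely, for every $\varepsilon_n\to0$ we have $d_Q(Q_{\varepsilon_n})=\|Q_{\varepsilon_n}-Q\|_{L^2}\to0$, whichever minimizers are chosen.

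\emph{Step 2: interior.} If the first claim failed, there would be $\varepsilon_n\to0$ and minimizers with $d_Q(Q_{\varepsilon_n})=\rho$. This contradicts Step 1, since there $d_Q\to0$ and $\rho>0$. One can also argue directly: Lemma \ref{lem:pgap} gives $m_{\varepsilon_n}\geq c_0+\eta_\rho/2$, while $m_{\varepsilon_n}\leq\mathcal{I}_{\varepsilon_n}(Q)\to c_0$.

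Hence there is $\varepsilon_0\leq\varepsilon_1$ such that $d_Q(Q_\varepsilon)<\rho$ for all $0<\varepsilon<\varepsilon_0$. By the contraction property \eqref{eq:rcontr}, $d_Q$ is $1$-Lipschitz in $L^2$, so $\{u\in\mathcal{S}_M:d_Q(u)<\rho\}$ is a relatively $L^2$-open (hence $H^s$-open) subset of $\mathcal{S}_M$ contained in $\mathcal{B}_\rho(Q)$. Consequently $Q_\varepsilon$ minimizes $\mathcal{I}_\varepsilon$ on a neighborhood of itself in $\mathcal{S}_M$, i.e.\ it is a local minimizer.

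\emph{Main obstacle.} The step needing the most care is identifying the limit $u$ with $Q$. This requires knowing that $u$ is a genuine positive radial ground state centered at the origin, so that the isolation lemma applies. The issue is that $u$ is only the $L^2$ limit of rearranged functions, and the isolation lemma was proved among positive radial ground states. I would handle it as above: combine $\mathcal{I}_0(u)\leq c_0$ with Lemma \ref{lem:fmass} to get the ground-state property, and use positivity as in Lemma \ref{lem:lcomp}. Nonnegativity plus radial monotonicity place $u$ at the origin, so no translation ambiguity arises.
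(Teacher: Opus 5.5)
Your proof is correct and uses the same ingredients as the paper: the bound $m_\varepsilon\leq\mathcal{I}_\varepsilon(Q)\to c_0$, Lemma \ref{lem:plimit}, Lemma \ref{lem:fmass}, positivity, the isolation in Lemma \ref{lem:egap}, and the $1$-Lipschitz property of $d_Q$ for local minimality. The only difference is the order: the paper first gets $d_Q(Q_\varepsilon)<\rho$ from the boundary gap of Lemma \ref{lem:pgap} and proves convergence afterwards, whereas your observation that convergence (uniform over the choice of minimizers) already forces interiority shows that Lemma \ref{lem:pgap} is not strictly needed here.
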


\begin{proof}
Choose $\varepsilon_0>0$ sufficiently small so that Lemmas \ref{lem:pgap} and \ref{lem:patt} apply for every $0<\varepsilon<\varepsilon_0$. By \eqref{eq:iq}, reducing $\varepsilon_0$ if necessary, we may also assume
\begin{equation*}
\mathcal{I}_\varepsilon(Q)<c_0+\frac{\eta_\rho}{2}
\end{equation*}
for every $0<\varepsilon<\varepsilon_0$. Since $Q_{\varepsilon}$ minimizes $\mathcal{I}_\varepsilon$ on $\mathcal{B}_\rho(Q)$ and $Q\in\mathcal{B}_\rho(Q)$,
\begin{equation*}
\mathcal{I}_\varepsilon(Q_{\varepsilon})
\leq\mathcal{I}_\varepsilon(Q)
<c_0+\frac{\eta_\rho}{2}.
\end{equation*}
The boundary estimate in Lemma \ref{lem:pgap} therefore implies
$
d_Q(Q_{\varepsilon})<\rho.
$

We next show that $Q_{\varepsilon}$ is a local minimizer on $\mathcal{S}_M$. Set
\begin{equation*}
\delta_{\varepsilon}=\rho-d_Q(Q_{\varepsilon})>0.
\end{equation*}
Let $v\in\mathcal{S}_M$ satisfy
$
\|v-Q_{\varepsilon}\|_{L^2(\mathbb{R}^N)}<\delta_{\varepsilon}.
$
Since $Q_{\varepsilon}=Q_{\varepsilon}^*$, \eqref{eq:rcontr} and the triangle inequality give
\begin{equation*}
\begin{aligned}
	d_Q(v)
	&=\|v^*-Q\|_{L^2(\mathbb{R}^N)}\\
	&\leq\|v^*-Q_{\varepsilon}\|_{L^2(\mathbb{R}^N)}
	+\|Q_{\varepsilon}-Q\|_{L^2(\mathbb{R}^N)}\\
	&\leq\|v-Q_{\varepsilon}\|_{L^2(\mathbb{R}^N)}
	+d_Q(Q_{\varepsilon})
	<\rho.
\end{aligned}
\end{equation*}
Thus $v\in\mathcal{B}_\rho(Q)$, and hence
\begin{equation*}
\mathcal{I}_\varepsilon(Q_{\varepsilon})
\leq\mathcal{I}_\varepsilon(v).
\end{equation*}
Therefore, $Q_{\varepsilon}$ is an $L^2(\mathbb{R}^N)$-local minimizer of $\mathcal{I}_\varepsilon$ on $\mathcal{S}_M$. 

Let $\varepsilon_n\to0$ with $0<\varepsilon_n<\varepsilon_0$. By minimality and \eqref{eq:iq},
\begin{equation*}
\mathcal{I}_{\varepsilon_n}(Q_{\varepsilon_n})
\leq\mathcal{I}_{\varepsilon_n}(Q)\to c_0.
\end{equation*}
Lemma \ref{lem:plimit} gives, after passing to a subsequence, some $U\in\mathcal{S}_M$ such that
\begin{equation*}
Q_{\varepsilon_n}\to U\quad\text{strongly in }L^2(\mathbb{R}^N),\quad
\mathcal{I}_0(U)\leq c_0.
\end{equation*}
Passing to a further subsequence, we may also assume that $Q_{\varepsilon_n}\to U$ almost everywhere. Since each $Q_{\varepsilon_n}$ is nonnegative, radial and nonincreasing, the same properties hold for $U$. Moreover,
\begin{equation*}
d_Q(U)=\|U-Q\|_{L^2(\mathbb{R}^N)}=\lim_{n\to\infty}\|Q_{\varepsilon_n}-Q\|_{L^2(\mathbb{R}^N)}\leq\rho.
\end{equation*} 
Since $\mathcal{I}_0(U)\leq c_0<\infty$, we have $U\in\mathcal{D}_s$. Hence Lemma \ref{lem:fmass} yields
$
A(U)\geq C_0(U).
$
Since $B(U)=M$ and $c_0=\frac{M}{4}$,
\begin{equation*}
0\geq\mathcal{I}_0(U)-c_0=\frac{1}{2}\left(A(U)-C_0(U)\right)\geq0.
\end{equation*}
Therefore
\begin{equation*}
A(U)=C_0(U),\quad \mathcal{I}_0(U)=c_0.
\end{equation*}
Thus $U\in\mathcal{N}_0$ and $U$ is a radial ground state of \eqref{eq:log}. The positivity argument in \cite[Proof of Theorem 1.1-(i) equation (2.19)]{Li-Peng-Shuai-2022} gives $U>0$ in $\mathbb{R}^N$. Since
$
d_Q(U)\leq\rho<\rho_*,
$
Lemma \ref{lem:egap} gives $U=Q$.

We have proved that every sequence $\varepsilon_n\to0$ has a subsequence such that
\begin{equation*}
Q_{\varepsilon_n}\to Q\quad\text{strongly in }L^2(\mathbb{R}^N).
\end{equation*}
If \eqref{eq:qconv} were false, there would exist $\sigma>0$ and a sequence $\varepsilon_n\to0$ such that
\begin{equation*}
\|Q_{\varepsilon_n}-Q\|_{L^2(\mathbb{R}^N)}\geq\sigma.
\end{equation*}
The preceding argument would produce a subsequence converging strongly in $L^2(\mathbb{R}^N)$ to $Q$, which is a contradiction. Therefore, \eqref{eq:qconv} holds.
\end{proof}

Since $2+\varepsilon<2_s^*$, the functional $\mathcal{I}_\varepsilon$ is of class $C^2$ on $H^s(\mathbb{R}^N)$. 
By Proposition \ref{prop:plmin}, $Q_{\varepsilon}$ is a local minimizer of $\mathcal{I}_\varepsilon$ on $\mathcal{S}_M$. Hence there exists a Lagrange multiplier $\lambda_{\varepsilon}\in\mathbb{R}$ such that 
\begin{equation*}
(-\Delta)^sQ_{\varepsilon}+\frac{1}{\varepsilon}Q_{\varepsilon}-\frac{1}{\varepsilon}Q_{\varepsilon}^{1+\varepsilon}=\lambda_{\varepsilon}Q_{\varepsilon}.
\end{equation*}
Set $\omega_{\varepsilon}=\frac{1}{\varepsilon}-\lambda_{\varepsilon}$. Then
\begin{equation}\label{eq:peq}
(-\Delta)^sQ_{\varepsilon}+\omega_{\varepsilon}Q_{\varepsilon}=\frac{1}{\varepsilon}Q_{\varepsilon}^{1+\varepsilon}.
\end{equation}

We now show that $\omega_\varepsilon>0$, that $Q_\varepsilon$ is positive and strictly decreasing, and that its linearized quadratic form has Morse index one.

\begin{lemma}\label{lem:pmorse}
There exists $\varepsilon_2>0$ such that, for every $0<\varepsilon<\varepsilon_2$,
\begin{equation*}
\omega_{\varepsilon}>0,\quad Q_{\varepsilon}>0\quad\text{in }\mathbb{R}^N,\quad Q_{\varepsilon}'(r)<0\quad\text{for }r>0.
\end{equation*}
Moreover, the quadratic form
\begin{equation*}
\mathfrak{q}_{\varepsilon}(h,h)=A(h)+\omega_{\varepsilon}B(h)-\frac{1+\varepsilon}{\varepsilon}\int_{\mathbb{R}^N}Q_{\varepsilon}^{\varepsilon}h^2\mathrm{d}x
\end{equation*}
is well defined on $H^s(\mathbb{R}^N)$ and has Morse index one.
\end{lemma}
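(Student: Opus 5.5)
The plan is to use three ingredients: the local minimality of $Q_\varepsilon$ on $\mathcal S_M$ from Proposition \ref{prop:plmin}, the Euler--Lagrange equation \eqref{eq:peq}, and a mass-preserving dilation as in Lemma \ref{lem:plog}. I would prove the sign of $\omega_\varepsilon$ first, then positivity and strict monotonicity, and finally the Morse index.

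\textbf{Step 1: $\omega_\varepsilon>0$.} For $\lambda>0$ set $u_\lambda(x)=\lambda^{\frac N2}Q_\varepsilon(\lambda x)\in\mathcal S_M$. Then $A(u_\lambda)=\lambda^{2s}A(Q_\varepsilon)$ and $\int u_\lambda^{2+\varepsilon}\,\mathrm dx=\lambda^{\frac{N\varepsilon}{2}}\int Q_\varepsilon^{2+\varepsilon}\,\mathrm dx$. Moreover $u_\lambda\to Q_\varepsilon$ in $L^2(\mathbb{R}^N)$ as $\lambda\to1$, so by the $L^2$-local minimality, $\lambda=1$ is a critical point of $\lambda\mapsto\mathcal I_\varepsilon(u_\lambda)$. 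This gives
\begin{equation*}
sA(Q_\varepsilon)=\frac{N}{2(2+\varepsilon)}\int_{\mathbb{R}^N}Q_\varepsilon^{2+\varepsilon}\,\mathrm dx .
\end{equation*}
Testing \eqref{eq:peq} with $Q_\varepsilon$ gives $A(Q_\varepsilon)+\omega_\varepsilon M=\frac1\varepsilon\int Q_\varepsilon^{2+\varepsilon}\,\mathrm dx$. Combining the two identities,
\begin{equation*}
\omega_\varepsilon M=A(Q_\varepsilon)\left(\frac{2(2+\varepsilon)s}{N\varepsilon}-1\right).
\end{equation*}
Since $Q_\varepsilon\neq0$ gives $A(Q_\varepsilon)>0$, this is positive once $\varepsilon<\frac{4s}{N-2s}$ when $N>2s$ (and for every $\varepsilon>0$ when $N\le2s$); in particular it holds for all small $\varepsilon$.

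\textbf{Step 2: positivity and strict monotonicity.} Set $W=(\varepsilon\omega_\varepsilon)^{-\frac1\varepsilon}Q_\varepsilon(\omega_\varepsilon^{-\frac1{2s}}\,\cdot)$. Then $W$ is a nonnegative, nontrivial, radial, nonincreasing $H^s$ solution of $(-\Delta)^sW+W=W^{1+\varepsilon}$. For the regularity of $W$ (boundedness, $C^{1,\beta}$, smoothness where $W>0$) I would either invoke the regularity theory for the power equation in \cite{Frank-Lenzmann-Silvestre-2016} or rerun the Moser iteration and Schauder bootstrap of Lemma \ref{lem:eiglinfty} and Proposition \ref{prop:known}.
\begin{itemize}
\item Suppose $W(x_0)=0$. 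Then $x_0$ is a global minimum, so the integral representation gives $(-\Delta)^sW(x_0)=-c_{N,s}\int\frac{W(y)}{|x_0-y|^{N+2s}}\,\mathrm dy<0$. But the equation gives $(-\Delta)^sW(x_0)=0$, a contradiction. Hence $W>0$.
\item For monotonicity, let $w=-\partial_{x_1}W\ge0$ on the half-space $H$. It is odd in $x_1$ and satisfies $(-\Delta)^sw+w=(1+\varepsilon)W^\varepsilon w$. If $w(x_0)=0$ for some $x_0\in H$, the reflection computation in Proposition \ref{prop:known} gives $(-\Delta)^sw(x_0)<0$, while the equation forces it to vanish. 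Hence $w>0$ in $H$.
\end{itemize}
Rescaling back gives $Q_\varepsilon>0$ and $Q_\varepsilon'(r)<0$ for $r>0$.

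\textbf{Step 3: Morse index one.} The form $\mathfrak q_\varepsilon$ is well defined on $H^s(\mathbb{R}^N)$ because $Q_\varepsilon\in L^\infty(\mathbb{R}^N)$, so $Q_\varepsilon^\varepsilon\in L^\infty(\mathbb{R}^N)$.
\begin{itemize}
\item \emph{Lower bound.} Using \eqref{eq:peq},
\begin{equation*}
\mathfrak q_\varepsilon(Q_\varepsilon,Q_\varepsilon)=\frac1\varepsilon\int Q_\varepsilon^{2+\varepsilon}\,\mathrm dx-\frac{1+\varepsilon}{\varepsilon}\int Q_\varepsilon^{2+\varepsilon}\,\mathrm dx=-\int Q_\varepsilon^{2+\varepsilon}\,\mathrm dx<0,
\end{equation*}
so the index is at least one.
\item \emph{Upper bound.} For $h\in H^s(\mathbb{R}^N)$ with $h\perp Q_\varepsilon$, take the path $u_t=\sqrt M\,(Q_\varepsilon+th)/\|Q_\varepsilon+th\|_{L^2}$ as in Lemma \ref{lem:morse}. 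Then $u_0'=h$ and $u_0''=-\frac{\|h\|^2_{L^2}}{M}Q_\varepsilon$. Since $\mathcal I_\varepsilon\in C^2(H^s)$ and $u_t\to Q_\varepsilon$ in $L^2$, minimality gives $\frac{\mathrm d^2}{\mathrm dt^2}\mathcal I_\varepsilon(u_t)\big|_{t=0}\ge0$. Expanding, and replacing the term $\mathcal I_\varepsilon'(Q_\varepsilon)u_0''$ through \eqref{eq:peq} by $\lambda_\varepsilon\langle Q_\varepsilon,u_0''\rangle=-\lambda_\varepsilon\|h\|_{L^2}^2$, the second derivative equals exactly $\mathfrak q_\varepsilon(h,h)$, since $\frac1\varepsilon-\lambda_\varepsilon=\omega_\varepsilon$. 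Thus $\mathfrak q_\varepsilon\ge0$ on $Q_\varepsilon^\perp$, and the codimension argument of Lemma \ref{lem:morse} bounds the index by one.
\end{itemize}

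The main obstacle is Step 2. One must secure enough regularity of $Q_\varepsilon$ (pointwise $C^{1,1}$ near a would-be zero of $w$) to evaluate $(-\Delta)^s$ pointwise there. I would handle this by the rescaling to $W$ and citing the regularity theory for the power equation.
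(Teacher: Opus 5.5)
Your proposal is correct and follows the paper's proof essentially step by step. It uses the same dilation identity combined with testing \eqref{eq:peq} to get $\omega_\varepsilon>0$, the same constrained second-variation path giving $\mathfrak q_\varepsilon\ge 0$ on $Q_\varepsilon^\perp$ together with $\mathfrak q_\varepsilon(Q_\varepsilon,Q_\varepsilon)=-\int Q_\varepsilon^{2+\varepsilon}<0$, and the same rescaling to $W$.

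There are two minor differences. The paper gets well-definedness of $\mathfrak q_\varepsilon$ from H\"older and Sobolev, so no $L^\infty$ bound is needed. For Step 2 it simply cites \cite[Proposition 1.1-(ii)]{Frank-Lenzmann-2013} and \cite[Proposition 3.1-(ii)]{Frank-Lenzmann-Silvestre-2016}, which already give positivity and strict radial monotonicity of any nonnegative nontrivial $H^s$ solution of the power equation. That citation replaces your hand-made maximum-principle and reflection argument, together with the regularity it requires.
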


\begin{proof}
For $h\in H^s(\mathbb{R}^N)$, H\"older's inequality and the Sobolev embedding give
\begin{equation*}
\int_{\mathbb{R}^N}Q_{\varepsilon}^{\varepsilon}h^2\mathrm{d}x
\leq\|Q_{\varepsilon}\|_{L^{2+\varepsilon}(\mathbb{R}^N)}^\varepsilon\|h\|_{L^{2+\varepsilon}(\mathbb{R}^N)}^2<\infty.
\end{equation*}
Thus $\mathfrak{q}_{\varepsilon}$ is well defined on $H^s(\mathbb{R}^N)$. 
Write
\begin{equation*}
u=Q_{\varepsilon},\quad \omega=\omega_{\varepsilon},\quad P=\int_{\mathbb{R}^N}u^{2+\varepsilon}\mathrm{d}x.
\end{equation*}
Testing \eqref{eq:peq} with $u$ gives
\begin{equation}\label{eq:ptest}
A(u)+\omega M=\frac{1}{\varepsilon}P.
\end{equation}

For $\tau>0$, set $u_\tau(x)=\tau^{\frac{N}{2}}u(\tau x)$. Then $B(u_\tau)=M$ and
\begin{equation*}
A(u_\tau)=\tau^{2s}A(u),\quad \int_{\mathbb{R}^N}u_\tau^{2+\varepsilon}\mathrm{d}x=\tau^{\frac{N\varepsilon}{2}}P.
\end{equation*}
Since $u$ is a local minimizer of $\mathcal{I}_\varepsilon$ on $\mathcal{S}_M$, the function $\tau\mapsto\mathcal{I}_\varepsilon(u_\tau)$ has a local minimum at $\tau=1$. Since
\begin{equation*}
\mathcal{I}_\varepsilon(u_\tau)=\frac{\tau^{2s}}{2}A(u)+\frac{M}{2\varepsilon}-\frac{\tau^{\frac{N\varepsilon}{2}}}{\varepsilon(2+\varepsilon)}P,
\end{equation*}
differentiating at $\tau=1$ gives 
\begin{equation}\label{eq:pscale}
P=\frac{2s(2+\varepsilon)}{N}A(u).
\end{equation}
Combining \eqref{eq:ptest} and \eqref{eq:pscale}, we obtain
\begin{equation}\label{eq:pomega}
N\varepsilon\omega M=\left(4s-\varepsilon(N-2s)\right)A(u).
\end{equation}
If $N\leq2s$, then $4s-\varepsilon(N-2s)>0$. If $N>2s$, the same inequality follows from $\varepsilon<\alpha_*(s,N)=\frac{4s}{N-2s}$. Since $B(u)=M>0$, we have $u\not\equiv0$ and therefore $P>0$. By \eqref{eq:pscale}, $A(u)>0$. Hence \eqref{eq:pomega} gives $\omega>0$.

We next prove that the Morse index is one. Let $h\in H^s(\mathbb{R}^N)$ satisfy
\begin{equation}\label{eq:phtan}
\int_{\mathbb{R}^N}uh\mathrm{d}x=0.
\end{equation}
Define
\begin{equation*}
\alpha(t)=\frac{\sqrt{M}}{\|u+th\|_{L^2(\mathbb{R}^N)}},\quad u_t=\alpha(t)(u+th).
\end{equation*}
Then $B(u_t)=M$. By \eqref{eq:phtan},
\begin{equation*}
\|u+th\|_{L^2(\mathbb{R}^N)}^2=M+t^2\|h\|_{L^2(\mathbb{R}^N)}^2,
\end{equation*}
so
\begin{equation*}
\alpha(0)=1,\quad \alpha'(0)=0,\quad \alpha''(0)=-\frac{\|h\|_{L^2(\mathbb{R}^N)}^2}{M},
\end{equation*}
and
\begin{equation}\label{eq:ppath}
u_0=u,\quad u_0'=h,\quad u_0''=-\frac{\|h\|_{L^2(\mathbb{R}^N)}^2}{M}u.
\end{equation} 

Since $\mathcal{I}_\varepsilon$ is of class $C^2$ on $H^s(\mathbb{R}^N)$, the chain rule and \eqref{eq:ppath} give
\begin{equation}\label{eq:psec}
\begin{aligned}
	\left.\frac{\mathrm{d}^2}{\mathrm{d}t^2}\mathcal{I}_\varepsilon(u_t)\right|_{t=0}
	&=A(h)+\frac{1}{\varepsilon}B(h)-\frac{1+\varepsilon}{\varepsilon}\int_{\mathbb{R}^N}u^\varepsilon h^2\mathrm{d}x-\lambda_{\varepsilon}B(h)\\
	&=A(h)+\omega B(h)-\frac{1+\varepsilon}{\varepsilon}\int_{\mathbb{R}^N}u^\varepsilon h^2\mathrm{d}x\\
	&=\mathfrak{q}_{\varepsilon}(h,h).
\end{aligned}
\end{equation} 
Since $u$ is a local minimizer of $\mathcal{I}_\varepsilon$ on $\mathcal{S}_M$ and $u_t\in\mathcal{S}_M$, \eqref{eq:psec} gives
\begin{equation}\label{eq:ptan}
\mathfrak{q}_{\varepsilon}(h,h)\geq0\quad\text{for every }h\in H^s(\mathbb{R}^N)\text{ satisfying }h\perp u.
\end{equation} 
On the other hand, \eqref{eq:ptest} gives
\begin{equation*}
\mathfrak{q}_{\varepsilon}(u,u)=A(u)+\omega M-\frac{1+\varepsilon}{\varepsilon}P=-P<0.
\end{equation*}
Hence $\operatorname{span}\{u\}$ is a negative subspace, and therefore
\begin{equation*}
\operatorname{ind}(\mathfrak{q}_{\varepsilon})\geq1.
\end{equation*} 
If $E\subset H^s(\mathbb{R}^N)$ is a negative subspace with $\dim E\geq2$, then
\begin{equation*}
\dim(E\cap u^\perp)\geq\dim E-1\geq1.
\end{equation*}
Hence there exists $0\neq h\in E\cap u^\perp$. Since $E$ is negative, $\mathfrak{q}_{\varepsilon}(h,h)<0$, contradicting \eqref{eq:ptan}. Therefore,
\begin{equation*}
\operatorname{ind}(\mathfrak{q}_{\varepsilon})=1.
\end{equation*}

Since $\omega>0$, define
\begin{equation*}
W(x)=(\varepsilon\omega)^{-\frac{1}{\varepsilon}}u\left(\omega^{-\frac{1}{2s}}x\right).
\end{equation*}
By the scaling property of the fractional Laplacian, for every $b>0$,
\begin{equation*}
(-\Delta)^s[u(b \cdot)](x)=b^{2s}\left[(-\Delta)^su\right](bx).
\end{equation*}
Taking $b=\omega^{-\frac{1}{2s}}$, we obtain
\begin{equation*}
(-\Delta)^sW(x)
=(\varepsilon\omega)^{-\frac{1}{\varepsilon}}\omega^{-1}
\left[(-\Delta)^su\right]\left(\omega^{-\frac{1}{2s}}x\right).
\end{equation*}
Using \eqref{eq:peq},
we obtain
\begin{equation*}
\begin{aligned}
	(-\Delta)^sW(x)&=\frac{(\varepsilon\omega)^{-\frac{1}{\varepsilon}}}{\varepsilon\omega}u\left(\omega^{-\frac{1}{2s}}x\right)^{1+\varepsilon}-W(x)
	=W(x)^{1+\varepsilon}-W(x).
\end{aligned}
\end{equation*}
Therefore
\begin{equation*}
(-\Delta)^sW+W=W^{1+\varepsilon}\quad\text{in }\mathbb{R}^N.
\end{equation*}
Moreover, $W$ is nonnegative, nontrivial, radial and nonincreasing. By \cite[Proposition 1.1-(ii)]{Frank-Lenzmann-2013} when $N=1$ and \cite[Proposition 3.1-(ii)]{Frank-Lenzmann-Silvestre-2016} when $N>1$, we have
\begin{equation*}
W>0\quad\text{in }\mathbb{R}^N,\quad W'(r)<0\quad\text{for }r>0.
\end{equation*}
Hence the same conclusions hold for $u=Q_{\varepsilon}$.
\end{proof} 

We recall the uniqueness result for the normalized power equation in the form needed below.

\begin{lemma}\label{lem:puniq}
Let $N\geq1$, $0<s<1$ and $0<\alpha<\alpha_*(s,N)$. Assume that $U\in H^s(\mathbb{R}^N)$ is a positive weak solution of
\begin{equation}\label{eq:pgen}
(-\Delta)^sU+U=U^{1+\alpha}\quad\text{in }\mathbb{R}^N.
\end{equation}
Define
\begin{equation*}
\mathfrak{q}_U(h,h)=A(h)+B(h)-(1+\alpha)\int_{\mathbb{R}^N}U^\alpha h^2\mathrm{d}x,\quad h\in H^s(\mathbb{R}^N).
\end{equation*}
If $\operatorname{ind}(\mathfrak{q}_U)=1$, then $U$ coincides, up to translation, with the unique positive power ground state of \eqref{eq:pgen}. 
\end{lemma}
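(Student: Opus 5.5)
This lemma is essentially a restatement of the uniqueness theorem of Frank--Lenzmann ($N=1$) and Frank--Lenzmann--Silvestre ($N\geq1$), so the plan is to reduce the hypotheses to theirs rather than redo the nonlocal ODE theory. Their result states that, for $0<\alpha<\alpha_*(s,N)$, any ground state of \eqref{eq:pgen} is unique up to translation. In fact their proof establishes more: the uniqueness holds among positive solutions whose linearized operator $L_+=(-\Delta)^s+1-(1+\alpha)U^\alpha$ has exactly one negative eigenvalue (Morse index one). That class is exactly what the continuation argument in $s$ from the local case $s=1$ requires. So the task is to verify that $U$ lies in this class in the sense used there.

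The first step is regularity and symmetry of $U$. A positive $H^s$ weak solution of \eqref{eq:pgen} with subcritical $\alpha$ is bounded and continuous by the usual Moser iteration and Schauder estimates, and decays like $|x|^{-N-2s}$. The moving-plane method of Chen--Li, which is already used in Proposition \ref{prop:known}, then shows that after a translation $U$ is radial and decreasing. This puts $U$ into the setting of \cite[Proposition 3.1]{Frank-Lenzmann-Silvestre-2016}.

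Next I identify $\operatorname{ind}(\mathfrak{q}_U)$ with the number of negative eigenvalues of $L_+$. Since $U^\alpha$ is bounded and decays at infinity, $U^\alpha$ is relatively compact with respect to $(-\Delta)^s+1$. Hence the essential spectrum of $L_+$ is $[1,\infty)$ and $H^s(\mathbb{R}^N)$ is its form domain. The min--max principle then gives that the Morse index of the quadratic form equals the number of negative eigenvalues counted with multiplicity.

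The final and main step invokes the uniqueness theorem \cite[Theorem 3]{Frank-Lenzmann-Silvestre-2016} (respectively \cite[Theorem 2.4]{Frank-Lenzmann-2013} for $N=1$). Its proof shows that every positive radial solution with Morse index one is nondegenerate, with kernel spanned by the $\partial_{x_j}U$. It then continues each such branch in $s$ up to $s=1$, where the local uniqueness theory of Kwong applies. Two distinct solutions in this class would produce two distinct branches reaching the same unique limit, and local uniqueness from the implicit function theorem rules that out. The obstacle is that their theorem is literally stated for ground states, that is, energy minimizers. I would handle this by citing the precise intermediate statements, \cite[Theorem 3 and Section 5]{Frank-Lenzmann-Silvestre-2016}, where the hypothesis actually used is $\operatorname{ind}(L_+)=1$ (their class of ``$L_+$ with Morse index one''). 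Finally, the unique positive power ground state itself has Morse index one, so $U$ coincides with it up to translation.
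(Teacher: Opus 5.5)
Your proposal follows essentially the same route as the paper. Both verify regularity and decay of $U$, use the relative compactness of $U^\alpha$ and Weyl's theorem to identify $\operatorname{ind}(\mathfrak{q}_U)$ with the number of negative eigenvalues of $L_+$, and then appeal to the Frank--Lenzmann--Silvestre uniqueness theory for positive solutions whose $L_+$ has Morse index one. The definitional issue you flag is handled in the paper simply by noting that such a $U$ is a ground state in the sense of \cite[Definition 3.1]{Frank-Lenzmann-Silvestre-2016}, so that \cite[Theorem 4]{Frank-Lenzmann-Silvestre-2016} applies directly; your extra moving-plane step is harmless.
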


\begin{proof}
By H\"older's inequality and the Sobolev embedding,
\begin{equation*}
\int_{\mathbb{R}^N}U^\alpha h^2\mathrm{d}x
\leq\|U\|_{L^{2+\alpha}(\mathbb{R}^N)}^\alpha
\|h\|_{L^{2+\alpha}(\mathbb{R}^N)}^2<\infty,
\end{equation*}
so $\mathfrak{q}_U$ is well defined on $H^s(\mathbb{R}^N)$.
Since $U$ is a positive solution of \eqref{eq:pgen}, standard regularity and decay estimates give
$U\in L^\infty(\mathbb{R}^N)$ and $U(x)\to0$ as $|x|\to\infty$,
see \cite[Proposition 3.1]{Frank-Lenzmann-Silvestre-2016} for $N\geq1$.
Hence $U^\alpha$ is a relatively compact perturbation of
$(-\Delta)^s+1$ (see the proof of Lemma \ref{lem:prs-truncated}). By Weyl's theorem,
\begin{equation*}
\sigma_{\mathrm{ess}}(L_{+,U})=[1,+\infty),
\end{equation*}
where
\begin{equation*}
L_{+,U}=(-\Delta)^s+1-(1+\alpha)U^\alpha.
\end{equation*}
Denote by $N_-(L_{+,U})$ the total number of negative eigenvalues of $L_{+,U}$ counted with multiplicity. Then, since $\operatorname{ind}(\mathfrak{q}_U)=1$, recalling the definition of the Morse index in \eqref{eq:morse-index}, we have
\begin{equation*}
N_-(L_{+,U})=\operatorname{ind}(\mathfrak{q}_U)=1.
\end{equation*} 
Thus $U$ is a ground state of \eqref{eq:pgen} in the sense of
\cite[Definition 3.1]{Frank-Lenzmann-Silvestre-2016} for $N\geq1$ and of
\cite{Frank-Lenzmann-2013} for $N=1$.
The uniqueness up to translations then follows from
\cite[Theorem 4]{Frank-Lenzmann-Silvestre-2016} when $N\geq1$.
\end{proof}

Finally, combining Proposition \ref{prop:plmin}, Lemmas \ref{lem:pmorse} and \ref{lem:puniq}, we prove the uniqueness statement in Theorem \ref{thm:main}.

\begin{proof}[Proof of Theorem \ref{thm:main}-(2)]
Assume that $Q_1\neq Q_2$ are positive radial ground states of \eqref{eq:log}, and set
\begin{equation*}
d=\|Q_1-Q_2\|_{L^2(\mathbb{R}^N)}>0.
\end{equation*} 
For $i=1,2$, let $\rho_{*,i}>0$ be given by Lemma \ref{lem:egap} for $Q_i$. Choose
\begin{equation*}
0<\rho_i<\min\left\{\rho_{*,i},\sqrt{2M},\frac{d}{4}\right\},\quad i=1,2.
\end{equation*}
Then the sets
\begin{equation*}
\mathcal{B}_{\rho_i}(Q_i)=\left\{u\in\mathcal{S}_M:d_{Q_i}(u)\leq\rho_i\right\}
\end{equation*}
are disjoint. Indeed, if $u$ belonged to their intersection, then
\begin{equation*}
\|Q_1-Q_2\|_{L^2(\mathbb{R}^N)}\leq\|Q_1-u^*\|_{L^2(\mathbb{R}^N)}+\|u^*-Q_2\|_{L^2(\mathbb{R}^N)}\leq\rho_1+\rho_2,
\end{equation*}
which contradicts the choice of $\rho_1$ and $\rho_2$.

For each $i=1,2$, Proposition \ref{prop:plmin} and Lemma \ref{lem:pmorse}, applied with $(Q,\rho)=(Q_i,\rho_i)$, yield $\bar{\varepsilon}_i>0$ such that their conclusions hold for $0<\varepsilon<\bar{\varepsilon}_i$. Fix
\begin{equation*}
0<\varepsilon<\min\left\{\bar{\varepsilon}_1,\bar{\varepsilon}_2,\alpha_*(s,N),\frac{4s}{N}\right\}.
\end{equation*}
Then, for $i=1,2$, there exists a positive radial local minimizer $Q_{\varepsilon,i}\in\mathcal B_{\rho_i}(Q_i)$ satisfying
\begin{equation*}
(-\Delta)^sQ_{\varepsilon,i}+\omega_{\varepsilon,i}Q_{\varepsilon,i}=\frac{1}{\varepsilon}Q_{\varepsilon,i}^{1+\varepsilon} \quad\text{in }\mathbb{R}^N,
\end{equation*}
with
\begin{equation*}
\|Q_{\varepsilon,i}\|_{L^2(\mathbb{R}^N)}^2=M,\quad \omega_{\varepsilon,i}>0,
\end{equation*}
and the corresponding linearized quadratic form has Morse index one. 

Let $u$ be either $Q_{\varepsilon,1}$ or $Q_{\varepsilon,2}$, and denote the corresponding frequency by $\omega$. Set
\begin{equation*}
a=(\varepsilon\omega)^{\frac{1}{\varepsilon}},\quad b=\omega^{\frac{1}{2s}},\quad W(x)=a^{-1}u(b^{-1}x).
\end{equation*}
Then 
\begin{equation*}
(-\Delta)^sW+W=W^{1+\varepsilon}\quad\text{in }\mathbb{R}^N.
\end{equation*} 
Let
\begin{equation*}
\mathfrak{q}_u(\phi,\phi)=A(\phi)+\omega B(\phi)-\frac{1+\varepsilon}{\varepsilon}\int_{\mathbb{R}^N}u^\varepsilon\phi^2\mathrm{d}x,
\end{equation*}
and
\begin{equation*}
\mathfrak{q}_W(\psi,\psi)=A(\psi)+B(\psi)-(1+\varepsilon)\int_{\mathbb{R}^N}W^\varepsilon\psi^2\mathrm{d}x.
\end{equation*}
For $\psi\in H^s(\mathbb{R}^N)$, set $\phi(x)=\psi(bx)$. Since $b^{2s}=\omega$ and $a^\varepsilon=\varepsilon\omega$, a change of variables gives
\begin{equation}\label{eq:power-scaling}
\mathfrak{q}_u(\phi,\phi)=\omega b^{-N}\mathfrak{q}_W(\psi,\psi).
\end{equation}
Hence the Morse index is preserved under this scaling. More precisely, if $E\subset H^s(\mathbb{R}^N)$ is a finite-dimensional subspace, then
\begin{equation*}
\dim\{\psi(b\cdot):\psi\in E\}=\dim E,
\end{equation*}
and \eqref{eq:power-scaling} shows that negative subspaces correspond to each other. Therefore
\begin{equation*}
\operatorname{ind}(\mathfrak{q}_W)=\operatorname{ind}(\mathfrak{q}_u)=1.
\end{equation*}

By Lemma \ref{lem:puniq}, the normalized functions associated with
$Q_{\varepsilon,1}$ and $Q_{\varepsilon,2}$ are translations of the unique
positive ground state $W_\varepsilon$ of
\begin{equation*}
(-\Delta)^sW+W=W^{1+\varepsilon}\quad\text{in }\mathbb{R}^N.
\end{equation*}
We choose the representative $W_\varepsilon$ to be radial and strictly decreasing.
Hence, for $i=1,2$, there exists $x_{\varepsilon,i}\in\mathbb{R}^N$ such that
\begin{equation*}
Q_{\varepsilon,i}(x)=(\varepsilon\omega_{\varepsilon,i})^{\frac{1}{\varepsilon}}W_\varepsilon\left(\omega_{\varepsilon,i}^{\frac{1}{2s}}(x-x_{\varepsilon,i})\right).
\end{equation*}
Since $Q_{\varepsilon,i}$ is radial about the origin, we have $x_{\varepsilon,i}=0$. 
Taking the $L^2(\mathbb{R}^N)$ norm yields for $i=1,2$ that
\begin{equation}\label{eq:mfreq}
M=\|Q_{\varepsilon,i}\|_{L^2(\mathbb{R}^N)}^2=\varepsilon^{\frac{2}{\varepsilon}}\omega_{\varepsilon,i}^{\frac{2}{\varepsilon}-\frac{N}{2s}}\|W_\varepsilon\|_{L^2(\mathbb{R}^N)}^2.
\end{equation}
Since
$
\frac{2}{\varepsilon}-\frac{N}{2s}>0,
$
formula \eqref{eq:mfreq} gives $\omega_{\varepsilon,1}=\omega_{\varepsilon,2}$. Hence
\begin{equation*}
Q_{\varepsilon,1}=Q_{\varepsilon,2}.
\end{equation*}
This common function belongs to both $\mathcal{B}_{\rho_1}(Q_1)$ and $\mathcal{B}_{\rho_2}(Q_2)$, contradicting their disjointness. Therefore, the positive radial ground state of \eqref{eq:log} is unique. Proposition \ref{prop:known} then gives uniqueness of positive ground states up to translations.
\end{proof}

\medskip
\section{Sharp logarithmic Sobolev inequalities}\label{sec:log-sobolev-applications}

Cotsiolis and Tavoularis \cite[Theorem 2.1]{Cotsiolis-Tavoularis-2005} claimed a sharp fractional logarithmic Sobolev inequality. Chatzakou and Ruzhansky \cite{Chatzakou-Ruzhansky-2024} subsequently pointed out an incompatibility in the exponent choices used in its proof and established a revised inequality with an explicit constant for $0<s<\frac{N}{2}$. In this section, for every $N\geq1$ and $0<s<1$, we use the fixed-mass characterization and Theorem \ref{thm:main} to derive sharp fractional logarithmic Sobolev inequalities on $\mathbb{R}^N$ and characterize all equality cases. 

Let $Q$ be the positive radial ground state of \eqref{eq:log} and set
\begin{equation*}
M=\|Q\|_{L^2(\mathbb{R}^N)}^2.
\end{equation*}
Define
\begin{equation*}
\kappa_{N,s}=2\mathrm{e}^{2s}M^{-\frac{2s}{N}}.
\end{equation*}

The following proposition gives the fixed-mass formulation, its scaling
version, and the equivalent scale-invariant form.

\begin{proposition}\label{prop:sharp-log-sobolev}
Let $N\geq1$ and $0<s<1$.
The following equivalent forms of the sharp logarithmic Sobolev inequality hold for every $u\in\mathcal{D}_s\setminus\{0\}$.

\begin{enumerate}
\item It holds that
\begin{equation}\label{eq:mass-log-sob}
	\int_{\mathbb{R}^N}|u|^2\log|u|^2\mathrm{d}x \leq 2A(u)+B(u)\log\left(\frac{B(u)}{M}\right).
\end{equation}
Equality holds if and only if
\begin{equation}\label{eq:mass-extremals}
	u(x)=cQ(x-y)
\end{equation}
for some $c\in\mathbb{R}\setminus\left\{0\right\}$ and $y\in\mathbb{R}^N$.

\item For every $a>0$,
\begin{equation}\label{eq:param-log-sob}
	\begin{aligned}
		\int_{\mathbb{R}^N}|u|^2\log|u|^2\mathrm{d}x \leq& \kappa_{N,s}a^{2s}A(u)+ \left[ \log B(u)-N\left(1+\log a\right) \right]B(u).
	\end{aligned}
\end{equation}
For fixed $a>0$, equality holds if and only if
\begin{equation}\label{eq:param-extremals}
	u(x)=cQ\left( \frac{M^{\frac{1}{N}}}{\mathrm{e} a}(x-y) \right)
\end{equation}
for some $c\in\mathbb{R}\setminus\left\{0\right\}$ and $y\in\mathbb{R}^N$.

\item It holds that
\begin{equation}\label{eq:scale-log-sob}
	\begin{aligned}
		\int_{\mathbb{R}^N}|u|^2\log|u|^2\mathrm{d}x \leq& B(u)\log\left(\frac{B(u)}{M}\right)+ \frac{NB(u)}{2s} \log\left( \frac{4s\mathrm{e}}{N}\frac{A(u)}{B(u)} \right).
	\end{aligned}
\end{equation}
Equality holds if and only if
\begin{equation}\label{eq:scale-extremals}
	u(x)=cQ\left(\lambda(x-y)\right)
\end{equation}
for some $c\in\mathbb{R}\setminus\left\{0\right\}$, $\lambda>0$ and $y\in\mathbb{R}^N$.
\end{enumerate}
\end{proposition}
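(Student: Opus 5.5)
The plan is to derive all three forms from the fixed-mass characterization in Lemma \ref{lem:fmass}, using the uniqueness in Theorem \ref{thm:main}-(2) to identify the equality cases. We first prove (1). Given $u\in\mathcal{D}_s\setminus\{0\}$, set $v=\sqrt{M/B(u)}\,u\in\mathcal{S}_M\cap\mathcal{D}_s$. Lemma \ref{lem:fmass} gives $A(v)\geq C_0(v)$. We use the homogeneity relations
\begin{equation*}
A(v)=\frac{M}{B(u)}A(u),\qquad C_0(v)=\frac{M}{B(u)}\Big(C_0(u)+\frac{1}{2}B(u)\log\frac{M}{B(u)}\Big).
\end{equation*}
Multiplying $A(v)\geq C_0(v)$ by $2B(u)/M$ yields exactly \eqref{eq:mass-log-sob}, since $\int|u|^2\log|u|^2=2C_0(u)$.

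For the equality case in (1), note that equality holds iff $A(v)=C_0(v)$. This is equivalent to $\mathcal{I}_0(v)=M/4+\frac12(A(v)-C_0(v))=c_0$, i.e.\ $v$ is a minimizer on $\mathcal{S}_M$, i.e.\ (by Lemma \ref{lem:fmass}) $v$ is a ground state. This is the step needing care. A minimizer $v$ satisfies $\mathcal{I}_0(|v|)\leq\mathcal{I}_0(v)$ by Lemma \ref{lem:modulus}, so $|v|$ is also a minimizer. Hence $A(|v|)=A(v)$, and the strict part of Lemma \ref{lem:modulus} shows that $v$ has a constant sign. Then $\pm v$ is a nonnegative ground state, positive by the positivity argument of \cite{Li-Peng-Shuai-2022} already used in Lemma \ref{lem:lcomp}. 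By Theorem \ref{thm:main}-(2) and Proposition \ref{prop:known}, $\pm v=Q(\cdot-y)$, which gives \eqref{eq:mass-extremals}. Conversely, $u=cQ(\cdot-y)$ gives $v=\pm Q(\cdot-y)$, a ground state, so equality holds.

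For (3), apply (1) to $u_\lambda(x)=u(\lambda x)$, or equivalently optimize over the dilation. Write $u=w(\lambda\,\cdot)$ with $\lambda>0$ free. The scalings are
\begin{equation*}
A(w(\lambda\cdot))=\lambda^{2s-N}A(w),\qquad B=\lambda^{-N}B(w),\qquad \int|w(\lambda\cdot)|^2\log|w(\lambda\cdot)|^2=\lambda^{-N}\int|w|^2\log|w|^2.
\end{equation*}
Applying \eqref{eq:mass-log-sob} to $u_\mu(x)=u(x/\mu)$ and rewriting in terms of $u$ gives
\begin{equation*}
\int|u|^2\log|u|^2\le 2\mu^{-2s}A(u)+B(u)\log\frac{B(u)}{M}+NB(u)\log\mu\qquad\text{for all }\mu>0.
\end{equation*}
Minimizing the right side in $\mu$ gives $\mu^{2s}=4sA(u)/(NB(u))$. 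Substituting produces the term $\frac{NB}{2s}\bigl(1+\log\frac{4sA}{NB}\bigr)$, which is \eqref{eq:scale-log-sob}. Equality in (3) forces equality in (1) for $u_\mu$ at the optimal $\mu$, so $u(x/\mu)=cQ(x-y')$, which is \eqref{eq:scale-extremals}. Conversely, for $u=cQ(\lambda(\cdot-y))$ the optimal $\mu$ equals $\lambda^{-1}$: Lemma \ref{lem:plog} gives $A(Q)=\frac{N}{4s}M$, so $A(u)/B(u)=\lambda^{2s}N/(4s)$. Hence equality holds.

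For (2), the same family of inequalities with $\mu$ chosen as $\mu=\mathrm{e}a M^{-1/N}$ gives
\begin{equation*}
2\mu^{-2s}=2\mathrm{e}^{-2s}a^{-2s}M^{2s/N},
\end{equation*}
which does not match $\kappa_{N,s}a^{2s}$. So we will instead reparametrize by setting $\mu^{-1}=\mathrm{e}aM^{-1/N}$. Then $2\mu^{-2s}=\kappa_{N,s}a^{2s}$ and
\begin{equation*}
B\log\frac{B}{M}+NB\log\mu=B\big[\log B-N(1+\log a)\big],
\end{equation*}
which is exactly \eqref{eq:param-log-sob}. Equality in (2) for fixed $a$ is equality in (1) for $u(\cdot/\mu)$ with this $\mu$. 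This gives $u(x)=cQ(\mu^{-1}(x-y))$ with $\mu^{-1}=\mathrm{e}aM^{-1/N}$, matching \eqref{eq:param-extremals} up to checking which convention for $\mu$ the statement uses. Verifying this exponent bookkeeping is the main routine risk. The equality characterization in (1), via the constant-sign and positivity argument combined with uniqueness, is the only substantive step.
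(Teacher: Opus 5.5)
Your proposal is correct and follows essentially the paper's route: normalize to mass $M$ and apply Lemma~\ref{lem:fmass}; get the equality case from the constant-sign argument plus Theorem~\ref{thm:main}-(2) (your explicit appeal to positivity fills in a step the paper leaves implicit); and obtain (2) and (3) from the same dilated family of inequalities, where your $\mu$ is the paper's $\rho^{-1}$ (the paper reaches (3) by optimizing (2) over $a$, and also proves (3)$\Rightarrow$(1) via $1+\log t\leq t$ to justify the word ``equivalent''). The exponent bookkeeping you flagged is only a slip and resolves in your favor: inverting $u(x/\mu)=cQ(x-y)$ gives $u(x)=cQ(\mu x-y)$, so in (2) the dilation factor is $\mu=M^{1/N}/(\mathrm{e}a)$, exactly as in \eqref{eq:param-extremals}; likewise, in the converse of (3) the optimal $\mu$ is $\lambda$, not $\lambda^{-1}$ (also note $A(u)>0$ for $u\neq0$, which is needed for that minimizer to exist).
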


\begin{proof}
\emph{Proof of (1).}
Set
\begin{equation*}
v=\left(\frac{M}{B(u)}\right)^{\frac{1}{2}}u.
\end{equation*}
Then $B(v)=M$ and Lemma \ref{lem:fmass} gives $A(v)\geq C_0(v)$. Since
\begin{equation*}
A(v)=\frac{M}{B(u)}A(u)
\end{equation*}
and
\begin{equation*}
2C_0(v)=\int_{\mathbb{R}^N}|v|^2\log|v|^2\mathrm{d}x = \frac{M}{B(u)} \int_{\mathbb{R}^N}|u|^2\log|u|^2\mathrm{d}x + M\log\left(\frac{M}{B(u)}\right),
\end{equation*}
we obtain \eqref{eq:mass-log-sob}. 
Equality is equivalent to
\begin{equation*}
A(v)=C_0(v), \quad \mathcal{I}_0(v)=\frac{M}{4}=c_0.
\end{equation*}
Moreover,
\begin{equation*}
A(|v|)\leq A(v)=C_0(v)=C_0(|v|)\leq A(|v|),
\end{equation*}
so equality holds in Lemma \ref{lem:modulus}. Hence $v$ has a constant sign, and Theorem \ref{thm:main}-(2) gives
\begin{equation*}
v(x)=\pm Q(x-y), \quad y\in\mathbb{R}^N.
\end{equation*}
This is equivalent to \eqref{eq:mass-extremals}. The converse follows by substitution.

\emph{Proof that (1) implies (2).}
For $\rho>0$, set
$
u_\rho(x)=\rho^{\frac{N}{2}}u(\rho x).
$
Then
\begin{equation*}
B(u_\rho)=B(u), \quad A(u_\rho)=\rho^{2s}A(u),
\end{equation*}
and
\begin{equation*}
\int_{\mathbb{R}^N}|u_\rho|^2\log|u_\rho|^2\mathrm{d}x = \int_{\mathbb{R}^N}|u|^2\log|u|^2\mathrm{d}x +NB(u)\log\rho.
\end{equation*}
Applying (1) to $u_\rho$ gives
\begin{equation}\label{eq:dilated-log}
\begin{aligned}
	\int_{\mathbb{R}^N}|u|^2\log|u|^2\mathrm{d}x \leq& 2\rho^{2s}A(u) +B(u)\log\left(\frac{B(u)}{M}\right)-NB(u)\log\rho.
\end{aligned}
\end{equation}
Taking $\rho=\mathrm{e} aM^{-\frac{1}{N}}$ in \eqref{eq:dilated-log}, we have 
\begin{equation*}
\begin{aligned}
	\int_{\mathbb{R}^N}|u|^2\log|u|^2\mathrm{d}x
	\leq&\kappa_{N,s}a^{2s}A(u)+B(u)\log\left(\frac{B(u)}M\right)\\
	&-NB(u)(1+\log a)+B(u)\log M\\
	=&\kappa_{N,s}a^{2s}A(u)+\left[\log B(u)-N(1+\log a)\right]B(u),
\end{aligned}
\end{equation*}
which gives \eqref{eq:param-log-sob}. Equality in \eqref{eq:dilated-log} is equivalent to
\begin{equation*}
u_\rho(x)=cQ(x-y),
\end{equation*}
which gives \eqref{eq:param-extremals}.

\emph{Proof that (2) implies (3).}
Since $u\neq0$, we have $A(u)>0$. Define
\begin{equation*}
F(a)=\kappa_{N,s}a^{2s}A(u)-NB(u)\log a,\quad
F'(a)=\frac{1}{a}\left(2s\kappa_{N,s}A(u)a^{2s}-NB(u)\right).
\end{equation*}
Hence $F$ has a unique minimizer $a_*>0$, determined by
\begin{equation}\label{eq:optimal-a}
a_*^{2s}=\frac{NB(u)}{2s\kappa_{N,s}A(u)}.
\end{equation} 
Substituting \eqref{eq:optimal-a} and using
$\kappa_{N,s}=2\mathrm{e}^{2s}M^{-\frac{2s}{N}}$, we obtain
\begin{equation*}
\begin{aligned}
	&\kappa_{N,s}a_*^{2s}A(u)+B(u)\log B(u)-NB(u)-NB(u)\log a_*\\
	&=\frac{NB(u)}{2s}+B(u)\log B(u)-NB(u)-\frac{NB(u)}{2s}\log\left(\frac{NB(u)}{2s\kappa_{N,s}A(u)}\right)\\
	&=\frac{NB(u)}{2s}+B(u)\log B(u)-NB(u)+\frac{NB(u)}{2s}\log\left(\frac{4sA(u)}{NB(u)}\right)+NB(u)-B(u)\log M\\
	&=B(u)\log\frac{B(u)}{M}
	+\frac{NB(u)}{2s}\log\left(\frac{4s\mathrm{e}}{N}\frac{A(u)}{B(u)}\right).
\end{aligned}
\end{equation*}
Substituting $a=a_*$ into \eqref{eq:param-log-sob} gives \eqref{eq:scale-log-sob}. Equality in \eqref{eq:scale-log-sob} holds if and only if equality holds in \eqref{eq:param-log-sob} for $a=a_*$. By the equality characterization in (2), this is equivalent to
\begin{equation*}
u(x)=cQ\left(\frac{M^{\frac{1}{N}}}{\mathrm{e}a_*}(x-y)\right)
\end{equation*}
for some $c\in\mathbb{R}\setminus\{0\}$ and $y\in\mathbb{R}^N$. Thus every equality case of \eqref{eq:scale-log-sob} has the form
\begin{equation*}
u(x)=cQ\left(\lambda(x-y)\right)
\end{equation*}
for some $c\in\mathbb{R}\setminus\{0\}$, $\lambda>0$ and $y\in\mathbb{R}^N$.
Conversely, let
$
u(x)=cQ\left(\lambda(x-y)\right).
$
By Lemma \ref{lem:plog},
\begin{equation*}
\frac{A(u)}{B(u)}=\lambda^{2s}\frac{N}{4s}.
\end{equation*}
Hence \eqref{eq:optimal-a} gives
$
a_*=\mathrm{e}^{-1}M^{\frac{1}{N}}\lambda^{-1},
$
and therefore
\begin{equation*}
\frac{M^{\frac{1}{N}}}{\mathrm{e}a_*}=\lambda.
\end{equation*}
Thus $u$ is an equality case of \eqref{eq:param-log-sob} for the minimizing parameter $a_*$, and consequently equality holds in \eqref{eq:scale-log-sob}. 

\emph{Proof that (3) implies (1).}
Set
\begin{equation*}
t=\frac{4sA(u)}{NB(u)}>0.
\end{equation*}
Since $1+\log t\leq t$, with equality if and only if $t=1$,
\begin{equation*}
\frac{NB(u)}{2s} \log\left(\frac{4s\mathrm{e}}{N}\frac{A(u)}{B(u)}\right) = \frac{NB(u)}{2s}(1+\log t) \leq 2A(u).
\end{equation*}
Substitution in \eqref{eq:scale-log-sob} gives \eqref{eq:mass-log-sob}. Equality in \eqref{eq:mass-log-sob} holds if and only if equality holds in \eqref{eq:scale-log-sob} and $t=1$. By the equality characterization in (3),
\begin{equation*}
u(x)=cQ\left(\lambda(x-y)\right)
\end{equation*}
for some $c\in\mathbb{R}\setminus\{0\}$, $\lambda>0$ and $y\in\mathbb{R}^N$. By Lemma \ref{lem:plog},
\begin{equation*}
t=\frac{4sA(u)}{NB(u)}=\lambda^{2s}.
\end{equation*}
Hence $t=1$ if and only if $\lambda=1$. Therefore equality in \eqref{eq:mass-log-sob} holds if and only if
\begin{equation*}
u(x)=cQ(x-y)
\end{equation*}
for some $c\in\mathbb{R}\setminus\{0\}$ and $y\in\mathbb{R}^N$.
\end{proof}

Specializing the fixed-mass and scale-invariant inequalities to unit mass gives the following normalized forms.

\begin{corollary}
Let $N\geq1$, $0<s<1$, and let $u\in\mathcal{D}_s$ satisfy $B(u)=1$. Then
\begin{equation}\label{eq:unit-log-sob}
\int_{\mathbb{R}^N}|u|^2\log|u|\mathrm{d}x \leq A(u)-\frac{1}{2}\log M .
\end{equation}
Equality holds if and only if
\begin{equation*}
u(x)=\pm M^{-\frac{1}{2}}Q(x-y)
\end{equation*}
for some $y\in\mathbb{R}^N$. Moreover,
\begin{equation}\label{eq:unit-scale-log}
\int_{\mathbb{R}^N}|u|^2\log|u|\mathrm{d}x \leq \frac{N}{4s} \log\left( \frac{4s\mathrm{e}}{N}A(u) \right) - \frac{1}{2}\log M.
\end{equation}
Equality holds if and only if
\begin{equation*}
u(x)= \pm M^{-\frac{1}{2}}\lambda^{\frac{N}{2}} Q\left(\lambda(x-y)\right)
\end{equation*}
for some $\lambda>0$ and $y\in\mathbb{R}^N$.
\end{corollary}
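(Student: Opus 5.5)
The plan is to specialize Proposition~\ref{prop:sharp-log-sobolev} to unit mass $B(u)=1$, dividing both inequalities by $2$. For the first inequality, apply \eqref{eq:mass-log-sob} with $B(u)=1$. Since $|u|^2\log|u|^2=2|u|^2\log|u|$, the left-hand side is $2\int_{\mathbb{R}^N}|u|^2\log|u|\,\mathrm{d}x$. The right-hand side becomes $2A(u)+\log(1/M)=2A(u)-\log M$. Dividing by $2$ gives \eqref{eq:unit-log-sob}.

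For the equality case of \eqref{eq:unit-log-sob}, part (1) of the proposition gives $u=cQ(\cdot-y)$ with $c\neq0$. The constraint $B(u)=1$ forces $c^2M=1$, that is, $c=\pm M^{-1/2}$. Conversely, these functions have unit mass and are equality cases of \eqref{eq:mass-log-sob}.

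For the second inequality, insert $B(u)=1$ into \eqref{eq:scale-log-sob}. The right-hand side becomes
\[
-\log M+\frac{N}{2s}\log\left(\frac{4s\mathrm{e}}{N}A(u)\right).
\]
Dividing by $2$ gives \eqref{eq:unit-scale-log}. This requires $A(u)>0$, which holds because $u\neq0$; this was already used in the proof of (2)$\Rightarrow$(3).

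For the equality case of \eqref{eq:unit-scale-log}, part (3) gives $u=cQ(\lambda(\cdot-y))$. Its mass is $B(u)=c^2\lambda^{-N}M$. Setting this equal to $1$ gives $c=\pm M^{-1/2}\lambda^{N/2}$. The converse again follows directly from part (3).

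There is no real obstacle here. The only care needed is the factor $2$ between $\log|u|^2$ and $\log|u|$, and solving the normalization of $c$ correctly in each equality case.
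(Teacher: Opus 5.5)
Your proposal is correct and matches the paper's proof: you specialize parts (1) and (3) of Proposition~\ref{prop:sharp-log-sobolev} to $B(u)=1$ and divide by $2$. You then fix the constant in the equality cases via $c^2M=1$ and $c^2\lambda^{-N}M=1$, respectively.
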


\begin{proof}
Parts (1) and (3) of Proposition \ref{prop:sharp-log-sobolev}, with $B(u)=1$, give
\begin{equation*}
\int_{\mathbb{R}^N}|u|^2\log|u|^2\mathrm{d}x \leq 2A(u)-\log M
\end{equation*}
and
\begin{equation*}
\int_{\mathbb{R}^N}|u|^2\log|u|^2\mathrm{d}x \leq -\log M + \frac{N}{2s} \log\left( \frac{4s\mathrm{e}}{N}A(u) \right).
\end{equation*}
Dividing by $2$ gives \eqref{eq:unit-log-sob} and \eqref{eq:unit-scale-log}. In the first case,
\begin{equation*}
1=B(cQ(\cdot-y))=c^2M,
\end{equation*}
which gives $c=\pm M^{-\frac{1}{2}}$. In the second case,
\begin{equation*}
1=B(cQ(\lambda(\cdot-y)))=c^2\lambda^{-N}M,
\end{equation*}
which gives $c=\pm M^{-\frac{1}{2}}\lambda^{\frac{N}{2}}$. These identities give the
stated equality families.
\end{proof}

For comparison, we recover the classical Gaussian case when $s=1$.

\begin{remark}
For $s=1$, the positive ground state of \eqref{eq:log} and its mass are
\begin{equation*}
Q_1(x)=\exp\left(\frac{N}{2}-\frac{|x|^2}{4}\right), \quad M_1=\mathrm{e}^N(2\pi)^{\frac{N}{2}}.
\end{equation*}
In fact, direct computation gives
\begin{equation*}
-\Delta Q_1=\left(\frac{N}{2}-\frac{|x|^2}{4}\right)Q_1=Q_1\log Q_1,
\end{equation*}
and
\begin{equation*}
\int_{\mathbb{R}^N}Q_1^2\mathrm{d}x=\mathrm{e}^N\int_{\mathbb{R}^N}\mathrm{e}^{-\frac{|x|^2}{2}}\mathrm{d}x=\mathrm{e}^N(2\pi)^{\frac{N}{2}}.
\end{equation*} 
The resulting equality cases agree with the classical
Gaussian extremals in \cite[Theorem 1]{DelPino-Dolbeault-2003} and \cite[Theorem 4]{Carlen-1991}. Then
\eqref{eq:param-log-sob} becomes
\begin{equation*}
\begin{aligned}
	\int_{\mathbb{R}^N}|u|^2\log|u|^2\mathrm{d}x \leq& \frac{a^2}{\pi}\|\nabla u\|_{L^2(\mathbb{R}^N)}^2 
	+ \left[ \log\|u\|_{L^2(\mathbb{R}^N)}^2 -N\left(1+\log a\right) \right] \|u\|_{L^2(\mathbb{R}^N)}^2.
\end{aligned}
\end{equation*}
For $\|u\|_{L^2(\mathbb{R}^N)}=1$, \eqref{eq:unit-log-sob} and \eqref{eq:unit-scale-log} become
\begin{equation*}
\int_{\mathbb{R}^N}|u|^2\log|u|\mathrm{d}x+\frac{N}{2}\left(1+\frac12\log(2\pi)\right)\leq\|\nabla u\|_{L^2(\mathbb{R}^N)}^2
\end{equation*}
and
\begin{equation*}
\int_{\mathbb{R}^N}|u|^2\log|u|\mathrm{d}x\leq\frac{N}{4}\log\left(\frac{2}{\pi N\mathrm e}\|\nabla u\|_{L^2(\mathbb{R}^N)}^2\right).
\end{equation*} 
The equality cases of the first inequality under the normalization
$\|u\|_{L^2(\mathbb{R}^N)}=1$ are
\begin{equation*}
u(x)=\pm(2\pi)^{-\frac{N}{4}}\exp\left(-\frac{|x-y|^2}{4}\right),\quad y\in\mathbb{R}^N.
\end{equation*}
The equality cases of the scale-invariant inequality under the same normalization are
\begin{equation*}
u(x)=\pm(\pi\sigma)^{-\frac{N}{4}}\exp\left(-\frac{|x-y|^2}{2\sigma}\right),\quad \sigma>0,\quad y\in\mathbb{R}^N.
\end{equation*}
\end{remark}

\medskip
\subsection*{Acknowledgements} 
Shuangjie Peng was supported by National Key R\&D Program (No. 2023YFA1010002).
Xiaoming An was supported by National Natural Science Foundation of China (No. 12561033).

\vskip 0.2truein

\noindent {\bf Data Availability Statement\ } Data sharing not applicable to this article as no
datasets were generated or analysed during the current study.

\vskip 0.2truein
\noindent {\bf Declarations}

\vskip 0.1truein
\noindent {\bf Conflict of interest\ } The authors have no conflict of interest to declare that are
relevant to the content of this article.


\vskip 0.35truein

\medskip	















\begin{thebibliography}{99}


\bibitem{Almgren-Lieb-1989}
F. J. Almgren Jr. and E. H. Lieb, 
Symmetric decreasing rearrangement is sometimes continuous, 
J. Amer. Math. Soc. {\bf 2} (1989), no. 4, 683--773.

\bibitem{Amick-Toland-ActaMath-1991}
C. J. Amick and J. F. Toland, 
Uniqueness and related analytic properties for the Benjamin-Ono equation---a nonlinear Neumann problem in the plane, 
Acta Math. {\bf 167} (1991), no. 1-2, 107--126.

\bibitem{An-Fang-2025}
X. An and Y. Fang,
Existence and uniqueness of positive ground state solutions of general logarithmic Schr\"odinger equations,
J. Differential Equations {\bf 422} (2025), 57--77.

\bibitem{An-Peng-Yang-Zhong-2026}
X. An, S. Peng, X. Yang, and F. Zhong,
Qualitative analysis for ground state solutions of logarithmic Schr\"odinger equations under a small constant magnetic field in $\mathbb{R}^N$,
J. Funct. Anal. {\bf 290} (2026), no. 7, Paper No. 111333, 51 pp.

\bibitem{An-Yang-2023}
X. An and X. Yang,
Convergence from power-law to logarithm-law in nonlinear fractional Schr\"odinger equations,
J. Math. Phys. {\bf 64} (2023), no. 1, Paper No. 011506, 13 pp.

\bibitem{Ardila-2017}
A. H. Ardila,
Existence and stability of standing waves for nonlinear fractional Schr\"odinger equation with logarithmic nonlinearity,
Nonlinear Anal. {\bf 155} (2017), 52--64.

\bibitem{Berestycki-Lions-1983a}
H. Berestycki and P.-L. Lions, 
Nonlinear scalar field equations. I. Existence of a ground state, 
Arch. Rational Mech. Anal. {\bf 82} (1983), no. 4, 313--345.

\bibitem{Berestycki-Lions-1983b}
H. Berestycki and P.-L. Lions, 
Nonlinear scalar field equations. II. Existence of infinitely many solutions, 
Arch. Rational Mech. Anal. {\bf 82} (1983), no. 4, 347--375.

\bibitem{Caffarelli-Silverstre-CPDE-2007}
L. \'A. Caffarelli and L. E. Silvestre, 
An extension problem related to the fractional Laplacian, 
Comm. Partial Differential Equations {\bf 32} (2007), no. 7-9, 1245--1260.

\bibitem{Carlen-1991}
E. A. Carlen,
Superadditivity of Fisher's information and logarithmic Sobolev inequalities,
J. Funct. Anal. {\bf 101} (1991), no. 1, 194--211.

\bibitem{Chang-Gustafson-Nakanishi-Tsai-2007}
S.-M. Chang, S. Gustafson, K. Nakanishi and T.-P. Tsai,
Spectra of linearized operators for NLS solitary waves, 
SIAM J. Math. Anal. {\bf 39} (2007/08), no. 4, 1070--1111.

\bibitem{Chatzakou-Ruzhansky-2024}
M. Chatzakou and M. V. Ruzhansky, 
Revised logarithmic Sobolev inequalities of fractional order, 
Bull. Sci. Math. {\bf 197} (2024), Paper No. 103530, 9 pp.

\bibitem{Chen-Li-CPAM-2006}
W. Chen, C. Li and B. Ou, 
Classification of solutions for an integral equation, 
Comm. Pure Appl. Math. {\bf 59} (2006), no. 3, 330--343.

\bibitem{Chen-Li-2018}
W. Chen and C. Li, 
Maximum principles for the fractional $p$-Laplacian and symmetry of solutions, 
Adv. Math. {\bf 335} (2018), 735--758.

\bibitem{Coffman-ARMA-1972}
C. V. Coffman, 
Uniqueness of the ground state solution for $\Delta u-u+u\sp{3}=0$\ and a variational characterization of other solutions, 
Arch. Rational Mech. Anal. {\bf 46} (1972), 81--95.

\bibitem{Coffman-JDE-1996}
C. V. Coffman, 
Uniqueness of the positive radial solution on an annulus of the Dirichlet problem for $\Delta u-u+u^3=0$, 
J. Differential Equations {\bf 128} (1996), no. 2, 379--386.

\bibitem{Cotsiolis-Tavoularis-2005}
A. Cotsiolis and N. K. Tavoularis,
On logarithmic Sobolev inequalities for higher order fractional derivatives,
C. R. Math. Acad. Sci. Paris {\bf 340} (2005), no. 3, 205--208.

\bibitem{DAvenia-Squassina-Zenari-2015}
P. d'Avenia, M. Squassina and M. Zenari,
Fractional logarithmic Schr\"odinger equations,
Math. Methods Appl. Sci. {\bf 38} (2015), no. 18, 5207--5216.

\bibitem{DelPino-Dolbeault-2003}
M. A. del Pino and J. Dolbeault,
The optimal Euclidean $L^p$-Sobolev logarithmic inequality,
J. Funct. Anal. {\bf 197} (2003), no. 1, 151--161.

\bibitem{Nezza-BSM-2012}
E. Di Nezza, G. Palatucci and E. Valdinoci, 
Hitchhiker's guide to the fractional Sobolev spaces, 
Bull. Sci. Math. {\bf 136} (2012), no. 5, 521--573. 

\bibitem{Frank-Lenzmann-2013}
R. L. Frank and E. Lenzmann,
Uniqueness of non-linear ground states for fractional Laplacians in $\mathbb{R}$,
Acta Math. {\bf 210} (2013), no. 2, 261--318.

\bibitem{Frank-Lenzmann-Silvestre-2016}
R. L. Frank, E. Lenzmann and L. E. Silvestre,
Uniqueness of radial solutions for the fractional Laplacian,
Comm. Pure Appl. Math. {\bf 69} (2016), no. 9, 1671--1726.

\bibitem{Frank-Seiringer-2008}
R. L. Frank and R. Seiringer,
Non-linear ground state representations and sharp Hardy inequalities,
J. Funct. Anal. {\bf 255} (2008), no. 12, 3407--3430.

\bibitem{Gross-1975}
L. Gross,
Logarithmic Sobolev inequalities,
Amer. J. Math. {\bf 97} (1975), no. 4, 1061--1083.

\bibitem{Kato-1995}
T. Kato, 
{\it Perturbation theory for linear operators}, 
reprint of the 1980 edition, 
Classics in Mathematics, Springer, Berlin, 1995.

\bibitem{Kenig-Martel-Robbiano-2011}
C. E. Kenig, Y. Martel and L. Robbiano, 
Local well-posedness and blow-up in the energy space for a class of $L^2$ critical dispersion generalized Benjamin-Ono equations, 
Ann. Inst. H. Poincar\'e{} C Anal. Non Lin\'eaire {\bf 28} (2011), no. 6, 853--887.

\bibitem{Kwong-1989}
M. K. Kwong, 
Uniqueness of positive solutions of $\Delta u-u+u^p=0$ in ${\bf R}^n$, 
Arch. Rational Mech. Anal. {\bf 105} (1989), no. 3, 243--266.

\bibitem{Kwong-Zhang-DIE-1991}
M. K. Kwong and L. Q. Zhang, 
Uniqueness of the positive solution of $\Delta u+f(u)=0$ in an annulus, 
Differential Integral Equations {\bf 4} (1991), no. 3, 583--599.

\bibitem{Leoni-2017}
G. Leoni, 
{\it A first course in Sobolev spaces}, second edition, 
Graduate Studies in Mathematics, 181, Amer. Math. Soc., Providence, RI, 2017.

\bibitem{Li-Peng-Shuai-2022}
Q. Li, S. J. Peng and W. Shuai,
On fractional logarithmic Schr\"odinger equations,
Adv. Nonlinear Stud. {\bf 22} (2022), no. 1, 41--66.

\bibitem{Liu-Sun-Zou-2026}
T. Liu, X. Sun, and W. Zou, 
Uniqueness of bound states to the logarithmic Schr\"odinger equation,
\emph{arXiv preprint} arXiv:2606.19077 (2026).

\bibitem{Lieb-Loss-2001}
E. H. Lieb and M. Loss,
{\it Analysis}, second edition,
Graduate Studies in Mathematics, 14, Amer. Math. Soc., Providence, RI, 2001.

\bibitem{Li-JEMS-2004}
Y. Y. Li, 
Remark on some conformally invariant integral equations: the method of moving spheres, 
J. Eur. Math. Soc. (JEMS) {\bf 6} (2004), no. 2, 153--180.

\bibitem{Laskin-2000}
N. Laskin, 
Fractals and quantum mechanics, 
Chaos {\bf 10} (2000), no. 4, 780--790.

\bibitem{Laskin-2002}
N. Laskin, 
Fractional Schr\"odinger equation,
Phys. Rev. E (3) {\bf 66} (2002), no. 5, 056108, 7 pp.

\bibitem{Mcleod-Serrin-ARMA-1987}
K. McLeod and J. B. Serrin Jr., 
Uniqueness of positive radial solutions of $\Delta u+f(u)=0$ in ${\bf R}^n$, 
Arch. Rational Mech. Anal. {\bf 99} (1987), no. 2, 115--145. 

\bibitem{Reed-Simon-1978-1}
M. C. Reed and B. Simon, 
{\it Methods of modern mathematical physics. I}, second edition, 
Academic Press, New York, 1980.

\bibitem{Reed-Simon-1978-4}
M. C. Reed and B. Simon, 
{\it Methods of modern mathematical physics. IV. Analysis of operators}, 
Academic Press, New York-London, 1978. 

\bibitem{RosOton-Serra-2016}
X. Ros-Oton and J. Serra,
Regularity theory for general stable operators,
J. Differential Equations {\bf 260} (2016), no. 12, 8675--8715.

\bibitem{Serrin-Tang-Indiana-2000}
J. B. Serrin Jr. and M. Tang, 
Uniqueness of ground states for quasilinear elliptic equations, 
Indiana Univ. Math. J. {\bf 49} (2000), no. 3, 897--923.

\bibitem{Tang-Invention-2026}
M. Tang, 
Uniqueness of bound states to $\Delta u-u+|u|^{p-1}u=0$ in $\mathbb{R}^{n}$, $n\geq3$, 
Invent. Math. {\bf 243} (2026), no. 2, 245--291.

\bibitem{Troy-ARMA-2016}
W. C. Troy, 
Uniqueness of positive ground state solutions of the logarithmic Schr\"odinger equation, 
Arch. Ration. Mech. Anal. {\bf 222} (2016), no. 3, 1581--1600.

\bibitem{Wang-Zhang-2019}
Z. Q. Wang and C. Zhang,
Convergence from power-law to logarithm-law in nonlinear scalar field equations,
Arch. Ration. Mech. Anal. {\bf 231} (2019), no. 1, 45--61.

\bibitem{Weinstein-1985}
M. I. Weinstein, 
Modulational stability of ground states of nonlinear Schr\"odinger equations, 
SIAM J. Math. Anal. {\bf 16} (1985), no. 3, 472--491.

\bibitem{Weinstein-1987}
M. I. Weinstein, 
Existence and dynamic stability of solitary wave solutions of equations arising in long wave propagation, 
Comm. Partial Differential Equations {\bf 12} (1987), no. 10, 1133--1173.

\bibitem{Zhang-Zhang-JFPT-2022}
C. Zhang and L. Zhang, 
Qualitative analysis on logarithmic Schr\"odinger equation with general potential, 
J. Fixed Point Theory Appl. {\bf 24} (2022), no. 4, Paper No. 74, 33 pp.


\end{thebibliography}
\end{document}